\def\definetac{\newif\iftac}    
\def\definebeamer{\newif\ifbeamer}
\else\usepackage{amsthm}\fi
  \definecolor{darkgreen}{rgb}{0,0.45,0} 
\let\ea\expandafter
\def\mdef#1#2{\ea\ea\ea\gdef\ea\ea\noexpand#1\ea{\ea\ensuremath\ea{#2}\xspace}}
\def\alwaysmath#1{\ea\ea\ea\global\ea\ea\ea\let\ea\ea\csname your@#1\endcsname\csname #1\endcsname
  \ea\def\csname #1\endcsname{\ensuremath{\csname your@#1\endcsname}\xspace}}
\DeclareRobustCommand\widecheck[1]{{\mathpalette\@widecheck{#1}}}
\def\@widecheck#1#2{%
    \setbox\z@\hbox{\m@th$#1#2$}%
    \setbox\tw@\hbox{\m@th$#1%
       \widehat{%
          \vrule\@width\z@\@height\ht\z@
          \vrule\@height\z@\@width\wd\z@}$}%
    \dp\tw@-\ht\z@
    \@tempdima\ht\z@ \advance\@tempdima2\ht\tw@ \divide\@tempdima\thr@@
    \setbox\tw@\hbox{%
       \raise\@tempdima\hbox{\scalebox{1}[-1]{\lower\@tempdima\box
\tw@}}}%
    {\ooalign{\box\tw@ \cr \box\z@}}}
\def\foreachletter#1#2#3{\foreachcount=#1
  \ea\loop\ea\ea\ea#3\@alph\foreachcount
  \advance\foreachcount by 1
  \ifnum\foreachcount<#2\repeat}
\def\foreachLetter#1#2#3{\foreachcount=#1
  \ea\loop\ea\ea\ea#3\@Alph\foreachcount
  \advance\foreachcount by 1
  \ifnum\foreachcount<#2\repeat}
\def\definescr#1{\ea\gdef\csname s#1\endcsname{\ensuremath{\mathscr{#1}}\xspace}}
\def\definecal#1{\ea\gdef\csname c#1\endcsname{\ensuremath{\mathcal{#1}}\xspace}}
\def\definebold#1{\ea\gdef\csname b#1\endcsname{\ensuremath{\mathbf{#1}}\xspace}}
\def\definebb#1{\ea\gdef\csname l#1\endcsname{\ensuremath{\mathbb{#1}}\xspace}}
\def\definefrak#1{\ea\gdef\csname k#1\endcsname{\ensuremath{\mathfrak{#1}}\xspace}}
\def\definesf#1{\ea\gdef\csname i#1\endcsname{\ensuremath{\mathsf{#1}}\xspace}}
\def\definebar#1{\ea\gdef\csname #1bar\endcsname{\ensuremath{\overline{#1}}\xspace}}
\def\definetil#1{\ea\gdef\csname #1til\endcsname{\ensuremath{\widetilde{#1}}\xspace}}
\def\definehat#1{\ea\gdef\csname #1hat\endcsname{\ensuremath{\widehat{#1}}\xspace}}
\def\definechk#1{\ea\gdef\csname #1chk\endcsname{\ensuremath{\widecheck{#1}}\xspace}}
\def\defineul#1{\ea\gdef\csname u#1\endcsname{\ensuremath{\underline{#1}}\xspace}}
\def\autofmt@n#1\autofmt@end{\mathrm{#1}}
\def\autofmt@b#1\autofmt@end{\mathbf{#1}}
\def\autofmt@l#1#2\autofmt@end{\mathbb{#1}\mathsf{#2}}
\def\autofmt@c#1#2\autofmt@end{\mathcal{#1}\mathit{#2}}
\def\autofmt@s#1#2\autofmt@end{\mathscr{#1}\mathit{#2}}
\def\autofmt@f#1\autofmt@end{\mathsf{#1}}
\def\autofmt@k#1\autofmt@end{\mathfrak{#1}}
\def\autofmt@u#1\autofmt@end{\underline{\smash{\mathsf{#1}}}}
\def\autofmt@U#1\autofmt@end{\underline{\underline{\smash{\mathsf{#1}}}}}
\def\autofmt@h#1\autofmt@end{\widehat{#1}}
\def\autofmt@r#1\autofmt@end{\overline{#1}}
\def\autofmt@t#1\autofmt@end{\widetilde{#1}}
\def\autofmt@k#1\autofmt@end{\check{#1}}
\def\auto@drop#1{}
\def\autodef#1{\ea\ea\ea\@autodef\ea\ea\ea#1\ea\auto@drop\string#1\autodef@end}
\def\@autodef#1#2#3\autodef@end{%
  \ea\def\ea#1\ea{\ea\ensuremath\ea{\csname autofmt@#2\endcsname#3\autofmt@end}\xspace}}
\def\autodefs@end{blarg!}
\def\autodefs#1{\@autodefs#1\autodefs@end}
\def\@autodefs#1{\ifx#1\autodefs@end%
  \def\autodefs@next{}%
  \else%
  \def\autodefs@next{\autodef#1\@autodefs}%
  \fi\autodefs@next}
\DeclareSymbolFont{bbold}{U}{bbold}{m}{n}
\DeclareSymbolFontAlphabet{\mathbbb}{bbold}
\newcommand{\lDelta}{\ensuremath{\mathbbb{\Delta}}\xspace}
\newcommand{\albar}{\ensuremath{\overline{\alpha}}\xspace}
\mdef\delbar{\overline{\partial}}
\newcommand{\inv}{^{-1}}
\mdef\hf{\textstyle\frac12 }
\mdef\thrd{\textstyle\frac13 }
\mdef\qtr{\textstyle\frac14 }
\newcommand{\op}{^{\mathrm{op}}}
\let\adj\dashv
\newcommand{\pushout}[1][dr]{\save*!/#1+1.2pc/#1:(1,-1)@^{|-}\restore}
\let\eqv\simeq
\mdef\Id{\mathrm{Id}}
\mdef\id{\mathrm{id}}
\def\frc#1/#2.{\frac{#1}{#2}}   
\mdef\ten{\mathrel{\otimes}}
\mdef\sqten{\mathrel{\boxtimes}}
\DeclareMathOperator\lan{Lan}
\DeclareMathOperator\colim{colim}
\newcommand{\too}[1][]{\ensuremath{\overset{#1}{\longrightarrow}}}
\newcommand{\ot}{\ensuremath{\leftarrow}}
\let\toto\rightrightarrows
\let\into\hookrightarrow
\mdef\we{\overset{\sim}{\longrightarrow}}
\mdef\leftwe{\overset{\sim}{\longleftarrow}}
\let\epi\twoheadrightarrow
\let\xto\xrightarrow
\let\xot\xleftarrow
\def\rightarrowtailfill@{\arrowfill@{\Yright\joinrel\relbar}\relbar\rightarrow}
\newcommand\xrightarrowtail[2][]{\ext@arrow 0055{\rightarrowtailfill@}{#1}{#2}}
\def\twoheadrightarrowfill@{\arrowfill@{\relbar\joinrel\relbar}\relbar\twoheadrightarrow}
\newcommand\xtwoheadrightarrow[2][]{\ext@arrow 0055{\twoheadrightarrowfill@}{#1}{#2}}
\def\slashedarrowfill@#1#2#3#4#5{%
  $\m@th\thickmuskip0mu\medmuskip\thickmuskip\thinmuskip\thickmuskip
   \relax#5#1\mkern-7mu%
   \cleaders\hbox{$#5\mkern-2mu#2\mkern-2mu$}\hfill
   \mathclap{#3}\mathclap{#2}%
   \cleaders\hbox{$#5\mkern-2mu#2\mkern-2mu$}\hfill
   \mkern-7mu#4$%
}
\def\rightslashedarrowfill@{%
  \slashedarrowfill@\relbar\relbar\mapstochar\rightarrow}
\newcommand\xslashedrightarrow[2][]{%
  \ext@arrow 0055{\rightslashedarrowfill@}{#1}{#2}}
\mdef\hto{\xslashedrightarrow{}}
\mdef\htoo{\xslashedrightarrow{\quad}}
\def\toiso{\xto{\smash{\raisebox{-.5mm}{$\scriptstyle\sim$}}}}
\def\otiso{\xot{\smash{\raisebox{-.5mm}{$\scriptstyle\sim$}}}}
\def\jd#1{\@jd#1\ej}
\def\@jd#1|-#2\ej{\@@jd#1,,\;\vdash\;\left(#2\right)}
\def\@@jd#1,{\@ifmtarg{#1}{\let\next=\relax}{\left(#1\right)\let\next=\@@@jd}\next}
\def\@@@jd#1,{\@ifmtarg{#1}{\let\next=\relax}{,\,\left(#1\right)\let\next=\@@@jd}\next}
\def\jdm#1{\@jdm#1\ej}
\def\@jdm#1|-#2\ej{\@@jd#1,,\\\vdash\;\left(#2\right)}
\long\def\my@drawfill#1#2;{%
\@skipfalse
\fill[#1,draw=none] #2;
\@skiptrue
\draw[#1,fill=none] #2;
}
\newif\if@skip
\newcommand{\skipit}[1]{\if@skip\else#1\fi}
\newcommand{\drawfill}[1][]{\my@drawfill{#1}}
\newif\ifhyperref
  \let\your@state\state
  \def\state#1{\my@state#1}
  \def\my@state#1.{\gdef\currthmtype{#1}\your@state{#1.}}
  \let\your@staterm\staterm
  \def\staterm#1{\my@staterm#1}
  \def\my@staterm#1.{\gdef\currthmtype{#1}\your@staterm{#1.}}
  \let\defthm\newtheorem
  \def\switchtotheoremrm{\let\defthm\newtheoremrm}
  \def\currthmtype{}
    \def\autoref#1{\ref*{label@name@#1}~\ref{#1}}
    \def\autoref#1{\ref{label@name@#1}~\ref{#1}}
    \let\old@label\label%
    \def\label#1{%
      {\let\your@currentlabel\@currentlabel%
        \edef\@currentlabel{\currthmtype}%
        \old@label{label@name@#1}}%
      \old@label{#1}}
    \def\defthm#1#2{%
      \newtheorem{#1}{#2}[section]%
      \expandafter\def\csname #1autorefname\endcsname{#2}%
      \expandafter\let\csname c@#1\endcsname\c@thm}
    \def\defthm#1#2{\newtheorem{#1}[thm]{#2}}
\let\SK@label\label\fi
    \let\old@label\label
    \let\your@thm\@thm
    \def\@thm#1#2#3{\gdef\currthmtype{#3}\your@thm{#1}{#2}{#3}}
    \def\currthmtype{}
    \def\label#1{{\let\your@currentlabel\@currentlabel\def\@currentlabel%
        {\currthmtype~\your@currentlabel}%
        \SK@label{#1@}}\old@label{#1}}
    \def\autoref#1{\ref{#1@}}
\newtheorem{thm}{Theorem}[section]
\else\theoremstyle{definition}\fi
\else\theoremstyle{remark}\fi
  \let\your@endproof\endproof
  \def\my@endproof{\your@endproof}
  \def\endproof{\my@endproof\gdef\my@endproof{\your@endproof}}
  \def\qedhere{\tag*{\endproofbox}\gdef\my@endproof{\relax}}
  \def\pr@@f[#1]{\subsubsection*{\sc #1.}}
\def\thmqedhere{\expandafter\csname\csname @currenvir\endcsname @qed\endcsname}
  \let\c@equation\c@subsection
  \let\c@equation\c@thm
\numberwithin{equation}{section}
\mdef\ep{\varepsilon}
\mdef\ph{\varphi}
\let\al\alpha
\let\be\beta
\let\gm\gamma
\let\de\delta
\let\ze\zeta
\title{Reedy categories and their generalizations}
\author{Michael Shulman}
\date{\today}
\thanks{This material is based on research sponsored by The United States Air Force Research Laboratory under agreement number FA9550-15-1-0053.  The U.S. Government is authorized to reproduce and distribute reprints for Governmental purposes notwithstanding any copyright notation thereon.  The views and conclusions contained herein are those of the author and should not be interpreted as necessarily representing the official policies or endorsements, either expressed or implied, of the United States Air Force Research Laboratory, the U.S. Government, or Carnegie Mellon University.}
\setlist[itemize,2]{label={$\star$}}
\let\M\cM
\let\N\cN
\let\B\cB
\let\C\cC
\let\D\cD
\let\E\cE
\let\F\cF
\let\G\cG
\let\J\cJ
\let\L\cL
\let\R\cR
\let\V\cV
\let\W\cW
\let\O\cO
\mdef\FO{\cF_{\O}}
\mdef\im{\mathrm{im}}
\def\Cbar{\ensuremath{\overrightarrow{\vphantom{\mathcal{C}}\smash{\overleftarrow{\mathcal{C}}}}}\xspace}
\def\Cbarde{\ensuremath{{\overrightarrow{\vphantom{\mathcal{C}}\smash{\overleftarrow{\mathcal{C}}}}\!}_\de}}
\def\FObar{\ensuremath{\overrightarrow{\vphantom{\mathcal{F}_{\mathcal{O}}}\smash{\overleftarrow{\mathcal{F}_{\mathcal{O}}}}}}\xspace}
\def\OGbar{\ensuremath{\overrightarrow{\vphantom{\mathcal{O}_G}\smash{\overleftarrow{\mathcal{O}_G}}}}\xspace}
\let\cref\autoref
\mdef\vprof{\V\text{-}\cProf}
\mdef\prewfscl{\mathrm{PreWfs}_{\mathrm{cl}}}
\mdef\wfscl{\nWfs_{\mathrm{cl}}}
\mdef\modelcl{\nModel_{\mathrm{cl}}}
\mdef\prewfsmorcl{\mathrm{PreWfsMor}_{\mathrm{cl}}}
\mdef\wfsmorcl{\mathrm{WfsMor}_{\mathrm{cl}}}
\mdef\quillencl{\mathrm{Quillen}_{\mathrm{cl}}}
\mdef\prewfsbimorcl{\mathrm{PreWfsBiMor}_{\mathrm{cl}}}
\mdef\wfsbimorcl{\mathrm{WfsBiMor}_{\mathrm{cl}}}
\mdef\biquillencl{\mathrm{BiQuillen}_{\mathrm{cl}}}
\def\eql{\mathrm{eq}}
\let\aut\nAut
\newcommand{\biglue}[1]{\ensuremath{\mathcal{G}\hspace{-1pt}\ell(#1)}\xspace}
\let\glue\biglue
\newcommand{\blank}{\mathord{\hspace{1pt}\text{--}\hspace{1pt}}}
\newcommand{\aand}{\qquad\text{and}\qquad}
\newcommand{\wlim}[3]{\{#1,#2\}^{#3}}
\newcommand{\wlimc}[2]{\wlim{#1}{#2}{\C}}
\newcommand{\wcolim}[3]{#1 \otimes_{#3} #2}
\newcommand{\wcolimc}[2]{\wcolim{#1}{#2}{\C}}
\newcommand{\coll}[1]{[#1]}
\let\tc\lone
\newcommand{\rep}[1]{{#1}_\bullet}
\newcommand{\corep}[1]{{#1}^\bullet}
\newcommand{\cpw}{\cdot}
\newcommand{\rup}[1]{\overrightarrow{#1}}
\newcommand{\rdn}[1]{\overleftarrow{#1}}
\newcommand{\strsl}[2]{#1 \sslash #2}
\let\codisc\nabla
\begin{document}

\begin{abstract}
  We observe that the Reedy model structure on a diagram category can be constructed by iterating an operation of ``bigluing'' model structures along a pair of functors and a natural transformation.
  This yields a new explanation of the definition of Reedy categories: they are almost exactly those small categories for which the category of diagrams and its model structure can be constructed by iterated bigluing.
  It also gives a consistent way to produce generalizations of Reedy categories, including the generalized Reedy categories of Cisinski and Berger--Moerdijk and the enriched Reedy categories of Angeltveit, but also new versions such as a combined notion of ``enriched generalized Reedy category''.
\end{abstract}

\maketitle

\setcounter{tocdepth}{1}
\tableofcontents

\section{Introduction}
\label{sec:introduction}

\subsection{Overview}
\label{sec:overview}

In the preprint~\cite{reedy}, Reedy described an inductive procedure for defining simplicial objects and maps between them, and used it to construct a fairly explicit model structure on the category of simplicial objects in any model category.
Kan then generalized this model structure to diagrams indexed by any \emph{Reedy category}, a notion he defined by abstracting some properties of the domain category $\lDelta\op$ for simplicial objects.
The notion of Reedy category has since become standard in the model-categorical literature, appearing among other places in the books~\cite[Ch.~5]{hovey:modelcats}, \cite[Ch.~15]{hirschhorn:modelcats}, \cite[\S22]{dhks:holim}\footnote{But see \cref{sec:freedy}.}, \cite[\S A.2.9]{lurie:higher-topoi}, and \cite[Ch.~14]{riehl:cht}.

However, there is no obvious reason why Kan's definition is the \emph{only} or the \emph{best} context in which to formulate Reedy's construction abstractly.
And indeed, several people have generalized the definition of Reedy category, maintaining the important property that Reedy diagrams in a model category inherit a fairly explicit model structure.
For instance, Cisinski~\cite[Def.~8.1.1]{cisinski:presheaves} and Berger--Moerdijk~\cite{bm:extn-reedy} have modified the definition to allow a Reedy category to contain nontrivial automorphisms, while Angeltveit~\cite{angeltveit:enr-reedy} has given a version for enriched categories.
However, none of these definitions appear inevitable either.
Perhaps the most satisfying account of Reedy categories to date is that of Riehl and Verity~\cite{rv:reedy}, who show that Kan's definition implies a decomposition of the hom-functor as a cell complex, which in turn induces analogous decompositions of all diagrams.
But the definition of Reedy category itself is still taken as given.

In this paper we analyze Reedy categories and Reedy model structures from the other direction: instead of ``what is a Reedy category?''\ we ask ``what is a Reedy model structure?''\ and work backwards to the definition of a Reedy category.
The classical result is that if \C is a Reedy category and \M a model category, then the diagram category $\M^\C$ has a model structure in which a map $A\to B$ is
\begin{itemize}
\item \dots a weak equivalence iff $A_x\to B_x$ is a weak equivalence in $\M$ for all $x\in \C$.
\item \dots a cofibration iff the induced map $A_x \sqcup_{L_x A} L_x B \to B_x$ is a cofibration in $\M$ for all $x\in \C$.
\item \dots a fibration iff the induced map $A_x \to B_x \times_{M_x B} M_x A$ is a fibration in $\M$ for all $x\in \C$.
\end{itemize}
Here $L_x$ and $M_x$ are the \emph{latching object} and \emph{matching object} functors, which are defined in terms of the Reedy structure of \C.
However, at the moment all we care about is that if $x$ has degree $n$ (part of the structure of a Reedy category is an ordinal-valued degree function on its objects), then $L_x$ and $M_x$ are functors $\M^{\C_n} \to \M$, where $\C_n$ is the full subcategory of \C on the objects of degree less than $n$.
In the prototypical example of $\lDelta\op$, where $\M^{\C}$ is the category of simplicial objects in $\M$, $L_n A$ is the ``object of degenerate $n$-simplices'' whereas $M_n A$ is the ``object of simplicial $(n-1)$-spheres (potential boundaries for $n$-simplices)''.

The fundamental observation which makes the Reedy model structure tick is that if we have a diagram $A\in \M^{\C_n}$, then to extend it to a diagram defined at $x$ as well, it is necessary and sufficient to give an object $A_x$ and a factorization $L_x A \to A_x \to M_x A$ of the canonical map $L_x A \to M_x A$ (and similarly for morphisms of diagrams).
For $\lDelta\op$, this means that if we have a partially defined simplicial object with objects of $k$-simplices for all $k<n$, then to extend it with $n$-simplices we have to give an object $A_n$, a map $L_n A \to A_n$ including the degeneracies, and a map $A_n \to M_n A$ assigning the boundary of every simplex, such that the composite $L_n A \to A_n \to M_n A$ assigns the correct boundary to degenerate simplices.

Categorically speaking, this observation can be reformulated as follows.
Given a natural transformation $\alpha : F\to G$ between parallel functors $F,G:\M\to \N$, let us define the \emph{bigluing category} $\biglue{\alpha}$ to be the category of quadruples $(M,N,\phi,\gm)$ such that $M\in \M$, $N\in\N$, and $\phi:F M \to N$ and $\gm : N \to G M$ are a factorization of $\alpha_M$ through $N$.
The above observation is then that $\M^{C_x}\simeq \biglue{\alpha}$, where $\alpha: L_x \to M_x$ is the canonical map between functors $\M^{C_n} \to \M$ and $C_x$ is the full subcategory of \C on $\C_n \cup \{x\}$.
Moreover, it is an easy exercise to reformulate the usual construction of the Reedy model structure as a theorem that if \M and \N are model categories and $F$ and $G$ are left and right Quillen respectively, then $\biglue{\alpha}$ inherits a model structure.

Therefore, our answer to the question ``what is a Reedy model structure?''\ is that it is one obtained by repeatedly (perhaps transfinitely) bigluing along a certain kind of transformation between functors $\M^\C \to \M$ (where $\C$ is a category playing the role of $\C_n$ previously).
This motivates us to ask, given \C, how can we find functors $F,G : \M^{\C}\to \M$ and a map $\alpha : F \to G$ such that $\biglue\alpha$ is of the form $\M^{\C'}$ for some new category $\C'$?

Of course, we expect $\C'$ to be obtained from $\C$ by adding one new object ``$x$''.
Thus, it stands to reason that $F$, $G$, and $\alpha$ will have to specify, among other things, the morphisms \emph{from} $x$ \emph{to} objects in $\C$, and the morphisms \emph{to} $x$ \emph{from} objects of $\C$.
These two collections of morphisms form diagrams $W:\C\to \nSet$ and $U:\C\op \to \nSet$, respectively; and given such $U$ and $W$ we do have canonical functors $F$ and $G$, namely the $U$-weighted colimit and the $W$-weighted limit.
Moreover, a natural transformation from the $U$-weighted colimit to the $W$-weighted limit can naturally be specified by giving a map $W\times U \to C(\blank,\blank)$ in $\nSet^{\C\op\times \C}$.
In $\C'$, this map will supply the composition of morphisms through $x$.

It remains only to specify the hom-set $\C'(x,x)$ (and the relevant composition maps), and for this there is a ``universal choice'': we take $\C'(x,x) = (W \otimes_\C U) \sqcup \{\id_x\}$.
That is, we throw in composites of morphisms $x\to y \to x$, freely subject to the associative law, and also an identity morphism.
This $\C'$ has a universal property (it is a ``collage'' in the bicategory of profunctors) which ensures that the resulting biglued category is indeed equivalent to $\M^{\C'}$.

A category with degrees assigned to its objects can be obtained by iterating this construction if and only if any nonidentity morphism between objects of the same degree factors uniquely-up-to-zigzags through an object of strictly lesser degree (i.e. the category of such factorizations is connected).
What remains is to ensure that the resulting latching and matching objects are left and right Quillen.
It turns out that this is equivalent to requiring that morphisms between objects of \emph{different} degrees also have connected or empty categories of factorizations through objects of strictly lesser degree.
For a precise statement, see \cref{defn:almost-reedy}; we will call these \textbf{almost-Reedy categories}.

This doesn't look much like the usual definition of Reedy category, but it turns out to be very close to it.
If $\C$ is almost-Reedy, let $\rup \C$ (resp.~$\rdn \C$) be the class of morphisms $f:x\to y$ such that $\deg(x)\le \deg(y)$ (resp.~$\deg(y)\le \deg(x)$) and that do not factor through any object of strictly lesser degree than $x$ and $y$.
We will show that just as in a Reedy category, every morphism factors uniquely into a $\rdn\C$-morphism followed by a $\rup\C$-morphism.

The only thing missing from the usual definition of a Reedy category, therefore, is that $\rdn\C$ and $\rup\C$ be subcategories, i.e.\ closed under composition.
And indeed, this can fail to be true; see \cref{eg:almost-reedy}.
However, this is all that can go wrong: $\C$ is a Reedy category if and only if it is an almost-Reedy category such that $\rdn\C$ and $\rup\C$ are closed under composition.
(In particular, this means that $\rdn\C$ and $\rup\C$ don't have to be given as data in the definition of a Reedy category; they are recoverable from the degree function.
This was also noticed by~\cite{rv:reedy}.)

It is unclear whether the extra generality of almost-Reedy categories is useful: it is more difficult to verify in practice and seems unlikely to have natural applications.
The point is rather that the notion of Reedy category (very slightly generalized) arises inevitably from the process of iterated bigluing.


Moreover, as often happens when we reformulate a definition more conceptually, it becomes easier to generalize.
Iterated bigluing can also be applied to more general types of input data, unifying various sorts of generalized Reedy-like categories in a common framework.
For instance, bigluing with $\M^\G$ rather than $\M$ itself, where \G is a groupoid, yields essentially the notion of \emph{generalized Reedy category} from~\cite{bm:extn-reedy}.
Bigluing with $\M^\D$ for an arbitrary category \D yields a ``more generalized'' notion of Reedy category.
Bigluing enriched categories instead yields a notion of \emph{enriched Reedy category} that generalizes the definition of~\cite{angeltveit:enr-reedy}.
Finally, combining these two generalizations we obtain a new notion of \emph{enriched generalized Reedy category}.

Other potential generalizations also suggest themselves; the idea of ``Reedy-ness'' as presented here applies naturally in any ``category theory'' where we have profunctors and collages.
For instance, one could consider \emph{internal} Reedy categories in a well-behaved model category, or \emph{enriched indexed} Reedy categories in the sense of~\cite{shulman:eicats}.

\subsection{Outline}
\label{sec:outline}

In \cref{sec:model} we make some simple observations about model categories and weak factorization systems which may not be exactly in the literature.
Then in \cref{sec:bigluing} we introduce the \emph{bigluing construction} that forms the foundation of the paper.
But rather than jumping right into its application to Reedy categories, to smooth the exposition in \cref{sec:inverse-categories} we first describe the special case of \emph{inverse categories}, which correspond to the special case of bigluing called \emph{gluing}.
This allows us to introduce the central idea of \emph{collages} in a less complicated setting.

In \cref{sec:functoriality} we return to the general theory of bigluing, proving some functoriality results that are necessary for the application to Reedy categories.
Then in \cref{sec:iterated-bigluing} we study general diagram categories that are obtained by iterated bigluing, and their domains, which we call \emph{bistratified categories}.
Reedy categories are a special sort of bistratified category; they are almost exactly those bistratified categories with discrete strata whose diagram categories inherit model structures from the bigluing construction.
In \cref{sec:reedy} we explain this: we introduce the intermediate notion of \emph{almost-Reedy} category, which are \emph{precisely} the above class of bistratified categories, and show that Reedy categories are the almost-Reedy categories for which $\rup\C$ and $\rdn\C$ are closed under composition.
The proof of this makes essential use of a factorization lemma which was also central to~\cite{rv:reedy}.
We show furthermore that Reedy categories are also exactly the almost-Reedy categories such that latching and matching objects can be computed as colimits and limits over $\rup\C$ and $\rdn\C$ only, respectively; this gives an additional argument for their naturalness.

The last few sections are concerned with generalizations.
In \cref{sec:auto} we relax the requirement of discrete strata, obtaining a notion of \emph{c-Reedy category} which generalizes the ``generalized Reedy categories'' of~\cite{bm:extn-reedy}.
In~\cite{bm:extn-reedy} the strata are required to be groupoids; we can even do without that hypothesis.
In \cref{sec:enriched} we generalize instead to enriched category theory, obtaining a notion of \emph{Reedy \V-category} which generalizes the Reedy \V-categories of~\cite{angeltveit:enr-reedy}.
Finally, in \cref{sec:enriched-generalized} we combine the previous two generalizations, obtaining a notion of \emph{c-Reedy \V-category} which permits non-discrete strata.
As an example, we show that the construction in~\cite{angeltveit:enr-reedy} of a Reedy \V-category associated to a non-symmetric operad can be generalized to symmetric operads.

\subsection{Prerequisites}
\label{sec:prerequisites}

Technically, this paper does not depend on any prior knowledge of Reedy categories, or even their definition.
However, space does not permit the inclusion of appropriate motivation and examples for this paper to serve as a good \emph{introduction} to Reedy categories.
Thus I recommend the reader have some previous acquaintance with them; this can be obtained from any of the books cited in the first paragraph of this introduction.
I will also assume familiarity with basic notions of model category theory, which can be obtained from the same sources.

\subsection{A Formalization}
\label{sec:a-formalization}

This paper is slightly unusual, for a paper in category theory, in that one of its main results (\cref{thm:reedy}: almost-Reedy categories are almost Reedy categories) depends on a sequence of technical lemmas, and as far as I know there is no particular reason to \emph{expect} it to be true.
That Reedy categories are almost-Reedy is essentially necessitated by the fact that Kan's definition suffices to construct Reedy model structures, but I have no intuitive argument for why adding closure under composition is enough for the converse; it just happens to work out that way.
Thus, it is natural to worry that a small mistake in one of the technical lemmas might bring the whole theorem crashing down.

To allay such concerns (which I shared myself), I have formally verified this theorem using the computer proof assistant Coq.
Verifying \emph{all} the results of this paper would require a substantial library of basic category theory, but fortunately the proof of \cref{thm:reedy} (including the technical lemmas) is largely elementary, requiring little more than the definition of a category.
(In fact, the formalization is a proof of the more general \cref{thm:creedy}, assuming as given the elementary characterization of almost-c-Reedy categories from \cref{thm:creedy-char}.)
The Coq code is available online on the arXiv and from my website; in \cref{sec:formalization} below I include some remarks on how it relates to the proofs as given in the paper proper.

\subsection{Acknowledgments}
\label{sec:acknowledgments}

I would like to thank Emily Riehl for several useful discussions about~\cite{rv:reedy} and feedback on a draft of this paper; David White for drawing my attention to the existing terminology ``couniversal weak equivalence'' (\cref{defn:univwe}); Karol Szumi\l{}o for the proof of \cref{thm:acof-cat}; Michal Przybylek for the example in footnote~\ref{fn:chu}; Charles Rezk for \cref{eg:fsreedy1}; and Justin Noel for a conversation about enriched generalized Reedy categories.
Some of the results herein were obtained independently (and a little earlier) by Richard Garner.
I would also like to thank Arthur, for sleeping peacefully while I wrote this paper.

\section{Weak factorization systems and model categories}
\label{sec:model}

Recall the following definitions.

\begin{defn}\label{defn:wfs}
  A \textbf{weak factorization system}, or \textbf{wfs}, on a category \M is a pair $(\L,\R)$ of classes of maps such that
  \begin{enumerate}
  \item \L is the class of maps having the left lifting property with respect to \R,\label{item:wfs1}
  \item \R is the class of maps having the right lifting property with respect to \L, and\label{item:wfs2}
  \item Every morphism in \M factors as a map in \L followed by a map in \R.\label{item:wfs-fact}
  \end{enumerate}
\end{defn}

\begin{defn}
  A \textbf{model structure} on \M is a triple $(\C,\F,\W)$ of classes of maps such that 
  \begin{itemize}
  \item $(\C\cap\W,\F)$ and $(\C,\F\cap\W)$ are wfs, and
  \item \W satisfies the 2-out-of-3 property.
  \end{itemize}
\end{defn}

This is equivalent to Quillen's original definition of (closed) model category; it can be found in~\cite[14.2.1]{mp:more-concise} or~\cite[11.3.1]{riehl:cht}.
If there is only one wfs of interest on a category, we denote it by $(\L,\R)$; context makes it clear which category we are referring to.
We say that a morphism $f:X\to Y$ in \L is an \textbf{\L-map}, and that $X$ is an \textbf{\L-object} if $\emptyset\to X$ is an \L-map.
Dually, we speak of \R-maps and \R-objects.


We will also occasionally need the following weaker notion.

\begin{defn}
  A \textbf{pre-wfs} is a pair $(\L,\R)$ satisfying \cref{defn:wfs}\ref{item:wfs1} and~\ref{item:wfs2}, but not necessarily~\ref{item:wfs-fact}.
\end{defn}

Our main reason for considering pre-wfs is the following examples.

\begin{lem}
  If \M is a complete and cocomplete category with a pre-wfs and \C is any small category, then $\M^\C$ has two pre-wfs defined as follows:
  \begin{itemize}
  \item In the \textbf{projective pre-wfs}, the \R-maps are the natural transformations that are objectwise in \R, with \L being the maps having the left lifting property with respect to these.
  \item In the \textbf{injective pre-wfs}, the \L-maps are the natural transformations that are objectwise in \L, with \R being the maps having the right lifting property with respect to these.
  \end{itemize}
\end{lem}
\begin{proof}
  We prove the projective case; the injective one is dual.
  With the definitions as given, it suffices to show that any map having the right lifting property with respect to the projective \L-maps is objectwise in \R.

  Note that for any $c\in \C$, the functor ``evaluate at $c$'' $\mathrm{ev}_c:\M^\C\to \M$ has a left adjoint defined by $M\mapsto \C(c,\blank)\cpw M$.
  Thus, if $f:M\to M'$ is an \L-map in \M, then the induced map $\C(c,\blank)\cpw f : \C(c,\blank)\cpw M\to \C(c,\blank)\cpw M'$ has the left lifting property with respect to all maps $X\to Y$ in $\M^\C$ such that $X_c \to Y_c$ is in \R.
  In particular, $\C(c,\blank)\cpw f$ is a projective \L-map.

  But conversely, $X\to Y$ has the right lifting property with respect to all maps of the form $\C(c,\blank)\cpw f$, for $f$ an \L-map in \M, if and only if $X_c \to Y_c$ is in \R.
  Thus, if $X\to Y$ has the right lifting property with respect to all projective \L-maps, it must be objectwise in \R.
\end{proof}

Even if the given pre-wfs on \M is a wfs, this is not necessarily the case for the projective and injective pre-wfs.
This holds for the projective case if the wfs on \M is \emph{cofibrantly generated}, and for the injective case if \M is furthermore a \emph{locally presentable category} (in which case one says that a cofibrantly generated wfs is \emph{combinatorial}).
In either of these cases, model structures also lift to diagram categories.



If \V, \M, and \N are equipped with pre-wfs and \N has pushouts, we say a functor $S:\V\times \M\to\N$ is a \textbf{left wfs-bimorphism} if for any \L-maps $V\to V'$ and $M\to M'$ in \V and \M respectively, the induced dotted map from the pushout
\begin{equation}
  \vcenter{\xymatrix@C=3pc{
      S(V,M)\ar[r]\ar[d] &
      S(V',M)\ar[d] \ar[ddr]\\
      S(V,M')\ar[r] \ar[drr] &
      S(V,M')\sqcup_{S(V,M)} S(V',M)\pushout \ar@{.>}[dr] \\
      && S(V',M')
      }}
\end{equation}
is an \L-map.
We say that $S:\V\op\times \M\to \N$ is a \textbf{right wfs-bimorphism} if $S\op:\V\times \M\op \to \N\op$ is a left wfs-bimorphism.
We will need the well-known fact that these conditions for any of the three functors in a two-variable adjunction imply the corresponding conditions for the other two; see e.g.~\cite[Lemma 11.1.10]{riehl:cht}.
Note in particular that this makes no use of factorizations, so it is true whether or not the pre-wfs in question are wfs.

The ``most basic'' wfs is (injections, surjections) on \nSet.
This is ``universally enriching'' for all pre-wfs, in the following sense.

\begin{thm}\label{thm:copower}
  If \M is complete and cocomplete and equipped with a pre-wfs, then
  \begin{enumerate}
  \item The copower functor $\nSet\times\M\to\M$ is a left wfs-bimorphism,\label{item:copower1}
  \item The power functor $\nSet\op\times\M\to\M$ is a right wfs-bimorphism, and\label{item:copower2}
  \item The hom-functor $\M\op\times \M \to \nSet$ is a right wfs-bimorphism.\label{item:copower3}
  \end{enumerate}
\end{thm}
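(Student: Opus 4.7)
The three functors in question form a two-variable adjunction: the copower $\nSet\times\M\to\M$ is left adjoint in each variable to the power and hom respectively. Hence, by the cited fact (\cite[Lemma 11.1.10]{riehl:cht}) that the three bimorphism conditions are equivalent for any two-variable adjunction, it is enough to prove any one of \ref{item:copower1}, \ref{item:copower2}, \ref{item:copower3}. The plan is to prove \ref{item:copower3} directly, since for the hom into \nSet the statement reduces to the very definition of a pre-wfs.

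Concretely, the wfs on \nSet is (injections, surjections), so \ref{item:copower3} asks the following: for any \L-map $i: V \to V'$ in \M and any \R-map $p: M \to M'$ in \M, the canonical pullback-corner map
\begin{equation}
\Hom(V', M) \too \Hom(V, M) \times_{\Hom(V, M')} \Hom(V', M')
\end{equation}
is a surjection of sets. An element of the codomain is precisely a commuting square with $i$ on the left and $p$ on the right, and an element of the domain mapping to it is precisely a diagonal filler for that square. Thus surjectivity of the canonical map is literally the statement that every lifting problem of $i$ against $p$ has a solution, which holds by the definition of the pre-wfs (namely that \L-maps have the left lifting property with respect to \R-maps).

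Having established \ref{item:copower3}, \ref{item:copower1} and \ref{item:copower2} follow at once from the two-variable adjunction result; note that, as remarked in the paragraph preceding the theorem, this implication does not use factorizations, and so is valid for arbitrary pre-wfs.

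The only potential obstacle is verifying that the natural map is indeed the pullback-corner map appearing in the definition of right wfs-bimorphism (rather than, say, its opposite in $\nSet\op$); this is just a matter of tracking the variance, since $\Hom$ is contravariant in the first argument, so a left wfs-bimorphism on $\Hom\op: \M \times \M\op \to \nSet\op$ unfolds to exactly the surjectivity condition above. Once this is checked, the proof is immediate.
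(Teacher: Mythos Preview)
Your proof is correct and follows essentially the same approach as the paper: reduce to item~\ref{item:copower3} via the two-variable adjunction, then observe that the pullback-corner condition for the hom-functor is exactly the lifting property defining a pre-wfs. The extra paragraph on variance is a harmless elaboration of what the paper leaves implicit.
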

\begin{proof}
  It suffices to prove one of~\ref{item:copower1}--\ref{item:copower3}.
  The easiest is~\ref{item:copower3}, which claims that if $A\to B$ is an \L-map and $C\to D$ an \R-map, then the induced function
  \[ \M(B,C) \to \M(A,C) \times_{\M(A,D)} \M(B,D) \]
  is a surjection.
  But this is just the lifting property of a pre-wfs.
\end{proof}

We will need to be able to take limits of diagrams of categories equipped with wfs or pre-wfs (hence also model categories).
We could do this by using either a pseudo sort of limit or a strict sort of morphism; for simplicity we choose the latter.

\begin{defn}
  A pre-wfs is \textbf{cloven} if we have chosen a particular lift in every commutative square from an \L-map to an \R-map.
  A wfs is \textbf{cloven} if we have additionally chosen a particular $(\L,\R)$-factorization of every morphism (which need not be functorial).
  A \textbf{strict functor} between cloven wfs or pre-wfs
is one which preserves \emph{both} classes of maps, and also the chosen lifts (and factorizations, in the wfs case) on the nose.
\end{defn}

Let $\prewfscl$ and $\wfscl$
denote the category of categories with cloven pre-wfs and wfs, respectively,
with strict functors between them.

Strict functors do not occur often, but the basic construction in \cref{sec:bigluing} will produce them.
The value of noticing this lies in the following fact.

\begin{thm}\label{thm:clovenlim}
  The forgetful functors $\prewfscl \to\nCat$ and $\wfscl\to\nCat$
  create limits.
\end{thm}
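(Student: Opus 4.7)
The plan is to construct the limit pointwise from the underlying $\nCat$-limit. Given a diagram $D\colon J\to\prewfscl$, let $\M$ be the limit in $\nCat$ of the underlying diagram of categories, with projections $\pi_j\colon\M\to D(j)$. Since a strict functor must preserve both classes and the chosen lifts on the nose, the structure on $\M$ is forced by the requirement that each $\pi_j$ be strict: a morphism $f$ of $\M$ must lie in $\L_\M$ (resp.\ $\R_\M$) if and only if each $\pi_j(f)$ lies in $\L_{D(j)}$ (resp.\ $\R_{D(j)}$), and the chosen lift of a square from an $\L_\M$-map to an $\R_\M$-map must project to the chosen lift in each $D(j)$; in the $\wfscl$ case, the cloven factorization of a morphism is similarly determined pointwise.

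The first (easier) step is to check that these pointwise data assemble coherently into morphisms of $\M$. This is precisely where strictness of the transition functors $D(a)\colon D(j)\to D(j')$ is essential: because each $D(a)$ preserves cloven lifts (and factorizations), the family of pointwise chosen lifts is compatible with the transition maps, so by the universal property of the $\nCat$-limit it determines a unique morphism in $\M$ projecting to the prescribed lift in each component. The universal property of the lifted limit then falls out formally: any cone $(\F, f_j\colon\F\to D(j))$ over $D$ in $\prewfscl$ (resp.\ $\wfscl$) induces, via the $\nCat$-universal property, a unique underlying functor $\tilde f\colon\F\to\M$, and $\tilde f$ is automatically strict because each composite $f_j = \pi_j\circ\tilde f$ is strict and the structure on $\M$ was defined pointwise.

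The main obstacle I expect is to verify that $(\L_\M,\R_\M)$ actually satisfies the pre-wfs axioms \cref{defn:wfs}\ref{item:wfs1} and~\ref{item:wfs2} \emph{in $\M$}: namely that $\L_\M$ is the class of all morphisms of $\M$ with the left lifting property against $\R_\M$, and dually for $\R_\M$. One inclusion of each equality is immediate from the cloven lifts constructed above. The content is in the reverse inclusion: if a morphism $f$ of $\M$ has the left lifting property against every morphism whose projections are all $\R$-maps, one must show that each $\pi_j(f)$ already lies in $\L_{D(j)}$. The natural strategy is to transport an arbitrary testing lifting problem in $D(j)$ against $\pi_j(f)$ into a lifting problem in $\M$ against $f$, using the explicit description of $\M$ as a $\nCat$-limit; the dual argument then handles $\R_\M = \L_\M^{\boxslash}$.
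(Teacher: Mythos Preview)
Your approach matches the paper's brief proof: define $\L_\M$ and $\R_\M$ pointwise, build the cloven lifts and factorizations componentwise using strictness of the transition functors, and verify the limit property. You are also right to single out the reverse inclusions (that a morphism with the left lifting property against $\R_\M$ must lie in $\L_\M$, and dually) as the nontrivial step; the paper handles this in a single clause without elaboration.

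However, your ``transport'' strategy for this step has a genuine gap. Lifting an arbitrary testing lifting problem from $D(j)$ to $\M$ requires extending an $\R_{D(j)}$-map $g$ and the accompanying square to a compatible family over all of $J$; if $J$ contains a morphism $a\colon j'\to j$, this demands a preimage of $g$ under the strict functor $D(a)$, which need not exist (think of an equalizer of two strict functors: an $\R$-map in the ambient category need not lie in the equalizer subcategory). For $\wfscl$ there is a cleaner argument that avoids transport altogether. Once the pointwise factorizations are in place, the standard retract trick applies: if $f$ has the left lifting property against $\R_\M$, factor $f=p\, i$ with $i\in\L_\M$ and $p\in\R_\M$; a lift against $p$ exhibits $f$ as a retract of $i$, and since each $\L_{D(j)}$ (being defined by a lifting property) is closed under retracts and functors preserve retracts, the pointwise $\L_\M$ is retract-closed, so $f\in\L_\M$. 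The $\prewfscl$ case, lacking factorizations, is not resolved by either your transport idea or this retract argument.
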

\begin{proof}
  Define a map in $\lim_c \M_c$ to be in \L or \R if it becomes so in each $\M_c$.
  Lifts and factorizations can be defined in each $\M_c$, fitting together because the functors are strict.
  Similarly, \L and \R determine each other because they do in each $\M_c$.
\end{proof}

We can also take limits of functors between diagrams of cloven wfs with strict maps.
Precisely, consider the following categories:
\begin{itemize}
\item \wfsmorcl and $\prewfsmorcl$, whose objects are triples $(\M,\N,F)$ where \M and \N have cloven wfs and pre-wfs, respectively, and $F:\M\to\N$ preserves \L-maps (but is not strict).
  Its morphisms are commutative squares in which the functors $\M\to\M'$ and $\N\to\N'$ are strict.
\item \wfsbimorcl and \prewfsbimorcl, whose objects are quadruples $(\V,\M,\N,S)$ where \V, \M, and \N have cloven wfs or pre-wfs, \N has chosen pushouts, and $S:\V\times \M\to\N$ is a left wfs-bimorphism.
  Their morphisms consist of strict functors $\V\to\V'$, $\M\to\M'$, and $\N\to\N'$ making the obvious square commute, and such that $\N\to\N'$ preserves the chosen pushouts on the nose.
\end{itemize}
We have forgetful functors
\begin{alignat}{2}
  \wfsmorcl &\to \mathrm{Func} &\qquad \wfsbimorcl &\to \mathrm{BiFunc}\\
  \prewfsmorcl &\to \mathrm{Func} &\qquad \prewfsbimorcl &\to \mathrm{BiFunc}
\end{alignat}
where $\mathrm{Func}$ is the category whose objects are functors and whose morphisms are commutative squares of functors, while $\mathrm{BiFunc}$ is the similar category whose objects are bifunctors $S:\V\times \M\to\N$.

\begin{thm}\label{thm:clovenlimmor}
  The above forgetful functors create limits.\qed
\end{thm}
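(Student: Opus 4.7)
The plan is to reduce the theorem to \cref{thm:clovenlim} together with the observation that chosen pushouts (like chosen lifts and factorizations) can be constructed levelwise in a limit whenever the transition functors preserve them strictly. I will treat the morphism and bimorphism cases in parallel.

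First, given a diagram $c \mapsto (\M_c, \N_c, F_c)$ in $\wfsmorcl$ (or $\prewfsmorcl$), take its image in $\mathrm{Func}$ and form the limit $F : \M \to \N$, where $\M = \lim \M_c$ and $\N = \lim \N_c$. By \cref{thm:clovenlim}, $\M$ and $\N$ inherit cloven (pre-)wfs whose L- and R-maps are precisely those which are so in every $\M_c$ or $\N_c$. Since each $F_c$ sends L-maps to L-maps and L-maps in $\M$ and $\N$ are detected levelwise, $F$ preserves L-maps, so $(\M,\N,F)$ lies in $\wfsmorcl$. A cone over the diagram in $\wfsmorcl$ with apex $(\M',\N',F')$ is exactly a compatible family of strict functors $\M' \to \M_c$ and $\N' \to \N_c$ commuting with the $F_c$; by \cref{thm:clovenlim} these factor uniquely through strict functors $\M' \to \M$ and $\N' \to \N$, and commutativity of the resulting square with $F$ and $F'$ is immediate from the universal property in $\mathrm{Func}$. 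Thus $(\M,\N,F)$ is the limit, and it is created from the corresponding limit in $\mathrm{Func}$.

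For the bimorphism categories, the additional items to check are that $\N := \lim \N_c$ inherits chosen pushouts and that the limit bifunctor $S : \V \times \M \to \N$ (with $\V = \lim \V_c$ and $\M = \lim \M_c$) is a left (pre-)wfs-bimorphism. Because every transition functor $\N_c \to \N_{c'}$ preserves chosen pushouts on the nose, a span in $\N$ has a canonical levelwise candidate pushout, and the universal property of the limit cone $\N \to \N_c$ ensures this really is a pushout in $\N$; we take it as the chosen one. The pushout-product map in $\N$ built from L-maps $V \to V'$ in $\V$ and $M \to M'$ in $\M$ is then, by construction, the levelwise assembly of the corresponding pushout-product maps in each $\N_c$, each of which is an L-map since $S_c$ is a left wfs-bimorphism; hence so is the total map in $\N$.

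The main (minor) subtlety is confirming that the limit of a diagram in $\mathrm{BiFunc}$ behaves as expected, so that $S$ is uniquely determined by the levelwise maps $\V \times \M \to \N_c$; this is straightforward from the universal property of $\N = \lim \N_c$ applied pointwise on $\V \times \M$. The creation claim in both settings then follows because every piece of structure participating in the limiting object --- the (pre-)wfs, the chosen lifts, the chosen factorizations, and, in the bimorphism case, the chosen pushouts --- is forced levelwise, so any lift of the limiting cone from $\mathrm{Func}$ or $\mathrm{BiFunc}$ to $\wfsmorcl$, $\prewfsmorcl$, $\wfsbimorcl$, or $\prewfsbimorcl$ must coincide with the one just constructed.
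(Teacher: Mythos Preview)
Your proposal is correct and follows exactly the approach the paper has in mind: the paper's own ``proof'' is simply a \qed, indicating that the argument is a routine extension of \cref{thm:clovenlim}, which is precisely what you carry out. Your treatment of the bimorphism case (levelwise chosen pushouts, hence levelwise pushout-product maps) is the only point requiring any care, and you handle it correctly.
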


We will primarily be interested in ordinal-indexed limits.
If $\be$ is an ordinal, regarded as a category in the usual way, by a \textbf{continuous $\be$-tower} we mean a continuous functor $\M:\be\op \to \nCat$.
Thus, if $\de<\be$ is a limit ordinal, we have $\M_\de = \nlim_{\de'<\de} \M_{\de'}$.

The dual notion is a \textbf{cocontinuous \be-cotower}.
If $\C:\be\to\nCat$ is such a cotower and \M is a fixed category, the universal property of colimits says that
\begin{equation}
  \M^{\colim_{\de<\be} \C_\de} \cong \lim_{\de<\be} \M^{\C_\de}.\label{eq:cotower}
\end{equation}




\section{Bigluing of model categories}
\label{sec:bigluing}

Let \M and \N be categories, $F,G:\M\to\N$ functors, and $\alpha:F\to G$ a natural transformation.

\begin{defn}
  The \textbf{bigluing} category of $\alpha$, denoted $\biglue{\alpha}$, is defined as follows.
  \begin{itemize}
  \item An object of $\biglue{\al}$ consists of an object $M\in\M$, an object $N\in\N$, and a factorization of $\al_M:FM\to GM$ through $N$, i.e.\ morphisms $\phi:FM \to N$ and $\gm:N\to GM$ whose composite is $\al_M$.
  \item A morphism of $\biglue{\al}$ consists of morphisms $\mu:M\to M'$ and $\nu:N\to N'$ such that the following two squares commute.
    \begin{equation}
      \vcenter{\xymatrix@-.5pc{
          FM\ar[r]^{\phi}\ar[d]_{F\mu} &
          N\ar[r]^{\gm}\ar[d]^{\nu} &
          GM\ar[d]^{G\mu}\\
          FM'\ar[r]_{\phi'} &
          N'\ar[r]_{\gm'} &
          GM'
        }}
    \end{equation}
  \end{itemize}
\end{defn}

The \textbf{gluing construction} is the special case when $F$ is constant at an initial object.
In this case, $\al$ is uniquely determined, so we speak of \emph{gluing along the functor $G$} and write $\glue{G}$ instead of $\biglue{\emptyset\to G}$.
The objects of the resulting category consist of $M\in \M$ and $N\in \N$ together with a map $N\to GM$.
Dually, if $G$ is constant at a terminal object, we may speak of \emph{cogluing along $F$}; this explains the terminology ``bigluing''.

Note that $\biglue{\alpha}\op = \biglue{\al\op}$, where $\al\op :G\op \to F\op$ is the induced transformation between functors $\M\op\to\N\op$.
There are forgetful functors $\biglue{\al} \to \M$ and $\biglue{\al}\to \N$.

\begin{lem}\label{thm:biglue-adjoint}
  The forgetful functor $\biglue{\al} \to \M$ has a left adjoint with identity counit.
  If $G$ has a left adjoint and \N has binary coproducts, then the forgetful functor $\biglue{\al}\to \N$ also has a left adjoint.
\end{lem}
\begin{proof}
  The first left adjoint takes $M$ to $(M,FM,\id,\al_M)$.

  Let $K$ be the left adjoint of $G$; we now construct a left adjoint to $\biglue{\al}\to\N$.
  Given $B\in\N$, consider the object
  \[\Ktil B = (KB,\,FKB\sqcup B,\,i_1,\,[\al_{KB},\eta])\]
  of $\biglue{\al}$, where $i_1$ is the coproduct injection $FKB \to FKB\sqcup B$, and $[\al_{KB},\eta]:FKB\sqcup B \to GKB$ has components $\al_{KB}:FKB \to GKB$ and the unit $\eta:B\to GKB$ of the adjunction $K\dashv G$.
  A map from $\Ktil B$ to $(M,N,\phi,\gm)$ consists of a map $\mu:KB \to M$ and $\nu:FKB\sqcup B \to N$ such that the following two squares commute.
  \begin{equation}
  \vcenter{\xymatrix@C=3pc{
      FKB\ar[r]^-{i_1}\ar[d]_{F\mu} &
      FKB\sqcup B\ar[r]^-{[\al_{KB},\eta]}\ar[d]^{\nu = [\nu_1,\nu_2]} &
      GKB\ar[d]^{G\mu}\\
      FM\ar[r]_\phi &
      N\ar[r]_\gm &
      GM
      }}
  \end{equation}
  The first square just ensures that the first component $\nu_1$ of $\nu$ is the composite $FKB \to FM \to N$.
  Given this, the second square consists of naturality for $\al$ (which is automatically true) together with the assertion that the adjunct $B \to GM$ of $\mu$ under the adjunction $K\dashv G$ is the composite $B \xto{\nu_2} N \xto{\gm} GM$.
  Thus, a map $\Ktil B \to (M,N,\phi,\gm)$ is uniquely determined by a map $\nu_2:B\to N$; so $\Ktil$ is left adjoint to $\biglue{\al} \to \N$.
\end{proof}

Dually, we have:

\begin{lem}
  The forgetful functor $\biglue{\al} \to \M$ has a right adjoint with identity unit.
  If $F$ has a right adjoint and \N has binary products, then the forgetful functor $\biglue{\al}\to \N$ also has a right adjoint.\qed
\end{lem}

We now observe:

\begin{lem}
  If \M and \N are cocomplete and $F$ is cocontinuous, then $\biglue{\al}$ is cocomplete.
  Dually, if \M and \N are complete and $G$ is continuous, then $\biglue{\al}$ is complete.

  In either case, if \M and \N have chosen limits or colimits, we can choose those in $\biglue{\al}$ to be preserved strictly by both forgetful functors.\qed
\end{lem}

Thus, it at least makes sense to ask whether $\biglue{\al}$ inherits a model structure from \M and \N.
We expect to need some further conditions to make this true.

Suppose first that $\M$ and $\N$ are equipped with wfs.
In the following theorem we see the fundamental Reedy-style constructions.

\begin{thm}\label{thm:wfs}
  If \M and \N each have a wfs and \N has pushouts and pullbacks, then $\biglue{\al}$ inherits a wfs defined as follows.
  \begin{itemize}
  \item The \L-maps are those where $M\to M'$ and the induced map $FM' \sqcup_{FM} N \to N'$ are both in \L.
  \item The \R-maps are those where $M\to M'$ and the induced map $N \to GM \times_{GM'} N'$ are both in \R.
  \end{itemize}
  If the wfs on \M and \N are cloven, then the wfs on $\biglue{\al}$ is canonically cloven and the forgetful functor $\biglue{\al} \to \M$ is strict.
\end{thm}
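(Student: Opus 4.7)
The plan is to verify the three clauses of \cref{defn:wfs} for the given classes $(\L, \R)$ on $\biglue{\al}$, using the wfs on $\M$ and on $\N$ componentwise, with the pushout $FM' \sqcup_{FM} N$ and pullback $GM \times_{GM'} N'$ acting as the ``correcting terms'' in the spirit of latching and matching objects. The factorization clause is the heart of the argument; the lifting properties then follow from elementary diagram-chasing.

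First I would establish the factorization property. Given a morphism $(\mu,\nu):(M,N,\phi,\gm)\to(M',N',\phi',\gm')$, factor $\mu$ in $\M$ as $M \xto{\ell} M'' \xto{r} M'$ with $\ell \in \L$ and $r\in\R$. Next, form the pushout $P := FM'' \sqcup_{FM} N$ and the pullback $Q := GM'' \times_{GM'} N'$; there is a canonical induced map $P \to Q$ assembled from $\al_{M''}:FM''\to GM''$, the given $\nu:N\to N'$, $\phi:FM\to N$ composed with $FM\to FM''$, and the composite $N \to GM \to GM''$ (commutativity follows from naturality of $\al$ and from the original morphism squares). Factor $P \to Q$ in $\N$ as $P \xto{\ell'} N'' \xto{r'} Q$ using the wfs on $\N$. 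Define $\phi'':FM''\to N''$ as $FM'' \to P \xto{\ell'} N''$ and $\gm'':N''\to GM''$ as $N'' \xto{r'} Q \to GM''$; their composite is $\al_{M''}$ by construction of $P$ and $Q$. The two resulting morphisms $(\ell,\ell'|_N):(M,N,\phi,\gm)\to (M'',N'',\phi'',\gm'')$ and $(r,r'|_{N'}):(M'',N'',\phi'',\gm'')\to(M',N',\phi',\gm')$ (restricting $\ell'$ and $r'$ along the relevant pushout/pullback legs) fit in $\biglue{\al}$, and by design the first is an $\L$-map and the second an $\R$-map.

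Next, for the lifting properties, suppose we have a square between an $\L$-map $(\mu_0,\nu_0)$ and an $\R$-map $(\mu_1,\nu_1)$. First lift on the $\M$-side: $\mu_0 \in \L_\M$ and $\mu_1\in\R_\M$, so the wfs on $\M$ supplies a lift in $\M$. With this lift fixed, the induced map $FM_0' \sqcup_{FM_0} N_0 \to N_1$ is well-defined and forms, together with the outer square on $\N$, a square from the $\L_\N$-map $FM_0' \sqcup_{FM_0} N_0 \to N_0'$ to the $\R_\N$-map $N_1 \to GM_1 \times_{GM_1'} N_1'$; commutativity follows by chasing the definitions through the pushout and pullback. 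The wfs on $\N$ then supplies a lift $N_0' \to N_1$, and one checks that together with the $\M$-lift it forms a morphism in $\biglue{\al}$. This verifies half of each of \cref{defn:wfs}\ref{item:wfs1} and~\ref{item:wfs2}; the other halves (characterization by the lifting property) follow by the standard retract argument applied inside $\biglue{\al}$ using the factorization just constructed.

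For strictness and clovenness, note that each step of the factorization and of the lifting uses only: chosen factorizations/lifts from the cloven wfs on $\M$ and on $\N$, plus chosen pushouts and pullbacks in $\N$. Thus everything is canonical, and the forgetful functor $\biglue{\al}\to\M$ preserves $\L$, $\R$, factorizations, and lifts on the nose. I expect the main bookkeeping obstacle to be verifying that the induced map $P\to Q$ in the factorization step is genuinely well-defined — i.e.\ checking the several commutations involving the pushout coprojections against the pullback projections — but these all reduce to naturality of $\al$ together with the defining squares of the given morphism, so no surprises should arise.
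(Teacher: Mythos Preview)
Your proposal is correct and follows essentially the same approach as the paper: factor first in $\M$, then use the induced pushout-to-pullback map in $\N$ to complete the factorization; lift first in $\M$, then in the induced $\N$-square; and deduce the lifting-property characterizations from factorization plus closure under retracts. The paper's proof is slightly terser (it simply notes retract-closure and then gives the factorization and lifting constructions), but the content is the same.
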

\begin{proof}
  Since both classes of maps are closed under retracts, it suffices to show the factorization and lifting properties.
  The construction should look familiar to any reader who has seen Reedy model structures before.
  We factor a morphism $(\mu,\nu):(M,N,\phi,\gm) \to (M',N',\phi',\gm')$ by first factoring $\mu$ as $M \xto{\L} M'' \xto{\R} M'$ in \M, then performing the dotted factorization below in \N:
  \begin{equation}
    \vcenter{\xymatrix{
        FM\ar[r]\ar[d] &
        N\ar[d] \ar[rrr] &&&
        GM\ar[d]\\
        FM''\ar[r] \ar[d] &
        FM'' \sqcup_{FM} N \ar@{.>}[r]&
        N'' \ar@{.>}[r] &
        GM'' \times_{GM'} N' \ar[d] \ar[r] &
        GM'' \ar[d]\\
        FM' \ar[rrr] &&&
        N' \ar[r] &
        GM'
      }}
  \end{equation}
  For lifting, we suppose given commutative squares
  \begin{equation}\label{eq:wfs-liftsq}
    \vcenter{\xymatrix{
        M_1\ar[r]\ar[d] &
        M_3\ar[d]\\
        M_2\ar[r] &
        M_4
      }}\qquad
    \vcenter{\xymatrix{
        N_1\ar[r]\ar[d] &
        N_3\ar[d]\\
        N_2\ar[r] &
        N_4
      }}
  \end{equation}
  commuting with the $\phi$'s and $\gm$'s and such that the maps
  \[ M_1 \to M_2 \aand FM_2 \sqcup_{FM_1} N_1 \to N_2 \]
  are in \L and the maps
  \[ M_3 \to M_4 \aand N_3 \to GM_3 \times_{GM_4} N_4 \]
  are in \R.
  We lift in the first square in~\eqref{eq:wfs-liftsq} to obtain a map $M_2 \to M_3$, then lift in the following square:
  \begin{equation}
    \vcenter{\xymatrix{
        FM_2 \sqcup_{FM_1} N_1 \ar[r]\ar[d] &
        N_3\ar[d]\\
        N_2 \ar[r] &
        GM_3 \times_{GM_4} N_4
      }}
  \end{equation}
  whose top and bottom maps are induced by the given ones and the lift $M_2 \to M_3$.
  
  The final claim is clear from the above explicit constructions.
\end{proof}

This is also true for pre-wfs if our functors have adjoints.

\begin{thm}\label{thm:prewfs}
  If \M and \N each have a pre-wfs, \N has finite limits and colimits, and $F$ has a right adjoint and $G$ has a left adjoint, then $\biglue{\al}$ inherits a pre-wfs defined as in \cref{thm:wfs}.
\end{thm}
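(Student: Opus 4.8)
The plan is to check directly the two defining properties of a pre-wfs, using the symmetry $\biglue{\al}\op = \biglue{\al\op}$ (with $\al\op : G\op\to F\op$) recorded above. This symmetry interchanges the two classes of \cref{thm:wfs} and leaves the hypotheses invariant, so I would first reduce to two claims: (a) every $\L$-map of $\biglue{\al}$ has the left lifting property against every $\R$-map; and (b) every map of $\biglue{\al}$ with the left lifting property against all $\R$-maps is itself an $\L$-map. Granting these: (a) yields the inclusions of $\L$ into the left-lifters against $\R$ and of $\R$ into the right-lifters against $\L$; (b) upgrades the first to an equality; and applying (b) to $\al\op$ upgrades the second to an equality — which are exactly the two pre-wfs axioms (no factorizations are involved, and closure under retracts is automatic for lifting-defined classes). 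For (a) I would reuse the lifting half of the proof of \cref{thm:wfs} unchanged: lift first in $\M$, then in the square in $\N$ assembled from $FM_2\sqcup_{FM_1}N_1$ and $GM_3\times_{GM_4}N_4$. That argument invokes only the lifting properties of the pre-wfs on $\M$ and $\N$ and the finite limits and colimits of $\N$, never a factorization, so it transfers verbatim.

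For (b), suppose $(\mu,\nu) : (M,N,\phi,\gm)\to(M',N',\phi',\gm')$ lifts against all $\R$-maps. By the definition of the two classes I must show $\mu\in\L_\M$ and that the induced map $g : FM'\sqcup_{FM}N\to N'$ lies in $\L_\N$. The first is formal: by the dual of \cref{thm:biglue-adjoint} the forgetful functor $\biglue{\al}\to\M$ has a right adjoint $R$ with identity unit, namely $RX = (X,GX,\al_X,\id)$; for $r : X\to X'$ in $\R_\M$ the map $Rr = (r,Gr)$ is an $\R$-map of $\biglue{\al}$, since its $\M$-component is $r\in\R_\M$ and the map $GX\to GX\times_{GX'}GX'$ it induces is an isomorphism. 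Hence $(\mu,\nu)$ lifts against every $Rr$, and transposing across $R$ shows $\mu$ lifts against every $r\in\R_\M$, i.e.\ $\mu\in\L_\M$.

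The real content is $g\in\L_\N$, and here is the argument I would use. Let $s : P\to Q$ be any map in $\R_\N$, and consider a lifting problem for $g$ against $s$: maps $p : FM'\sqcup_{FM}N\to P$ and $q : N'\to Q$ with $sp = qg$. Pull $s$ back along $q$ to get $\bar s : \bar P := P\times_Q N'\to N'$ — still in $\R_\N$, since $\R_\N$ is closed under pullback — and set $t := \langle p,g\rangle : FM'\sqcup_{FM}N\to\bar P$, so $\bar s t = g$. Write $i_1 : FM'\to FM'\sqcup_{FM}N$ and $i_2 : N\to FM'\sqcup_{FM}N$ for the coprojections. One checks that $Z := (M',\,\bar P,\,t i_1,\,\gm'\bar s)$ is an object of $\biglue{\al}$ (the composite $\gm'\bar s t i_1 = \gm'\phi' = \al_{M'}$) and that $(\id_{M'},\bar s) : Z\to(M',N',\phi',\gm')$ is an $\R$-map of $\biglue{\al}$: its $\M$-component is an identity and the map it induces into $GM'\times_{GM'}N'\iso N'$ is $\bar s\in\R_\N$. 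Now $u := (\mu, t i_2) : (M,N,\phi,\gm)\to Z$ is a morphism, and together with the identity of $(M',N',\phi',\gm')$ it forms a commutative square from $(\mu,\nu)$ to $(\id_{M'},\bar s)$ (the one nonformal equality being $\bar s t i_2 = g i_2 = \nu$). By hypothesis this square has a lift $w : (M',N',\phi',\gm')\to Z$; its $\N$-component $w_2 : N'\to\bar P$ then satisfies $\bar s w_2 = \id_{N'}$ and — by the pushout property, since $w_2\phi' = t i_1$ and $w_2\nu = t i_2$ — also $w_2 g = t$. Composing $w_2$ with the projection $\bar P\to P$ therefore solves the original lifting problem of $g$ against $s$. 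So $g$ lifts against every $s\in\R_\N$, hence $g\in\L_\N$, and $(\mu,\nu)$ is an $\L$-map, as required.

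The step I expect to be the obstacle is precisely this construction of enough $\R$-maps to detect $g\in\L_\N$. The obvious device — pushing $\R_\N$-maps through one of the adjoints of $\biglue{\al}\to\M$ or $\biglue{\al}\to\N$ from \cref{thm:biglue-adjoint} — fails on the $\N$-side, because the $\M$-component of the image need not land in $\R_\M$; pulling $s$ back along the bottom map $q$ first, so that its codomain becomes $N'$, is the maneuver that repairs this, letting the test $\R$-map be built directly over the already-available object $M'$ with identity $\M$-component. Everything else reduces to checking commutativity of squares in $\biglue{\al}$, which is routine. (As a side remark, this argument uses only functoriality of $F$ and $G$ together with the finite bicompleteness of $\N$; the hypotheses that $F$ has a right adjoint and $G$ a left adjoint, while certainly true in all the intended applications, would be what one uses in a more structural proof phrased via the adjoints of $\biglue{\al}\to\N$.)
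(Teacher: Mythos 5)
Your proof is correct, but it takes a genuinely different route from the paper's. Both arguments share the same skeleton: the lifting half of \cref{thm:wfs} gives $\L\subseteq{}^{\boxslash}\R$ and $\R\subseteq\L^{\boxslash}$ without using factorizations, one nontrivial inclusion is proved directly, and the other follows from the symmetry $\biglue{\al}\op=\biglue{\al\op}$. The difference is in the nontrivial inclusion. The paper proves that a map with the right lifting property against $\L$ is an $\R$-map, and it does so by exhibiting a family of test $\L$-maps built from the left adjoint $K$ of $G$: for each $\L$-map $f\maps A\to B$ in $\N$ it forms the objects $\Khat f=(KB,\,FKB\sqcup A,\,i_1,\,[\al_{KB},\eta f])$ and $\Ktil B$, shows the comparison $\Khat f\to\Ktil B$ is an $\L$-map, and observes that lifting against it is exactly lifting of $f$ against $N\to N'\times_{GM'}GM$; this is precisely where the hypotheses that $G$ has a left adjoint (and dually $F$ a right adjoint) are consumed. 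You instead prove that a map with the left lifting property against $\R$ is an $\L$-map, by manufacturing, for each test map $s\in\R_{\N}$ and each lifting problem against the pushout-corner map $g\maps FM'\sqcup_{FM}N\to N'$, a bespoke $\R$-map of $\biglue{\al}$: pull $s$ back along the bottom leg so its codomain becomes $N'$, and glue the result over $M'$ with identity $\M$-component. The lift supplied by the hypothesis then solves the original problem via the pushout's universal property. I checked the details (the object $Z$, the morphisms $(\id_{M'},\bar s)$ and $(\mu,t i_2)$, the forced identities $w_1=\id$, $\bar s w_2=\id$, $w_2 g=t$) and they all hold. A noteworthy by-product, which you point out yourself, is that your argument uses only the finite limits and colimits of $\N$ and functoriality of $F$ and $G$, so it establishes the conclusion without the adjoint hypotheses --- a mild strengthening of the stated theorem; the trade-off is that it detects the lifting class pointwise, one lifting problem at a time, whereas the paper's adjoint-built family $\Khat f\to\Ktil B$ identifies a uniform collection of generating-type $\L$-maps, which is structurally informative in its own right.
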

\begin{proof}
  It remains to show that the classes \L and \R in $\biglue{\al}$ determine each other by lifting properties.
  (In the case of wfs, this follows from the factorization property and closure under retracts.)

  Suppose that a map $(M,N,\phi,\gm) \to (M',N',\phi',\gm')$ in $\biglue{\al}$ has the right lifting property with respect to \L.
  Since the left adjoint of the forgetful functor $\biglue{\al} \to \M$ preserves \L-maps by inspection, the underlying map $M\to M'$ must be in \R.

  Now let $K$ be the left adjoint of $G$ and $\Ktil$ the resulting left adjoint to $\biglue{\al} \to \N$ as in \cref{thm:biglue-adjoint}.
  Let $f:A\to B$ be an \L-map in \N, and consider the object 
  \[\Khat f = (KB,\,FKB\sqcup A,\,i_1,\,[\al_{KB},\eta f])\]
  of $\biglue{\al}$.
  A similar analysis to the proof of \cref{thm:biglue-adjoint} reveals that a map from $\Khat f$ to $(M,N,\phi,\gm)$ consists of a commutative square
  \begin{equation}\label{eq:khat-sq}
  \vcenter{\xymatrix{
      A\ar[r]\ar[d]_f &
      N\ar[d]^{\gm}\\
      B\ar[r] &
      GM.
      }}
  \end{equation}
  We have an induced map $\Khat f \to \Ktil B$, precomposition with which induces a commutative square~\eqref{eq:khat-sq} from knowing its diagonal.
  It follows that to give a commutative square as on the left in $\biglue{\al}$:
  \begin{equation}
    \vcenter{\xymatrix{
        \Khat f\ar[r]\ar[d] &
        (M,N,\phi,\gm)\ar[d]\\
        \Ktil B\ar[r] &
        (M',N',\phi',\gm')
      }}
    \qquad
    \vcenter{\xymatrix{
        A\ar[r]\ar[d] &
        N\ar[d]\\
        B\ar[r] &
        N' \times_{GM'} GM
      }}  
  \end{equation}
  is equivalent to giving a commutative square on the right in \N, and diagonal liftings likewise correspond.

  Finally, this map $\Khat f \to \Ktil B$ is in \L, since $\id_{KB}:KB \to KB$ is an \L-map in \M and $FKB\sqcup A \to FKB\sqcup B$ is an \L-map in \N (being the coproduct of the two \L-maps $\id_{FKB}$ and $f:A\to B$).
  Thus, if $(M,N,\phi,\gm) \to (M',N',\phi',\gm')$ has the right lifting property with respect to \L, then it must have the right lifting property with respect to $\Khat f \to \Ktil B$, and hence $N \to N'\times_{GM'} GM$ must be an \R-map in \N, as desired.
\end{proof}

In particular, if \M and \N are model categories, then both of their constituent wfs lift to $\biglue{\al}$.
To fit together these wfs into a model structure on $\biglue{\al}$, however, we need an additional hypothesis, for which purpose we recall the following definition from~\cite{bb:htapm}.



\begin{defn}\label{defn:univwe}
  A morphism $f$ in a model category is a \textbf{couniversal weak equivalence} if every pushout of $f$ is a weak equivalence.
  Dually, $f$ is a \textbf{universal weak equivalence} if every pullback of it is a weak equivalence.
\end{defn}

Every acyclic cofibration is a couniversal weak equivalence.
More generally, if there happens to be some other model structure on the same category with the same or smaller class of weak equivalences, then any acyclic cofibration in that other model structure will also be a couniversal weak equivalence.
In addition, we have the following observation:

\begin{lem}\label{thm:objwise-cwe}
  Let \M be a model category and \C any small category, and suppose $\M^\C$ has any model structure in which the weak equivalences are objectwise.
  Then any objectwise acyclic cofibration in $\M^\C$ is a couniversal weak equivalence in this model structure, and similarly any objectwise acyclic fibration is a universal weak equivalence.
\end{lem}
\begin{proof}
  This follows immediately from the fact that limits, colimits, and weak equivalences in $\M^\C$ are objectwise.
\end{proof}

Note that \cref{thm:objwise-cwe} does not require the existence of a projective or injective model structure on $\M^\C$.

\begin{thm}\label{thm:model}
  Suppose \M and \N are model categories and we have $F,G:\M\to\N$ and $\al:F\to G$ such that
  \begin{itemize}
  \item $F$ is cocontinuous and takes acyclic cofibrations to couniversal weak equivalences.
  \item $G$ is continuous and takes acyclic fibrations to universal weak equivalences.
  \end{itemize}
  Then $\biglue{\al}$ is a model category, and the forgetful functors to \M and \N preserve cofibrations, fibrations, and weak equivalences.
\end{thm}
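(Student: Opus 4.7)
The plan is to apply \cref{thm:wfs} twice, transferring both wfs of $\M$ and $\N$ to $\biglue{\al}$: once for the (cofibration, acyclic fibration) wfs, yielding $(\L_1, \R_1)$; and once for the (acyclic cofibration, fibration) wfs, yielding $(\L_2, \R_2)$. The cocontinuity/continuity hypotheses on $F$ and $G$ guarantee that $\biglue{\al}$ is bicomplete so that the requisite pushouts and pullbacks exist. I would define a map $(\mu,\nu)$ in $\biglue{\al}$ to be a weak equivalence iff both $\mu$ and $\nu$ are weak equivalences in their respective categories; the 2-out-of-3 property is then immediate, and the forgetful functors manifestly preserve weak equivalences. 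The heart of the proof is then to verify $\L_2 = \L_1 \cap \W$ and dually $\R_1 = \R_2 \cap \W$, which together with the two wfs assemble into a model structure.

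For $\L_2 \subseteq \L_1 \cap \W$, the inclusion $\L_2 \subseteq \L_1$ is immediate from the definitions in \cref{thm:wfs}, since acyclic cofibrations are cofibrations. Given $(\mu,\nu) \in \L_2$, the component $\mu$ is a weak equivalence as an acyclic cofibration, and $\nu$ factors as $N \to FM' \sqcup_{FM} N \to N'$. The hypothesis that $F$ sends acyclic cofibrations to couniversal weak equivalences makes the first factor (a pushout of $F\mu$) a weak equivalence, while the second is an acyclic cofibration in $\N$ by the definition of $\L_2$; so $\nu$ is a weak equivalence by composition.

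For the converse $\L_1 \cap \W \subseteq \L_2$, I would use 2-out-of-3 in place of composition: given $(\mu,\nu) \in \L_1 \cap \W$, the map $\mu$ is a cofibration and a weak equivalence in $\M$, hence an acyclic cofibration, so $F\mu$ is a couniversal weak equivalence and its pushout $N \to FM' \sqcup_{FM} N$ is a weak equivalence. Combined with $\nu$ being a weak equivalence, 2-out-of-3 forces $FM' \sqcup_{FM} N \to N'$ to be a weak equivalence, and since it is already an $\L_1$-cofibration it is an acyclic cofibration. The argument for $\R_1 = \R_2 \cap \W$ is dual, using that $G$ sends acyclic fibrations to universal weak equivalences to propagate the weak equivalence through the pullback $GM \times_{GM'} N' \to N'$.

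The main obstacle is exactly this propagation of weak equivalences through the pushout step defining $\L_1$-maps (and the pullback defining $\R_2$-maps); the couniversal/universal weak equivalence hypotheses are precisely calibrated for this, and without them one has no way to bridge between the two wfs. Everything else reduces to routine manipulation of the already-established wfs from \cref{thm:wfs} and 2-out-of-3.
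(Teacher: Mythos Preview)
Your proposal is correct and essentially identical to the paper's proof: both apply \cref{thm:wfs} twice to obtain the two wfs, define weak equivalences componentwise, and then verify that the acyclic cofibrations coincide with cofibrations that are weak equivalences (and dually) by factoring $\nu$ through the pushout $N \to FM' \sqcup_{FM} N \to N'$ and using the couniversal weak equivalence hypothesis on $F$ together with 2-out-of-3. The paper's presentation is slightly terser but follows the same logic in the same order.
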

\begin{proof}
  By \cref{thm:wfs}, $\biglue{\al}$ inherits two wfs which we call (cofibration, acyclic fibration) and (acyclic cofibration, fibration).
  (For the moment, we treat ``acyclic cofibration'' and ``acyclic fibration'' as atomic names for classes of morphisms in $\biglue{\al}$.)
  We define a map in $\biglue{\al}$ to be a weak equivalence just when both $M\to M'$ and $N\to N'$ are.

  Now if a cofibration in $\biglue{\al}$ is also a weak equivalence, then its underlying map $M\to M'$ in \M is an acyclic cofibration.
  Hence $FM \to FM'$ is a couniversal weak equivalence, thus its pushout $N \to FM' \sqcup_{FM} N$ is a weak equivalence.
  Since $N\to N'$ is a weak equivalence, by the 2-out-of-3 property, $FM' \sqcup_{FM} N \to N'$ is also a weak equivalence, and hence an acyclic cofibration.
  Thus, our original map was an acyclic cofibration.

  The same argument with the 2-out-of-3 property applied in the other direction shows that every acyclic cofibration is a weak equivalence.
  Thus, the acyclic cofibrations in $\biglue{\al}$ are precisely the cofibrations that are also weak equivalences, and dually for the acyclic fibrations.
  Since the weak equivalences obviously satisfy the 2-out-of-3 property, we have a model structure.
\end{proof}

Note that the hypotheses are satisfied if $F$ is left Quillen and $G$ is right Quillen.
The more general hypothesis will be useful in \cref{sec:auto}.



\begin{rmk}\label{rmk:grothendieck}
  Richard Garner has noted that the bigluing construction can be seen as an instance of the \emph{Grothendieck construction} for wfs described in~\cite{roig:bifibred,aes:mdl-smcat,hp:groth-model}.
  Namely, when $\N$ has pullbacks and pushouts, the forgetful functor $\biglue{\al} \to \M$ is a Grothendieck bifibration (i.e.\ both a fibration and an opfibration), whose fiber over $M\in \M$ is the category of factorizations of $\al_M$.
  If \M and \N have wfs, then the wfs of \N also induces a wfs on these fibers, and so the Grothendieck construction yields the biglued wfs on the total category $\biglue{\al}$.
\end{rmk}

\section{Inverse categories, with an introduction to collages}
\label{sec:inverse-categories}

As a warm-up before diving into Reedy categories, we start with \emph{inverse categories}.
These turn out to correspond to \emph{iterated gluing} in the same way that Reedy categories will correspond to iterated bigluing (and their duals, \emph{direct categories}, correspond to iterated cogluing).

As we will do later for Reedy categories, we will not assume the notion of inverse category \emph{a priori}.
Instead, in this section our goal will be to construct the diagram categories $\M^\C$, for some yet-to-be-determined class of categories \C, by iterated gluing.
Specifically, we will repeatedly glue some previously constructed category $\M^\C$ with $\M$ along a functor $G:\M^\C\to\M$.
By \cref{thm:model}, if we want the construction to preserve model structures, it is reasonable to take $G$ to be right Quillen.
A natural class of functors $\M^\C\to\M$, which we may hope will at least sometimes be right Quillen, is provided by \emph{weighted limits}.

Recall that if $X:\C\to\M$ and $W:\C\to\nSet$, the \textbf{$W$-weighted limit of $X$} is defined to be the functor cotensor product
\[ \wlimc W X = \eql\left( \prod_{c} X(c)^{W(c)} \toto \prod_{c,c'} X(c)^{\C(c',c) \times W(c')}\right). \]
If \M is complete, this defines a functor $\wlimc W \blank : \M^\C \to \M$, which has a left adjoint that sends $Y\in\M$ to the copower defined by $(Y\cpw W)(c) = Y \cpw W(c)$.
In particular, $\wlimc W \blank$ is continuous, so gluing along it will at least preserve completeness and cocompleteness.

We would now like to define an \emph{inverse category} to be a category \C such that for any complete category \M, the category $\M^\C$ can be obtained from \M by repeated gluing along weighted limit functors.
However, in order for that to be sensible, we need to know that if we glue along a weighted limit functor $\wlimc W \blank : \M^\C\to \M$, the resulting category is (at least sometimes) again a diagram category $\M^{\C'}$ for some other category $\C'$.
Fortunately, as we will now show, this is \emph{always} the case.

Recall that for categories \C and \D, a \textbf{profunctor} from $\C$ to $\D$, denoted $H:\C \hto \D$, is a functor $H:\D\op\times\C \to \nSet$.
Profunctors are the morphisms of a bicategory \cProf, where composition is the functor tensor product: the composite of $H:\C\hto \D$ and $K:\D\hto \E$ is defined by
\[ (K H)(e,c) = \ncoeq\left( \coprod_{d,d'} K(e,d) \times \D(d,d') \times H(d',c) \toto \coprod_d K(e,d) \times H(d,c) \right). \]

\begin{defn}
  The \textbf{collage} of a profunctor $H:\C\hto\D$, which we will write $\coll{H}$, is the category whose objects are the disjoint union of those of \C and \D, and whose hom-sets are
  \begin{alignat*}{2}
    \coll{H}(c,c') &= \C(c,c') &\qquad
    \coll{H}(d,c) &= H(d,c)\\
    \coll{H}(d,d') &= \D(d,d') &\qquad
    \coll{H}(c,d) &= \emptyset
  \end{alignat*}
  with composition and identities induced from those of \C and \D and the functoriality of $H$.
\end{defn}

In particular, we may consider a weight $W:\C\to\nSet$ to be a profunctor from \C to the terminal category \tc.
Its collage is then a category $\coll{W}$ containing \C as a full subcategory, with one additional object $\star$ added, and with additional morphisms $\coll{W}(\star,c) = W(c)$.

Of central importance for us is that the collage of any profunctor has a universal property in the bicategory of profunctors: it is a \emph{representable lax colimit}.
To explain this, recall that any functor $P:\C\to\D$ induces a \textbf{representable profunctor} $\rep P: \C \hto \D$, defined by 
\[ \rep P(d,c) = \D(d,P(c)). \]
This defines the action on morphisms of an identity-on-objects pseudofunctor $\rep{(\blank)} : \cCat \to \cProf$.
Moreover, this pseudofunctor is locally fully faithful, i.e.\ a natural transformation $P\to Q$ is uniquely determined by its image $\rep P \to \rep Q$.
Finally, every representable profunctor $\rep P$ has a right adjoint profunctor $\corep P$ defined by 
\[ \corep P(c,d) = \D(P(c),d). \]

\begin{defn}
  A \textbf{representable lax cocone with vertex \E} under a profunctor $H:\C\hto \D$ consists of functors $P:\D\to \E$ and $Q:\C\to \E$ and a natural transformation $\rep P H \to \rep Q$.
  A morphism of such cocones (with the same vertex) consists of transformations $P\to P'$ and $Q\to Q'$ such that an evident square commutes.
\end{defn}

By the adjointness $\rep P \adj \corep P$, a transformation $\rep P H \to \rep Q$ can equivalently be regarded as a transformation $H \to \corep P \rep Q$.
Moreover, the ``co-Yoneda lemma'' implies that $(\corep P \rep Q)(d,c) \cong \E(Pd,Qc)$.
So a representable lax cocone under $H:\C\hto \D$ consists of $P:\D\to \E$ and $Q:\C\to \E$ together with ``an interpretation of the elements of $H$ as morphisms from the image of $P$ to the image of $Q$.''
This makes the following fact fairly clear.

\begin{thm}\label{thm:collage}
  Any profunctor $H:\C\hto \D$ admits a representable lax cocone with vertex $\coll H$, where $P$ and $Q$ are the inclusions and the transformation $H(d,c) \to \coll H(Pd,Qc)$ is the identity.
  Moreover, this is the \textbf{universal} representable lax cocone, in the sense that the category of representable lax cocones under $H$ with vertex \E is naturally isomorphic to the category of functors $\coll H \to \E$.\qed
\end{thm}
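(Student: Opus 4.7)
My plan is to prove both parts by directly unfolding the definitions, with the co-Yoneda reformulation doing most of the work.

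First I would verify the existence of the canonical representable lax cocone. Taking $P:\D\to\coll H$ and $Q:\C\to\coll H$ to be the inclusions, I need a natural transformation $\rep P H \to \rep Q$, equivalently (via the adjunction $\rep P \adj \corep P$ and co-Yoneda) a transformation $H \to \corep P \rep Q$ in $\nSet^{\D\op\times\C}$. But by definition of the collage, $\coll H(Pd,Qc) = H(d,c)$, and $(\corep P \rep Q)(d,c) \cong \coll H(Pd,Qc)$; so the identity is the required transformation, and naturality holds by the very definition of how composition with morphisms of $\C$ and $\D$ acts on $H$-elements in $\coll H$.

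For the universal property, given a lax cocone $(P',Q',\tau)$ with vertex $\E$, I translate $\tau:\rep{P'}H\to \rep{Q'}$ into a natural transformation $\tau:H(d,c)\to \E(P'd,Q'c)$ via co-Yoneda. I then define $F:\coll H \to \E$ on objects by $F(d)=P'd$ and $F(c)=Q'c$, and on morphisms by
\[
  F|_{\C(c,c')} = Q',\qquad F|_{\D(d,d')} = P',\qquad F|_{H(d,c)} = \tau_{d,c}.
\]
Functoriality on $\C$- and $\D$-morphisms is automatic; the content is that for $h\in H(d,c)$, $g\in \C(c,c')$, and $f\in\D(d',d)$, the composite $g\circ h$ in $\coll H$ (which lives in $H(d,c')$ and is obtained from the functoriality of $H$ in its second variable) is sent to $Q'(g)\circ\tau_{d,c}(h)$, and similarly $h\circ f$ is sent to $\tau_{d',c}(h)\circ P'(f)$. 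These two equations are exactly the components of naturality of $\tau:H\to \corep{P'}\rep{Q'}$ in $\C$ and in $\D\op$ respectively, so $F$ is a functor.

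Conversely, any functor $F:\coll H\to \E$ determines a lax cocone by setting $P'=F\circ P$, $Q'=F\circ Q$, and letting $\tau$ be the composite $H(d,c)=\coll H(Pd,Qc)\xrightarrow{F}\E(FPd,FQc)=\E(P'd,Q'c)$. It is immediate from the construction that these two passages are mutually inverse bijections on objects, so the categories of cocones and of functors are isomorphic on objects. Finally, a morphism of lax cocones is a pair of natural transformations $P'\to P''$, $Q'\to Q''$ compatible with the $\tau$'s; since $\coll H$ is the disjoint union of $\C$ and $\D$ on objects, such a pair is the same as a natural transformation $F\to F'$ between the corresponding functors, and the compatibility square with $\tau$ translates exactly into naturality at morphisms of the form $h\in H(d,c)$. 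The main (minor) obstacle is just bookkeeping: making sure the direction of the adjunction and the component conventions in the co-Yoneda isomorphism line up so that the two naturality conditions really do correspond to composability on each side.
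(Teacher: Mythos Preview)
Your proposal is correct and is precisely the direct unfolding the paper has in mind. Note that the paper does not actually write out a proof: it observes that a representable lax cocone amounts to functors $P,Q$ together with ``an interpretation of the elements of $H$ as morphisms from the image of $P$ to the image of $Q$,'' declares the theorem ``fairly clear'' from this, and marks it with \qed. Your argument is exactly the spelled-out version of that observation, so there is nothing to compare beyond noting that you have filled in the bookkeeping the paper leaves to the reader.
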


In fact, $\coll H$ is also a lax colimit in the bicategory \cProf: the universal representable lax cocone is also the universal non-representable lax cocone.
For more on this universal property of collages, see~\cite{street:cauchy-enr,wood:proarrows-ii,gs:freecocomp}.
Our purpose in recalling it is to prove the following theorem.

\begin{thm}\label{thm:gluing=collage}
  For any category \C and weight $W:\C\to\nSet$, if we glue along the $W$-weighted limit functor $\wlimc W\blank : \M^\C \to \M$, we have $\glue{\wlimc W \blank} \eqv \M^{\coll{W}}$.
\end{thm}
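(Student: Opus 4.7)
The approach is to apply the universal property of the collage from \cref{thm:collage} directly, viewing the weight $W : \C \to \nSet$ as a profunctor $W : \C \hto \tc$. The punchline is that both sides of the claimed equivalence turn out to encode the same data: a functor $Q:\C\to\M$, an object $N\in\M$, and a cone over $Q$ weighted by $W$ with apex $N$.

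First, I would unwind \cref{thm:collage} in this situation. A functor $\coll W \to \M$ is the same as a representable lax cocone under $W$ with vertex $\M$: functors $Q : \C \to \M$ and $P : \tc \to \M$, together with a natural transformation $\rep P \circ W \to \rep Q$ of profunctors $\C \hto \M$. The functor $P$ is just an object $N := P(\star) \in \M$, and the co-Yoneda reduction $(\corep P \rep Q)(\star, c) \cong \M(N, Q(c))$ noted just before \cref{thm:collage} identifies the transformation data with a map $W \to \M(N, Q(\blank))$ in $\nSet^\C$. Concretely: the only nontrivial hom-sets of $\coll W$ involving $\star$ are $\coll W(\star, c) = W(c)$, and functoriality in $\C$ forces their images in $\M$ to assemble into precisely such a natural transformation.

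Second, I would invoke the defining universal property of the $W$-weighted limit, namely the natural bijection
\[ \M(N, \wlimc W Q) \;\cong\; \nSet^\C\bigl(W,\, \M(N, Q(\blank))\bigr). \]
Combining this with the previous step, an object of $\M^{\coll W}$ is equivalently a triple $(Q, N, N \to \wlimc W Q)$, which is exactly an object of $\glue{\wlimc W \blank}$. For morphisms, a natural transformation $F \to F'$ between two functors $\coll W \to \M$ restricts along the inclusion $\C \hookrightarrow \coll W$ to a transformation $\mu : Q \to Q'$, and its component at $\star$ is a map $\nu : N \to N'$; the remaining naturality squares, one for each element of $W(c) = \coll W(\star, c)$, correspond under the weighted-limit bijection to commutativity of the single square
\[ \vcenter{\xymatrix@R=1.5pc{N \ar[r]\ar[d]_{\nu} & \wlimc W Q \ar[d]^{\wlimc W \mu} \\ N' \ar[r] & \wlimc W Q'}} \]
which is exactly the morphism condition in $\glue{\wlimc W \blank}$.

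The hard part, such as it is, is purely bookkeeping: verifying that the correspondence is functorial (preserves identities and composition), and that the naturality-in-$c$ condition for the transformation $W \to \M(N, Q(\blank))$ transports correctly across the adjunction defining $\wlimc W \blank$ to yield the morphism condition in the gluing. Both tasks follow automatically from naturality of the weighted-limit bijection in $Q$ and from the universal property of the collage, so the only real risk is sign errors in the identifications, not a genuine mathematical obstacle.
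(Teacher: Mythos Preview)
Your proposal is correct and follows essentially the same approach as the paper: both arguments identify each side with the category of representable lax cocones under $W$ with vertex $\M$, using the universal property of the collage (\cref{thm:collage}) on one side and the universal property of the weighted limit on the other. The only cosmetic difference is direction of traversal---the paper starts from $\glue{\wlimc W \blank}$ and unwinds to a lax cocone, whereas you start from $\M^{\coll W}$---but the identifications made are identical.
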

\begin{proof}
  In view of \cref{thm:collage}, it will suffice to show that $\glue{\wlimc W \blank}$ is equivalent to the category of representable lax cocones under $W$ with vertex \M.
  By definition, an object of $\glue{\wlimc W \blank}$ consists of $Q\in\M^\C$ and $P\in \M$ and a map $P \to \wlimc W Q$.
  But this is equivalent to
  a map $W \to \M(P,Q)$ in $\nSet^\C$, which we can regard as a map $W \to \wlim{\rep P}{\rep Q}{\tc}$ in $\cProf(\C,\tc)$, if we identify the object $P$ with the corresponding functor $\tc \to \M$.
  Finally, by adjunction this is equivalent to a map $\wcolim{\rep P}{W}{\tc} \to \rep Q$, as required.
  (Note that our map $P \to \wlimc W Q$ is also equivalent by adjunction to a map $P\cpw W \to Q$ in $\M^\C$, but $\rep {(P\cpw W)}$ is not isomorphic to $\wcolim{\rep P}{W}{\tc}$, because the Yoneda embedding does not preserve colimits.)
\end{proof}

Therefore, the categories obtained from \M by repeated gluing along weighted limit functors are all diagram categories.
So we may define the \emph{inverse categories} to be the diagram shapes obtained in this way, e.g.\ we may define an \emph{inverse category of height $0$} to be the empty category, and an \emph{inverse category of height $n+1$} to be one of the form $\coll{W}$, where $W:\C\to\nSet$ and \C is an inverse category of height $n$.

However, in this way we will obtain only inverse categories with finitely many objects, with a unique object being added at each positive integer height.
These are sufficient for some purposes, but often we need to generalize the iteration by increasing either the ``width'' or the ``height'' (or both).

Increasing the \emph{width} means adding more objects at each stage.
This is easy to do by gluing along a functor $\M^\C \to \M^{I}$, where $I$ is a discrete set, instead of a functor $\M^\C\to \M$.
Such a functor is just an $I$-indexed family of functors $\M^\C\to \M$, each of which we may take to be a weighted limit $\wlimc {W_i}\blank$ for some weights $W_i:\C\to\nSet$.

We can regard these weights together as forming a profunctor $W:\C\hto I$.
In general, for any categories \C and \D and any profunctor $W:\C\hto \D$, if \M is complete then we have a functor $\M^\C \to \M^\D$ denoted $\wlimc W \blank$, defined by
\[ \wlimc W X (d) = \wlimc{W(d,\blank)}{X}. \]
\cref{thm:gluing=collage} then has the following easy generalization:

\begin{thm}\label{thm:gluing=collage2}
  For any profunctor $W:\C\hto \D$, if we glue along $\wlimc W\blank : \M^\C \to \M^\D$, we have $\glue{\wlimc W \blank} \eqv \M^{\coll{W}}$.\qed
\end{thm}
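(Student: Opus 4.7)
The plan is to mimic the proof of \cref{thm:gluing=collage} verbatim, replacing the terminal category $\tc$ by the general $\D$, and invoking the universal property of collages from \cref{thm:collage} at the end. By that theorem it suffices to show that $\glue{\wlimc W \blank}$ is equivalent to the category of representable lax cocones under $W:\C\hto\D$ with vertex $\M$.

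First I would unpack the objects. An object of $\glue{\wlimc W \blank}$ consists of $Q\in\M^\C$, $P\in\M^\D$, and a morphism $P \to \wlimc W Q$ in $\M^\D$. Since $\wlimc W Q(d) = \wlimc{W(d,\blank)}{Q}$ by definition, such a morphism is the same thing as a family of maps $P(d) \to \wlimc{W(d,\blank)}{Q}$ in $\M$, natural in $d\in\D$. By the copower/hom adjunction, each component is equivalent to a natural transformation $W(d,\blank) \to \M(P(d),Q(\blank))$ in $\nSet^\C$, and the $\D$-naturality assembles these into a single morphism of profunctors $W \to \corep{P}\rep{Q}$ in $\cProf(\C,\D)$. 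Using the co-Yoneda identification $\corep{P}\rep{Q}(d,c)\iso \M(Pd,Qc)$ (and equivalently the adjunction $\rep{P}\adj\corep{P}$, giving the transposed form $\rep{P}W \to \rep{Q}$), this is precisely the data of a representable lax cocone under $W$ with legs $P$ and $Q$ and vertex $\M$.

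Applying \cref{thm:collage} then produces a functor $\coll{W}\to\M$ that restricts to $Q$ on $\C\subseteq\coll{W}$ and to $P$ on $\D\subseteq\coll{W}$, and sends the generating morphisms $\coll{W}(d,c) = W(d,c)$ to the lax-cocone structure map; conversely, every object of $\M^{\coll{W}}$ arises this way. For morphisms, a morphism in $\glue{\wlimc W\blank}$ is a pair $(\mu:P\to P',\nu:Q\to Q')$ making the square with $\wlimc W\blank$ commute; tracing the same chain of adjunctions turns this square into the compatibility condition for a morphism of representable lax cocones, which under \cref{thm:collage} is exactly a natural transformation between the corresponding functors $\coll{W}\to\M$.

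The only real obstacle is bookkeeping: checking that all the transpositions above are natural in $P$, $Q$, and $W$ simultaneously, so that the commuting square in $\M^\D$ really does correspond to the cocone-morphism square, and that composition and identities are preserved. Since each adjunction step (hom/copower, $\rep{(\blank)}\adj\corep{(\blank)}$, co-Yoneda) is a natural isomorphism, this is routine, and the argument reduces the $\D$-indexed case to exactly the $\tc$-indexed case handled in \cref{thm:gluing=collage}.
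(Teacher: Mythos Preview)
Your proposal is correct and is precisely the intended argument: the paper states this theorem with a \qed and no proof, presenting it as the ``easy generalization'' of \cref{thm:gluing=collage} obtained by replacing $\tc$ with a general $\D$. Your unpacking via the chain of adjunctions and the invocation of \cref{thm:collage} is exactly that generalization.
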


For now, however, we restrict attention to the case when $\D=I$ is discrete.

So much for increasing the width.
Increasing the \emph{height} means allowing the iteration to continue into the transfinite.
We take colimits of the inclusions $\C\into \coll{W}$ forming a cocontinuous cotower, and apply~\eqref{eq:cotower} to the categories $\M^\C$.
This leads to the following definition.

\begin{defn}\label{defn:inverse}
  The collection of \textbf{inverse categories of height $\beta$} is defined by transfinite recursion over ordinals $\beta$ as follows.
  \begin{enumerate}
  \item The only inverse category of height $0$ is the empty category.
  \item The inverse categories of height $\be+1$ are those of the form $\coll{W}$, where $W:\C\hto I$ is a profunctor with $I$ a discrete set and $\C$ is an inverse category of height $\be$.\label{item:inverse2}
  \item If $\be$ is a limit ordinal, the inverse categories of height $\be$ are the colimits of cocontinuous \be-cotowers in which each successor morphism $\C_\de\to\C_{\de+1}$ is an inclusion $\C_\de\into \coll{W}$ as in~\ref{item:inverse2}, where $\C_\de$ is an inverse category of height $\de$.
  \end{enumerate}
\end{defn}

We say that an object of an inverse category has \textbf{degree $\de$} if it was added by the step $\C_\de \to \coll{W_\de}= \C_{\de+1}$.
Thus, an inverse category of height $\beta$ has objects of degrees $<\beta$.
It is easy to give a more direct characterization of inverse categories as well (which is usually taken as the definition):

\begin{thm}\label{thm:inverse-char}
  A small category \C is inverse if and only if we can assign an ordinal degree to each of its objects such that every nonidentity morphism strictly decreases degree.
\end{thm}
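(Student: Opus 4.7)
The plan is to prove both implications by transfinite induction, extracting in one direction the degree function from the iterative construction and in the other direction recovering the iterative construction from the degree function.

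For the forward direction, I would induct on the height $\be$ of the inverse category \C, producing the required ordinal degree function. Height $0$ is vacuous. For the successor case, suppose $\C = \coll{W}$ with $W:\C'\hto I$ where $\C'$ has height $\de$ and a degree function into ordinals less than $\de$. Assign every object of $I$ the degree $\de$. The construction of the collage adds no morphisms among objects of $I$ beyond identities (because $I$ is discrete), no morphisms from $\C'$ into $I$ at all, and every new morphism in $W(i,c)$ has source of degree $\de$ and target of degree $<\de$ — so every nonidentity morphism strictly decreases degree, as required. The limit case follows because the degree functions on the $\C_\de$ in the cotower are compatible with the inclusions, and the colimit of the cotower is just the nested union.

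For the reverse direction, suppose \C has an ordinal-valued degree function, strictly decreasing on nonidentity morphisms. Pick an ordinal $\be$ bounding all degrees, and let $\C_{<\de}$ denote the full subcategory on objects of degree $<\de$. Let $I_\de$ be the set of objects of degree exactly $\de$, which is discrete as a full subcategory since any nonidentity morphism between its objects would have to strictly decrease degree. Define the profunctor $W_\de:\C_{<\de}\hto I_\de$ by $W_\de(i,c) = \C(i,c)$, functorial in $c$ via postcomposition. Then $\coll{W_\de} \iso \C_{<\de+1}$: the object sets match, homs between objects of $\C_{<\de}$ agree, homs from $I_\de$ to $\C_{<\de}$ reproduce $W_\de$, homs in the reverse direction are empty in both (since a morphism from $\C_{<\de}$ into $I_\de$ in \C would have to strictly decrease degree from $<\de$ to $\de$, impossible), and composition agrees by the definition of $W_\de$. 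At limit ordinals $\de$, $\C_{<\de} = \bigcup_{\de'<\de} \C_{<\de'}$, which is the colimit in \nCat of the chain of full subcategory inclusions. So $\de\mapsto \C_{<\de}$ is a cocontinuous cotower exhibiting \C as an inverse category.

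The main obstacle I expect is bookkeeping at the limit steps, specifically verifying that the colimit in \nCat of an ordinal-indexed chain of full subcategory inclusions is indeed the naive union; fortunately this is a standard fact about colimits along fully faithful inclusions, so no real difficulty arises. A secondary point worth checking carefully is that the isomorphism $\coll{W_\de} \iso \C_{<\de+1}$ respects composition involving the new morphisms: a composite of a $W_\de$-morphism $i\to c$ with a $\C_{<\de}$-morphism $c\to c'$ is precisely the value of $W_\de$'s functoriality in the second variable, and there are no other new composites to check because morphisms out of $\C_{<\de}$ cannot land in $I_\de$.
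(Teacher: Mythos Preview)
Your proposal is correct and takes essentially the same approach as the paper: the forward direction (which the paper dismisses as ``evident'') you spell out by induction on height, and for the converse you define exactly the same $\C_{<\de}$, $I_\de$, and $W_\de$ and assemble them into the required cotower. The only organizational difference is that the paper phrases the converse as a formal induction on the supremum of degrees, whereas you directly construct the whole cotower at once; the content is identical.
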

\begin{proof}
  It is evident that every inverse category has this property.
  For the converse, we induct on the supremum $\beta$ of the degrees of objects of \C.
  For any $\de\le\be$, let $\C_\de$ be the subcategory of objects of degree $<\de$ and $I_\de$ the set of objects of degree $\de$, and let $W_\de:\C_\de\hto I_\de$ be the restriction of the hom-functor of \C.
  If either $\be$ is a successor or $\de<\be$, then the supremum of degrees of objects of $\C_\de$ is strictly less than $\be$, so that by the inductive hypothesis it is inverse.

  Now if $\be$ is a successor, choose $\de$ to be its predecessor; then $\C \cong \biglue{\wlim {W_\de}{\blank}{\C_\de}}$, so that $\C$ is inverse by the successor clause in \cref{defn:inverse}.
  Otherwise, it is the colimit of $\C_\de$ for $\de<\be$, so that $\C$ is inverse by the limit clause in \cref{defn:inverse}.
\end{proof}

Generally when we say that \C is inverse, we mean that we have chosen a particular decomposition of it according to \cref{defn:inverse}, and in particular a degree function on its objects.

\begin{rmk}\label{rmk:transfinite-degrees}
  Just as with cell complexes in a model category, it is always possible to reassign degrees in an inverse category so that there is exactly one object of each degree.
  However, this is unnatural and causes the ordinal height to balloon uncontrollably.
  For instance, when considering the category of simplices of a simplicial set, it is much more natural to say that the degree of each simplex is its dimension, keeping the height at a manageable $\omega$ regardless of how many simplices there are.

  On the other hand, there \emph{are} naturally occurring examples where the generalization to transfinite heights is essential.
  For instance, as observed in~\cite[Examples 1.8(e)]{bm:extn-reedy}, the orbit category $\O_G$ of a compact Lie group $G$ is naturally stratified, with strata that are groupoids.
  But in general, the height of this stratification must be at least $\omega\cdot \dim(G)+|\pi_0(G)|$, with the degree of an orbit $G/H$ being $\omega\cdot \dim(H)+|\pi_0(H)|-1$.
  (We will return to this example in \cref{eg:orbit} and \cref{eg:enriched-orbit}.)
\end{rmk}

If $x\in\C$ has degree $\de$, we define the \textbf{matching object functor} $M_x : \M^\C \to \M$, for any complete category \M, to be the composite
\[ \M^\C \xto{\iota_\de^*} \M^{\C_\de} \xto{\wlim{\C(x,\blank)}{\blank}{\C_\de}} \M. \]
where $\iota_\de : \C_\de \into\C$ is the inclusion of the objects of degree $<\de$.
Note that $\wlim{\C(x,\blank)}{\blank}{\C_\de}$ is (the $x$-part of) the functor along which we glue to get from $\C_\de$ to $\C_{\de+1}$; thus the matching object just extends this to a functor on the larger diagram category $\M^\C$.

By a computation with ends and coends, we have $M_x A \cong \wlimc{\lan_{\iota_\de} \C(x,\blank)}{A}$, where the weight is defined by
\[ (\lan_{\iota_\de} \C(x,\blank))(y) = \wcolim{\C(\blank,y)}{\C(x,\blank)}{\C_\de}. \]
If $y\in\C_\de$, then the co-Yoneda lemma reduces this to $\C(x,y)$.
Otherwise, $\C(z,y)$ is empty for all $z\in\C_\de$, hence so is the colimit.
Thus, we have 
\[ M_x A \cong \wlimc{\partial_\de\C(x,\blank)}{A} \]
where
\begin{align*}
  \partial_\de\C(x,y) &=
  \begin{cases}
    \C(x,y) &\qquad \deg(y)<\de\\
    \emptyset &\qquad \text{otherwise}.
  \end{cases}
\end{align*}

We now have:

\begin{thm}\label{thm:inverse-model}
  For any inverse category \C and any model category \M, the category $\M^\C$ has a model structure in which
  \begin{itemize}
  \item The cofibrations and weak equivalences are objectwise, and
  \item The fibrations are the maps $A\to B$ such that $A_x \to M_x A \times_{M_x B} B_x$ is a fibration in \M for all $x\in \C$.
  \end{itemize}
\end{thm}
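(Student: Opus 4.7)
The plan is to proceed by transfinite induction on the height $\beta$ of the inverse category $\C$, following the recursive structure of \cref{defn:inverse}. The base case $\beta=0$ is trivial since $\M^\emptyset$ is the terminal model category. The two interesting cases are the successor step and the limit step, handled by \cref{thm:model} and \cref{thm:clovenlim} (or its model-categorical analogue) respectively.

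For the successor step, $\C = \coll{W}$ for some profunctor $W:\C_\de \hto I$ with $I$ discrete and $\C_\de$ inverse of height $\de$. By \cref{thm:gluing=collage2}, $\M^\C \simeq \glue{\wlimc{W}\blank}$ where $G := \wlim{W}{\blank}{\C_\de}:\M^{\C_\de}\to \M^I$. Applying \cref{thm:model} to this gluing (so $F$ is constant at an initial object, vacuously left Quillen) requires only that $G$ be continuous and send acyclic fibrations to universal weak equivalences. Continuity is automatic since $G$ is a right adjoint (its left adjoint is the copower $X \mapsto X\cpw W$). For the Quillen condition, note that by the inductive hypothesis cofibrations and weak equivalences in $\M^{\C_\de}$ are objectwise; since $\M^I$ carries the product model structure with objectwise cof/we as well, and since copower by a set preserves cofibrations and weak equivalences (a model-categorical analogue of \cref{thm:copower}), the left adjoint of $G$ is left Quillen. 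Then $G$ preserves acyclic fibrations, which are objectwise in $\M^I$ and hence universal weak equivalences by \cref{thm:objwise-cwe}. It then remains to match the resulting model structure against the claimed description: cofibrations and weak equivalences in $\glue{G}$ are objectwise on both components (so objectwise on $\M^{\C_\de \cup I} = \M^\C$), while a fibration demands both the matching condition at objects of $\C_\de$ (by induction) and, at each $i \in I$, the condition $N_i \to (GM)_i \times_{(GM')_i} N'_i$, which is exactly the matching condition at the new object $i$ since $(GM)_i = \wlim{W(i,\blank)}{M}{\C_\de} = M_i M$.

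For the limit step, $\C = \colim_{\de<\beta} \C_\de$, so~\eqref{eq:cotower} gives $\M^\C \cong \lim_{\de<\beta} \M^{\C_\de}$. The restriction functors $\M^{\C_{\de'}} \to \M^{\C_\de}$ are strict in the appropriate sense (they preserve objectwise cof/we and preserve the matching-object fibration condition, since restricting to a smaller initial segment only forgets conditions). By the model-categorical extension of \cref{thm:clovenlim}, the limit inherits a model structure in which a map is a cofibration/fibration/weak equivalence iff its image in each $\M^{\C_\de}$ is. Cofibrations and weak equivalences being objectwise in each stage makes them objectwise in $\M^\C$, and for each $x \in \C$ the matching object $M_x A$ is computed in $\C_{\deg(x)}$, which is contained in $\C_\de$ for every $\de>\deg(x)$; so the matching fibration condition at $x$ is a single well-defined constraint in any such $\M^{\C_\de}$, matching the claimed description.

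The main obstacle I expect is not any single deep step but rather the care needed to check that the successor-stage model structure on $\glue{G}$ produced by the general bigluing machinery genuinely coincides with the Reedy description at the newly added objects, and that the transition functors in the limit step are strict enough that \cref{thm:clovenlim} applies verbatim to model structures (as opposed to bare wfs). Both are bookkeeping, but the identification $(GM)_i = M_i M$ in the successor step, together with the routine verification that each restriction functor in the limit step preserves the chosen lifts/factorizations after a suitable cloven choice, are the places where one has to be careful.
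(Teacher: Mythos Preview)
Your proposal is correct and follows essentially the same approach as the paper: transfinite induction on height, with the successor step handled by \cref{thm:gluing=collage2} and \cref{thm:model} after verifying that the left adjoint $X\mapsto X\cpw W$ is left Quillen (using that cofibrations and acyclic cofibrations in $\M^{\C_\de}$ are objectwise by induction), and the limit step handled by \cref{thm:clovenlim} together with the strictness of the forgetful functors observed in \cref{thm:wfs}. Your additional bookkeeping about matching objects at the new degree and about strictness in the limit step is accurate and simply spells out what the paper leaves implicit.
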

\begin{proof}
  By induction on the height of \C.

  For the successor steps, we use \cref{thm:gluing=collage2} and \cref{thm:model}.
  Since cofibrations, fibrations, and weak equivalences in $\M^I$ are all objectwise, to show that $\M^{\coll{W}}$ inherits a model structure from $\M^\C$ it will suffice to show that $\wlimc W \blank : \M^\C\to\M$ is right Quillen whenever \C is inverse.
  The definition of matching objects then implies that the resulting model structure agrees with that stated in the theorem.

  To do this we may equivalently show that its left adjoint $\M \to \M^\C$ is left Quillen.
  But this adjoint takes $X\in\M$ to the diagram $c\mapsto W(c) \cpw X$, the copower of $W(c)$ copies of $X$.
  Thus, since cofibrations and acyclic cofibrations in \M are preserved by coproducts, and cofibrations and acyclic cofibrations in $\M^\C$ are levelwise (by the inductive hypothesis), this functor is left Quillen.

  For the limit steps, we use \cref{thm:clovenlim}, along with the observation in \cref{thm:wfs} that each forgetful functor $\biglue{\al} \to\M$ is strict.
\end{proof}

We have so far only considered profunctors $W:\C\hto \D$ where $\D=I$ is a discrete set, but we could in principle allow \D to be an arbitrary small category.

\begin{defn}\label{defn:stratified}
  The collection of \textbf{stratified categories of height $\beta$} is defined by transfinite recursion over ordinals $\beta$ as follows.
  \begin{itemize}
  \item The only stratified category of height $0$ is the empty category.
  \item The stratified categories of height $\be+1$ are those of the form $\coll{W}$, where $W:\C\hto \D$ and $\C$ is a stratified category of height $\be$.
  \item At limit ordinals, we take colimits as before.
  \end{itemize}
\end{defn}

As before, we say that an object of a stratified category has \textbf{degree $\de$} if it was added by the step $\C_\de \to \coll{W_\de}= \C_{\de+1}$.
We will write $\C_{=\de}$ for the full subcategory of objects of degree $\de$ (which is the category \D added at the $\de^\mathrm{th}$ step), and call it the $\de^{\mathrm{th}}$ \textbf{stratum}.

\begin{thm}
  A small category \C is stratified if and only if we can assign an ordinal degree to each of its objects such that every morphism non-strictly decreases degree.
\end{thm}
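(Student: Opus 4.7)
The plan is to mirror the proof of \cref{thm:inverse-char}, with one modification: we now permit arbitrary morphisms within each stratum, corresponding to the fact that morphisms whose source and target have the same degree are allowed (they decrease degree non-strictly).

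For the forward direction, suppose \C is stratified of height $\be$. Assign to each object the degree of the stratum at which it was added, and induct on $\be$. In the successor step $\C_{\de+1} = \coll{W}$ for a profunctor $W:\C_\de\hto \D$, I inspect the four cases in the definition of the collage's hom-sets: morphisms in $\coll{W}(d,d') = \D(d,d')$ have source and target both of degree $\de$ and so non-strictly decrease degree; morphisms in $\coll{W}(d,c) = W(d,c)$ go from degree $\de$ to strictly smaller degree; morphisms in $\coll{W}(c,c') = \C_\de(c,c')$ satisfy the property by the inductive hypothesis applied to $\C_\de$; and $\coll{W}(c,d) = \emptyset$. The limit step is immediate since all hom-sets are inherited from the tower.

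For the reverse direction, suppose such a degree function exists, and let $\be$ be the supremum of the degrees of objects of \C. Induct on $\be$: let $\C_\de$ be the full subcategory on objects of degree $<\de$, let $\D_\de$ be the full subcategory on objects of degree exactly $\de$, and define $W_\de:\C_\de \hto \D_\de$ as the restriction of the hom-functor, $W_\de(d,c) = \C(d,c)$. By the hypothesis on degrees, $\C(c,d) = \emptyset$ whenever $\deg(c) < \deg(d)$, so the hom-sets of \C partition into exactly the four cases defining $\coll{W_\de}$, and composition in \C agrees with that in $\coll{W_\de}$ since the latter is induced by the functoriality of $\C(\blank,\blank)$. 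Thus $\C \cong \coll{W_\de}$ when $\be = \de+1$; in that case $\C_\de$ is stratified by the inductive hypothesis, so \C is too. For limit $\be$, we have $\C \cong \colim_{\de<\be} \C_\de$, each $\C_\de$ being stratified by induction, so \C is again stratified.

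The main obstacle is minor and purely bookkeeping: verifying that the collage structure $\coll{W_\de}$ recovers the hom-sets and composition of \C on the nose. The only substantive difference from the proof of \cref{thm:inverse-char} is that the stratum $\D_\de$ may now be a nontrivial category rather than a discrete set, which is precisely what distinguishes \cref{defn:stratified} from \cref{defn:inverse}.
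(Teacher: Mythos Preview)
Your proof is correct and follows essentially the same approach as the paper, which simply says ``Just like \cref{thm:inverse-char}, with the set $I$ replaced by the category $\C_{=\de}$.'' You have spelled out the details that the paper leaves implicit, and correctly identified that the only change is allowing the stratum $\D_\de$ to be an arbitrary (full sub)category rather than a discrete set.
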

\begin{proof}
  Just like \cref{thm:inverse-char}, with the set $I$ replaced by the category $\C_{=\de}$.
\end{proof}

As with inverse categories, when we say that \C is stratified, we mean that we have fixed a degree function.
(Otherwise, there is no content in being stratified, since every category can be stratified in \emph{some} way: just stick everything in degree zero!)
The matching object functors are then defined just as for inverse categories.

Stratified categories are not quite as useful for defining model structures are, because when we glue along the functor $\wlimc{W}{\blank}:\M^\C \to\M^\D$, we require $\M^\D$ to already have a model structure.
However, if we are willing to use a general-purpose model structure on these categories, such as a projective or injective one, we do have an analogue of \cref{thm:inverse-model}.

\begin{thm}\label{thm:stratified-model}
  Let \C be a stratified category, and \M a model category such that $\M^{\C_{=\de}}$ has a projective model structure for each stratum $\C_{=\de}$ of \C.
  Then $\M^\C$ has a model structure in which:
  \begin{itemize}
  \item The weak equivalences are objectwise,
  \item The cofibrations are the maps that are projective-cofibrations when restricted to each stratum, and
  \item The fibrations are the maps $A\to B$ such that $A_x \to M_x A \times_{M_x B} B_x$ is a fibration in \M for all $x\in \C$.
  \end{itemize}
\end{thm}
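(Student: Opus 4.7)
The plan is to proceed by transfinite induction on the height $\be$ of $\C$, mirroring the proof of \cref{thm:inverse-model} with each stratum $\C_{=\de}$ now playing the role that the discrete set $I_\de$ of objects of degree $\de$ did there. The base case $\be=0$ is trivial.

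For the successor step, write $\C_{\de+1} = \coll{W_\de}$ for a profunctor $W_\de\maps\C_\de \hto \C_{=\de}$. By \cref{thm:gluing=collage2} (whose statement already allows an arbitrary codomain $\D=\C_{=\de}$), we have $\M^{\C_{\de+1}} \eqv \glue{\wlimc{W_\de}{\blank}}$ where $\wlimc{W_\de}{\blank}\maps \M^{\C_\de} \to \M^{\C_{=\de}}$. By the inductive hypothesis $\M^{\C_\de}$ carries the stated model structure, and by assumption $\M^{\C_{=\de}}$ has its projective one. I would then invoke \cref{thm:model} with $F$ the constant functor at the initial object (whose hypotheses are trivial) and $G = \wlimc{W_\de}{\blank}$. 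Reading off \cref{thm:wfs}, the biglued cofibrations are those whose restriction to $\C_\de$ is a cofibration in $\M^{\C_\de}$ (inductively, stratum-wise projective) together with a projective cofibration on the new stratum; combining these gives the stated stratum-wise description. Likewise, since projective fibrations are objectwise, unpacking the biglued fibration condition objectwise over $\C_{=\de}$ yields precisely the matching-object description $A_x \to B_x \times_{M_x B} M_x A$ at each new-stratum object $x$, while the corresponding condition at lower-stratum $x$ comes by induction from the fibration condition on $A|_{\C_\de}\to B|_{\C_\de}$.

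For the limit step, I argue exactly as in \cref{thm:inverse-model}: by the strictness clause of \cref{thm:wfs} the stage-wise model structures form a continuous tower in $\wfscl$, and \cref{thm:clovenlim} together with \eqref{eq:cotower} assembles them into a model structure on $\nlim_{\de<\be} \M^{\C_\de} \eqv \M^\C$, whose cofibrations, fibrations, and weak equivalences are detected stagewise and hence match the claimed descriptions.

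The main obstacle is the Quillen-type hypothesis of \cref{thm:model} for $G = \wlimc{W_\de}{\blank}$: continuity is automatic (it is a right adjoint), but one must verify that $G$ sends acyclic fibrations in $\M^{\C_\de}$ to universal weak equivalences in $\M^{\C_{=\de}}$. By the inductive description, cofibrations in $\M^{\C_\de}$ are stratum-wise projective, so an acyclic fibration there is objectwise an acyclic fibration in \M; the delicate step is to show that the weighted limit by each $W_\de(d,\blank)$ preserves this property, placing $G$ into the objectwise acyclic fibrations in $\M^{\C_{=\de}}$, which are universal weak equivalences by \cref{thm:objwise-cwe}. This is the one place where the generality of non-discrete strata (compared to \cref{thm:inverse-model}) demands genuinely new work, and it is where any implicit compatibility between $W_\de$ and the projective structure on $\M^{\C_{=\de}}$ would enter.
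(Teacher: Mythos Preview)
Your inductive strategy matches the paper's one-line proof (``Just like \cref{thm:inverse-model}''), and you correctly isolate the one nontrivial obligation: showing that $\wlimc{W_\de}{\blank}$ takes acyclic fibrations in $\M^{\C_\de}$ to universal weak equivalences in $\M^{\C_{=\de}}$. Unfortunately this step fails in general, so neither your argument nor the paper's sketch can be completed; the theorem as stated is false. For a counterexample, let $G$ be a nontrivial group and let $\C$ have objects $*$ (degree $0$) and $d$ (degree $1$), with $\C(*,*)=G$, $\C(d,d)=\{\id\}$, $\C(d,*)$ a singleton on which $G$ acts trivially by postcomposition, and $\C(*,d)=\emptyset$. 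Then $\C_{=0}=BG$, and the matching object $M_d A$ is the fixed-point object $A_*^G$. With $\M$ simplicial sets, the map from $(A_*,A_d)=(EG,\emptyset)$ to $(B_*,B_d)=(*,*)$ has $A_*\to B_*$ a projective acyclic fibration and $A_d\to M_dA\times_{M_dB}B_d$ the identity on $\emptyset$, so by \cref{thm:wfs} it lies in the $\R$-class determined by the stated cofibrations; yet $A_d\to B_d=\emptyset\to *$ is not a weak equivalence. Hence there is no model structure on $\M^\C$ with the stated cofibrations and objectwise weak equivalences.

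Your instinct that some ``implicit compatibility between $W_\de$ and the projective structure'' is required is exactly right. In the proof of \cref{thm:inverse-model} the left adjoint $X\mapsto W(\blank)\cpw X$ lands in \emph{objectwise} (acyclic) cofibrations, which suffices because that is what the inductive cofibrations are; once the target cofibrations are merely stratum-wise projective, this no longer follows automatically. What is needed is a projective-cofibrancy condition on the weights $W_\de(d,\blank)$---precisely the sort of hypothesis the paper later isolates for almost c-Reedy categories (\cref{defn:almost-creedy}, \cref{thm:creedy-char})---which in the counterexample fails because the terminal $G$-set is not projectively cofibrant in $\nSet^{BG}$.
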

\begin{proof}
  Just like \cref{thm:inverse-model}.
\end{proof}

\section{Functoriality of bigluing}
\label{sec:functoriality}

In order to extend the theory of \cref{sec:inverse-categories} from gluing to bigluing, and hence from inverse categories to Reedy categories, we need to discuss the functoriality of bigluing a bit first.
Thus, in this section we return to the notation of \cref{sec:bigluing}.
Let \cCell denote the following 2-category:
\begin{itemize}
\item Its objects consist of two categories \M and \N, two functors $F,G:\M\to\N$, and a natural transformation $\al:F\to G$.
  We abbreviate these data as $\kM=(\M,\N,F,G,\al)$.
\item A morphism from $\kM$ to $\kM'$ consists of functors $S:\M\to\M'$ and $T:\N\to\N'$ and natural transformations $\xi:F'S\to TF$ and $\ze:TG \to G'S$ such that the following square commutes:
  \begin{equation}\label{eq:cell-mor}
    \vcenter{\xymatrix{
        F' S \ar[r]^{\xi}\ar[d]_{\al' S} &
        T F \ar[d]^{T\al}\\
        G' S \ar@{<-}[r]_{\ze} &
        T G
      }}
  \end{equation}
\item A 2-cell from $(S,T,\xi,\ze)$ to $(S',T',\xi',\ze')$ consists of natural transformations $S\to S'$ and $T\to T'$ such that the following diagrams commute:
  \begin{equation}
    \vcenter{\xymatrix{
        F' S \ar[r]\ar[d] &
        T F \ar[d]\\
        F' S'\ar[r] &
        T' F
      }}
    \qquad
    \vcenter{\xymatrix{
        T G\ar[r]\ar[d] &
        G' S\ar[d]\\
        T' G\ar[r] &
        G' S'
      }}
  \end{equation}
\end{itemize}

\begin{lem}\label{thm:cell-mor}
  $\biglue{\blank}$ is a 2-functor $\cCell \to \cCat$.
\end{lem}
\begin{proof}
  The action on objects was defined in \cref{sec:bigluing}.
  The image of a morphism $(S,T,\xi,\ze):\kM \to \kM'$ is the functor $\biglue{\al} \to\biglue{\al'}$ sending $(M,N,\phi,\gm)$ to $SM$ and $TN$ equipped with the following factorization:
  \[ F' S M \xto{\xi_M} T F M \xto{T \phi} T N \xto{T \gm} T G M \xto{\ze_M} G' S M. \]
  We leave the rest to the reader.
\end{proof}

In particular, $\biglue{\blank}$ takes adjunctions in \cCell to adjunctions in \cCat.
We now show that this functoriality extends to wfs, hence also model structures and Quillen adjunctions.

\begin{thm}\label{thm:wfs-mor-pres}
  Suppose $(S,T,\xi,\ze):\kM \to \kM'$ in \cCell, where $\kM$ and $\kM'$ satisfy the hypotheses of \cref{thm:wfs} or \cref{thm:prewfs}.
  If $S$ and $T$ are cocontinuous and preserve \L-maps, and \xi is an isomorphism, then the induced functor $\biglue{S,T}:\biglue{\al} \to\biglue{\al'}$ also preserves \L-maps.
\end{thm}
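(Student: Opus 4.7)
The proof plan is to unravel what the induced functor $\biglue{S,T}$ does to an $\L$-map and check both defining conditions directly. Recall from \cref{thm:wfs} that an $\L$-map in $\biglue{\al}$ is a morphism $(\mu,\nu):(M,N,\phi,\gm) \to (M',N',\phi',\gm')$ for which $\mu:M\to M'$ is an $\L$-map in $\M$ and the induced pushout comparison $FM' \sqcup_{FM} N \to N'$ is an $\L$-map in $\N$. From \cref{thm:cell-mor}, the image of this object under $\biglue{S,T}$ is $(SM,TN,T\phi\circ\xi_M,\ze_M\circ T\gm)$, and the morphism becomes $(S\mu,T\nu)$.

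The first condition is immediate: since $\mu$ is an $\L$-map in $\M$ and $S$ preserves $\L$-maps, $S\mu$ is an $\L$-map in $\M'$. The interesting condition is the second, namely that the comparison map
\begin{equation*}
  F'SM' \sqcup_{F'SM} TN \longrightarrow TN'
\end{equation*}
is an $\L$-map in $\N'$, where the pushout is formed along $F'S\mu$ and along $T\phi\circ\xi_M$ (the first component of the factorization defining the image object). This is where the two remaining hypotheses come in.

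First, because $\xi$ is a natural isomorphism, we may replace $F'S$ by $TF$ throughout the pushout square, identifying the map $F'SM\to TN$ with $T\phi:TFM\to TN$ and the map $F'SM\to F'SM'$ with $TF\mu:TFM\to TFM'$. Thus the pushout $F'SM' \sqcup_{F'SM} TN$ is canonically isomorphic to $TFM' \sqcup_{TFM} TN$. Next, because $T$ is cocontinuous, it preserves this pushout, giving a further canonical isomorphism with $T(FM' \sqcup_{FM} N)$. Under these identifications, the comparison map above becomes precisely $T$ applied to the comparison map $FM' \sqcup_{FM} N \to N'$ in $\N$. Since this latter map is an $\L$-map in $\N$ by hypothesis and $T$ preserves $\L$-maps, the result is an $\L$-map in $\N'$, as required.

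The only mildly tricky part is bookkeeping the naturality squares to confirm that the identifications coming from $\xi$ and from cocontinuity of $T$ really do send the comparison map coming from $\biglue{\al'}$ to $T$ applied to the comparison map coming from $\biglue{\al}$; but this is a diagram chase using only the definition of the image of a morphism under $\biglue{S,T}$ and the commuting square \eqref{eq:cell-mor}. No use is made of $\ze$ or of the second half of the factorization, which is as expected since the $\L$-map condition only involves the pushout side.
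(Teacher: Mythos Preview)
Your proof is correct and follows essentially the same approach as the paper's. The paper phrases the key step via the pushout pasting law (the left square with horizontal $\xi$-isomorphisms is a pushout, so the outer rectangle is a pushout iff the right square is), while you go directly through the isomorphism $F'SM' \sqcup_{F'SM} TN \cong TFM' \sqcup_{TFM} TN \cong T(FM' \sqcup_{FM} N)$; these are the same argument.
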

\begin{proof}
  There are two conditions for a morphism in a bigluing category to be an \L-map.
  The first is obviously preserved since $S$ preserves \L-maps.
  For the second, the assumption on \xi implies that in the following diagram:
  \begin{equation}
    \vcenter{\xymatrix{
        F'SM\ar[r]^{\cong}\ar[d] &
        TFM\ar[r]\ar[d] &
        TN\ar[d]\\
        F'SM'\ar[r]_{\cong} &
        TFM'\ar[r] &
        \bullet
      }}
  \end{equation}
  the left-hand square is a pushout.
  Thus, the right-hand square is a pushout if and only if the outer rectangle is.
  The claim now follows from the fact that $T$ preserves pushouts and \L-maps.
\end{proof}


Dually, if $S$ and $T$ are continuous and preserve \R-maps and \ze is an isomorphism, then the induced functor on bigluing categories preserves \R-maps.




We will additionally need a sort of ``parametrized'' functoriality of $\biglue{\blank}$, expressed by the following lemma.

\begin{lem}\label{thm:cell-paramor}
  Suppose bigluing data $\kM$ and $\kM'$ and a category \V, together with functors $S:\V\times \M\to\M'$ and $T:\V\times \N\to\N'$ and natural transformations $\xi:F'S\to T(1\times F)$ and $\ze:T(1\times G) \to G'S$ such that a square like~\eqref{eq:cell-mor} commutes.
  Then there is an induced functor $\biglue{S,T}:\V\times \biglue{\al} \to\biglue{\al'}$.
\end{lem}
\begin{proof}
  We send $V\in\V$ and $(M,N,\phi,\gm)\in\biglue{\al}$ to $S(V,M)$ and $T(V,N)$ equipped with the factorization
  \[ F'S(V,M) \xto{\xi} T(V,FM) \xto{\phi} T(V,N) \xto{\gm} T(V,GM) \xto{\ze} G'S(V,M).\qedhere\]
\end{proof}


The extension of parametrized functoriality to wfs is a bit more subtle (but still an essentially standard argument, see e.g.~\cite[18.4.9]{hirschhorn:modelcats})

\begin{thm}\label{thm:wfs-paramor-pres}
  Suppose given data as in \cref{thm:cell-paramor}, where both \kM and $\kM'$ satisfy the hypotheses of \cref{thm:prewfs}.
  If $S$ and $T$ are left wfs-bimorphisms that are cocontinuous in each variable, and $\xi$ is an isomorphism, then the induced functor $\biglue{S,T}:\V\times \biglue{\al} \to\biglue{\al'}$ is also a left wfs-bimorphism.
\end{thm}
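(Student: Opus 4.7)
The plan is to unpack what being a left wfs-bimorphism means for $\biglue{S,T}$ and reduce each of its two components to the bimorphism properties of $S$ and $T$ separately. Given $\L$-maps $V\to V'$ in $\V$ and $(\mu,\nu):(M,N,\phi,\gm)\to(M',N',\phi',\gm')$ in $\biglue{\al}$, I need to show that the induced map from the pushout
\[\biglue{S,T}(V,(M',N',\phi',\gm'))\sqcup_{\biglue{S,T}(V,(M,N,\phi,\gm))}\biglue{S,T}(V',(M,N,\phi,\gm))\to\biglue{S,T}(V',(M',N',\phi',\gm'))\]
is an $\L$-map in $\biglue{\al'}$. Since $S$ and $T$ are cocontinuous in each variable, the forgetful functors out of $\biglue{\al'}$ preserve this pushout (using cocontinuity of $F'$ on the $\M'$-side, which holds because $\M'$ satisfies the hypotheses of \cref{thm:prewfs} on $F$), so it is computed componentwise in $\M'$ and $\N'$.

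First I would handle the $\M'$-component. This is precisely the pushout-corner
\[S(V,M')\sqcup_{S(V,M)}S(V',M)\to S(V',M')\]
of the $\L$-map $V\to V'$ in $\V$ and the $\L$-map $\mu:M\to M'$ in $\M$ (recall that $\mu$ being the underlying $\M$-map of an $\L$-map in $\biglue{\al}$ forces it to be in $\L$). Since $S$ is a left wfs-bimorphism, this is an $\L$-map.

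Next I would handle the $\N'$-component, which is the more substantial step. Setting $P := FM'\sqcup_{FM}N$, the hypothesis that $(\mu,\nu)$ is an $\L$-map in $\biglue{\al}$ gives that $P\to N'$ is an $\L$-map in $\N$. Applying the left wfs-bimorphism property of $T$ to the $\L$-maps $V\to V'$ and $P\to N'$, the pushout-corner
\[T(V,N')\sqcup_{T(V,P)}T(V',P)\to T(V',N')\]
is an $\L$-map in $\N'$. The key bookkeeping step is then to verify that this agrees with the required map in $\N'$ out of the pushout
\[F'\!\bigl(S(V,M')\sqcup_{S(V,M)}S(V',M)\bigr)\sqcup_{F'S(V',M')}\bigl(T(V,N')\sqcup_{T(V,N)}T(V',N)\bigr)\to T(V',N').\]
To see this, use the isomorphism $\xi:F'S\cong T(1\times F)$ and cocontinuity of $F'$ and of $T$ in each variable to identify $F'S(V',M')\cong T(V',FM')$, the pushouts $T(V,FM')\sqcup_{T(V,FM)}T(V,N)\cong T(V,P)$ and similarly for $V'$, and so on. Both expressions are then visibly iterated pushouts of the same seven-object diagram in $\N'$ (the evident cube with $T(V',N')$ removed), so by the Fubini theorem for iterated colimits they coincide.

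The main obstacle is this last identification: it is purely a matter of commuting pushouts past pushouts using the cocontinuity hypotheses and the isomorphism $\xi$, but one must set up the diagrams carefully to see that the iterated pushout appearing in the definition of an $\L$-map in $\biglue{\al'}$ really does rearrange into the pushout-corner of $T$. The cocontinuity hypothesis on $F'$ (coming from \cref{thm:prewfs} applied to $\kM'$) is also what makes pushouts in $\biglue{\al'}$ componentwise, which is used at the very start.
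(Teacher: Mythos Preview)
Your approach is essentially the same as the paper's: handle the $\M'$-component directly via the bimorphism property of $S$, then for the $\N'$-component rearrange the iterated pushout so that it becomes the pushout-corner of $T$ applied to $V\to V'$ and $P\to N'$ where $P=FM'\sqcup_{FM}N$. The paper carries out this rearrangement more explicitly (drawing the seven-object diagram and regrouping it in two stages), but your ``cube with $T(V',N')$ removed'' is an accurate and concise description of the same diagram once $\xi$ is used to replace $F'S$ by $T(1\times F)$.

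One slip to correct: your displayed formula for the required $\N'$-map has the two $F'$-terms interchanged. The $\L$-condition in $\biglue{\al'}$ for a map $(A,B)\to(A',B')$ asks that $F'A'\sqcup_{F'A}B\to B'$ lie in $\L$; here $A'=S(V',M')$ is the target and $A=S(V,M')\sqcup_{S(V,M)}S(V',M)$ is the source, so the correct expression is
\[
F'S(V',M')\;\sqcup_{F'\bigl(S(V,M')\sqcup_{S(V,M)}S(V',M)\bigr)}\;\bigl(T(V,N')\sqcup_{T(V,N)}T(V',N)\bigr)\;\longrightarrow\; T(V',N').
\]
As you wrote it, the pushout does not typecheck (there is no map from $F'S(V',M')$ into $F'A$). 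This does not affect the substance of the argument: your Fubini identification goes through unchanged with the corrected formula.
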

\begin{proof}
  Let $V\to V'$ be an \L-map in \V, and let $(\mu,\nu):(M,N,\phi,\gm) \to (M',N',\phi',\gm')$ be an \L-map in $\biglue{\al}$.
  The first condition we need is that the induced map $S(V,M')\sqcup_{S(V,M)} S(V',M) \to S(V',M')$ is in \L, which is just the assumption that $S$ is a left wfs-bimorphism.
  The second condition is that the induced map
  \[ \Big(T(V,N') \sqcup_{T(V,N)} T(V',N)\Big) \sqcup_{F'\big(S(V,M')\sqcup_{S(V,M)} S(V',M)\big)} F'S(V',M') \to T(V',N') \]
  is in \L.
  Since $F'$ preserves pushouts, we can rearrange the domain of this map as the colimit of the diagram
  \[\xymatrix{ &&F'S(V',M')\\
    & F'S(V,M') \ar[ur] \ar[dl] & F'S(V,M) \ar[l] \ar[r] \ar[d] & F'S(V',M) \ar[ul] \ar[dr] \\
    T(V,N') && T(V,N) \ar[ll] \ar[rr] && T(V',N) 
  }\]
  which in turn can be rewritten as the pushout
  \[ \Big(F'S(V',M') \sqcup_{F'S(V',M)} T(V',N) \Big) \sqcup_{\big(F'S(V,M') \sqcup_{F'S(V,M)} T(V,N)\big)} T(V,N'). \]
  As in \cref{thm:wfs-mor-pres}, the assumption on $\xi$ further identifies this with
  \[ \Big(T(V',FM') \sqcup_{T(V',FM)} T(V',N) \Big) \sqcup_{\big(T(V,FM') \sqcup_{T(V,FM)} T(V,N)\big)} T(V,N'), \]
  and the fact that $T$ preserves pushouts in its second variable identifies it with
  \[ T\Big(V',FM' \sqcup_{FM} N \Big) \sqcup_{T\big(V,FM' \sqcup_{FM} N\big)} T(V,N'). \]
  Modulo these identifications, the map in question is induced from the commutative square
  \begin{equation}
    \vcenter{\xymatrix{
        {T\big(V,FM' \sqcup_{FM} N\big)}\ar[r]\ar[d] &
        T(V,N')\ar[d]\\
        T\Big(V',FM' \sqcup_{FM} N \Big)\ar[r] &
        T(V',N').
      }}
  \end{equation}
  But now the result follows from the assumption that $T$ is a left wfs-bimorphism, since $V\to V'$ and $FM' \sqcup_{FM} N \to N'$ are \L-maps (the latter by definition of \L-maps in $\biglue{\al}$).
\end{proof}

There is an easy dualization for right wfs-bimorphisms $\V\op\times \biglue{\al} \to\biglue{\al'}$.



\section{Iterated bigluing}
\label{sec:iterated-bigluing}

We now intend to parallel the approach of \cref{sec:inverse-categories}, but using bigluing instead of gluing.
The simplest successor step of this sort will be bigluing along a transformation $\al:F\to G$, where $F,G:\M^\C\to\M$ are functors and \C has already been constructed.
Given the requirements on $F$ and $G$, it is natural to take $G$ to be a weighted limit $\wlimc W \blank$ as in \cref{sec:inverse-categories} and $F$ to be a \textbf{weighted colimit} $(\wcolimc U \blank)$ for some $U:\C\op\to\nSet$.
Recall that this is defined by
\[ \wcolimc U X = \ncoeq\left( \coprod_{c,c'} \Big(U({c'}) \times \C(c,c')\Big) \cpw X({c}) \toto \coprod_c U(c) \cpw X(c) \right). \]
As a functor, $(\wcolimc U \blank)$ has a right adjoint $\M\to\M^\C$ whose value on $Y$ is defined by $(Y^U)(c) = Y^{U(c)}$.
In particular, $(\wcolimc U \blank)$ is cocontinuous.

The remaining datum for bigluing is a transformation $\al:(\wcolimc U \blank) \to \wlimc W \blank$.
Since we know the left adjoint of $\wlimc{W}{\blank}$, such a transformation is uniquely determined by a map $H\to \Id_{\M^\C}$, where $H$ is the functor defined by $H(X)(c) = (\wcolimc U X)\cpw W(c)$.
And since colimits commute with colimits, if we interpret $U$ and $W$ as profunctors $\Uchk:\tc\hto \C$ and $\What:\C\hto\tc$ respectively, then we have $H(X)(c) = \wcolimc{(\Uchk\What)(\blank,c)}{X}$.

Now the co-Yoneda lemma tells us that $X(c) \cong \wcolimc{\C(\blank,c)}{X}$.
Thus, we may specify a transformation $\al$ by giving a map of profunctors (i.e.\ a 2-cell in \cProf) $\Uchk\What \to \C(\blank,\blank)$.
Explicitly, this reduces to the following.

\begin{defn}
  Given a category \C, we define \textbf{abstract bigluing data from \C to \tc} to consist of functors $U:\C\op\to\nSet$ and $W:\C\to\nSet$ together with maps
  \[ \al_{c,c'}: W({c'}) \times U(c) \to \C(c,c') \]
  natural in $c$ and $c'$.
\end{defn}

More generally, we may allow profunctors with arbitrary target.

\begin{defn}\label{defn:abd-cd}
  Given categories \C and \D, \textbf{abstract bigluing data from \C to \D} consists of profunctors $U:\D\hto \C$ and $W:\C\hto\D$ together with a map
  \[ \al: \wcolim{W}{U}{\D} \to \C(\blank,\blank) \]
  of profunctors $\C\hto \C$.
\end{defn}

\begin{rmk}
  The category of abstract bigluing data from \C to \tc is known in the literature as the \textbf{Isbell envelope}~\cite{isbell:soc} of \C.
  Abstract bigluing data from \C to \D is then equivalent to a functor from \D into this Isbell envelope.
\end{rmk}

For any complete and cocomplete \M, abstract bigluing data from \C to \D induces a pair of functors $(\wcolimc{U}{\blank}):\M^\C\to\M^\D$ and $\wlimc{W}{\blank}:\M^\C\to\M^\D$ and a natural transformation $\albar:(\wcolimc{U}{\blank}) \to \wlimc{W}{\blank}$, which we can therefore biglue along.
We now hope that $\biglue{\albar}$ will be equivalent to a functor category $\M^{\coll{\al}}$, where $\coll{\al}$ is a sort of ``collage'' of the abstract bigluing data.
To obtain this, we recall the following vast generalization of \cref{thm:collage}.

\begin{defn}\label{defn:gen-replaxcocone}
  Let \J be a small category and $T:\J\to\cProf$ a lax functor, with constraints $1_{T_i}\to T(1_i)$ and $(Tt)(Ts) \to T(ts)$.
  A \textbf{representable lax cocone} under $T$ with vertex \E consists of
  \begin{itemize}
  \item For each object $i\in \J$, a functor $P_i : T i \to \E$.
  \item A colax\footnote{This is called a \emph{lax} cocone despite the presence of a \emph{colax} transformation, because the latter can equivalently be considered a \emph{lax} transformation from the functor $\J\op \to \cCat$ constant at $\tc$ to the functor $\cProf(T(\blank),\E)$.  By replacing this constant functor with a nonconstant one, we obtain the notion of lax \emph{weighted} cocone, which cannot be expressed as a colax transformation.} natural transformation with components $\rep {(P_i)}$ from $T$ to the constant functor at \E.
    Thus, for each morphism $s:i\to j$ in \J we have a natural transformation $\rep {(P_j)} (T s) \to \rep {(P_i)}$, satisfying straightforward axioms.
  \end{itemize}
  A morphism of such cocones consists of transformations $P_i\to P'_i$ such that evident squares commute.
\end{defn}

\begin{defn}\label{defn:gen-collage}
  Given a small category \J and a lax functor $T:\J\to\cProf$, define its \textbf{collage} $\coll{T}$ to be the category described as follows.
  \begin{itemize}
  \item Its set of objects is $\coprod_{i\in \J} \mathrm{ob}(Ti)$.
  \item For $i,j\in \J$, $x\in Ti$, and $y\in Tj$, its hom-set is $\coll{T}(x,y) = \coprod_{s:j\to i} Ts(x,y)$.
  \item The category structure is induced by the lax functoriality constraints of $T$.
  \end{itemize}
\end{defn}

In these definitions and the theorem to follow, one can in fact allow \J to be a small \emph{bicategory}, in which case the coproducts in $\coll{T}(x,y)$ must be replaced by a colimit over $\J(j,i)$; but we will not need this.

\begin{thm}\label{thm:gen-collage}
  Any lax functor $T:\J\to\cProf$ admits a universal representable lax cocone with vertex $\coll T$, so that the category of representable lax cocones under $T$ with any vertex \E is naturally isomorphic to the category of functors $\coll T \to \E$.
\end{thm}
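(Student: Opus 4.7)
My strategy is to construct the universal cocone with vertex $\coll{T}$ explicitly and then verify the universal property by direct inspection; the entire result is essentially formal once one recognizes that the definitions of $\coll{T}$ and of a representable lax cocone have been engineered so that the data of the latter, with vertex $\E$, is exactly the data of a functor $\coll{T}\to\E$.

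For the universal cocone, I define $P_i:Ti\to\coll{T}$ on objects by the coproduct inclusion, and on a morphism $f:x\to x'$ of $Ti$ by the composite
\[
Ti(x,x')\longrightarrow T(1_i)(x,x')\hookrightarrow \coll{T}(x,x')=\coprod_{s:i\to i}Ts(x,x'),
\]
using the lax unit constraint $1_{Ti}\to T(1_i)$ followed by the inclusion of the $s=1_i$ summand; functoriality follows from the lax unit axioms. For each $s:i\to j$ in $\J$, define $\sigma_s:\rep{(P_j)}(Ts)\to\rep{(P_i)}$ by the universal property of the defining coend: at $(e,x)\in\coll{T}\op\times Ti$ it is the family of maps $\coll{T}(e,P_j(y))\times Ts(y,x)\to\coll{T}(e,P_i(x))$ sending $(g,f)$ to the composite of $g$ with the $\coll{T}$-morphism $P_j(y)\to P_i(x)$ obtained as the image of $f$ in the $s$-summand of $\coll{T}(P_j(y),P_i(x))=\coprod_{t:i\to j}Tt(y,x)$. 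The cocone axioms, demanding compatibility of $\sigma$ with the lax unit and associator constraints of $T$, then reduce directly to the unit and associativity laws for composition in $\coll{T}$, which are built from exactly these same constraints.

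For the universal property, given a representable lax cocone $(P'_i,\sigma'_s)$ with vertex $\E$, I define a functor $\Phi:\coll{T}\to\E$ by $\Phi(x)=P'_i(x)$ on objects, and on a morphism $(s,f):x\to y$ in $\coll{T}$ (with $s:j\to i$, $f\in Ts(x,y)$, $x\in Ti$, $y\in Tj$) by the image of $(\mathrm{id}_{P'_i(x)},f)$ under the coend inclusion $\E(P'_i(x),P'_i(x))\times Ts(x,y)\to\rep{(P'_i)}(Ts)(P'_i(x),y)$ followed by the $(P'_i(x),y)$-component of $\sigma'_s:\rep{(P'_i)}(Ts)\to\rep{(P'_j)}$, yielding an element of $\E(P'_i(x),P'_j(y))$. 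Functoriality of $\Phi$ is precisely the content of the two cocone coherence axioms. Conversely, a functor $\Phi:\coll{T}\to\E$ yields a cocone by post-composing the universal cocone with $\Phi$, and these two assignments are visibly mutually inverse, because every morphism of $\coll{T}(x,y)$ decomposes uniquely as a pair $(s,f)$ and hence is realized as the image under the universal $\sigma_s$ at $e=P_i(x)$ of the pair $(1_x,f)$. The extension to morphisms of cocones is routine, giving the asserted natural isomorphism of categories.

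The only genuinely technical step is the bookkeeping: tracking the coend formula for profunctor composition together with the direction conventions carefully enough to recognize that the universal $\sigma_s$ literally encodes ``post-composition by the $s$-indexed morphisms in $\coll{T}$.'' Once this identification is in place, the correspondence between functors out of $\coll{T}$ and representable lax cocones under $T$ is just a matter of reading off the definitions.
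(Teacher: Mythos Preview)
Your argument is correct: you construct the universal representable lax cocone explicitly and then unwind the definitions to see that functors $\coll{T}\to\E$ correspond bijectively to representable lax cocones with vertex $\E$, with the cocone axioms matching the unit and associativity of composition in $\coll{T}$.

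The paper takes a different route: rather than spelling out the verification, it cites Street's characterization of lax colimits in $\cProf$ (which is stated for cocones whose components are \emph{left adjoint} profunctors, i.e.\ retracts of representables) and then appeals to Garner--Shulman for the observation that the same argument goes through for honest representables. The paper also offers a one-line alternative: since the coprojections $P_i:Ti\to\coll{T}$ are jointly surjective on objects, they detect representability, so one can deduce the representable version from the left-adjoint version directly. Your approach has the advantage of being self-contained and making transparent exactly why the collage has the claimed universal property; the paper's approach is terser but relies on the reader locating and adapting the cited results. Mathematically the two are the same argument at different levels of explicitness.
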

\begin{proof}
  This almost follows from the proof of~\cite[Proposition 2.2(a), (b), and (e)]{street:cauchy-enr}; the only difference is the latter's use of retracts of representable profunctors instead of representable ones (since the former can be characterized as the left adjoints in \cProf).
  However, as remarked in~\cite[Theorem 16.16]{gs:freecocomp}, 
  essentially the same proof yields our version involving actual representables.
  We could also simply observe that the coprojections $P_i : Ti \to \coll T$ are evidently representable, and detect representability since they are jointly surjective on objects.
\end{proof}

As in \cref{sec:inverse-categories}, collages are also lax colimits in \cProf itself.
A different construction of collages with the universal property of \cref{thm:gen-collage} (but not the one for non-representable cocones) can be found in~\cite{gp:double-limits}.

Given \cref{thm:gen-collage}, it is natural to try to represent abstract bigluing data as a lax functor into \cProf.
For this purpose let \J be the ``free-living retraction'': it has two objects $c$ and $d$ and three nonidentity morphisms $w:c\to d$, $u:d\to c$, and $e:d\to d$, with $uw=1_c$, $wu = e$, $ew = s$, $ue = e$, and $ee=e$.

\begin{lem}
  Abstract bigluing data from \C to \D is equivalently a lax functor $T:\J\to\cProf$, for the above \J, such that
  \begin{itemize}
  \item $Tc = \C$ and $Td=\D$, and
  \item The constraints $1_{\C} \to T(1_c)$, $1_{\D} \to T(1_d)$, and $(Tw)(Tu) \to Te$ are identities.
  \end{itemize}
\end{lem}
\begin{proof}
  Of the remaining data, of course $Tu$ and $Tw$ correspond to $U$ and $W$ respectively, while the constraint $(Tu)(Tw) \to T(uw) = T(1_c)$ corresponds to $\al$.
  We leave it to the reader to check that the rest of the constraints are uniquely determined by these.
\end{proof}

In this case we can unwind \cref{defn:gen-collage} to something much simpler, since most hom-categories of \J are singleton sets, except for $\J(d,d)$ which is a doubleton.
Thus, the \textbf{collage of abstract bigluing data} $(U,W,\al)$ is the category $\coll{\al}$ whose objects are the disjoint union of the objects of \C and \D, with
\begin{align*}
  \coll{\al}(c,c') &= \C(c,c')\\
  \coll{\al}(c,d) &= U(c,d)\\
  \coll{\al}(d,c) &= W(d,c)\\
  \coll{\al}(d,d') &= \D(d,d') \sqcup (\wcolimc{U}{W})(d,d').
\end{align*}
(The coproduct in $\coll{\al}(d,d')$ arises from the doubleton $\J(d,d) = \{1_d,e\}$, with $(\wcolimc{U}{W})$ being $Te = (Tw)(Tu)$.)

\begin{thm}\label{thm:bigluing=gencollage}
  Given abstract bigluing data $(U,W,\al):\C \to \D$ and any complete and cocomplete category \M, we have $\M^{\coll{\al}} \simeq \biglue{\albar}$, where $\albar:(\wcolimc U \blank) \to \wlimc W\blank$ is the induced (concrete) bigluing data between functors $\M^\C\to\M^\D$.
\end{thm}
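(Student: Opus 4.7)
The plan is to invoke the universal property of collages from \cref{thm:gen-collage} and translate between functors $\coll{\al} \to \M$ and objects of $\biglue{\albar}$ via the intermediate notion of representable lax cocones. By \cref{thm:gen-collage}, $\M^{\coll{\al}}$ is equivalent to the category of representable lax cocones with vertex $\M$ under the lax functor $T:\J\to\cProf$ corresponding to $(U,W,\al)$, so it will suffice to produce a natural equivalence between this latter category and $\biglue{\albar}$.

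To unpack such a cocone, I would first extract its object-level data as functors $A := P_c : \C \to \M$ and $B := P_d : \D \to \M$, together with a colax transformation whose components at the generating morphisms $u:d\to c$, $w:c\to d$, and $e:d\to d$ of $\J$ are maps of representable profunctors into $\M$. Using the adjunctions $\rep{P}\adj\corep{P}$ together with the defining universal properties of weighted colimits and limits, the $u$-component $\rep{A}\, U \to \rep{B}$ is equivalent to a morphism $\phi:(\wcolimc{U}{A}) \to B$ in $\M^\D$, and the $w$-component $\rep{B}\, W \to \rep{A}$ is equivalent to a morphism $\gm:B \to \wlimc{W}{A}$ in $\M^\D$. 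The $e$-component is forced by the coherence axiom for $wu = e$ in $\J$ to be the evident composite of the $w$- and $u$-components, and so carries no independent information; the further axioms associated with $ew=w$, $ue=u$, and $ee=e$ then reduce to naturality together with instances already accounted for.

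The only remaining nontrivial axiom is the one imposed by the relation $uw = 1_c$, combined with the lax constraint $\al:(Tu)(Tw) \to T(1_c)$ of the lax functor $T$. Unpacking this axiom in terms of the translations above yields precisely the equation $\gm\phi = \albar_A$, i.e.\ the factorization condition defining objects of $\biglue{\albar}$. A parallel analysis of morphisms of cocones, which consist of natural transformations $A\to A'$ and $B\to B'$ compatible with the colax structure, shows that they correspond bijectively and naturally to morphisms in $\biglue{\albar}$, producing the claimed equivalence of categories.

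The main obstacle is the careful bookkeeping: one must systematically verify that every coherence axiom for the lax functor $T$ reduces either to naturality of $\phi$ and $\gm$, to the determination of the $e$-component from the other two components, or to the single factorization equation, leaving no data or constraints unaccounted for. Each individual verification is routine given the co-Yoneda lemma and the standard end/coend calculus for weighted (co)limits, but there are enough of them that a direct proof not routed through the universal property of \cref{thm:gen-collage} would be quite tedious.
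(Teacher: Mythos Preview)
Your proposal is correct and follows essentially the same route as the paper: invoke \cref{thm:gen-collage} to identify $\M^{\coll{\al}}$ with representable lax cocones under $T$, observe that the $e$-component is forced by $wu=e$ so that only the $u$- and $w$-components remain as data, translate these via the adjunctions $\rep{P}\dashv\corep{P}$ into $\phi$ and $\gamma$, and check that the axiom for $uw=1_c$ becomes the factorization condition $\gamma\phi=\albar_A$. The paper spells out the adjunction step slightly differently (passing from tensor products, which do not commute with $\rep{(-)}$, to cotensor products, which do), but the argument is the same.
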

\begin{proof}
  Given \cref{thm:gen-collage}, it remains to show that $\biglue{\albar}$ is equivalent to the category of representable lax cocones with vertex \M under the corresponding lax functor $T:\J\to\cProf$.
  Such a cocone consists of two functors $P_c :\C\to \M$ and $P_d:\D\to \M$, which of course are exactly the objects of $\M^\C$ and $\M^\D$ that appear in an object of $\biglue{\al}$, along with structure maps that we hope will correspond to a factorization of $\albar: (\wcolimc{U}{P_c}) \to \wlimc{W}{P_c}$ through $P_d$.

  Firstly, since our lax functor $T$ is normal (preserves identities strictly), the structure maps of our lax cocone corresponding to identity morphisms are uniquely determined.
  Moreover, since $(Tw)(Tu) \to T(wu) = Te$ is an identity, the axioms of a lax cocone also ensure that the structure map $\rep {(P_d)} WU \to \rep {(P_d)}$ must be the composite of the two structure maps $\rep {(P_d)} W \to \rep {(P_c)}$ and $\rep {(P_c)} U \to \rep {(P_d)}$; thus only the latter two remain as data.
  These must satisfy the axiom that the composite
  \begin{equation}
    \rep {(P_c)} U W \to \rep {(P_d)} W \to \rep {(P_c)}\label{eq:gen-repcocone-fact}
  \end{equation}
  is equal to $\rep {(P_c)} U W \to \rep {(P_c)} 1_\C \cong \rep {(P_c)}$.
  Expressed in terms of tensor products, this means that
  \[  \wcolim{W}{\wcolimc{U}{\rep {(P_c)}}}{\D} \to \wcolim{W}{\rep{(P_d)}}{\D} \to \rep{(P_c)} \]
  is equal to
  \[ \wcolimc{\wcolim{W}{U}{\D}}{\rep {(P_c)}} \to \wcolimc{\C(\blank,\blank)}{\rep {(P_c)}}\cong{\rep {(P_c)}} .\]
  As in the proof of \cref{thm:gluing=collage}, these tensor products do not commute with $\rep{(\blank)}$, but we can pass across adjunctions to get to cotensor products which do.
  Thus we end up comparing two maps from $\rep{(P_c)}$ to
  \[ \wlim{U}{\wlimc{W}{\rep{(P_c)}}}{\D} \cong \rep{(\wlim{U}{\wlimc{W}{P_c}}{\D})},\]
  hence equivalently two maps from $P_c$ to $\wlim{U}{\wlimc{W}{P_c}}{\D}$, and thus also equivalently two maps from $\wcolimc{U}{P_c}$ to $\wlimc{W}{P_c}$.
  It is straightforward to check that the resulting condition is exactly that appearing in $\biglue{\albar}$, and that
  the other axioms of a lax cocone are automatic.
\end{proof}

In conclusion, given abstract bigluing data $(U,W,\al)$ from \C to \D, if we biglue along the resulting natural transformation between functors $\M^\C \to \M^\D$, the result is always again a functor category, and we have a formula for the diagram shape (the collage $\coll{\al}$).
We are now interested in the categories that can be obtained by transfinitely iterating this construction.

\begin{defn}\label{defn:bistratified}
  The collection of \textbf{bistratified categories of height $\beta$} is defined by transfinite recursion over ordinals $\beta$ as follows.
  \begin{itemize}
  \item The only bistratified category of height $0$ is the empty category.
  \item The bistratified categories of height $\be+1$ are those of the form $\coll{\al}$, where $(U,W,\al):\C\hto \D$ is abstract bigluing data and $\C$ is a stratified category of height $\be$.
  \item At limit ordinals, we take colimits as usual.
  \end{itemize}
\end{defn}

As with inverse categories and stratified categories, we say that an object of a stratified category has \textbf{degree $\de$} if it was added by the step $\C_\de \to \coll{\al_\de}= \C_{\de+1}$.
We write $\C_{=\de}$ for the subcategory $\D_\de$ and call it the $\de^{\mathrm{th}}$ \textbf{stratum}; its objects are those of degree $\de$, but unlike before it is \emph{not} a full subcategory of $\C$.

Since we have morphisms that both increase and decrease degree, it is also not as clear how to identify whether a degree function induces a bistratification.
For this, we begin with the following.

\begin{thm}[Recognition principle for collages]\label{thm:recoll}
  Let $\C\subseteq \E$ be a full subcategory, and let $\D\subseteq\E$ be a not-necessarily-full subcategory whose objects are precisely those not in \C.
  Define $U:\D\hto \C$ and $W:\C\hto \D$ by $U(c,d) = \E(c,d)$ and $W(d,c) = \E(d,c)$, and let $\al:\wcolim{W}{U}{\D} \to \C(-,-)$ be composition in \E, yielding abstract bigluing data from \C to \D.
  Then the induced functor $\coll{\al} \to \E$ is an isomorphism if and only if the following condition holds:
  \begin{equation}\label{eq:recoll}
    \parbox{10cm}{A morphism in \E between objects of \D factors through an object of \C if and only if it does \emph{not} lie in \D, and in this case the category of such factorizations is connected.}\tag{$*$}
  \end{equation}
\end{thm}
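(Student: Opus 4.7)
The strategy is to observe that the induced functor $\Phi\colon\coll{\al}\to\E$ is the identity on objects (by the hypothesis that objects of $\D$ are exactly those of $\E$ not in $\C$), so $\Phi$ is an isomorphism iff it is bijective on each hom-set. Three of the four types of hom-sets in $\coll{\al}$, namely $\coll{\al}(c,c') = \C(c,c')$, $\coll{\al}(c,d) = U(c,d)$, and $\coll{\al}(d,c) = W(d,c)$, are sent by $\Phi$ identically onto the corresponding hom-sets of $\E$, using that $\C\subseteq\E$ is full and that $U,W$ were defined to be the mixed hom-sets of $\E$. So only the $\D$-to-$\D$ case is nontrivial: for $d,d'\in\D$, I must analyze the map
\[ \phi\colon \D(d,d') \sqcup \int^{c\in\C} \E(c,d')\times \E(d,c) \to \E(d,d'), \]
which sends elements of $\D(d,d')$ through the inclusion $\D\hookrightarrow\E$, and the class of a pair $(f\colon d\to c,\,g\colon c\to d')$ to the composite $g\circ f$.

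I would then split bijectivity of $\phi$ into three conditions: (a) surjectivity, i.e., every morphism $d\to d'$ in $\E$ either lies in $\D$ or factors through some $c\in\C$; (b) disjointness of the two summands' images, i.e., no morphism in $\D\hookrightarrow\E$ is also a composite $gf$ with $c\in\C$; and (c) injectivity of $\phi$ restricted to the coend. (Injectivity of the restriction to $\D(d,d')$ is automatic since $\D$ is a subcategory.) Together, (a) and (b) are precisely the biconditional ``a morphism in $\E$ between objects of $\D$ factors through $\C$ iff it does not lie in $\D$,'' which is the first clause of~($*$).

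For (c), I would use the standard identification of the coend $\int^{c\in\C}\E(c,d')\times\E(d,c)$ with the set $\pi_0(\mathrm{Fact}(d,d'))$ of connected components of the factorization category $\mathrm{Fact}(d,d')$ whose objects are triples $(c,f,g)$ with $c\in\C$, $f\colon d\to c$, $g\colon c\to d'$, and whose morphisms $(c,f,g)\to(c',f',g')$ are arrows $h\colon c\to c'$ in $\C$ satisfying $hf=f'$ and $g'h=g$. Each such $h$ preserves the composite $gf$, so every connected component of $\mathrm{Fact}(d,d')$ lies entirely in the fiber over a single morphism $m\in\E(d,d')$ of the ``compose'' map. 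Injectivity of $\phi$ on the coend therefore amounts to each nonempty such fiber (i.e., the category of factorizations of that particular $m$) being connected, which is exactly the second clause of~($*$).

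The one subtle point to verify carefully is this last equivalence between ``injectivity on the coend'' and ``per-morphism connectedness'': it works precisely because any zigzag of morphisms in $\mathrm{Fact}(d,d')$ connecting two factorizations of a fixed $m$ never leaves the fiber over $m$. Once this is in hand, the equivalence (a)+(b)+(c) $\iff$ ($*$) is immediate, and both directions of the theorem follow by assembling these observations.
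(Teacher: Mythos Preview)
Your proposal is correct and follows essentially the same approach as the paper: reduce to bijectivity on $\D$-to-$\D$ hom-sets, split the map from the coproduct $\D(d,d')\sqcup(\wcolimc{U}{W})(d,d')$ into its two components, identify the first clause of~($*$) with joint surjectivity plus disjointness of images, and identify the second clause with injectivity on the coend via the description of the coend as connected components of the factorization category. Your explicit remark that zigzags in the factorization category stay within a single fiber of the composition map is exactly the observation the paper uses when it says the preimage of a fixed $f$ is the quotient of its factorizations by the coend relation.
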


Here by the \textbf{category of factorizations} of a morphism $f:x\to y$, we mean the category whose objects are pairs of morphisms $(h,g)$ such that $hg = f$, and whose morphisms $(h,g) \to (h',g')$ are morphisms $k$ (which we call \textbf{connecting morphisms}) such that $g' = gk$ and $h = h'k$:
\[ \xymatrix{ & \ar[dr]^{h} \ar[dd]^{k} \\ \ar[ur]^g \ar[dr]_{g'} && \\ & \ar[ur]_{h'} } \]
The statement~\eqref{eq:recoll} is about the full subcategory of this on the objects $(h,g)$ such that $\mathrm{dom}(h) = \mathrm{cod}(g)$ lies in \C.

\begin{proof}
  By definition, the functor $\coll{\al} \to \E$ is bijective on objects, and fully faithful except possibly between objects of \D.
  For $d,d'\in\D$, the map $\coll{\al}(d,d') \to \E(d,d')$ has two components, one of which is the inclusion $\D(d,d') \into \E(d,d')$, and the other of which is the map
  \begin{equation}
    \wcolimc{\E(\blank,d')}{\E(d,\blank)} \to \E(d,d').\label{eq:recoll-map}
  \end{equation}
  Now by definition, $\wcolimc{\E(\blank,d')}{\E(d,\blank)}$ is the quotient of the set $\coprod_{c\in\C} \E(c,d')\times \E(d,c)$ by the equivalence relation generated by setting $(h,k g) \sim (h k,g)$ for any $g:d\to c$, $k:c\to c'$, and $h:c'\to d'$, and~\eqref{eq:recoll-map} is induced by composition.
  Thus, the first clause of condition~\eqref{eq:recoll} ensures that the two components of $\coll{\al}(d,d') \to \E(d,d')$ have disjoint images and are jointly surjective.
  Moreover, the preimage of $f:d\to d'$ under~\eqref{eq:recoll-map} is the quotient of the set of factorizations $(h,g)$ of $f$ by this same relation.
  This is exactly the relation generated by morphisms in the category of factorizations, so the second clause of~\eqref{eq:recoll} ensures that the second component of $\coll{\al}(d,d') \to \E(d,d')$ is injective (the first component being obviously so).
\end{proof}

Another way to state this is: consider the collection of morphisms between objects not in \C that do not factor through any object of \C.
If these morphisms contain the identities and are closed under composition, they form a non-full subcategory \D.
In this case, we can define $U$, $W$, and $\al$ as above, and the only remaining part of~\eqref{eq:recoll} to check is that if a morphism between objects of \D factors through \C then the category of such factorizations is connected.\footnote{According to our terminology, the empty category is not connected.}

This leads to a recognition principle for bistratified categories.
First we state some definitions.

\begin{defn}
  Let \C be a category whose objects are assigned ordinal degrees.
  \begin{itemize}
  \item A morphism is \textbf{level} if its domain and codomain have the same degree.
  \item The \textbf{degree} of a factorization $(h,g)$ of a morphism $f$ is the degree of the intermediate object $\mathrm{dom}(h) = \mathrm{cod}(g)$.
  \item A factorization of a morphism $f$ is \textbf{fundamental} if its degree is strictly less than the degrees of both the domain and codomain of $f$.
  \item A morphism is \textbf{basic} if it does not admit any fundamental factorization.
  \end{itemize}
\end{defn}

\begin{thm}\label{thm:bistrat-char}
  A category \C whose objects are assigned ordinal degrees is bistrat\-ified if and only if the following conditions hold.
  \begin{enumerate}
  \item All identities are basic.\label{item:bistrat1}
  \item Basic level morphisms are closed under composition.\label{item:bistrat2}
  \item The category of fundamental factorizations of any non-basic level morphism is connected.\label{item:bistrat3}
  \end{enumerate}
\end{thm}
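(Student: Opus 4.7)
The plan is to translate bistratification directly into iterated applications of the collage recognition principle (\cref{thm:recoll}), with each successor stage of the bistratification corresponding to a single ordinal degree, so that conditions~\ref{item:bistrat1}--\ref{item:bistrat3} are exactly the three pieces of data needed to invoke that principle at every level.

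For the forward direction, I would suppose \C is bistratified with chain $\C_0\subseteq\C_1\subseteq\cdots\subseteq\C$ where $\C_{\de+1}=\coll{\al_\de}$. Each $\C_{\de+1}$ sits as a full subcategory of all later $\C_\eta$ and of \C itself, so the level morphisms at degree $\de$ in \C are precisely those in $\C_{\de+1}$. The explicit description of hom-sets in the collage (\cref{defn:gen-collage}, in the form unwound after \cref{thm:bigluing=gencollage}) splits these level morphisms into those lying in the stratum subcategory $\D_\de=\C_{=\de}$ and those represented by a class in $\wcolimc{U_\de}{W_\de}(d,d')$, i.e.\ morphisms admitting a factorization through an object of degree $<\de$. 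Since $\D_\de$ is a subcategory, identities on degree-$\de$ objects lie in $\D_\de$ and hence are basic, and composites of basic level morphisms are basic; this gives \ref{item:bistrat1} and~\ref{item:bistrat2}. Finally, the equivalence relation defining the tensor product $\wcolimc{U_\de}{W_\de}$ is by construction generated by the connecting morphisms in the category of fundamental factorizations, yielding~\ref{item:bistrat3}.

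For the converse, I would proceed by transfinite induction on the supremum $\be$ of degrees. At a successor stage $\de+1$, let $\C_\de$ be the full subcategory of \C on objects of degree $<\de$ and let $\D_\de$ be the subcategory whose objects are those of degree $\de$ and whose morphisms are the basic level morphisms between them; conditions~\ref{item:bistrat1} and~\ref{item:bistrat2} are exactly what is needed to ensure $\D_\de$ is a subcategory. Define $U_\de(c,d)$ and $W_\de(d,c)$ by restricting the hom-functor of the full subcategory on objects of degree $\le\de$, with $\al_\de$ given by composition. The recognition principle \cref{thm:recoll} then applies: its condition~$(*)$ says that a morphism between degree-$\de$ objects factors through $\C_\de$ iff it is not in $\D_\de$, which is exactly the definition of ``basic'' for level morphisms, and that in this case the category of such factorizations is connected, which is exactly~\ref{item:bistrat3}. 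Hence the full subcategory on objects of degree $\le\de$ is isomorphic to $\coll{\al_\de}$. At a limit ordinal we take the colimit, using that each morphism of \C has domain and codomain of some degrees $\de_1,\de_2<\be$ and so already appears in $\C_{\max(\de_1,\de_2)+1}$.

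The main subtlety I expect is aligning the quantifiers correctly in the recognition step: conditions~\ref{item:bistrat1}--\ref{item:bistrat3} are phrased globally in \C (using the fixed degree function), whereas $(*)$ is phrased locally at the $(\de+1)$-st stage. The reconciliation relies on the observation that fundamental factorizations of a level morphism at degree $\de$ necessarily have intermediate object of degree $<\de$, and hence lie entirely within the full subcategory on degree $\le\de$ objects, so both clauses of $(*)$ are genuinely determined by data present at that single stage. Once this is in hand, the rest of the argument is routine bookkeeping.
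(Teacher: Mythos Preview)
Your proposal is correct and follows essentially the same approach as the paper: transfinite induction on the supremum of degrees, invoking the recognition principle \cref{thm:recoll} at each successor stage with $\D_\de$ taken to be the (non-full) subcategory of basic level morphisms at degree $\de$, and taking colimits at limit ordinals. The paper's proof is simply a more compressed version that handles both directions simultaneously by asserting that conditions~\ref{item:bistrat1}--\ref{item:bistrat3} at degree $\de$ are equivalent, via \cref{thm:recoll}, to the full subcategory on objects of degree $\le\de$ being the collage of abstract bigluing data; your more explicit separation into forward and converse directions, and your remark about aligning the global conditions with the stage-local hypothesis~$(*)$, are welcome elaborations of points the paper leaves implicit.
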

\begin{proof}
  This is analogous to \cref{thm:inverse-char}, using \cref{thm:recoll}.
  We induct on the supremum $\beta$ of the degrees of objects of \C.
  For any $\de\le\be$, let $\C_\de$ be the subcategory of objects of degree $<\de$ and $\E_\de$ the subcategory of objects of degree $\le\de$.

  If either $\be$ is a successor or $\de<\be$, then the supremum of degrees of objects of $\C_\de$ is strictly less than $\be$.
  Thus, by the inductive hypothesis, the given conditions for level morphisms of degree $<\de$ are equivalent to $\C_\de$ being bistratified.
  Now \cref{thm:recoll} implies that the given conditions for level morphisms of degree $\de$ are equivalent to $\E_\de$ being the collage of abstract bigluing data on $\C_\de$.

  Now if $\be$ is a successor, let $\de$ be its predecessor; then $\C \cong \E_\de$ and we are done.
  If $\be$ is a limit, then $\C$ is the colimit of $\C_\de$ for $\de<\be$, and we are done again.
\end{proof}

Note that condition~\ref{item:bistrat1} implies in particular that no objects of distinct degrees can be isomorphic.
In other words, all isomorphisms in a bistratified category are level.
Moreover, all isomorphisms are also basic, since composing one morphism in a fundamental factorization of an isomorphism $f$ with the inverse of $f$ would yield a fundamental factorization of an identity.

Unlike for inverse categories and stratified categories, a bistratified category does not automatically induce a model structure on diagram categories; we need additional conditions to ensure that the functors involved in each bigluing are well-behaved.
This is the role of ``Reedy-ness'' in all its forms, which we will introduce in the next section.

We end this section with a few useful observations about bistratified categories.
The first is that the notion is self-dual.

\begin{lem}\label{thm:bistrat-opp}
  If \C is bistratified, so is $\C\op$ with the same degree function.
\end{lem}
\begin{proof}
  We can induct on height, or just observe that the conditions of \cref{thm:bistrat-char} are self-dual.
\end{proof}

The second is that ``objectwise functors between diagram categories over bistratified domains can be constructed inductively''.
More precisely, suppose $S:\M\to\N$ is a functor; we define a functor $S^\C:\M^\C \to\N^\C$ inductively as follows:
\begin{itemize}
\item At successor stages, we apply \cref{thm:cell-mor}, where $\xi$ and $\ze$ are the canonical comparison maps $\wcolimc{U}{S X} \to S(\wcolimc{U}{X})$ and $S(\wlimc{W}{X})\to \wlimc{W}{S X}$.
\item At limit stages, we simply take limits.
\end{itemize}

\begin{lem}\label{thm:bistrat-postcomp}
  If \C is bistratified and $S:\M\to\N$, then $S^\C:\M^\C \to\N^\C$ is isomorphic to applying $S$ objectwise, a.k.a.\ postcomposition with $S$.
\end{lem}
\begin{proof}
  By definition, $S^\C$ acts as $S$ on each stratum.
  Tracing through the construction of its functorial action from $\xi$ and $\ze$, we see that it also acts as $S$ on non-basic level morphisms and morphisms between strata.
\end{proof}

We also have a parametrized version of this.
Given $S:\V\times\M\to\N$, we define $S^\C:\V\times\M^\C \to\N^\C$ in a similar inductive way using \cref{thm:cell-paramor}.

\begin{lem}\label{thm:bistrat-parapostcomp}
  If \C is bistratified and $S:\V\times \M\to\N$, then $S^\C:\V\times \M^\C \to\N^\C$ is naturally isomorphic to the functor which applies $S$ objectwise.\qed
\end{lem}

\section{Reedy categories}
\label{sec:reedy}

In this section, we consider bistratified categories in which each stratum is a discrete set.
The following characterization of such categories follows immediately from \cref{thm:bistrat-char}.

\begin{thm}\label{thm:bistrat-discrete}
  A category \C whose objects are assigned ordinal degrees is bistratified with discrete strata if and only if
  \begin{enumerate}
  \item The basic level morphisms are exactly the identities, and
  \item The category of fundamental factorizations of any non-basic level morphism is connected.\label{item:bistrat-discrete2}
  \end{enumerate}
\end{thm}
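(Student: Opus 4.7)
The plan is to read off the equivalence directly from \cref{thm:bistrat-char}, with the only extra ingredient being a correct identification of what ``discrete strata'' means in terms of basic level morphisms.

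First I would establish that, in a bistratified category, the morphisms of the stratum $\C_{=\de} = \D_\de$ are \emph{precisely} the basic level morphisms of degree $\de$. This is built into the collage construction: at stage $\de$ we have
\[ \coll{\al_\de}(d,d') = \D_\de(d,d') \sqcup (\wcolimc{U_\de}{W_\de})(d,d'), \]
and the recognition principle (\cref{thm:recoll}) tells us that the first summand consists exactly of those morphisms $d\to d'$ that do not factor through any object of lower degree, i.e.\ the basic ones, while the second summand consists of the non-basic level morphisms. Consequently, the stratum $\C_{=\de}$ is discrete if and only if the only basic level morphisms of degree $\de$ are identities.

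Given this, the equivalence is a matter of comparing lists of conditions. Bistratified with discrete strata means the three conditions of \cref{thm:bistrat-char} hold \emph{and} every stratum is discrete, which by the previous paragraph is the same as saying that basic level morphisms are identities. Condition~(1) of \cref{thm:bistrat-discrete} combines ``identities are basic'' (condition~(1) of \cref{thm:bistrat-char}) with ``basic level morphisms are identities'' (the discreteness of strata). Once we know basic level morphisms are all identities, their closure under composition (condition~(2) of \cref{thm:bistrat-char}) is automatic. Condition~(2) of \cref{thm:bistrat-discrete} is verbatim condition~(3) of \cref{thm:bistrat-char}. So the two lists of conditions are equivalent in both directions, and there is no additional work to do.

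The only step that is not entirely bookkeeping is the identification of basic level morphisms with stratum morphisms, which is where I expect the reader might pause; but this is an immediate application of \cref{thm:recoll} at each successor stage of the bistratification, so even it should dispatched in a sentence.
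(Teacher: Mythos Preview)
Your proposal is correct and matches the paper's approach: the paper simply states that this characterization ``follows immediately from \cref{thm:bistrat-char}'' without giving any further proof, and your argument is precisely the unpacking of that claim. The identification of stratum morphisms with basic level morphisms via \cref{thm:recoll} is the right (and only) point worth making explicit.
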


Our goal is to identify conditions ensuring that diagrams over such a category inherit an explicit model structure by iterating \cref{thm:model}.
It will be convenient to describe this model structure in terms of matching object functors, as in \cref{sec:inverse-categories}, and their dual \emph{latching} object functors.

If \C is bistratified and $x\in\C$ has degree $\de$, we define the \textbf{matching object functor} $M_x : \M^\C \to \M$ to be the composite
\[ \M^\C \xto{\iota_\de^*} \M^{\C_\de} \xto{\wlim{\C(x,\blank)}{\blank}{\C_\de}} \M, \]
where $\C_\de$ is the full subcategory of objects of degree $<\de$.
As in \cref{sec:inverse-categories}, $\wlim{\C(x,\blank)}{\blank}{\C_\de}$ is the $x$-component of the weighted limit functor $\M^\C \to \M^I$ used in the $\de^{\mathrm{th}}$ step of bigluing; the matching object extends this to a functor on $\M^\C$.
This is again isomorphic to a weighted limit over all of \C:
\[ M_x A \cong \wlimc{\partial_\de\C(x,\blank)}{A},\]
with weight defined by
\[ \partial_\de\C(x,y) = \wcolim{\C(\blank,y)}{\C(x,\blank)}{\C_\de}. \]
As in \cref{sec:inverse-categories}, if $y\in\C_\de$, the co-Yoneda lemma gives $\partial_\de\C(x,y)\cong\C(x,y)$.
Now, however, in contrast to the previous situation, if $\deg(y)\ge\de$ there can still be nontrivial morphisms into it from objects of $\C_\de$.

Concretely, the elements of $\partial_\de\C(x,y)$ are equivalence classes of composable pairs $x\to z\to y$ where $\deg(z)<\de$, with the equivalence relation being generated by morphisms $z\to z'$ making the two triangles commute.
In particular, we have a natural map $\partial_\de\C(x,y) \to \C(x,y)$ given by composition, and the fiber of this map over $f:x\to y$ is the set of connected components of the category of factorizations of $f$ through objects of degree $<\de$.

Dually, we define the \textbf{latching object functor} $L_x : \M^\C \to \M$ to be the composite
\[ \M^\C \xto{\iota_\de^*} \M^{\C_\de} \xto{\wcolim{\C(\blank,x)}{\blank}{\C_\de}} \M, \]
which is an extension of the $x$-component of the weighted colimit functor used in the $\de^{\mathrm{th}}$ step of bigluing.
This is isomorphic to a weighted colimit over all of \C:
\[ L_x A \cong \wcolimc{\partial_\de\C(\blank,x)}{A},\]
where $\partial_\de\C(\blank,\blank)$ is the \emph{same} bifunctor as for the matching object, only now regarded as a weight for colimits with the second variable fixed.

We now observe that wfs are automatically inherited by diagram categories over any bistratified \C.

\begin{thm}\label{thm:bistrat-disc-wfs}
  If \C is bistratified with discrete strata, then for any complete and cocomplete category \M with a wfs (or pre-wfs), the category $\M^\C$ inherits a \textbf{Reedy wfs} (or pre-wfs) in which
  \begin{itemize}
  \item $A\to B$ is in \R iff the induced map $A_x \to M_x A \times_{M_x B} B_x$ in \M is in \R for all $x\in \C$.
  \item $A\to B$ is in \L iff the induced map $L_x B \sqcup_{L_x A} A_x \to B_x$ in \M is in \L for all $x\in \C$.
  \end{itemize}
\end{thm}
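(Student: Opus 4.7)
The plan is to induct on the height $\beta$ of the bistratification of $\C$. At height $0$, $\C$ is empty, $\M^\C$ is the terminal category, and the claim is trivial. For the limit case, by~\eqref{eq:cotower} and \cref{thm:clovenlim} applied to the continuous tower of restriction functors $\M^{\C_\de} \to \M^{\C_{\de'}}$ (which are strict for the inductively constructed Reedy (pre-)wfs), we obtain $\M^\C = \lim_{\de<\be} \M^{\C_\de}$ as a cloven (pre-)wfs; the classes $\L$ and $\R$ are defined stratum by stratum, matching the Reedy description since matching and latching objects at $x$ of degree $\de$ only depend on the restriction of $A$ and $B$ to $\C_\de \subseteq \C_{\de+1}$.

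For the successor case, write $\C_{\be+1} = \coll{\al_\be}$ where $(U,W,\al_\be)$ is abstract bigluing data from $\C_\be$ to a discrete set $I$ (this is where the discrete-strata hypothesis is used). By \cref{thm:bigluing=gencollage}, $\M^{\C_{\be+1}} \simeq \biglue{\albar_\be}$, where $\albar_\be$ is a natural transformation from $F = (\wcolimc{U}{\blank})$ to $G = \wlimc{W}{\blank}$, both viewed as functors $\M^{\C_\be} \to \M^I$. Since $I$ is discrete, $\M^I$ is simply a product of copies of $\M$, which inherits the given (pre-)wfs pointwise. The functor $F$ is cocontinuous with right adjoint $Y \mapsto Y^U$, and $G$ is continuous with left adjoint $Y \mapsto Y \cpw W$, so \cref{thm:wfs} (for wfs) or \cref{thm:prewfs} (for pre-wfs) applies and equips $\biglue{\albar_\be}$ with a (pre-)wfs, cloven canonically from the cleavages of $\M^{\C_\be}$ and $\M^I$, and such that the forgetful functor to $\M^{\C_\be}$ is strict.

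The key step is to identify this biglued (pre-)wfs with the claimed Reedy one. By the description in \cref{thm:wfs}, a morphism $A \to B$ in $\biglue{\albar_\be}$ is in $\L$ iff its restriction to $\M^{\C_\be}$ is in the Reedy $\L$ and the induced map $F(B|_{\C_\be}) \sqcup_{F(A|_{\C_\be})} A|_I \to B|_I$ is in the pointwise $\L$ on $\M^I$. By the inductive hypothesis, the first condition is precisely the Reedy condition at all $y \in \C_\be$. For $x \in I$ of degree $\be$, the component of $F$ at $x$ is the weighted colimit $\wcolimc{U(\blank,x)}{\blank} = \wcolim{\C(\blank,x)}{\blank}{\C_\be}$, which by definition is $L_x(\blank)$; so the second condition is exactly that $L_x B \sqcup_{L_x A} A_x \to B_x$ is in $\L$ in $\M$ for all $x$ of degree $\be$. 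The analogous argument for $\R$-maps using $G$ and $M_x$ is dual.

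The main obstacle is bookkeeping: one must carefully check that the matching and latching object functors, as defined via restriction to $\C_\de$ followed by the weighted (co)limit along $\partial_\de\C$, agree with the functors $F$ and $G$ appearing in the bigluing at step $\de$, and that this identification is compatible under the tower of restrictions in the limit step (i.e., that the forgetful functors $\M^\C \to \M^{\C_\de}$ are strict for the (pre-)wfs we construct). Both follow from the observation that for $x$ of degree $\de$, $M_x$ and $L_x$ depend only on the restriction to $\C_\de$, together with the strictness built into \cref{thm:wfs,thm:prewfs}.
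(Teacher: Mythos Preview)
Your proposal is correct and follows essentially the same approach as the paper: induction on height, with successor stages handled by \cref{thm:wfs} (or \cref{thm:prewfs}) applied to the bigluing identification from \cref{thm:bigluing=gencollage} using the objectwise (pre-)wfs on $\M^I$, and limit stages by \cref{thm:clovenlim} together with the strictness of the forgetful functor in \cref{thm:wfs}. You have in fact spelled out more detail than the paper does, particularly the identification of the biglued $\L$- and $\R$-classes with the Reedy description via the definitions of $L_x$ and $M_x$.
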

\begin{proof}
  By induction.
  Successor stages follow from \cref{thm:wfs} (or \cref{thm:prewfs}), using the fact that \L-maps and \R-maps in $\M^I$ are objectwise.
  Limit stages follow from \cref{thm:clovenlim} and strictness of the forgetful functor in \cref{thm:wfs}.
\end{proof}

The following observation will also be useful.

\begin{lem}\label{thm:reedy-wfs-opp}
  If \C is bistratified with discrete strata, then the Reedy (pre-)wfs on $\M^{\C\op}$ is the opposite of the Reedy (pre-)wfs on $(\M\op)^\C$.
\end{lem}
\begin{proof}
  We induct on height.
  Limit stages are easy as usual.
  For successor stages, if $W:\C\to\nSet$ is a weight, then modulo the isomorphism $(\M\op)^\C \cong (\M^{\C\op})\op$, the $W$-weighted limit functor of $\M\op$
  \[ \wlimc W \blank : (M^{\C\op})\op \to \M\op \]
  can be identified with the opposite of the $W$-weighted \emph{colimit} functor of \M.
  Thus, if $(U,W,\al)$ is abstract bigluing data from \C to $I$, then $(W,U,\al)$ is also such from $\C\op$ to $I$.
  The matching and latching objects get interchanged, and so the wfs of \cref{thm:bistrat-disc-wfs} gets dualized.
\end{proof}

In particular, \cref{thm:bistrat-disc-wfs} implies that if \C is bistratified with discrete strata and \M is a model category, then $\M^\C$ inherits two wfs.
The question is now under what conditions they fit together into a model structure.

By \cref{thm:model}, this will happen if each functor $({\wcolim{\C(\blank,x)}{\blank}{\C_\de}})$ takes acyclic cofibrations to couniversal weak equivalences and each functor $\wlim{\C(x,\blank)}{\blank}{\C_\de}$ takes acyclic fibrations to universal weak equivalences.
The following lemma yields a sufficient condition for this.
Recall from \cref{thm:copower} that the wfs (injections, surjections) on $\nSet$ is ``universally enriching''; our present observation is that this enrichment lifts to weighted limits and colimits, when all diagram categories are given their Reedy (pre-)wfs.

\begin{lem}\label{thm:reedy-wfs}
  For any bistratified category \C with discrete strata and any complete and cocomplete category \M with a wfs,
  \begin{enumerate}
  \item The weighted limit $(\nSet^{\C})\op \times \M^\C \to \M$ is a right wfs-bimorphism.\label{item:reedywfs1}
  \item The weighted colimit $\nSet^{\C\op} \times \M^\C \to \M$ is a left wfs-bimorphism.\label{item:reedywfs2}
  \end{enumerate}
  (All diagram categories are equipped with their Reedy wfs.)
\end{lem}
\begin{proof}
  We prove~\ref{item:reedywfs1}; the proof of~\ref{item:reedywfs2} is dual.
  It will suffice to show that the left adjoint $\nSet^\C \times \M \to \M^\C$ is a left wfs-bimorphism.
  Note that this left adjoint is simply the objectwise copower, which by \cref{thm:bistrat-parapostcomp} can be constructed inductively using \cref{thm:cell-paramor}.
  Thus, we may use induction on the height of \C.

  The limit step follows from \cref{thm:clovenlimmor}.
  For the successor step, we use \cref{thm:wfs-paramor-pres}.
  The inductive hypothesis says that $S$ is a left wfs-bimorphism, while $T$ is such by \cref{thm:copower}, and $\xi$ is an isomorphism since the copower functor is cocontinuous in each variable.
\end{proof}

Recall from \cref{sec:model} that in the context of a wfs $(\L,\R)$, we say $X$ is an \emph{\L-object} if the map $\emptyset\to X$ is in \L.
Thus, \cref{thm:reedy-wfs} implies that if $U$ and $W$ are \L-objects in $\nSet^{\C\op}$ and $\nSet^\C$ respectively, and \M is a model category, then $(\wcolimc{U}{\blank})$ and $\wlimc{W}{\blank}$ are left and right Quillen respectively, so that \cref{thm:model} can be applied.
(Note that this condition treats $U$ and $W$ essentially symmetrically, corresponding to the fact that the notion of Reedy category is self-dual; this will no longer be the case for generalized Reedy categories in \cref{sec:auto}.)

In fact, this sufficient condition is also \emph{necessary}.
To show this, we begin with the following.

\begin{lem}\label{thm:quillen-almostreedy}
  If \C is bistratified with discrete strata and $U:\C\op\to\nSet$, the following are equivalent.
  \begin{enumerate}
  \item For any \M with a wfs, $(\wcolimc{U}{\blank}):\M^\C\to\M$ preserves \L-maps.\label{item:qar1}
  \item $U$ is an \L-object in $\nSet^{\C\op}$.\label{item:qar2}
  \end{enumerate}
  Similarly, $W$ is an \L-object in $\nSet^{\C}$ if and only if $\wlimc{W}{\blank}$ preserves \R-maps for any such \M.
\end{lem}
\begin{proof}
  \cref{thm:reedy-wfs} shows that~\ref{item:qar2} implies~\ref{item:qar1}.
  To show~\ref{item:qar1} implies~\ref{item:qar2}, we induct on the height of \C.

  As usual, let $\C_\de$ be the full subcategory of objects of degree $<\de$, with $\iota_\de:\C_\de \to \C$ the inclusion.
  Note that $\iota_\de^* : \M^\C \to \M^{\C_\de}$ preserves matching and latching objects at objects of degree $<\de$.
  In particular, $\iota_\de^* : \M^\C \to \M^{\C_\de}$ preserves \R-maps, so its left adjoint $\lan_{\iota_\de}: \M^{\C_\de} \to \M^\C$ preserves \L-maps.
  Therefore, if~\ref{item:qar1} holds for \C and $U$, the composite
  \[ \M^{\C_\de} \xto{\lan_{\iota_\de}} \M^\C \xto{\wcolimc{U}{\blank}} \M \]
  also preserves \L-maps.
  But this composite is isomorphic to $(\wcolim{\iota_\de^* U}{\blank}{\C_\de})$.

  Thus, if $\C_\de$ has height strictly less than that of $\C$, the inductive hypothesis implies that $\iota_\de^* U$ is an \L-object in $\nSet^{\C\op}$, and so $L_x U \to U_x$ is an \L-map for any $x$ of degree $<\de$.
  If the height of \C is a limit ordinal, then as $\de$ varies this includes all objects $x\in \C$, so we are done.
  Otherwise, $\C$ has height $\be+1$, and it remains only to show that $L_x U \to U_x$ is an \L-map (i.e.\ an injection) when $x$ has degree $\be$.

  Take $\M = \nSet$ with its usual (injections, surjections) wfs.
  Then we have the representable functor $\C(x,\blank)\in \nSet^\C$, and $\wcolimc U {\C(x,\blank)} \cong U_x$ by the co-Yoneda lemma.
  We also have $\partial_\be\C(x,\blank) \in \nSet^\C$, with $\wcolimc U {\partial_\be\C(x,\blank)} \cong L_x U$ by definition.
  Thus, it will suffice to show that $\partial_\be\C(x,\blank) \too \C(x,\blank)$ is an \L-map in $\nSet^\C$.
  In other words, we must show that the induced map
  \begin{equation}
    \partial_\be\C(x,y) \sqcup_{L_y \partial_\be\C(x,\blank)} L_y \C(x,\blank) \too \C(x,y)\label{eq:reedy-char-po}
  \end{equation}
  is an injection for any $y\in \C$.

  There are two cases: either $\deg(y)<\be$ or $\deg(y) = \be$.
  If $\deg(y)<\be$, then $\partial_\be\C(x,y) \cong \C(x,y)$, and likewise $L_y \partial_\be\C(x,\blank) \cong L_y \C(x,\blank)$.
  Thus, in this case~\eqref{eq:reedy-char-po} is an isomorphism.

  If $\deg(y) = \be$, then the map $\partial_\be\C(x,y) \to \C(x,y)$ is an injection by \cref{thm:bistrat-discrete} (in fact, it is an isomorphism unless $x=y$).
  We also have $L_y \C(x,\blank) \cong \partial_\be\C(x,y)$.
  Finally, we have
  \[ L_y \partial_\be\C(x,\blank) \cong \wcolim{\partial_\be\C(\blank,y)}{\partial_\be\C(x,\blank)}{\C_\be}. \]
  But since $\partial_\be\C(x,z) \cong \C(x,z)$ and $\partial_\be\C(z,y) \cong \C(z,y)$ for any $z\in\C_\be$, this is just $\partial_\be\C(x,y)$ again.
  So in this case,~\eqref{eq:reedy-char-po} is just the inclusion $\partial_\be\C(x,y) \into \C(x,y)$.

  The dual statement about $W$ follows formally by \cref{thm:reedy-wfs-opp}.
\end{proof}

\cref{thm:quillen-almostreedy} implies that if $(\wcolim{U}{\blank}{\C}):\M^\C\to\M$ takes acyclic cofibrations to acyclic cofibrations for any model category \M, then $U$ is an \L-object, and dually for $W$.
We would like to weaken the hypotheses to require only that $(\wcolim{U}{\blank}{\C})$ takes acyclic cofibrations to couniversal weak equivalences.
In general, not every couniversal weak equivalence is an acyclic cofibration, but this is true in a few model categories, such as the following.

Recall that the category \nCat has a canonical model structure, in which the weak equivalences are the equivalences of categories and the cofibrations are the functors that are injective on objects.
Let $\codisc:\nSet\to\nCat$ denote the functor which sends a set to the contractible groupoid on that set.

\begin{lem}\label{thm:acof-cat}
  Every couniversal weak equivalence in \nCat is an acyclic cofibration.
\end{lem}
\begin{proof}
  I am indebted to Karol Szumi\l{}o for pointing out that the proof of~\cite[A Somewhat Less Trivial Lemma]{sp:canonical} can be adapted to prove this lemma.
  
  Suppose $f:A\to B$ is a couniversal weak equivalence, and suppose for contradiction there were objects $x,y\in A$ with $x\neq y$ but $f(x)=f(y)$.
  Since $f$ is an equivalence, there is a unique isomorphism $e:x\cong y$ with $f(e) = \id$.
  Let $E=\codisc 2$ be the contractible groupoid with two objects $a$ and $b$; then there is some functor $g:A\to \codisc 2$ with $g(x) = a$ and $g(y)=b$.
  Let $C$ be a category containing an object $c$ with a nonidentity automorphism $p:c\cong c$; then there is a unique functor $h:E\to C$ taking the unique isomorphism $a\cong b$ to $p$.
  The composite $hg:A\to C$ then satisfies $hg(e) = p$.

  Let $D$ be the pushout of $f$ and $hg$.
  The induced functor $C\to D$ must map the nonidentity automorphism $p$ to an identity, since $f$ maps $e$ to an identity.
  But it must also be an equivalence of categories, since $f$ is a couniversal weak equivalence, giving a contradiction.
\end{proof}

Putting this all together, we have:

\begin{thm}\label{thm:almost-reedy}
  Suppose \C is bistratified with discrete strata, and that for any model category \M, the two wfs on $\M^\C$ form a model structure with the objectwise weak equivalences.
  Let $U:\C\op\to\nSet$ and $W:\C\to\nSet$, and let $\nSet^{\C\op}$ and $\nSet^{\C}$ have their Reedy wfs induced from the (injections, surjections) wfs on \nSet.
  \begin{enumerate}
  \item $({\wcolimc{U}{\blank}}):\M^\C\to\M$ takes Reedy acyclic cofibrations to couniversal weak equivalences for all model categories \M if and only if $U$ is an \L-object in $\nSet^{\C\op}$.\label{item:ar1}
  \item $\wlimc{W}{\blank}:\M^\C\to\M$ takes Reedy acyclic fibrations to universal weak equivalences for all model categories \M if and only if $W$ is an \L-object in $\nSet^\C$.\label{item:ar2}
  \end{enumerate}
\end{thm}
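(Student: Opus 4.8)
Both directions of the first assertion rest on \cref{thm:quillen-almostreedy} and \cref{thm:acof-cat}, and the second assertion is formally dual to the first --- replace colimits by limits and $\nSet^{\C\op}$ by $\nSet^\C$, replace ``couniversal'' by ``universal'' weak equivalence, and use the dual of \cref{thm:acof-cat}, which holds because the canonical model structure on \nCat is carried to itself by $C\mapsto C\op$ --- so I only discuss the first. For the ``if'' direction, suppose $U$ is an $\L$-object in $\nSet^{\C\op}$. Then $(\wcolimc{U}{\blank})\colon\M^\C\to\M$ preserves $\L$-maps for every wfs on \M, by \cref{thm:quillen-almostreedy} (whose easy direction is \cref{thm:reedy-wfs}); applying this to the acyclic-cofibration--fibration wfs of a model category \M shows that $(\wcolimc{U}{\blank})$ carries Reedy acyclic cofibrations in $\M^\C$ to acyclic cofibrations in \M, and every acyclic cofibration is a couniversal weak equivalence.

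For the ``only if'' direction, suppose $(\wcolimc{U}{\blank})$ carries Reedy acyclic cofibrations to couniversal weak equivalences for every model category \M. The standing hypothesis lets me take $\M=\nCat$ with its canonical model structure, and \cref{thm:acof-cat} then improves the conclusion: $(\wcolimc{U}{\blank})\colon\nCat^\C\to\nCat$ carries Reedy acyclic cofibrations to acyclic cofibrations. I must deduce that $U$ is an $\L$-object in $\nSet^{\C\op}$, i.e.\ that $L_xU\to U_x$ is an injection for every $x$. For this I would re-run the induction on $\mathrm{height}(\C)$ used in the proof of \cref{thm:quillen-almostreedy}: precomposition with $\lan_{\iota_\de}$ reduces the lower strata to $\C_\de$, leaving the case of $x$ of maximal degree $\be$. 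To reach the injectivity of $L_xU\to U_x$ from the \nCat-statement I would use the adjoint string $\mathrm{disc}\dashv\mathrm{ob}\dashv\codisc$ relating \nSet and \nCat: the object-set functor $\mathrm{ob}$ preserves all limits and colimits, hence commutes with weighted colimits --- so $\mathrm{ob}\circ(\wcolimc{U}{\blank})\cong(\wcolimc{U}{\blank})\circ\mathrm{ob}^\C$ by \cref{thm:bistrat-postcomp} --- and it sends acyclic cofibrations to injections. Thus it would suffice to exhibit the Reedy $\L$-map $\partial_\be\C(x,\blank)\to\C(x,\blank)$ of $\nSet^\C$, whose image under $(\wcolimc{U}{\blank})$ is precisely $L_xU\to U_x$, as the object-set of some Reedy acyclic cofibration in $\nCat^\C$, and then push it through $(\wcolimc{U}{\blank})$ and $\mathrm{ob}$.

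The main obstacle is this last realization. The map $\partial_\be\C(x,\blank)\to\C(x,\blank)$ is an isomorphism at every object except $x$, where it is the inclusion omitting exactly the basic endomorphism $\id_x$; hence neither $\mathrm{disc}$ nor $\codisc$ applied objectwise turns it into an objectwise weak equivalence at $x$, so the naive candidates are not Reedy acyclic cofibrations. Instead one must build a \nCat-valued diagram that is objectwise equivalent to a discrete diagram but whose weighted colimit, through the coend relations that force the class of a triple $(z,u,\xi)$ to agree with that of $\bigl(x,\,U(\xi)u,\,\id_x\bigr)$, manufactures a nontrivial automorphism precisely when two distinct elements of $L_xU$ share an image in $U_x$. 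Since $(\wcolimc{U}{\blank})$ is then obliged to send the resulting Reedy acyclic cofibration to a weak equivalence, which in \nCat is fully faithful, no such automorphism can appear; so $L_xU\to U_x$ is injective after all. Verifying that this construction really is a Reedy acyclic cofibration, and that the manufactured automorphism genuinely obstructs full faithfulness, is where the work lies.
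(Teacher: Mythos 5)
The ``if'' direction and the overall strategy for the converse (take $\M=\nCat$, use \cref{thm:acof-cat}, reduce to a set-level injectivity statement via the object-set functor) match the paper, but your ``only if'' argument has a genuine gap at exactly the point you flag. You correctly observe that the specific Reedy \L-map $\partial_\be\C(x,\blank)\to\C(x,\blank)$ cannot simply be fed through $\codisc$ or $\mathrm{disc}$, since at the object $x$ its complement is $\{\id_x\}$ and $\partial_\be\C(x,x)$ may be empty, so the result need not be an objectwise equivalence; but the replacement you promise --- a bespoke $\nCat$-valued diagram whose weighted colimit ``manufactures a nontrivial automorphism'' witnessing non-injectivity of $L_xU\to U_x$ --- is never constructed, and it is precisely the content of the theorem, not a routine verification. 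The paper's resolution is different and much simpler: by the proof of \cref{thm:quillen-almostreedy} it suffices to show that $(\wcolimc{U}{\blank})\colon\nSet^\C\to\nSet$ preserves \emph{arbitrary} Reedy \L-maps, so one takes any \L-map $i\colon A\to B$ in $\nSet^\C$ and replaces it by $j=i\sqcup\id\colon A\sqcup 1\to B\sqcup 1$, which is still an \L-map and is now objectwise a map of \emph{nonempty} sets. Then $\codisc j$ is a Reedy cofibration (the object-set functor is cocontinuous and reflects cofibrations) and an objectwise equivalence (a map of nonempty codiscrete categories), hence --- and this is where the standing hypothesis that the two wfs form a model structure with objectwise weak equivalences is used --- a Reedy acyclic cofibration; the hypothesis on $U$ plus \cref{thm:acof-cat} then makes $\wcolimc{U}{\codisc j}$ an acyclic cofibration in $\nCat$, in particular injective on objects, and since the object-set functor preserves colimits, $\wcolimc{U}{j}$, hence $\wcolimc{U}{i}$, is an injection. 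The ``add a disjoint point'' move is the missing idea; without it (or some substitute) your outline does not go through.

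A secondary point: your justification of the dual statement~\ref{item:ar2} is shakier than needed. The functor $C\mapsto C\op$ \emph{preserves} the cofibrations and fibrations of the canonical model structure on $\nCat$ (injective-on-objects functors and isofibrations are each closed under op), so it does not convert \cref{thm:acof-cat} into a statement about universal weak equivalences and acyclic fibrations. The clean dualization, which is what the paper means by ``the proof of~\ref{item:ar2} is dual'' (and what it makes explicit later in \cref{thm:almost-creedy}), is to apply part~\ref{item:ar1} to $\C\op$ and to model categories of the form $\M\op$ --- in effect taking $\M=\nCat\op$ --- using \cref{thm:reedy-wfs-opp} and \cref{thm:bistrat-opp} to translate the Reedy wfs and the bistratification; this invokes \cref{thm:acof-cat} itself rather than any dual of it.
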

\begin{proof}
  We prove~\ref{item:ar1}; the proof of~\ref{item:ar2} is dual.
  \cref{thm:reedy-wfs} implies directly that if $U$ is an \L-object in $\nSet^{\C\op}$. then $({\wcolimc{U}{\blank}}):\M^\C\to\M$ preserves acyclic cofibrations, hence takes them to couniversal weak equivalences.
  Thus it remains to prove the converse.

  By the proof of \cref{thm:quillen-almostreedy}, it suffices to show that $({\wcolimc{U}{\blank}}):\nSet^\C\to\nSet$ preserves \L-maps.
  Let $i:A\to B$ be an \L-map in $\nSet^\C$, and let $j= i\sqcup \id:A\sqcup 1 \to B\sqcup 1$.
  Since weighted colimits preserve coproducts, we have $({\wcolimc{U}{j}})\cong (\wcolimc{U}{i}) \sqcup \id$.
  Thus, $\wcolimc{U}{i}$ is an \L-map (i.e.\ an injection) if and only if ${\wcolimc{U}{j}}$ is.
  
  Consider now the map $\codisc j$ in $\nCat^\C$.
  Since the set-of-objects functor $\nCat\to\nSet$ is cocontinuous and reflects cofibrations, $\codisc j$ is a Reedy cofibration.
  Since the domain and codomain of $j$ are both nonempty (this is why we passed from $i$ to $j$), $\codisc j$ is an objectwise weak equivalence.
  And since $\nCat^\C$ is, by assumption, a model category with the Reedy cofibrations and objectwise weak equivalences, $\codisc j$ is a Reedy acyclic cofibration, i.e.\ an \L-map for the Reedy wfs induced by the (acyclic cofibrations, fibrations) wfs on \nCat.

  By assumption on $U$, therefore, ${\wcolimc{U}{\nabla j}}$ is a couniversal weak equivalence in \nCat.
  But by \cref{thm:acof-cat}, it is therefore an acyclic cofibration, and in particular injective on objects.
  Since the set-of-objects functor $\nCat\to\nSet$ preserves colimits, ${\wcolimc{U}{j}}$ is also an injection, as desired.
\end{proof}

In conclusion, once we have \cref{thm:model} and the idea of iterating it, the following definition is really inevitable.

\begin{defn}\label{defn:almost-reedy}
  A bistratified category \C with discrete strata is \textbf{almost-Reedy} if for any object $x$ of degree $\be$, the hom-functors $\C(x,\blank) \in \nSet^{\C_\be}$ and $\C(\blank,x) \in \nSet^{\C_\be\op}$ are \L-objects.
\end{defn}

\begin{thm}\label{thm:reedy-model}
  Let \C be almost-Reedy.
  Then for any model category \M, the category $\M^\C$ inherits a model structure such that
  \begin{itemize}
  \item The weak equivalences are objectwise.
  \item $A\to B$ is a fibration (resp.\ acyclic fibration) iff for all $x\in \C$, the induced map $A_x \to M_x A \times_{M_x B} B_x$ is a fibration (resp.\ acyclic fibration) in \M.
  \item $A\to B$ is a cofibration (resp.\ acyclic cofibration) iff  for all $x\in \C$, the induced map $L_x B \sqcup_{L_x A} A_x \to B_x$ is a cofibration (resp.\ acyclic cofibration) in \M.
  \end{itemize}
\end{thm}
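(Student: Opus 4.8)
The plan is to induct on the height of $\C$, in the style of the proof of \cref{thm:inverse-model}. The base case (height $0$, where $\M^\C$ is the terminal category) is trivial. For a limit ordinal $\be$, note first that each $\C_\de$ with $\de<\be$ is again almost-Reedy, since it is a full subcategory of $\C$ and the hom-functors appearing in \cref{defn:almost-reedy} at objects of degree $<\de$ are literally the same computed in $\C_\de$ or in $\C$. By \eqref{eq:cotower} we have $\M^\C \cong \lim_{\de<\be}\M^{\C_\de}$, and by \cref{thm:bistrat-disc-wfs} this already carries its two Reedy wfs, built via \cref{thm:clovenlim} as the limits of those on the $\M^{\C_\de}$, with strict projections to each $\M^{\C_\de}$. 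Declaring a map in $\M^\C$ to be a weak equivalence iff it is one in each $\M^{\C_\de}$ -- equivalently, objectwise -- the $2$-out-of-$3$ property is immediate, and since in each $\M^{\C_\de}$ the Reedy acyclic (co)fibrations are, by the inductive hypothesis, exactly the Reedy (co)fibrations that are weak equivalences, and both sides are detected levelwise in the limit, the same holds in $\M^\C$; so $\M^\C$ is a model category with the stated description, inherited from \cref{thm:bistrat-disc-wfs}.

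The substance is the successor step. Suppose $\C$ has height $\be+1$, so $\C = \coll{\al_\be}$ for abstract bigluing data $(U,W,\al_\be):\C_\be\hto I$ with $I$ discrete and $\C_\be$ bistratified with discrete strata; as above $\C_\be$ is almost-Reedy, so by the inductive hypothesis $\M^{\C_\be}$ carries the stated model structure. By \cref{thm:bigluing=gencollage}, $\M^\C \simeq \biglue{\albar}$ where $\albar:(\wcolim{U}{\blank}{\C_\be})\to\wlim{W}{\blank}{\C_\be}$ is the induced transformation between functors $\M^{\C_\be}\to\M^I$, and $\M^I$ is a model category with the objectwise structure. I would then check the hypotheses of \cref{thm:model} for $F=(\wcolim{U}{\blank}{\C_\be})$ and $G=\wlim{W}{\blank}{\C_\be}$. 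Cocontinuity of $F$ and continuity of $G$ are standard. For each object $x$ of degree $\be$, the associated weights $\C(\blank,x)$ and $\C(x,\blank)$ are $\L$-objects in $\nSet^{\C_\be\op}$ and $\nSet^{\C_\be}$ respectively, by the almost-Reedy hypothesis; hence, applying \cref{thm:quillen-almostreedy} with both the (cofibration, acyclic fibration) and the (acyclic cofibration, fibration) wfs on $\M$, each component of $F$ preserves $\L$-maps and each component of $G$ preserves $\R$-maps. In particular $F$ carries the Reedy acyclic cofibrations of $\M^{\C_\be}$ -- which by the inductive hypothesis are exactly the acyclic cofibrations of its model structure -- to objectwise acyclic cofibrations in $\M^I$, and dually $G$ carries acyclic fibrations to objectwise acyclic fibrations; by \cref{thm:objwise-cwe} these are couniversal, resp.\ universal, weak equivalences. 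So \cref{thm:model} applies, showing that $\M^\C\simeq\biglue{\albar}$ is a model category whose forgetful functors to $\M^{\C_\be}$ and $\M^I$ preserve cofibrations, fibrations, and weak equivalences.

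It remains to match this with the statement, which is bookkeeping. Under the equivalence of \cref{thm:bigluing=gencollage} the forgetful functor $\biglue{\albar}\to\M^{\C_\be}$ is the restriction $\iota_\be^*$, which preserves latching and matching objects at objects of degree $<\be$; for an object $x$ of degree $\be$, the maps $(FM)(x)\to N_x\to(GM)(x)$ built into an object of $\biglue{\albar}$ are, by construction of $F$ and $G$, precisely the canonical maps $L_xA\to A_x\to M_xA$. Hence the characterizations of (acyclic) cofibrations and fibrations in \cref{thm:wfs} and \cref{thm:bistrat-disc-wfs}, combined with the inductive hypothesis at objects of degree $<\be$, give exactly the stated conditions at every object of $\C$ -- with acyclic cofibrations and acyclic fibrations being automatically the cofibrations, resp.\ fibrations, that are weak equivalences -- while the weak equivalences, being objectwise in $\M^{\C_\be}$ and in $\M^I$, are objectwise in $\C$. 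I expect the main obstacle to be less any single hard argument than the discipline of carrying the full inductive hypothesis -- including the explicit latching/matching description -- through both steps, and in particular of checking that $F$ and $G$ restrict on the newly adjoined objects to exactly the latching and matching object functors, so that the conclusion of \cref{thm:model} specializes to the claimed model structure.
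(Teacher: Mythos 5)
Your proposal is correct and follows essentially the same route as the paper: induction on the height of $\C$, with the successor step handled by identifying $\M^\C\simeq\biglue{\albar}$ and applying \cref{thm:model} once the almost-Reedy ($\L$-object) hypothesis is converted, via \cref{thm:reedy-wfs}/\cref{thm:quillen-almostreedy} and \cref{thm:objwise-cwe}, into the (co)universal weak equivalence conditions — which is exactly the easy direction of \cref{thm:almost-reedy} that the paper invokes — and with limit steps handled as in \cref{thm:bistrat-disc-wfs}. The extra bookkeeping you carry out (matching the biglued structure to the latching/matching description, and the limit-stage verification) is just an expansion of what the paper leaves implicit.
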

\begin{proof}
  By induction on height, with \cref{thm:almost-reedy} for the successor steps (limit steps are easy).
\end{proof}

Our next goals are to give a more concrete description of almost-Reedy categories, and to compare them to Kan's notion of Reedy category.

First of all, saying that $\C(x,\blank)$ is an \L-object means that for each $y\in\C$ of degree $\de$, the induced map
\[ \partial_\de\C(x,y) \too \C(x,y) \]
is an injection.
Since $y$ has degree $\de$ and $x$ has greater degree, $\partial_\de\C(x,y)$ is the set of connected components of the category of fundamental factorizations of morphisms $x\to y$.
In particular, the image of this map is the set of non-basic arrows $x\to y$; so injectivity of this map says that every non-basic arrow that strictly decreases degree has a connected category of fundamental factorizations.

Dually, $\C(\blank,x)$ being an \L-object asks that
\[ \partial_\de\C(y,x) \too \C(y,x) \]
is an injection whenever $\deg(x) >\deg(y)=\de$.
Thus, now we are asking the same condition except for morphisms which strictly \emph{increase} degree.
Recall that this same condition for \emph{level} morphisms appeared in \cref{thm:bistrat-discrete}\ref{item:bistrat-discrete2}.

We have just proven the following.

\begin{thm}\label{thm:reedychar}
  For a category to be almost-Reedy, it is necessary and sufficient that it be equipped with a degree function on its objects such that
  \begin{enumerate}
  \item The basic level morphisms are exactly the identities, and\label{item:reedychar1}
  \item The category of fundamental factorizations of any non-basic morphism (level or not) is connected.\qed\label{item:reedychar2}
  \end{enumerate}
\end{thm}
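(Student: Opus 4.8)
The plan is to derive the statement by unwinding \cref{defn:almost-reedy} and pairing it with the characterization of bistratified categories with discrete strata in \cref{thm:bistrat-discrete}, using the concrete description of the weight $\partial_\de\C(x,\blank)$ recorded in the discussion preceding \cref{defn:almost-reedy}. Recall that for $x$ of degree $\be$ and $y$ of degree $\de<\be$ one has $\partial_\de\C(x,y)=\wcolim{\C(\blank,y)}{\C(x,\blank)}{\C_\de}$, with the canonical map $\partial_\de\C(x,y)\too\C(x,y)$ given by composition, and that the fibre of this map over a morphism $f:x\to y$ is the set of connected components of the category of factorizations of $f$ through objects of degree $<\de$. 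The first thing I would pin down is that, since here $\deg(y)=\de<\be=\deg(x)$, an intermediate object of degree $<\de$ automatically has degree strictly below both $\deg(x)$ and $\deg(y)$, so in this range ``factorization through an object of degree $<\de$'' means exactly ``fundamental factorization''; moreover $f$ lies in the image of $\partial_\de\C(x,y)\too\C(x,y)$ precisely when it admits such a factorization, i.e.\ precisely when $f$ is non-basic.

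With this in hand I would record the dictionary between the two sides. For $x$ of degree $\be$, the hom-functor $\C(x,\blank)\in\nSet^{\C_\be}$ is an \L-object for the Reedy wfs induced by the (injections, surjections) wfs on \nSet if and only if $\partial_\de\C(x,y)\too\C(x,y)$ is injective for every $y$ of degree $\de<\be$ (here one uses that the latching object of the initial presheaf is initial, so the defining condition collapses to this map); by the previous paragraph this holds if and only if every non-basic morphism out of $x$ that strictly decreases degree has a connected category of fundamental factorizations, the basic morphisms contributing only empty fibres, which do no harm to injectivity. Dually, $\C(\blank,x)\in\nSet^{\C_\be\op}$ is an \L-object if and only if every non-basic morphism into $x$ that strictly increases degree has a connected category of fundamental factorizations. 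Letting $x$ range over all objects, these two families of \L-object conditions are together equivalent to condition~\ref{item:reedychar2} restricted to \emph{non-level} morphisms.

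Finally I would assemble the equivalence. Condition~\ref{item:reedychar1} of the theorem is word for word the first condition of \cref{thm:bistrat-discrete}, and condition~\ref{item:reedychar2} restricted to level morphisms is \cref{thm:bistrat-discrete}\ref{item:bistrat-discrete2}. Hence if \C is almost-Reedy it is in particular bistratified with discrete strata, so \cref{thm:bistrat-discrete} supplies \ref{item:reedychar1} and the level case of \ref{item:reedychar2}, while the \L-object clauses of \cref{defn:almost-reedy} supply the non-level case of \ref{item:reedychar2} via the dictionary; conversely, a degree function satisfying \ref{item:reedychar1} and \ref{item:reedychar2} makes \C bistratified with discrete strata by \cref{thm:bistrat-discrete}, and the non-level case of \ref{item:reedychar2} yields the two \L-object conditions, so \C is almost-Reedy. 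The only real obstacle I anticipate is the bookkeeping in the first two paragraphs: confirming that the Reedy \L-object condition on $\C(x,\blank)$ as an object of $\nSet^{\C_\be}$ unwinds to a quantification over exactly the strictly-degree-decreasing morphisms with domain of degree $\be$, and that the identification of ``factors through an object of degree $<\de$'' with ``fundamental'' genuinely holds in this range — it rests on $\min(\deg x,\deg y)=\de$, which is precisely why the level case cannot be captured this way and must be obtained separately from \cref{thm:bistrat-discrete}.
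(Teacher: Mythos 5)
Your proposal is correct and follows essentially the same route as the paper: unwind the two \L-object clauses of \cref{defn:almost-reedy} into injectivity of the maps $\partial_\de\C(x,y)\to\C(x,y)$ for strictly degree-decreasing (resp.\ increasing) morphisms, identify the fibres with sets of connected components of categories of fundamental factorizations, and combine with \cref{thm:bistrat-discrete} to supply the level case. Your explicit check that ``factors through degree $<\de$'' coincides with ``fundamental'' in the non-level range, and that basic morphisms only contribute empty fibres, is exactly the bookkeeping the paper carries out in the discussion preceding the theorem.
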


Perhaps surprisingly, almost-Reedy categories share most of the structure of ordinary Reedy categories.
Let $\rup\C$ be the set of basic morphisms that non-strictly raise degree, i.e.\ such that the degree of their codomain is at least the degree of their domain.
Dually, let $\rdn\C$ be the set of basic morphisms that non-strictly lower degree.
By \cref{thm:reedychar}\ref{item:reedychar1}, the only level morphisms in $\rup\C$ and $\rdn\C$ are identities.

\begin{lem}\label{thm:reedy-fact}
  Every morphism $f:c\to c'$ in an almost-Reedy category factors as $\rup f \rdn f$, where $\rup f \in \rup \C$ and $\rdn f \in\rdn \C$.
\end{lem}
\begin{proof}
  We induct on $\min(\deg(c),\deg(c'))$.
  If $f$ is basic, then it is either in $\rup\C$ or $\rdn\C$, in which case it has a trivial such factorization with one factor an identity.
  Otherwise, we have a fundamental factorization $f = h_0 g_0$.

  By the inductive hypothesis, we can write $g_0 = \rup {g_0} \rdn {g_0}$ and $h_0 = \rup {h_0} \rdn {h_0}$.
  If both $\rup{g_0}$ and $\rdn{h_0}$ are identities, then setting $\rdn f = \rdn{g_0}$ and $\rup f = \rup{h_0}$ yields the desired factorization.
  Otherwise, either the domain of $\rup{g_0}$ or the codomain of $\rdn{h_0}$ must have degree strictly less than that of $(h_0,g_0)$.
  If the former, set $g_1 = \rdn{g_0}$ and $h_1 = h\rup{g_0}$, while if the latter, set $g_1 = \rdn{h_0} g$ and $h_1 = \rup{h_0}$.
  In either case, we have another fundamental factorization $f = h_1 g_1$ of degree less than that of $(h_0,g_0)$.

  We now iterate, obtaining fundamental factorizations $f = h_n g_n$ with strictly decreasing degrees.
  Since ordinals are well-founded, the process must stop.
  Hence we must eventually have both $\rup{g_n}$ and $\rdn{h_n}$ identities, giving the desired factorization.
\end{proof}

\begin{lem}\label{thm:reedy-zigzag}
  In an almost-Reedy category, two fundamental factorizations of the same morphism with degrees $\delta$ and $\delta'$ are connected by a zigzag of fundamental factorizations passing only through intermediate factorizations of degree $<\max(\de,\de')$.
\end{lem}
\begin{proof}
  By \cref{thm:reedychar}\ref{item:reedychar2}, any two fundamental factorizations of the same morphism $f$ are connected by \emph{some} zigzag of fundamental factorizations.
  Thus, suppose we have such a zigzag, which we may assume contains no identity connecting morphisms, and let $\be$ be the maximum degree of all the intermediate factorizations occurring in it.
  We will show that if $\be\ge\de$ and $\be\ge\de'$, then we can replace this zigzag by another one in which there occurs one fewer factorization of degree $\be$ and none of degree $>\be$.
  By iterating this procedure, therefore, we will eventually reach a zigzag of the desired sort.
  (Technically, we are performing well-founded induction on $\omega\cdot \theta$, where $\theta$ is the height of \C.)

  To start with, choose an intermediate factorization $f=h g$ of degree $\be$ in the given zigzag.
  By assumption, the two factorizations $f=h' g'$ and $f = h'' g''$ on either side of it are each of degree $\le\be$.
  If either of them has degree $\be$, then the connecting morphism relates objects of the same degree and hence has a fundamental factorization.
  Thus, by inserting one or two additional objects in the zigzag (of degree $<\be$) we may assume that $(h',g')$ and $(h'',g'')$ each have degree $<\be$.

  Now there are four possibilities for the direction of the morphisms connecting these three factorizations in a zigzag.
  \begin{enumerate}
  \item $(h',g') \to (h,g) \to (h'',g'')$.
    In this case, we can simply omit $(h,g)$ from the zigzag entirely.
  \item $(h',g') \ot (h,g) \ot (h'',g'')$.
    We can again omit $(h,g)$.
  \item $(h',g') \xot{k} (h,g) \xto{\ell} (h'',g'')$.
    In this case we have $h = h' k$ and $h = h''\ell$, both of which are fundamental factorizations.
    Thus, they are connected by a zigzag of fundamental factorizations of $h$, which we can compose with $g$ and splice into our given zigzag in place of $(h,g)$.
  \item $(h',g') \xto{k} (h,g) \xot{\ell} (h'',g'')$.
    This is similar to the previous case, only now $g = kg'$ and $g = \ell g''$ are fundamental factorizations.
  \end{enumerate}
  Thus, in all four cases we can produce another zigzag with one fewer object of degree $\be$ and none of degree $>\be$, as claimed.
\end{proof}

\begin{thm}\label{thm:reedy-fact-uniq}
  The factorization in \cref{thm:reedy-fact} is unique.
\end{thm}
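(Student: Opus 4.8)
The plan is to deduce uniqueness from the connectedness of the category of fundamental factorizations — in the refined form of \cref{thm:reedy-zigzag} — together with the observation that a good factorization (one into $\rdn\C$ followed by $\rup\C$) is, in a precise sense, ``isolated'' among the fundamental factorizations of a non-basic morphism.

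First I would dispose of the case in which $f:c\to c'$ is basic. If $f=hg$ with $h\in\rup\C$ and $g\in\rdn\C$ and both $h,g$ were non-identities, then by \cref{thm:reedychar}\ref{item:reedychar1} each would strictly change degree, so this would be a fundamental factorization of $f$, contradicting basicness; hence one of $h,g$ is an identity and $f$ is the other, which evidently determines the factorization uniquely.

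Now let $f$ be non-basic, and suppose $(h,g)$ and $(h',g')$ are two factorizations with $h,h'\in\rup\C$, $g,g'\in\rdn\C$, with intermediate objects $z,z'$. As in the basic case, neither factor can be an identity (else $f\in\rup\C$ or $f\in\rdn\C$, hence basic), so $g$ and $h$ strictly change degree, which makes $(h,g)$ a \emph{fundamental} factorization of $f$; likewise $(h',g')$. The key point I would establish is: such a factorization cannot be joined by a single connecting morphism to a fundamental factorization of strictly smaller degree. Indeed, a connecting morphism out of $(h,g)$ rewrites $h$ as a composite through the intermediate object of the target, and one into $(h,g)$ rewrites $g$ as a composite through the intermediate object of the source; if that object had degree $<\deg z$, then — using $\deg z<\min(\deg c,\deg c')$ — we would obtain a fundamental factorization of the \emph{basic} morphism $h$, resp.\ $g$, a contradiction.

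Given this, I would finish as follows. By \cref{thm:reedy-zigzag}, $(h,g)$ and $(h',g')$ are linked by a zigzag of fundamental factorizations whose interior nodes all have degree $<\max(\deg z,\deg z')$. Were there any interior node, the node adjacent to the endpoint of maximal degree would itself be interior, hence of strictly smaller degree than that endpoint, contradicting the key point. So the zigzag has no interior nodes, and $(h,g)$ and $(h',g')$ are either equal or joined by a single connecting morphism $k$. In the latter case the key point, applied to both endpoints, forces $\deg z=\deg z'$, so $k$ is level; and $k$ must be basic (otherwise a fundamental factorization of $k$, composed with $g$ or $g'$, would yield one of the basic morphism $g'$ or $g$), so $k=\id$ by \cref{thm:reedychar}\ref{item:reedychar1}. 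Hence $z=z'$, $g=g'$, and $h=h'$. The only real labour is the degree bookkeeping in verifying the key point and its two consequences; no idea beyond \cref{thm:reedy-zigzag} is needed, which I expect is why the statement is split off here rather than proved inline.
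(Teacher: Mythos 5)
Your proposal is correct and follows essentially the same route as the paper's proof: treat the basic case directly, observe that both factorizations of a non-basic morphism are fundamental, invoke \cref{thm:reedy-zigzag} to get a zigzag through factorizations of degree below the maximum, rule out any connecting morphism to a factorization of strictly smaller degree because it would produce a fundamental factorization of the basic $h$ or $g$, and then dispose of a single level connecting morphism via \cref{thm:reedychar}\ref{item:reedychar1}. The only differences are cosmetic (the paper phrases the degree comparison as a WLOG rather than via the endpoint of maximal degree).
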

\begin{proof}
  Suppose $f = h g$ and $f = h' g'$ are two factorizations with $h,h'\in\rup \C$ and $g,g'\in\rdn \C$.
  Without loss of generality, let the degree of $(h,g)$ be greater than or equal to that of $(h',g')$.

  If $f$ is basic, then either $h=h'=\id$ or $g=g'=\id$, whence the factorizations agree.
  Otherwise, both of these factorizations must be fundamental, and hence they are connected by a zigzag of fundamental factorizations.
  By \cref{thm:reedy-zigzag}, therefore, they are connected by such a zigzag passing only through factorizations of degree strictly less than that of $(h,g)$.
  But any map of factorizations relating $(h,g)$ to one of lesser degree exhibits a fundamental factorization of either $h$ or $g$, which is impossible as they are both basic.
  Thus, the only possible such zigzag has length one or zero.
  If it has length one, then the connecting morphism is level and hence has a fundamental factorization, yielding again a fundamental factorization of either $h$ or $g$, a contradiction.
  Thus, it has length zero, so $(h,g) = (h',g')$.
\end{proof}

We can now show that almost-Reedy categories really are almost Reedy categories.
Recall the usual definition:

\begin{defn}
  A \textbf{Reedy category} is a category \C equipped with a ordinal degree function on its objects, and subcategories $\rup\C$ and $\rdn \C$ containing all the objects, such that
  \begin{itemize}
  \item Every nonidentity morphism in $\rup\C$ strictly raises degree and every nonidentity morphism in $\rdn\C$ strictly lowers degree.
  \item Every morphism $f$ factors uniquely as $\rup f \rdn f$, where $\rup f \in \rup \C$ and $\rdn f \in\rdn \C$.
  \end{itemize}
\end{defn}

\begin{thm}\label{thm:reedy}
  The following are equivalent for a category \C with an ordinal degree function on its objects.
  \begin{enumerate}
  \item \C is a Reedy category.\label{item:reedy1}
  \item \C is an almost-Reedy category and $\rup\C$ and $\rdn \C$ are closed under composition.\label{item:reedy2}
  \end{enumerate}
\end{thm}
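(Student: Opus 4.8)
The plan is to prove the two implications separately. The direction from \ref{item:reedy2} to \ref{item:reedy1} is essentially bookkeeping: if \C is almost-Reedy with $\rup\C$ and $\rdn\C$ closed under composition, then since all identities are basic (hence lie in both $\rup\C$ and $\rdn\C$) these classes are genuine subcategories containing all objects; by \cref{thm:reedychar}\ref{item:reedychar1} every level basic morphism is an identity, so every nonidentity morphism of $\rup\C$ strictly raises degree and every nonidentity morphism of $\rdn\C$ strictly lowers it; and existence and uniqueness of the $\rup\C$-after-$\rdn\C$ factorization of an arbitrary morphism are precisely \cref{thm:reedy-fact} and \cref{thm:reedy-fact-uniq}, which hold in any almost-Reedy category. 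Hence \C is a Reedy category.

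All the content is in the direction from \ref{item:reedy1} to \ref{item:reedy2}. Assume \C is a Reedy category with its given subcategories, and for a morphism $f\colon x\to y$ write $f=\rup f\,\rdn f$ for the unique factorization through an object $z$. The key lemma is that $\deg z\le\min(\deg x,\deg y)$, with equality if and only if $f$ is basic, and that in the equality case $f$ itself lies in $\rup\C\cup\rdn\C$. The inequality is immediate since $\rdn f$ does not raise degree and $\rup f$ does not lower it. If $\deg z<\min(\deg x,\deg y)$ then $(\rup f,\rdn f)$ is a fundamental factorization, so $f$ is non-basic; conversely, given \emph{any} factorization $f=hg$ through an object $w$, factor $h=\rup h\,\rdn h$ through $v$ and then factor $\rdn h\,g$ as $\rup{(\rdn h g)}\,\rdn{(\rdn h g)}$, noting that $\rup h\,\rup{(\rdn h g)}\in\rup\C$ and $\rdn{(\rdn h g)}\in\rdn\C$, so uniqueness of the Reedy factorization of $f$ identifies the intermediate object with $z$ and gives $\deg z\le\deg v\le\deg w$. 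Taking $(h,g)$ fundamental shows that ``$f$ non-basic'' forces $\deg z<\min(\deg x,\deg y)$; by contraposition, $\deg z=\min(\deg x,\deg y)$ exactly when $f$ is basic, and then one of $\rdn f$, $\rup f$ is an identity. In particular basic level morphisms are exactly the identities, and the almost-Reedy $\rup\C,\rdn\C$ coincide with the given subcategories, hence are closed under composition.

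It remains to verify \cref{thm:reedychar}\ref{item:reedychar2}, that the category of fundamental factorizations of a non-basic $f\colon x\to y$ is connected; this is the main obstacle. By the previous paragraph $(\rup f,\rdn f)$ is itself such a factorization (through $z$, with $\deg z<\min(\deg x,\deg y)$), and I will connect an arbitrary fundamental factorization $(h,g)$ through $w$ to it by a length-two zigzag. Factoring $h=\rup h\,\rdn h$ through $v$, the arrow $\rdn h\colon w\to v$ is a connecting morphism $(h,g)\to(\rup h,\,\rdn h\,g)$; and factoring $\rdn h\,g$ as $\rup{(\rdn h g)}\,\rdn{(\rdn h g)}$, whose intermediate object uniqueness again identifies with $z$, the arrow $\rup{(\rdn h g)}\colon z\to v$ is a connecting morphism $(\rup f,\rdn f)\to(\rup h,\,\rdn h\,g)$. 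A degree count ($\deg v\le\deg w<\min(\deg x,\deg y)$ and $\deg z\le\deg v$) shows all three factorizations are fundamental, so this zigzag lies in the relevant category, which is therefore connected. The only real difficulty is organizing this zigzag and the surrounding degree bookkeeping cleanly; conceptually the whole of this direction is driven by repeated appeals to the uniqueness of the Reedy factorization, the factorization lemma that was also central to~\cite{rv:reedy}.
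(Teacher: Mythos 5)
Your proposal is correct and follows essentially the same route as the paper: the easy direction via \cref{thm:reedy-fact} and \cref{thm:reedy-fact-uniq}, and the converse by rewriting an arbitrary factorization as $\rup h\,\rdn h\,\rup g\,\rdn g$ and invoking uniqueness of Reedy factorizations to identify the basic morphisms with $\rup\C\cup\rdn\C$, then connecting any fundamental factorization to $(\rup f,\rdn f)$ by the same two-step zigzag (the \cite{rv:reedy} Lemma~2.9 argument). Your packaging of the degree comparison as a ``key lemma'' and your factoring of $\rdn h\,g$ directly (rather than via $k=\rdn h\,\rup g$) are only cosmetic variations on the paper's proof.
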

\begin{proof}
  We have already shown that an almost-Reedy category has all the properties of a Reedy category except for closure of $\rup\C$ and $\rdn \C$ under composition, so~\ref{item:reedy2} implies~\ref{item:reedy1}.

  For the converse, let \C be Reedy.
  If a morphism $f\in\rdn\C$ has a factorization $f = h g$, then we can write
  \[ f = h g = \rup h \rdn h \rup g \rdn g = \rup h \rup k \rdn k \rdn g \]
  where $k = \rdn h \rup g$.
  Since $\rup \C$ and $\rdn \C$ are closed under composition, and Reedy factorizations are unique, we have $\rup h \rup k = \rup f$ and $\rdn k \rdn g = \rdn f$.

  In particular, since $\rup k$ non-strictly raises degree and $\rdn h$ non-strictly lowers it, the factorization $f = h g$ has degree greater than or equal to $f = \rup f \rdn f$.
  Thus, if $f = h g$ is a fundamental factorization, then so is $f = \rup f \rdn f$; hence neither $\rup f$ nor $\rdn f$ is an identity, and so by uniqueness of Reedy factorizations, $f\notin\rdn \C$ and $f\notin\rup\C$.
  On the other hand, if $f\notin\rdn \C$ and $f\notin\rup\C$, then the factorization $f = \rup f \rdn f$ must be fundamental since neither factor is an identity.
  Thus, $f$ has a fundamental factorization if and only if $f\notin\rdn \C$ and $f\notin\rup\C$, and thus (contrapositively) $\rup\C\cup\rdn\C$ is precisely the class of basic morphisms.
  In particular, the only basic level morphisms are identities, giving \cref{thm:reedychar}\ref{item:reedychar1}.
  Moreover, the subcategories $\rup\C$ and $\rdn \C$ given as \emph{data} in the definition of Reedy category in fact agree with those we have \emph{defined} for almost-Reedy categories.
  (The fact that $\rup\C$ and $\rdn\C$ in a Reedy category admit this characterization also appears in~\cite[Observation 3.18]{rv:reedy}.)

  We now observe, as in~\cite[Lemma 2.9]{rv:reedy}, that an arbitrary factorization $f = h g$ as above is connected by a two-step zigzag to the Reedy factorization $\rup f \rdn f$:
  \begin{equation}
    \xymatrix@+2pc{ & \ar[dr]^{h} \ar[d]^{\rdn h} \\
      \ar[ur]^{g} \ar[r]^(.6){\rdn h g = k \rdn g} \ar[dr]_{\rdn f} & \ar[r]^{\rup h} & \\
      & \ar[ur]_{\rup f} \ar[u]_{\rup k}. }\label{eq:reedyfact-zigzag}
  \end{equation}
  In particular, if $f = h g$ is a fundamental factorization, then it is connected to the canonical fundamental factorization $\rup f \rdn f$ by a zigzag of fundamental factorizations.
  Thus, the category of fundamental factorizations of any non-basic morphism is connected, which is \cref{thm:reedychar}\ref{item:reedychar2}.
  So~\ref{item:reedy2} implies~\ref{item:reedy1}.
\end{proof}

\begin{rmk}
  As mentioned in the introduction, \cref{thm:reedy} (or more precisely, its generalization \cref{thm:creedy}) has been formally verified in the computer proof assistant Coq; see \cref{sec:formalization}.
\end{rmk}

Thus, by starting from the desired form of a Reedy model structure, we have arrived almost inexorably at the definition of Reedy category.
The only short gap left is between almost-Reedy and Reedy categories.
The following example shows that this really is a gap.

\begin{eg}\label{eg:almost-reedy}
  Consider the commutative square category, with degrees assigned as shown below:
  \[ \xymatrix@R=1pc{
    a \ar[ddd] \ar[dr] && \deg=3\\
    & b \ar[d] & \deg=2\\
    & d  & \deg=1\\
    c \ar[ur] && \deg=0}
  \]
  It is easy to check that it is almost-Reedy: the only non-basic arrow is the common composite $a\to d$, and it has only one fundamental factorization $a\to c\to d$.
  But this category (with this degree function) is not Reedy, since the basic arrows $a\to b$ and $b\to d$ have a non-basic composite.
  (This same category does admit a different degree function for which it is Reedy.)

  In the model structure on diagrams over this almost-Reedy category, a diagram $X$ is Reedy fibrant if $X_c$ and $X_d$ are fibrant, $X_b\to X_d$ is a fibration, and the induced map $X_a \to X_c \times_{X_d} X_b$ is a fibration.
  It is Reedy cofibrant if $X_a$, $X_b$, and $X_c$ are cofibrant and $X_c \to X_d$ is a cofibration.
\end{eg}

Is there a reason to prefer Reedy categories to almost-Reedy ones?
One obvious one is that the definition is easier to verify in practice for non-toy examples.
Another is that in a Reedy category, the matching object $M_x A$ can be computed using only the maps out of $x$ that are in $\rdn\C$, rather than all of them.

More precisely, note that the matching object $M_x A = \wlim{\C(x,-)}{A}{\C_\de}$ in an almost-Reedy category can equivalently be defined as an ordinary (``conical'') limit over the category $\strsl{x}{\C}$, whose objects are morphisms $f:x\to y$ with $\deg(x)<\deg(y)$ and whose morphisms are commutative triangles under $x$.
In a Reedy category, we also have the subcategory $\strsl{x}{\rdn\C}$, whose objects are nonidentity morphisms $f:x\to y$ in $\rdn\C$ and whose morphisms are commutative triangles in $\rdn\C$ under $x$.
It can be shown (see \cref{thm:reedy-initial} below) that the inclusion $\strsl{x}{\rdn\C} \into \strsl{x}{\C}$ is an \emph{initial functor}~\cite[\S IX.3]{maclane}, so that the matching object can equivalently be computed as a limit over $\strsl{x}{\rdn\C}$.

This is not true in an almost-Reedy category, as can be seen from \cref{eg:almost-reedy}: the definition of the matching object $M_a X = X_c \times_{X_d} X_b$ requires the map $X_c \to X_d$, which is not in $\rdn\C$.
In fact, in the almost-Reedy case, $\strsl{x}{\rdn\C}$ may not even be a category!
However, with a bit of care we can still make precise what it would mean for this to be true in an almost-Reedy category, and it turns out to give another equivalent characterization of Reedy categories.

Recall that a category can be considered as a (directed) \emph{graph} equipped with composition and identities.
If $\M$ is a category, $\D$ is a graph, and $F:\D\to\M$ is a graph morphism, we can define the \textbf{limit} of $F$ just as we would for a functor between categories: it is an object $M\in \M$ equipped with morphisms $M\to F(d)$ for all vertices (i.e.\ ``objects'') $d\in \D$, such that the evident triangle commutes for all edges (i.e.\ ``morphisms'') in $\D$.

If $\D$ is a graph, $\E$ is a category, $G:\D\to\E$ is a graph morphism, and $e\in \E$ is an object, let $G/e$ denote the graph whose vertices are morphisms $G(d) \to e$ in $\E$, for some vertex $d\in\D$, and whose edges are edges $d\to d'$ in $\D$ forming a commutative triangle in $\E$.
We say that $G$ is \textbf{initial} if for any $e\in\E$ the graph $G/e$ is connected, i.e.\ it is nonempty and any two vertices are related by some zigzag of edges.
If $\D$ is a category and $G$ a functor, then $G/e$ is the underlying graph of the usual comma category, and this reduces to the usual notion of initial functor.
It is straightforward to adapt the usual proof for categories to show that a graph morphism $G:\D\to\E$ is initial if and only if for any functor $F:\E\to\M$ to a complete category, the induced comparison map $\lim F \to \lim F G$ is an isomorphism.
There is a dual notion of \textbf{final} graph morphism which induces an isomorphism between colimits.

Now if $\C$ is almost-Reedy and $x\in\C$, we have a graph $\strsl{x}{\rdn\C}$ whose vertices are nonidentity morphisms $f:x\to y$ in $\rdn \C$, and whose edges are morphisms $y\to y'$ in $\rdn \C$ forming a commutative triangle.
This graph is not necessarily a category since $\rdn\C$ may not be closed under composition.
However, we have an obvious graph morphism $\strsl{x}{\rdn\C} \into \strsl{x}{\C}$.
Similarly, we have a graph $\strsl{\rup\C}{x}$ with a graph morphism $\strsl{\rup\C}{x} \into \strsl{\C}{x}$.

\begin{thm}\label{thm:reedy-initial}
  Let $\C$ be almost-Reedy; then the following are equivalent.
  \begin{enumerate}
  \item $\C$ is Reedy.\label{item:ri1}
  \item For any $x\in\C$, the graph morphism $\strsl{x}{\rdn\C} \into \strsl{x}{\C}$ is initial and the graph morphism $\strsl{\rup\C}{x} \into \strsl{\C}{x}$ is final.\label{item:ri2}
  \end{enumerate}
\end{thm}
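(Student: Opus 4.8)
The plan is to prove the two implications separately, using the factorization machinery already developed (\cref{thm:reedy-fact}, \cref{thm:reedy-zigzag}, \cref{thm:reedy-fact-uniq}) together with \cref{thm:reedy}, which tells us that in case~\ref{item:ri1} the subcategories $\rdn\C$ and $\rup\C$ are genuinely closed under composition and that $\rdn\C\cup\rup\C$ is exactly the class of basic morphisms. By duality (\cref{thm:bistrat-opp}, or just the manifest self-duality of the statement), it suffices to treat the claim about $\strsl{x}{\rdn\C}\into\strsl{x}{\C}$; the claim about $\strsl{\rup\C}{x}\into\strsl{\C}{x}$ follows by passing to $\C\op$.

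For \ref{item:ri1}$\Rightarrow$\ref{item:ri2}, assume \C is Reedy. First I would check $\strsl{x}{\rdn\C}\into\strsl{x}{\C}$ is \emph{graph-initial} directly from the definition of initiality for graph morphisms given just above the theorem: given an object $f\colon x\to y$ of $\strsl{x}{\C}$, I must exhibit a vertex of $\strsl{x}{\rdn\C}$ mapping to it and connectedness of the relevant comma graph. Take the Reedy factorization $f=\rup f\cdot \rdn f$; then $\rdn f\colon x\to z$ is an object of $\strsl{x}{\rdn\C}$ (it is in $\rdn\C$, and it is a nonidentity provided $\deg(x)>\deg(y)$ — the degenerate case $\rdn f=\id_x$, i.e. $f$ itself already in $\rup\C$, needs separate handling but is the same as in the classical Reedy argument) and $\rup f$ is an edge $z\to y$ in $\C$ above it. For connectedness, suppose $g_1\colon x\to z_1$ and $g_2\colon x\to z_2$ in $\rdn\C$ both sit above $f$ via $h_1,h_2\in\C$; then $f=h_1g_1=h_2g_2$ are two factorizations, and applying the zigzag technology of \cref{thm:reedy-zigzag} (specialized exactly as in diagram~\eqref{eq:reedyfact-zigzag} of the proof of \cref{thm:reedy}, where any factorization is joined to the Reedy one by a two-step zigzag through $\rdn\C$-morphisms) connects $g_1$ and $g_2$ inside $\strsl{x}{\rdn\C}$. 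The upshot is that $\strsl{x}{\rdn\C}\into\strsl{x}{\C}$ is initial; then, as the paragraph preceding the theorem records, this gives $M_xA=\lim$ over $\strsl{x}{\C}$ equals the limit over $\strsl{x}{\rdn\C}$.

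For the converse \ref{item:ri2}$\Rightarrow$\ref{item:ri1}, I would argue contrapositively: if \C is almost-Reedy but not Reedy, then by \cref{thm:reedy} one of $\rup\C$, $\rdn\C$ fails to be closed under composition, and I claim initiality (resp.\ finality) then fails for some $x$. Concretely, suppose $g\colon x\to y$ and $g'\colon y\to z$ lie in $\rdn\C$ but $g'g\notin\rdn\C$. Consider the Reedy-style factorization $g'g=\rup{(g'g)}\cdot\rdn{(g'g)}$ of \cref{thm:reedy-fact}; since $g'g\notin\rdn\C$, $\rup{(g'g)}$ is a nonidentity, and one uses \cref{thm:reedy-fact-uniq} and the analysis of \cref{eg:almost-reedy} to see that in the graph $\strsl{x}{\rdn\C}$ the vertex $g$ and the vertex $\rdn{(g'g)}$ both lie above the object $g'g$ of $\strsl{x}{\C}$, but there is \emph{no} zigzag joining them within $\strsl{x}{\rdn\C}$ — any connecting morphism would, via \cref{thm:reedy-zigzag}, have to pass through a fundamental factorization of a \emph{basic} morphism, which is impossible, so the comma graph over $g'g$ is disconnected. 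The main obstacle, and the part that will need the most care, is precisely this last step: making rigorous that the failure of composition-closure forces a genuine disconnection in the graph $G/e$, rather than merely a zigzag that exits and re-enters $\strsl{x}{\rdn\C}$. Here one must exploit that \emph{edges} of $\strsl{x}{\rdn\C}$ are only $\rdn\C$-morphisms, so a path is trapped; the cleanest route is probably to run the argument of \cref{thm:reedy-fact-uniq} verbatim, replacing "factorization into $\rup\C$ then $\rdn\C$" by "vertex of $\strsl{x}{\rdn\C}$ lying over a fixed $f$", and observe that the uniqueness proof degenerates exactly when $\rdn\C$ is closed under composition — so non-closure is equivalent to non-uniqueness of such a vertex up to $\strsl{x}{\rdn\C}$-zigzag, i.e.\ to non-initiality.
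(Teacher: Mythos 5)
Your forward direction is essentially the paper's: given a vertex $(h,g)$ of the comma graph over $f$ with $g\in\rdn\C$, write $h=\rup h\,\rdn h$; closure of $\rdn\C$ under composition together with uniqueness of Reedy factorizations forces $\rdn h\,g=\rdn f$ and $\rup h=\rup f$, so $\rdn h$ is a single edge to the canonical vertex $(\rup f,\rdn f)$. Your appeal to the two-step zigzag~\eqref{eq:reedyfact-zigzag} is slightly off, since its second connecting morphism $\rup k$ lies in $\rup\C$ and so is not in general an edge of $\strsl{x}{\rdn\C}$; it only works here because $g\in\rdn\C$ gives $\rup g=\id$, hence $\rup k=\id$, collapsing the zigzag to the one step above. (The ``degenerate case'' you flag does not arise: objects of $\strsl{x}{\C}$ strictly decrease degree, so they are never in $\rup\C$.)

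The genuine gap is in the converse, which is the heart of the theorem. Your contrapositive claim --- that if $g\colon x\to y$ and $g'\colon y\to z$ lie in $\rdn\C$ but $f=g'g\notin\rdn\C$, then the vertices $(g',g)$ and $(\rup f,\rdn f)$ of the comma graph over $f$ cannot be joined by a zigzag because ``any connecting morphism would have to pass through a fundamental factorization of a basic morphism'' --- is not an argument: edges of the comma graph are simply $\rdn\C$-morphisms making triangles commute, the \emph{first} factors of its vertices need not be basic, and nothing in \cref{thm:reedy-zigzag} or \cref{thm:reedy-fact-uniq} forbids a long zigzag through factorizations of intermediate degree larger than $\deg(z)$; ruling such zigzags out is precisely what must be proved, and in fact the asserted disconnection is only correct when the counterexample is chosen with $\deg(x)$ minimal, a reduction you never make. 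What is actually required is the direct implication: assume connectivity of the comma graph over $f$, take a zigzag from $(g',g)$ (intermediate degree $>\deg(z)$) to $(\rup f,\rdn f)$ (intermediate degree $<\deg(z)$ if $f\notin\rdn\C$), locate the \emph{first} vertex of degree $\le\deg(z)$, prove that all earlier vertices have \emph{both} factors in $\rdn\C$ --- this needs the transport of fundamental factorizations of first factors along connecting maps in both directions \emph{and} an induction on $\deg(x)$ to know that composites with lower-degree domain stay in $\rdn\C$ --- and then show the connecting morphism at the transition is forced to be an identity, whence $f\in\rdn\C$. Your fallback suggestion, to ``run \cref{thm:reedy-fact-uniq} verbatim'' and observe that uniqueness degenerates exactly when $\rdn\C$ is closed, restates the equivalence rather than proving it: that proof compares two factorizations with \emph{basic} factors, while here the first factors are arbitrary, and the inductive hypothesis on lower-degree domains, which your sketch never invokes, is indispensable.
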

\begin{proof}
  If $\C$ is Reedy, then $\strsl{x}{\C}$ is a category and $\iota:\strsl{x}{\rdn\C} \into \strsl{x}{\C}$ is a functor.
  Let $f:x\to y$ be an object of $\strsl{x}{\C}$; we want to show that $(\iota/f)$ is connected.
  Factoring $f$ as $\rup f \rdn f$ shows that this category is nonempty.
  Another object of this category is a factorization $f = h g$ with $g\in\rdn\C$; thus $f = \rup h \rdn h g$.
  Since $\rdn\C$ is closed under composition, $\rdn h g \in \rdn \C$, and so by unique factorization we must have $\rup h = \rup f$ and $\rdn h g = \rdn f$.
  Thus the factorization $h g$ is connected to $\rup f \rdn f$ by a 1-step zigzag consisting of $\rdn h$, which also lies in $\strsl{x}{\rdn\C}$.
  The argument for $\rup\C$ is dual, so~\ref{item:ri1} implies~\ref{item:ri2}.

  Conversely, suppose~\ref{item:ri2} holds; we will show that $\rdn\C$ and $\rup\C$ are closed under composition, so that by \cref{thm:reedy} $\C$ is Reedy.
  Suppose $f:x\to y$ with $f = h g$ where $h,g\in\rdn\C$; we want to show $f\in\rdn\C$ (the argument for $\rup\C$ is dual).
  By induction, we may suppose that this is true for all composable pairs whose domain has degree $<\deg(x)$.
  We may also suppose neither $g$ nor $h$ is an identity.
  Then this factorization and the unique factorization $f=\rup f \rdn f$ from \cref{thm:reedy-fact} are both vertices of $(\iota/f)$, where $\iota:\strsl{x}{\rdn\C} \into \strsl{x}{\C}$ is the above graph morphism.
  Thus, they are connected by a zigzag of factorizations with comparison maps in $\rdn \C$.

  Now if $f\notin\rdn\C$, then the factorization $(\rup f, \rdn f)$ has degree less than $\deg(y)$, while since $h\in\rdn\C$ is not an identity the factorization $(h, g)$ has degree greater than that of $y$.
  Thus, as we move along the zigzag from $(h,g)$ to $(\rup f, \rdn f)$, there must be a first factorization that has degree $\le\deg(y)$.

  I claim all the factorizations occurring \emph{prior} to this point have \emph{both} factors in $\rdn \C$.
  Their first factors are all in $\rdn\C$ by definition.
  Thus it suffices to show that given arrows as in \cref{fig:fact-zag},
  \begin{figure}
    \centering
    \begin{subfigure}[b]{0.4\textwidth}
      \centering
      $\xymatrix{ &x \ar[dl]_{g'} \ar[ddr]^{g''}\\
        \ar[ddr]_{h'} \ar[drr]^{k}\\
        && \ar[dl]^{h''}\\
        & y }$
      \caption{}
      \label{fig:fact-zag}
    \end{subfigure}
    \begin{subfigure}[b]{0.4\textwidth}
      \centering
      $\xymatrix{ & x \ar[dl]_{g'} \ar[ddr]^{g''}\\
        \ar[d]_{h'} \ar[drr]^{k}\\
        y && z \ar[ll]^{h''}
      }$
      \caption{}
      \label{fig:fact-zag2}
    \end{subfigure}
    \caption{Configurations for the proof of \cref{thm:reedy-initial}}
    \label{fig:reedy-initial}
  \end{figure}
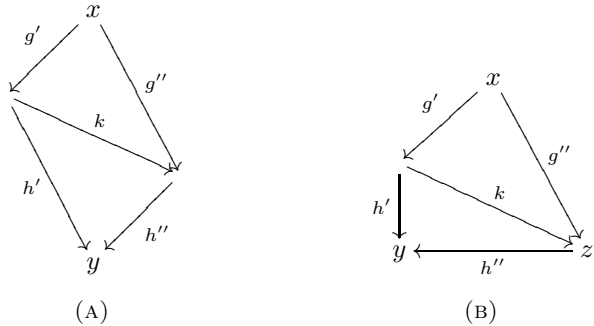
  where $g', g'', k\in\rdn\C$ and degrees strictly decrease downwards, we have $h'\in\rdn\C$ if and only if $h''\in\rdn\C$.
  However, any fundamental factorization of $h''$ would yield, by composition with $k$, a fundamental factorization of $h'$; so if $h'$ is basic then so is $h''$.
  On the other hand, if $h''\in\rdn\C$, then since the domain of $k$ has degree $<\deg(x)$ (as $g'$ is not an identity, being a vertex of $\strsl{x}{\rdn\C}$), the inductive hypothesis implies that $h' = h'' k$ is also in $\rdn\C$.
  This proves the claim.

  Now at the first factorization with degree $\le\deg(y)$, we have a diagram as shown in \cref{fig:fact-zag2},
  with $g', g'', k, h'\in\rdn\C$ and $\deg(z)\le\deg(y)$.
  If $h''$ is not an identity, then the factorization $\rup{h''}\rdn{h''}$ has degree strictly less than $\deg(y)$, and so $h' = \rup{h''}\left(\rdn{h''} k\right)$ is a fundamental factorization, contradicting $h'\in\rdn\C$.
  Thus $h''$ must be an identity, so that $f = g'' \in\rdn \C$ as desired.
\end{proof}

\section{Generalized Reedy categories}
\label{sec:auto}

In \cref{sec:reedy}, we restricted attention to bistratified categories with discrete strata.
In addition to being the route to the classical notion of Reedy category, this is the most natural way to obtain a fully explicit Reedy-like model structure on $\M^\C$, since $\M^I$ for discrete $I$ has a model structure that is completely objectwise.

If we allow abstract bigluing data from \C to some non-discrete category \D, then in order to apply \cref{thm:model} we would need a given model structure on $\M^\D$.
But such model structures do exist under certain hypotheses, such as the projective and injective ones.
In this section, we will consider what happens if we use a projective model structure on each $\M^\D$.
We will call the resulting generalized notions \textbf{c-Reedy} (for ``categorified''), and if necessary we will refer to the corresponding notions from \cref{sec:reedy} as \textbf{s-Reedy} (for ``set'' or ``strict'').
Using the projective rather than the injective (or some other) model structure is a choice; the injective case can be deduced by duality, but the resulting notion of c-Reedy category would be different.
If necessary for disambiguation, one might speak of \emph{projectively} or \emph{injectively} c-Reedy categories; but since we consider only the projective case we will avoid such cumbersome terminology.

An interesting intermediate case is when the categories \D are all required to be groupoids.
This exhibits certain simplifying features, has many important applications, and leads to a comparison with the theory of~\cite{cisinski:presheaves} and~\cite{bm:extn-reedy}.
We will refer to this case as \textbf{g-Reedy} (for ``groupoidal'' or ``generalized'', the latter following the terminology of the cited references).

One of the simplifications in the groupoidal case is that since all isomorphisms in a bistratified category are basic and level, we don't need to assert separately that the latter are closed under composition.
This is expressed by the following analogue of \cref{thm:bistrat-discrete}, which also follows immediately from \cref{thm:bistrat-char}.

\begin{thm}\label{thm:bistrat-gpd}
  A category \C whose objects are assigned ordinal degrees is bistratified with groupoidal strata if and only if
  \begin{enumerate}
  \item The basic level morphisms are exactly the isomorphisms, and
  \item The category of fundamental factorizations of any non-basic level morphism is connected.
  \end{enumerate}
\end{thm}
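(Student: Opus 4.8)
The plan is to read this off \cref{thm:bistrat-char} in the same way one reads off \cref{thm:bistrat-discrete}, the only extra input being a dictionary between strata and basic morphisms. The key preliminary observation I would record is that, in any bistratified category, the basic level morphisms between objects of degree $\de$ are exactly the morphisms of the stratum $\C_{=\de}$: for a level morphism between objects of degree $\de$, a fundamental factorization is precisely a factorization through an object of strictly smaller degree, so the morphism is basic iff it admits no such factorization, and by the recognition principle \cref{thm:recoll} — which holds at each stage of the bistratification, the collage there being built from the objects of degree $<\de$ and the stratum $\C_{=\de}$ — these are exactly the morphisms lying in $\C_{=\de}$.

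Granting this, the forward implication is short. If \C is bistratified with groupoidal strata, then conditions~\ref{item:bistrat1}--\ref{item:bistrat3} of \cref{thm:bistrat-char} hold, and condition~\ref{item:bistrat3} is precisely clause~(ii) here. For clause~(i): by the remark following \cref{thm:bistrat-char}, every isomorphism of \C is level and basic; conversely a basic level morphism between objects of degree $\de$ is, by the preliminary observation, a morphism of the \emph{groupoid} $\C_{=\de}$, hence invertible there, hence an isomorphism in \C. So the basic level morphisms are exactly the isomorphisms.

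For the converse, assume clauses~(i) and~(ii). Then \cref{thm:bistrat-char}\ref{item:bistrat1} holds because identities are isomorphisms and hence basic by~(i); \cref{thm:bistrat-char}\ref{item:bistrat2} holds because two composable basic level morphisms are, by~(i), isomorphisms between objects of a single degree, whence their composite is again such an isomorphism and so basic; and \cref{thm:bistrat-char}\ref{item:bistrat3} is clause~(ii). So \C is bistratified. Finally, by the preliminary observation the morphisms of each stratum $\C_{=\de}$ are the basic level morphisms between objects of degree $\de$, which by~(i) are isomorphisms of \C; the inverse of such an isomorphism is again an isomorphism between objects of degree $\de$, hence again a basic level morphism, hence again a morphism of $\C_{=\de}$, so $\C_{=\de}$ is closed under inverses and is therefore a groupoid.

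The argument is pure bookkeeping, and I do not expect a genuine obstacle. The one place to be careful is the preliminary observation, i.e.\ matching ``basic level morphism'' with ``morphism of the stratum'' correctly through \cref{thm:recoll}; the rest is just threading through \cref{thm:bistrat-char} together with the standing fact that in a bistratified category every isomorphism is automatically basic and level.
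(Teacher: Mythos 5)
Your proof is correct and follows essentially the same route as the paper, which simply notes that the statement follows immediately from \cref{thm:bistrat-char} together with the observation that isomorphisms in a bistratified category are automatically basic and level. Your preliminary identification of the morphisms of the stratum $\C_{=\de}$ with the basic level morphisms of degree $\de$ is exactly the implicit dictionary the paper relies on, so your write-up just makes the bookkeeping explicit.
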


We now introduce a more general sort of \textbf{matching object functor} for arbitrary bistratified categories.
For any degree $\de$, let us write $M_\delta$ for the composite
\[ \M^\C \xto{\iota_\de^*} \M^{\C_\de} \xto{\wlim{\C(\blank,\blank)}{\blank}{\C_\de}} \M^{\C_{=\de}} \]
in which the second functor is the weighted limit used in the $\de^{\mathrm{th}}$ stage of bigluing.
(Recall that $\C_{=\de}$ denotes the $\de^{\mathrm{th}}$ stratum of \C, i.e.\ the \emph{non-full} subcategory consisting of the objects of degree $\de$ and the \emph{basic} morphisms between them.)
The difference with the matching object functors from \cref{sec:reedy} is now that we don't want to separate the components at different objects of $\C_{=\de}$.

This is again isomorphic to a weighted limit
\[ (M_\de A)_x \cong \wlimc{\partial_\de\C(x,\blank)}{A},\]
with weight defined by
\[ \partial_\de\C(x,y) = \wcolim{\C(\blank,y)}{\C(x,\blank)}{\C_\de}. \]
Note that here we are regarding $\partial_\de\C$ as a profunctor from $\C$ to $\C_{=\de}$, so that this weighted limit takes values in $\M^{\C_{=\de}}$.
Similarly, we have the \textbf{latching object functor} defined by
\[ (L_\de A)_x \cong \wcolimc{\partial_\de\C(\blank,x)}{A} \]
in which we regard $\partial_\de \C$ as a profunctor from $\C_{=\de}$ to $\C$.
As in \cref{sec:reedy}, we will also continue to write $M_x A$ for the value $(M_\de A)_x$ of the functor $M_\de A \in \M^{\C_{=\de}}$ at the object $x\in\C_{=\de}$, and similarly for $L_x$.
We may also consider $M_x$ and $L_x$ to take values in $\M^{\aut(x)}$, where $\aut(x)$ is the automorphism group of $x$ in $\C_{=\de}$ (or equivalently in \C, since all isomorphisms are basic level); this is mainly important when the strata are groupoidal.
Finally, we will write $A_\de$ for the restriction of $A\in\M^\C$ to a functor on $\C_{=\de}$.

\begin{thm}\label{thm:bistrat-wfs}
  If \C is any bistratified category, then for any complete and cocomplete category \M with a pre-wfs, the category $\M^\C$ inherits a \textbf{c-Reedy pre-wfs} in which
  \begin{enumerate}
  \item $A\to B$ is in \R iff the induced map $A_x \to M_x A \times_{M_x B} B_x$ in \M is in \R for all $x\in \C$.\label{item:bswfsR}
  \item $A\to B$ is in \L iff the induced map $L_\de B \sqcup_{L_\de A} A_\de \to B_\de$ in $\M^{\C_{=\de}}$ is a projective \L-map for all degrees $\de$.\label{item:bswfsL}
  \end{enumerate}
  If \C has groupoidal strata, then condition~\ref{item:bswfsL} is equivalent to
  \begin{enumerate}
  \item[\ref{item:bswfsL}$'$] The induced map $L_x B \sqcup_{L_x A} A_x \to B_x$ in $\M^{\aut(x)}$ is a projective \L-map for all $x\in\C$.
  \end{enumerate}
  Moreover, if the projective pre-wfs on each $\M^{\C_{=\de}}$ is a wfs, then the c-Reedy pre-wfs is also a wfs.
\end{thm}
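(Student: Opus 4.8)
The plan is to argue by induction on the height of $\C$, paralleling the proof of \cref{thm:bistrat-disc-wfs}. The height-zero case is vacuous, as $\M^\C$ is then terminal. For a successor height, write $\C = \coll{\al_\de}$ for the abstract bigluing data $(U,W,\al_\de)\colon \C_\de \hto \C_{=\de}$ of \cref{defn:bistratified}; by the recognition principle \cref{thm:recoll} we may take $U(c,x)=\C(c,x)$ and $W(x,c)=\C(x,c)$, with $\al_\de$ given by composition in $\C$. By \cref{thm:bigluing=gencollage}, $\M^\C \simeq \biglue{\albar_\de}$, where $\albar_\de\colon (\wcolim{U}{\blank}{\C_\de}) \to \wlim{W}{\blank}{\C_\de}$ is the induced transformation between functors $\M^{\C_\de}\to\M^{\C_{=\de}}$. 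Now $\M^{\C_{=\de}}$ carries its projective pre-wfs (\cref{sec:model}) and, since $\M$ is complete and cocomplete, has finite limits and colimits; moreover the weighted colimit $(\wcolim{U}{\blank}{\C_\de})$ has a right adjoint and the weighted limit $\wlim{W}{\blank}{\C_\de}$ has a left adjoint. Hence \cref{thm:prewfs} applies, with $\M^{\C_\de}$ --- carrying the c-Reedy pre-wfs supplied by the inductive hypothesis --- playing the role of ``$\M$'' and $\M^{\C_{=\de}}$ that of ``$\N$'', and it equips $\biglue{\albar_\de}\simeq\M^\C$ with a pre-wfs.

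It remains to identify this pre-wfs with the stated one. By the explicit description in \cref{thm:wfs} (whose construction \cref{thm:prewfs} reuses), a map $A\to B$ of $\biglue{\albar_\de}$ is an \R-map iff $\iota_\de^*A\to\iota_\de^*B$ is an \R-map in $\M^{\C_\de}$ and $A_\de \to (\wlim{W}{\iota_\de^*A}{\C_\de})\times_{(\wlim{W}{\iota_\de^*B}{\C_\de})}B_\de$ is an \R-map in $\M^{\C_{=\de}}$. Since $(\wlim{W}{\iota_\de^*A}{\C_\de})_x = M_x A$ and the \R-maps of the projective pre-wfs on $\M^{\C_{=\de}}$ are precisely the objectwise ones, the second condition says exactly that $A_x \to M_x A\times_{M_x B} B_x$ lies in \R for every $x$ of degree $\de$; together with the inductive hypothesis applied to $\iota_\de^*A\to\iota_\de^*B$ (latching and matching objects at lower degrees being unchanged under $\iota_\de^*$) this yields \ref{item:bswfsR}. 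Dually, $A\to B$ is an \L-map iff $\iota_\de^*A\to\iota_\de^*B$ is one and $(\wcolim{U}{\iota_\de^*B}{\C_\de})\sqcup_{(\wcolim{U}{\iota_\de^*A}{\C_\de})}A_\de \to B_\de$ is a projective \L-map in $\M^{\C_{=\de}}$; recognizing $(\wcolim{U}{\iota_\de^*A}{\C_\de}) = L_\de A$ from the definition of the latching functor, this is \ref{item:bswfsL} at degree $\de$, and the inductive hypothesis again supplies the lower degrees. At a limit height $\be$ we have $\C = \colim_{\de<\be}\C_\de$, so $\M^\C\cong\lim_{\de<\be}\M^{\C_\de}$ by \eqref{eq:cotower}; taking cloven structures throughout (the projective pre-wfs on each stratum being clovable by the axiom of choice), the construction of \cref{thm:wfs} makes each successor-stage pre-wfs canonically cloven with strict forgetful functor to $\M^{\C_\de}$, so all structure functors of this tower are strict, and \cref{thm:clovenlim} then produces a cloven pre-wfs on $\M^\C$, which is the c-Reedy one by the degreewise description above.

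For the groupoidal case, choosing one object $x$ in each isomorphism class of $\C_{=\de}$ gives an equivalence of categories $\coprod_x \aut(x) \we \C_{=\de}$, hence an equivalence $\M^{\C_{=\de}} \simeq \prod_x \M^{\aut(x)}$. Because \R-maps in a projective pre-wfs are detected objectwise, this equivalence carries projective \R-maps to tuples of projective \R-maps, and therefore --- lifting properties being preserved by equivalences of categories and by product decompositions --- projective \L-maps to tuples of projective \L-maps. Thus a map in $\M^{\C_{=\de}}$ is a projective \L-map iff each of its components in the $\M^{\aut(x)}$ is; since $L_\de$ and $M_\de$ restrict along $\aut(x)\hookrightarrow\C_{=\de}$ to the evident $\M^{\aut(x)}$-valued functors $L_x$ and $M_x$, this is exactly the equivalence of \ref{item:bswfsL} with \ref{item:bswfsL}$'$. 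Finally, if the projective pre-wfs on each $\M^{\C_{=\de}}$ is a wfs, then at each successor step we invoke \cref{thm:wfs} in place of \cref{thm:prewfs} (its hypothesis that ``$\M$'' carries a wfs holds by induction, and $\M^{\C_{=\de}}$ has pushouts and pullbacks), so the biglued pre-wfs is a wfs, and at limit steps the $\wfscl\to\nCat$ half of \cref{thm:clovenlim} shows the limit is again a wfs. The only real work here is the bookkeeping: identifying the bigluing data of \cref{thm:recoll} with the latching and matching functors, and checking that objectwise-detection of projective \R-maps is precisely what turns ``projective \L-map in $\M^{\C_{=\de}}$'' into the componentwise statement \ref{item:bswfsL}$'$ in the groupoidal case. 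I do not anticipate a deeper obstacle.
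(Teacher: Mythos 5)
Your proposal is correct and follows essentially the same route as the paper: the paper's proof is exactly the induction on height, bigluing at successor stages via \cref{thm:prewfs} (or \cref{thm:wfs} in the wfs case) with the projective pre-wfs on $\M^{\C_{=\de}}$, limit stages via \cref{thm:clovenlim} and strictness, and the groupoidal simplification from the decomposition of diagrams over a groupoid into a product over its components. You have merely spelled out in detail the identification of the biglued classes with the stated matching/latching conditions, which the paper leaves implicit.
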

\begin{proof}
  Just like \cref{thm:bistrat-disc-wfs}, but bigluing with the projective pre-wfs on $\M^{\C_{=\de}}$ instead of the objectwise one on $\M^I$.
  The claim about the groupoidal case follows because the projective pre-wfs on diagrams over a groupoid is equivalent to the product of the projective pre-wfs on diagrams over each of its connected components.
\end{proof}

As in \cref{sec:reedy}, we now want to characterize when the two resulting wfs on $\M^\C$, for \M a model category, fit together into a model structure with objectwise weak equivalences.

We begin with an analogue of \cref{thm:reedy-wfs}, which is unsurprisingly somewhat more complicated.
For one thing, because in \cref{sec:reedy} the strata were discrete, we could restrict attention to single hom-functors $\C(x,\blank)$ and $\C(\blank,x)$; thus in \cref{thm:reedy-wfs} it was sufficient to consider limits and colimits weighted by functors $W\in \nSet^\C$ and $U\in\nSet^{\C\op}$ respectively.
We could instead have considered the ``parametrized'' weighted limit functor $(\nSet^{\C\op\times I})\op \times \M^\C \to \M^I$, where $I$ a set (eventually the set of objects of a given degree), but the discreteness of $I$ would imply that the wfs on $\nSet^{\C\op\times I}$ and $\M^I$ are all objectwise, so there would be no real change.

Now, however, our strata are no longer discrete, so we must consider instead weighted limit and colimit functors such as $(\nSet^{\C\times\D\op})\op \times \M^\C \to \M^\D$ and $\nSet^{\C\op\times \D} \times \M^\C \to \M^\D$, with $\D$ a category.
In particular, this means that categories such as $\nSet^{\C\op\times\D}$, $\nSet^{\C\times\D\op}$, and $\M^\D$ inherit more than one (pre-)wfs, and we have to choose the correct ones to make the lemma true.
We want to use the c-Reedy wfs for diagrams over $\C$ and $\C\op$, but diagrams over $\D$ and $\D\op$ could be given either an injective or a projective wfs.
Furthermore, we could write $\nSet^{\C\times\D\op}$ (say) as either $(\nSet^\C)^{\D\op}$ or $(\nSet^{\D\op})^\C$, thus applying either the c-Reedy construction first and then an injective or projective one, or vice versa.
Finally, because of our choice to use \emph{projective} structures on strata rather than injective ones, the analogue of \cref{thm:reedy-wfs-opp} is not true for c-Reedy wfs; thus we would obtain different wfs on $\nSet^{\C\op}$ by applying the c-Reedy construction with the bistratified category $\C\op$, or by writing it as $((\nSet\op)^\C)\op$ and applying the c-Reedy construction with $\C$ itself.

It turns out that the correct thing to do is to write $\nSet^{\C\op\times \D} \cong (((\nSet\op)^\C)\op)^\D$ and $\nSet^{\C\times\D\op} \cong (\nSet^\C)^{\D\op}$ --- thus first applying the c-Reedy construction with $\C$ itself in both cases --- and then use the \emph{injective} wfs with respect to $\D$ and $\D\op$.
The resulting wfs will be the only ones we consider on the categories $\nSet^{\C\op\times \D}$ and $\nSet^{\C\times\D\op}$, so we will simply denote them by $(\L,\R)$ as usual.
For the category $\M^\D$, it turns out that we need to use the projective pre-wfs for limits, but the injective one for colimits.
In addition to making the lemma true, these choices have the advantage of making as many things as possible objectwise and thus simple to analyze.

\begin{lem}\label{thm:creedy-wfs}
  Suppose \C is bistratified, \M is a complete and cocomplete category with a pre-wfs, and \D is a small category.
  \begin{itemize}
  \item Let $\M^\C$ have its c-Reedy pre-wfs and $\nSet$ have its (injection, surjection) wfs.
  \item Let $\nSet^\C$ and $\nSet^{\C\op} \cong ((\nSet\op)^\C)\op$ have their c-Reedy wfs.
  \item Let $\nSet^{\C\op\times \D} \cong (((\nSet\op)^\C)\op)^\D$ and $\nSet^{\C\times\D\op} \cong (\nSet^\C)^{\D\op}$ have their induced injective pre-wfs.
  \end{itemize}
  Then:
  \begin{enumerate}
  \item The weighted limit $(\nSet^{\C\times\D\op})\op \times \M^\C \to \M^\D$ is a right wfs-bimorphism when $\M^\D$ has its projective pre-wfs.\label{item:creedywfs1}
  \item The weighted colimit $\nSet^{\C\op\times \D} \times \M^\C \to \M^\D$ is a left wfs-bimorphism when $\M^\D$ has its injective pre-wfs.\label{item:creedywfs2}
  \end{enumerate}
\end{lem}
\begin{proof}
  We prove~\ref{item:creedywfs1}; the proof of~\ref{item:creedywfs2} is dual.
  Since $\nSet^{\C\times\D\op} \cong (\nSet^\C)^{\D\op}$ and its \L-maps are objectwise with respect to $\D\op$, and the \R-maps in $\M^\D$ are objectwise with respect to \D, it suffices to show that the weighted limit $(\nSet^\C)\op \times \M^\C \to \M$ is a right wfs-bimorphism.
  (This is a simple version of~\cite[Theorem 3.2]{gambino:wgtlim}.)
  Now it suffices to show by adjunction that the objectwise copower $\nSet^\C \times \M \to \M^\C$ is a left wfs-bimorphism.
  For this we can apply the same argument as in \cref{thm:reedy-wfs}, using \cref{thm:wfs-paramor-pres}, \cref{thm:bistrat-parapostcomp}, and induction, once we have as a starting point that the copower functor $\nSet \times \M \to \M$ is a left wfs-bimorphism.
  But this is exactly \cref{thm:copower}.
\end{proof}

Recall from \cref{thm:objwise-cwe} that if \M is a model category, then the projective acyclic fibrations in $\M^\D$ are universal weak equivalences and the injective acyclic cofibrations in $\M^\D$ are couniversal weak equivalences.
Thus, if $W$ is an \L-object in $\nSet^{\C\times\D\op}$, then $\wlimc W \blank:\M^\C \to \M^\D$ takes acyclic fibrations to universal weak equivalences, and if $U$ is an \L-object in $\nSet^{\C\op\times \D}$, then $(\wcolimc U \blank) :\M^\C \to\M^\D$ takes acyclic cofibrations to couniversal weak equivalences.

In constrast to the situation in \cref{sec:reedy}, however, these conditions on $U$ and $W$ are \emph{not} dual to each other; this is due again to our choice to use the projective structure on strata in the c-Reedy wfs.
On the one hand, $U$ being an \L-object in $\nSet^{\C\op\times \D}$ means that for each $d\in\D$, $U(\blank,d)$ is an \L-object in $((\nSet\op)^\C)\op$.
As remarked above, this is not the same as being an \L-object in $\nSet^\C$, since the c-Reedy wfs uses the projective and not the injective model structure; instead it means being an \R-object in $(\nSet\op)^\C$, which by \cref{thm:bistrat-wfs} means that for each $y\in \C$, the map ``$U(y,d) \to M_y U(\blank,d)$'' is an \R-map in $\nSet\op$.
In other words, its opposite $L_y U(\blank,d) \to U(y,d)$ must be an \L-map in $\nSet$, i.e.\ an injection.

On the other hand, $W$ being an \L-object in $\nSet^{\C\times\D\op}$ says, by definition, that each $W(d,\blank)$ is an \L-object in $\nSet^\C$.
In other words, we require that each map $L_\de W(d,\blank) \to W(d,\blank)$ is a \emph{projective} \L-map in $\nSet^{\C_{=\de}}$.
Thus, the condition on $W$ involves projective-cofibrancy, while the condition on $U$ does not.
To express this condition more concretely, we need the following.

\begin{lem}\label{thm:projcof}
  For any small category \D, the projective \L-maps in $\nSet^\D$ are the complemented injections whose complement is a coproduct of retracts of representable functors.
  In particular, for any group $G$, the projective \L-maps in $\nSet^G$ are the injections whose complement has a free $G$-action.
\end{lem}
\begin{proof}
  Since the wfs on \nSet is cofibrantly generated, its projective pre-wfs is in fact a wfs.
  The generating projective \L-maps in any projective wfs are the underlying generating \L-maps tensored with representable functors.
  The only generating cofibration of $\nSet$ is $\emptyset \to 1$, so the only generating cofibrations of $\nSet^\D$ are $\emptyset \to \D(d,\blank)$ for some $d\in\D$.
  A relative cell complex built out of these is exactly a complemented injection whose complement is a coproduct of representables.
  Thus, the \L-maps are the retracts of these, which are as claimed.

  The case for groups follows because the only representable in $\nSet^G$ is the free orbit $G/e$, and it has no nontrivial retracts.
\end{proof}

We can now prove analogues of \cref{thm:quillen-almostreedy}, \cref{thm:acof-cat}, and \cref{thm:almost-reedy}.

\begin{lem}\label{thm:quillen-creedy}
  Let \C be bistratified, \D a small category, and $U\in \nSet^{\C\op\times \D}$ and $W\in\nSet^{\C\times \D\op}$, equipped with their pre-wfs from \cref{thm:creedy-wfs}.
  Then the following are equivalent:
  \begin{enumerate}
  \item $U$ is an \L-object.
  \item For any \M with a wfs, $(\wcolimc{U}{\blank}):\M^\C\to\M^\D$ takes c-Reedy \L-maps to injective \L-maps.
  \item For any \M with a wfs and any $d\in \D$, $(\wcolimc{U(\blank,d)}{\blank}):\M^\C\to\M$ takes c-Reedy \L-maps to \L-maps.
  \end{enumerate}
  Similarly, the following are equivalent:
  \begin{enumerate}
  \item $W$ is an \L-object.
  \item For any \M with a wfs, $\wlimc{W}{\blank}:\M^\C\to\M^\D$ takes c-Reedy \R-maps to projective \R-maps.
  \item For any \M with a wfs and any $d\in \D$, $\wlimc{W(d,\blank)}{\blank}:\M^\C\to\M$ takes c-Reedy \R-maps to \R-maps.
  \end{enumerate}
\end{lem}
\begin{proof}
  In both cases, the second and third conditions are equivalent by the definition of injective \L-maps and projective \R-maps.
  Moreover, in both cases the first condition implies the second by \cref{thm:creedy-wfs}.
  Thus it suffices to show that the third condition implies the first.

  In the case of $U$, by the remarks after \cref{thm:creedy-wfs}, it suffices to show that for each $d$, the map $L_x U(\blank,d)\to U(x,d)$ is an injection for each $x\in \C$.
  We can largely repeat the proof of \cref{thm:quillen-almostreedy}; the only hitch is that instead of showing that~\eqref{eq:reedy-char-po} is an injection, we need to show that 
  \begin{equation}
    \partial_\be\C(x,\blank) \sqcup_{L_\de \partial_\be\C(x,\blank)} L_\de \C(x,\blank) \too \C(x,\blank)\label{eq:creedy-char-po}
  \end{equation}
  is a projective \L-map in $\nSet^{\C_{\de}}$, for any degree $\de$.
  If $\de<\be$, then this is again an isomorphism.
  If $\de=\be$, the same argument reduces it to $\partial_\be\C(x,\blank) \to \C(x,\blank)$.
  However, as remarked above, the complement of $\partial_\be\C(x,y) \to \C(x,y)$ is the set of basic morphisms $x\to y$, which since $x$ and $y$ both have degree $\be$ is just $\C_{=\be}(x,y)$.
  Thus, the map $\partial_\be\C(x,\blank) \to \C(x,\blank)$ is complemented and its complement is the representable $\C_{=\be}(x,\blank)$.
  By \cref{thm:projcof}, therefore, it is a projective \L-map.

  In the case of $W$, it similarly suffices to show that each $W(d,\blank)$ is an \L-object in $\nSet^\C$, i.e.\ that the map $L_\de W(d,\blank)\to W(d,\blank)$ is a projective \L-map in $\nSet^{\C_{=\de}}$ for each degree $\de$.
  In this case, we repeat the proof of \cref{thm:quillen-almostreedy} with $\nSet^{\C_{=\be}\op}$, with its projective wfs, in place of $\nSet$.
  (We also have to dualize, so that we consider weighted limits in $(\nSet^{\C_{=\be}\op})\op$ to get weighted colimits in $\nSet^{\C_{=\be}\op}$.)
  Now we end up having to show that $\partial_\be\C(\blank,y) \to \C(\blank,y)$ is a projective \L-map in $\nSet^{\C_{=\be}\op}$ for any $y$ of degree $\be$; but this follows from \cref{thm:projcof} by the same reasoning.
\end{proof}

\begin{lem}\label{thm:acof-catinj}
  Every couniversal weak equivalence in $\nCat^\D$ is an injective (i.e.\ objectwise) acyclic cofibration.
\end{lem}
\begin{proof}
  Suppose $f:A\to B$ is a couniversal weak equivalence, and suppose for contradiction there were $d\in \D$ and objects $x,y\in A_d$ with $x\neq y$ but $f(x)=f(y)$ in $B_d$.
  Since $f$ is an equivalence, there is a unique isomorphism $e:x\cong y$ in $A_d$ with $f(e) = \id$.
  Let $E$, $g$, $C$, and $h$ be as in the proof of \cref{thm:acof-cat} with $A_d$ replacing $A$.
  Note that $hg:A_d \to C$ sends \emph{all} objects of $A_d$ to the single object $c$.

  Now define $\Chat\in\nCat^\D$ by $\Chat_{d'} = C^{\D(d',d)}$, i.e.\ $\Chat$ is the image of $C$ under the right adjoint to the ``evaluate at $d$'' functor $\nCat^\D \to \nCat$.
  Thus, the functor $hg:A_d \to C$ corresponds to a map $k:A\to \Chat$, where for $\psi:d'\to d'$ we have $k_{d'}(\blank)(\psi) = hg(A_\psi(\blank))$.
  In particular, $k_{d'}$ sends \emph{all} objects of $A_{d'}$ to the constant function $\psi\mapsto c$.
  Thus, $k_d(e)$ is a nonidentity automorphism of this object.
  But now if we let $D$ be the pushout of $f$ and $k$, we find that $k_d(e)$ must go to an identity in $D$, so that $\Chat \to D$ is not an objectwise equivalence.
\end{proof}

\begin{thm}\label{thm:almost-creedy}
  Suppose \C is bistratified, and that for any model category \M admitting projective model structures on each $\M^{\C_{=\de}}$, the two wfs on $\M^\C$ form a model structure with the objectwise weak equivalences.
  Let \D be a small category and $U\in\nSet^{\C\op\times \D}$ and $W\in\nSet^{\C\times \D\op}$, equipped with their pre-wfs from \cref{thm:creedy-wfs}.
  \begin{enumerate}
  \item $({\wcolimc{U}{\blank}}):\M^\C\to\M^\D$ takes Reedy acyclic cofibrations to couniversal weak equivalences for all such model categories \M if and only if $U$ is an \L-object.\label{item:acr1}
  \item $\wlimc{W}{\blank}:\M^\C\to\M^\D$ takes Reedy acyclic fibrations to universal weak equivalences for all such model categories \M if and only if $W$ is an \L-object.\label{item:acr2}
  \end{enumerate}
\end{thm}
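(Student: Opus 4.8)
The plan is to mirror the proof of \cref{thm:almost-reedy} line by line, substituting the c-Reedy tools developed above: \cref{thm:creedy-wfs} for \cref{thm:reedy-wfs}, \cref{thm:quillen-creedy} for \cref{thm:quillen-almostreedy}, and \cref{thm:acof-catinj} for \cref{thm:acof-cat}. I would prove~\ref{item:acr1} and deduce~\ref{item:acr2} dually; but because the analogue of \cref{thm:reedy-wfs-opp} fails for the c-Reedy wfs, that dualization is not formal and has to be re-run with dual test data --- this is the step needing the most care, and I come back to it below.

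The ``if'' direction of~\ref{item:acr1} is immediate: if $U$ is an \L-object, then since $\emptyset\to U$ is an \L-map, \cref{thm:creedy-wfs}\ref{item:creedywfs2} shows that $(\wcolimc{U}{\blank}):\M^\C\to\M^\D$ preserves \L-maps when $\M^\D$ carries its \emph{injective} pre-wfs; applying this to each of the two c-Reedy wfs on $\M^\C$, it takes c-Reedy acyclic cofibrations to objectwise acyclic cofibrations, which by \cref{thm:objwise-cwe} are couniversal weak equivalences. Symmetrically, \cref{thm:creedy-wfs}\ref{item:creedywfs1} together with \cref{thm:objwise-cwe} gives the ``if'' direction of~\ref{item:acr2}, using the \emph{projective} pre-wfs on $\M^\D$ and objectwise acyclic fibrations.

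For the converse of~\ref{item:acr1}: by the proof of \cref{thm:quillen-creedy} (whose appeal to a specific test category is, in the $U$ case, confined to $\nSet$) it suffices to show that for each $d\in\D$ the functor $(\wcolimc{U(\blank,d)}{\blank}):\nSet^\C\to\nSet$ preserves c-Reedy \L-maps. Given a c-Reedy \L-map $i:A\to B$ in $\nSet^\C$, put $j = i\sqcup\id : A\sqcup 1\to B\sqcup 1$; since weighted colimits commute with coproducts, $(\wcolimc{U(\blank,d)}{i})$ is an injection iff $(\wcolimc{U(\blank,d)}{j})$ is. Just as in \cref{thm:almost-reedy}, $\codisc j$ is a c-Reedy cofibration in $\nCat^\C$ (the set-of-objects functor $\nCat\to\nSet$ is cocontinuous and reflects cofibrations) and an objectwise weak equivalence, both ends of $j$ being nonempty; since $\nCat$ is combinatorial each $\nCat^{\C_{=\de}}$ has a projective model structure, so the theorem's hypothesis applies with $\M=\nCat$ and $\codisc j$ is a c-Reedy acyclic cofibration. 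Hence $(\wcolimc{U}{\codisc j}):\nCat^\C\to\nCat^\D$ is a couniversal weak equivalence in $\nCat^\D$, so by \cref{thm:acof-catinj} an objectwise acyclic cofibration, in particular objectwise injective on objects; applying the cocontinuous set-of-objects functor in degree $d$ recovers $(\wcolimc{U(\blank,d)}{j})$, which is therefore an injection.

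For the converse of~\ref{item:acr2} one runs the same argument with all variances reversed. Here ``$W$ is an \L-object'' means each $L_\de W(d,\blank)\to W(d,\blank)$ is a \emph{projective} \L-map in $\nSet^{\C_{=\de}}$, and since (as in the proof of \cref{thm:quillen-creedy}) the complement of this map is representable, \cref{thm:projcof} reduces the claim to plain injectivity, which one extracts by testing $\wlimc{W}{\blank}$ against $\M=\nCat\op$ --- which again has projective model structures on its strata, $\nCat$ being combinatorial --- where, by \cref{thm:acof-catinj} read in $\nCat^{\D\op}$, a universal weak equivalence is forced to be an objectwise acyclic fibration. The principal obstacle is exactly this asymmetry: because the c-Reedy wfs is built from \emph{projective} structures on the strata rather than the self-dual objectwise ones, the hypotheses on $U$ and on $W$ are genuinely different --- one involves projective-cofibrancy, the other does not --- so~\ref{item:acr2} cannot be obtained by merely transposing~\ref{item:acr1} but must be argued with the dual construction; throughout, one has to keep straight which of the several (pre-)wfs carried by $\nSet^{\C\op\times\D}$, $\nSet^{\C\times\D\op}$, $\M^\D$, and the intermediate diagram categories is in force at each point.
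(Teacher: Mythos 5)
Your treatment of part (i), and of both easy directions, matches the paper's own proof (which likewise gets the easy directions from \cref{thm:creedy-wfs} plus \cref{thm:objwise-cwe}, and for the converse of (i) takes $\M=\nCat$, uses \cref{thm:acof-catinj}, passes through $\codisc j$ and the set-of-objects functor, and finishes with \cref{thm:quillen-creedy}). The genuine problem is your converse of (ii). You assert that ``as in the proof of \cref{thm:quillen-creedy}, the complement of $L_\de W(d,\blank)\to W(d,\blank)$ is representable,'' and then use \cref{thm:projcof} to reduce the $W$-condition to plain injectivity, detected with $\M=\nCat\op$. But the representable-complement observation in \cref{thm:quillen-creedy} is about the hom-weights of $\C$ itself: the complement of $\partial_\be\C(x,\blank)\to\C(x,\blank)$ (resp.\ $\partial_\be\C(\blank,y)\to\C(\blank,y)$) consists of the basic morphisms and is the representable $\C_{=\be}(x,\blank)$ (resp.\ $\C_{=\be}(\blank,y)$). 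It says nothing about the arbitrary weight $W$: that the complement of the latching map of $W(d,\blank)$ is a coproduct of retracts of representables is exactly the projective-cofibrancy content of ``$W$ is an \L-object,'' i.e.\ the thing to be proved, not something you may assume. If the $W$-condition really reduced to objectwise injectivity it would have the same content as the $U$-condition, contradicting the asymmetry you yourself stress at the end; in the groupoidal case the missing content is precisely the freeness of the stratum action (see the discussion following \cref{thm:creedy-model}), and a test with $\M=\nCat\op$ that only records injectivity on objects cannot detect it. So your argument for (ii) does not establish that $W$ is an \L-object.

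The paper instead routes (ii) through the $W$-half of \cref{thm:quillen-creedy}, whose proof does \emph{not} use $\nSet$ as the test base but repeats the argument of \cref{thm:quillen-almostreedy} with $\nSet^{\C_{=\be}\op}$, equipped with its \emph{projective} wfs, in place of \nSet; the representability of the complement enters there only to show that the test map $\partial_\be\C(\blank,y)\to\C(\blank,y)$ is a projective \L-map. Correspondingly, in the converse of (ii) the ``categories to sets'' reduction with $\M=\nCat\op$ has to be carried out retaining the stratum-indexed diagram structure, so that what comes out of \cref{thm:acof-catinj} and the set-of-objects functor is a projective \L-map over $\C_{=\de}$ and not merely an injection. (A smaller instance of the same issue appears already in your part (i): the parenthetical reason that $\codisc j$ is a c-Reedy cofibration --- ``ob is cocontinuous and reflects cofibrations'' --- only shows its relative latching maps are objectwise cofibrations, whereas the c-Reedy condition asks that they be \emph{projective} cofibrations in $\nCat^{\C_{=\de}}$; this is true, but needs a short extra lifting argument using that acyclic fibrations in \nCat are surjective-on-objects equivalences and that the object-level relative latching map of $j$ is a projective \L-map in $\nSet^{\C_{=\de}}$.)
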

\begin{proof}
  In both cases ``only if'' follows from \cref{thm:creedy-wfs}.
  For the converse, we take $\M=\nCat$ in~\ref{item:acr1} and $\M=\nCat\op$ in~\ref{item:acr2}, so that by \cref{thm:acof-catinj}, both functors take values in objectwise acyclic cofibrations.
  We can now ignore the extra parameter object $d\in\D$, reduce from acyclic cofibrations of categories to injections of sets as in the proof of \cref{thm:almost-reedy}, and apply \cref{thm:quillen-creedy}.
\end{proof}

This motivates the following definition.

\begin{defn}\label{defn:almost-creedy}
  A bistratified category is \textbf{almost c-Reedy} if for all objects $x$ of degree \be,
  \begin{itemize}
  \item $\C(\blank,x)$ is an \L-object in the c-Reedy wfs on $\nSet^{\C_{\be}\op} \cong ((\nSet\op)^{\C_{\be}})\op$, and
  \item $\C(x,\blank)$ is an \L-object in the c-Reedy wfs on $\nSet^{\C_\be}$.
  \end{itemize}
  It is \textbf{almost g-Reedy} if in addition all its strata are groupoidal.
\end{defn}

Thus, we have the following theorem.

\begin{thm}\label{thm:creedy-model}
  Let \C be almost c-Reedy.
  Then for any model category \M such that each $\M^{\C_{=\de}}$ has a projective model structure, $\M^\C$ has a model structure such that
  \begin{itemize}
  \item The weak equivalences are objectwise.
  \item $A\to B$ is a fibration (resp.\ acyclic fibration) iff  for all $x\in \C$, the induced map $A_x \to M_x A \times_{M_x B} B_x$ is a fibration (resp.\ acyclic fibration) in \M.
  \item $A\to B$ is a cofibration (resp.\ acyclic cofibration) iff for all degrees $\de$, the induced map $L_\de B \sqcup_{L_\de A} A_\de \to B_\de$ is a projective-cofibration (resp.\ acyclic projective-cofibration) in $\M^{\C_{=\de}}$.
  \end{itemize}
  If \C is almost g-Reedy, then the condition to be a cofibration can be simplified to
  \begin{itemize}
  \item $A\to B$ is a cofibration iff the induced map $L_x B \sqcup_{L_x A} A_x \to B_x$ is a projective-cofibration in $\M^{\aut(x)}$ for all $x\in\C$.
  \end{itemize}
  and similarly in the acyclic case.\qed
\end{thm}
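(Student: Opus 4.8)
The plan is to prove this by induction on the height of \C, transcribing the proof of \cref{thm:reedy-model} with the c-Reedy machinery of this section in place of the s-Reedy machinery of \cref{sec:reedy}. By \cref{thm:bistrat-wfs} the category $\M^\C$ already carries two c-Reedy wfs --- genuine wfs, not merely pre-wfs, since both projective pre-wfs on each $\M^{\C_{=\de}}$ are wfs (being the two halves of a model structure) --- which we name (cofibration, acyclic fibration) and (acyclic cofibration, fibration); declare a map to be a weak equivalence precisely when it is objectwise one. What must be shown is that these classes assemble into a model structure and that the (acyclic) cofibrations and fibrations are governed by the stated latching and matching conditions.

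For the successor step at height $\be+1$, I would write $\C = \coll{\al_\be}$ for the abstract bigluing data $(U_\be,W_\be,\al_\be)$ from $\C_\be$ to the stratum $\C_{=\be}$; the subcategory $\C_\be$ is again almost c-Reedy, of strictly smaller height, so by the inductive hypothesis $\M^{\C_\be}$ carries the asserted model structure. By \cref{thm:bigluing=gencollage} there is an equivalence $\M^\C \simeq \biglue{\albar}$, where $\albar : (\wcolimc{U_\be}{\blank}) \to \wlimc{W_\be}{\blank}$ is the induced transformation between functors $\M^{\C_\be} \to \M^{\C_{=\be}}$ and we give $\M^{\C_{=\be}}$ its projective model structure. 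The almost-c-Reedy condition at objects of degree $\be$ unwinds exactly to the assertion that $U_\be$ and $W_\be$ are \L-objects for the wfs of \cref{thm:creedy-wfs}, so \cref{thm:almost-creedy} tells us that $(\wcolimc{U_\be}{\blank})$ carries acyclic cofibrations to couniversal weak equivalences and $\wlimc{W_\be}{\blank}$ carries acyclic fibrations to universal weak equivalences; these functors are respectively cocontinuous and continuous, having adjoints, so \cref{thm:model} applies and yields a model structure on $\biglue{\albar}$ whose two forgetful functors preserve cofibrations, fibrations, and weak equivalences. Transporting this along the equivalence, and noting that under it $(\wcolimc{U_\be}{\blank})$ becomes the latching functor $L_\be$ and $\wlimc{W_\be}{\blank}$ the matching functor $M_\be$, the resulting cofibrations, fibrations, and weak equivalences are precisely those of \cref{thm:bistrat-wfs} together with the objectwise weak equivalences, which is the claim. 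At a limit ordinal, $\M^\C = \lim_\de \M^{\C_\de}$ by \eqref{eq:cotower}, all three classes are detected stagewise, and \cref{thm:clovenlim} (using strictness of the forgetful functors from \cref{thm:wfs}) glues the inductively-given model structures, with 2-out-of-3 and the identification of the acyclic classes inherited stage by stage. For almost g-Reedy \C, \cref{thm:bistrat-wfs} already records that condition~\ref{item:bswfsL} on cofibrations is equivalent to the pointwise condition~\ref{item:bswfsL}$'$ over the groups $\aut(x)$; applying this to acyclic cofibrations as well gives the simplified description.

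I expect essentially all of this to be routine once \cref{thm:almost-creedy} and \cref{thm:model} are in hand; the one place where care is needed, and where a sloppy argument could slip, is the bookkeeping in the successor step --- remembering that the strata $\M^{\C_{=\de}}$ carry \emph{projective} model structures whereas $\M^{\C_\be}$ carries its \emph{c-Reedy} one, and checking that the ``couniversal/universal weak equivalence'' conclusions of \cref{thm:almost-creedy} are measured in exactly the structures that the hypotheses of \cref{thm:model} demand. Everything else is a transcription of the proof of \cref{thm:reedy-model}.
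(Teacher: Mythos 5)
Your proposal is correct and follows the paper's own route: the paper proves this exactly as it proves \cref{thm:reedy-model}, by induction on height, using the collage/bigluing equivalence and \cref{thm:model} at successor stages (with the \L-object conditions of \cref{defn:almost-creedy} feeding into \cref{thm:creedy-wfs} and \cref{thm:objwise-cwe} to verify the couniversal/universal weak equivalence hypotheses, the strata carrying their projective model structures), \cref{thm:clovenlim} at limit stages, and the groupoidal clause of \cref{thm:bistrat-wfs} for the g-Reedy simplification. The only cosmetic remark is that for the sufficiency direction one should really cite \cref{thm:creedy-wfs} together with \cref{thm:objwise-cwe} rather than \cref{thm:almost-creedy} (whose statement presupposes the model structure), but since you invoke \cref{thm:creedy-wfs} as well, the substance is exactly as in the paper.
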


\begin{rmk}
  As observed in~\cite{bm:extn-reedy}, the non-self-duality of (almost) c- and g-Reedy categories intertwines with the duality between projective and injective model structures.
  Specifically, if $\C\op$ is almost c-Reedy and \M admits \emph{injective} model structures, then $\M^\C$ has an induced model structure which can be obtained by regarding it as $((\M\op)^{\C\op})\op$.
\end{rmk}

We emphasize again that because the c-Reedy wfs is not self-dual, neither is the notion of almost c-Reedy (or almost g-Reedy) category.
Specifically, the first condition in \cref{defn:almost-creedy} reduces to the same condition as in \cref{sec:reedy} that
\begin{equation}
  \partial_\de\C(y,x) \to \C(y,x)\label{eq:cofUmap}
\end{equation}
is an injection whenever $\de = \deg(y)<\deg(x)$; while the second requires instead that for each $x$ and each degree $\de<\deg(x)$, the map
\begin{equation}
  \partial_\de\C(x,\blank) \to \C(x,\blank)\label{eq:cofWmap}
\end{equation}
is a projective \L-map in $\nSet^{\C_{=\de}}$.
By \cref{thm:projcof}, this latter means a complemented injection whose complement is a coproduct of retracts of representables.
(Note that the complement of~\eqref{eq:cofWmap} at $y\in\C_{=\de}$ is just the set of basic morphisms $x\to y$.)

In the g-Reedy case, the latter reduces to asking that for each $y$ with $\deg(y)<\deg(x)$, the map
\begin{equation}
  \partial_\de\C(x,y) \to \C(x,y)
\end{equation}
is a projective \L-map in $\nSet^{\aut(y)}$, which means an injection whose complement has a free $\aut(y)$-action.

This enables us to characterize the almost c-Reedy categories more concretely.

\begin{thm}\label{thm:creedy-char}
  For a category with a degree function on its objects to be almost c-Reedy, it is necessary and sufficient that
  \begin{enumerate}
  \item All identities are basic.\label{item:creedychar0}
  \item Basic level morphisms are closed under composition.\label{item:creedychar1}
  \item The category of fundamental factorizations of any non-basic morphism is connected.\label{item:creedychar2}
  \item If $f$ is basic and strictly decreases degree, and $g$ is basic level, then $g f$ is again basic.
    Thus, for any $x$ and any $\de<\deg(x)$, the basic morphisms from $x$ constitute a functor $\C_{=\de}\to\nSet$.\label{item:creedychar3}
  \item Each of the functors in~\ref{item:creedychar3} is a coproduct of retracts of representables.\label{item:creedychar4}
  \end{enumerate}
\end{thm}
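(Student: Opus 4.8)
The plan is to reduce everything to the two \L-object conditions of \cref{defn:almost-creedy} and then translate each into elementary terms, exactly as was done for the s-Reedy case in \cref{sec:reedy}. First, \cref{thm:bistrat-char} already tells us that a category equipped with a degree function is bistratified if and only if it satisfies (i), (ii), and the special case of (iii) in which the non-basic morphism is required to be level. So I would fix such a bistratified \C and an object $x$ of degree $\be$, and unwind what it means for $\C(\blank,x)\in\nSet^{\C_\be\op}$ and $\C(x,\blank)\in\nSet^{\C_\be}$ to be \L-objects, using \cref{thm:bistrat-wfs} together with the co-Yoneda computation of latching objects from \cref{sec:auto} (the same manipulation carried out in the proof of \cref{thm:quillen-creedy}). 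As noted in the discussion preceding the theorem, this says precisely that \eqref{eq:cofUmap}, i.e.\ $\partial_\de\C(y,x)\to\C(y,x)$, is an injection whenever $\de=\deg(y)<\deg(x)$, and that \eqref{eq:cofWmap}, i.e.\ $\partial_\de\C(x,\blank)\to\C(x,\blank)$, is a projective \L-map in $\nSet^{\C_{=\de}}$ for each $\de<\deg(x)$.

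Next I would analyze \eqref{eq:cofUmap}. Since $\de=\deg(y)<\deg(x)$, a factorization of a morphism $y\to x$ through an object of degree $<\de$ is exactly a fundamental factorization; hence $\partial_\de\C(y,x)$ is the set of connected components of the category of fundamental factorizations, its image in $\C(y,x)$ is exactly the set of non-basic morphisms $y\to x$, and the fibre over such a morphism is the set of connected components of its category of fundamental factorizations. Thus injectivity of \eqref{eq:cofUmap} for all such $y$ is equivalent to the statement that every non-basic morphism which strictly raises degree has a connected category of fundamental factorizations. The very same argument shows that injectivity of \eqref{eq:cofWmap} at each object of $\C_{=\de}$ is equivalent to the statement that every non-basic morphism which strictly lowers degree has a connected category of fundamental factorizations. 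Together with the level case already built into bistratification, these two statements are jointly equivalent to condition (iii).

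It then remains to extract the rest of the content of \eqref{eq:cofWmap}. By \cref{thm:projcof}, a map in $\nSet^{\C_{=\de}}$ is a projective \L-map if and only if it is a complemented injection whose complement is a coproduct of retracts of representables. Assume \eqref{eq:cofWmap} is an injection; then its objectwise complement at $y\in\C_{=\de}$ is the set of basic morphisms $x\to y$. Since complements in $\nSet$, and hence in the presheaf category $\nSet^{\C_{=\de}}$, are unique, \eqref{eq:cofWmap} is a complemented injection precisely when this objectwise complement is a subfunctor of $\C(x,\blank)|_{\C_{=\de}}$, i.e.\ is closed under postcomposition with the morphisms of $\C_{=\de}$ --- which are exactly the basic level morphisms. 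That closure condition is (iv) (and under it the complement \emph{is} the functor $\C_{=\de}\to\nSet$ appearing there), while the complement's being a coproduct of retracts of representables is (v). Assembling the pieces: \C is almost c-Reedy if and only if it is bistratified (conditions (i), (ii), and the level part of (iii)) and \eqref{eq:cofUmap}, \eqref{eq:cofWmap} behave as above, which contributes the non-level cases of (iii) together with (iv) and (v); this is exactly the list (i)--(v).

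The only step that is not bookkeeping is the identification of ``complemented'' with condition (iv), which I expect to be the main (and mild) obstacle: it requires the observation that a complemented monomorphism of presheaves has a unique complement, namely the objectwise set-theoretic one, so that complementedness forces that objectwise complement to be a subfunctor. Everything else is a matter of quoting \cref{thm:bistrat-char}, \cref{thm:bistrat-wfs}, \cref{thm:projcof}, and the reduction already carried out in \cref{thm:quillen-creedy} and the surrounding discussion.
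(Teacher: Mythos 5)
Your proposal is correct and follows essentially the same route as the paper: the paper's (very terse) proof likewise combines \cref{thm:bistrat-char} with the reduction of the two \L-object conditions to \eqref{eq:cofUmap} and \eqref{eq:cofWmap} (done in the discussion just before the theorem) to get (i)--(iii), and then invokes \cref{thm:projcof} so that (iv) and (v) say exactly that \eqref{eq:cofWmap} is a projective \L-map. Your extra unwinding — identifying the fibres of \eqref{eq:cofUmap} and \eqref{eq:cofWmap} with connected components of fundamental-factorization categories, and complementedness with closure of the basic morphisms under postcomposition by basic level morphisms — is just the detail the paper leaves implicit, and it is accurate.
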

\begin{proof}
  By \cref{thm:bistrat-char}, conditions~\ref{item:creedychar0},~\ref{item:creedychar1}, and~\ref{item:creedychar2} are equivalent to \C being bistratified and~\eqref{eq:cofUmap} and~\eqref{eq:cofWmap} being objectwise injections.
  By \cref{thm:projcof}, conditions~\ref{item:creedychar3} and~\ref{item:creedychar4} are then equivalent to saying that~\eqref{eq:cofWmap} is a projective \L-map.
\end{proof}

In the groupoidal case, the characterization simplifies a bit.

\begin{cor}\label{thm:greedy-char}
  For a category with a degree function on its objects to be almost g-Reedy, it is necessary and sufficient that
  \begin{enumerate}
  \item The basic level morphisms are exactly the isomorphisms.\label{item:greedychar1}
  \item The category of fundamental factorizations of any non-basic morphism is connected.\label{item:greedychar2}
  \item The automorphisms of any object $y$ act freely (by composition) on the basic morphisms with codomain $y$ that strictly decrease degree.\label{item:greedychar3}
  \end{enumerate}
\end{cor}
\begin{proof}
  Replace \cref{thm:bistrat-char} with \cref{thm:bistrat-gpd} and use the second part of \cref{thm:projcof} (note that \cref{thm:creedy-char}\ref{item:creedychar3} is automatic in this case).
\end{proof}

As we did in \cref{sec:reedy}, let us define $\rup\C$ and $\rdn\C$ in an almost c-Reedy category to consist of the basic morphisms that non-strictly raise and lower degree, respectively.
Thus $\rup\C\cap\rdn\C$ is the category of basic level morphisms.
As in \cref{sec:reedy}, $\rup\C$ and $\rdn\C$ are not necessarily subcategories, but we do have the following:

\begin{lem}\label{thm:creedy-compose}
  Let \C be almost c-Reedy.
  \begin{enumerate}
  \item If $f$ and $g$ are basic level, then so is $g f$.\label{item:crc1}
  \item If $f\in \rdn\C$ and $g$ is basic level, then $g f \in\rdn\C$.\label{item:crc2}
  \item If $g f\in \rdn\C$ and one of $f$ or $g$ is basic level, then the other is in $\rdn\C$.\label{item:crc4}
  \item If $g f\in \rup\C$ and one of $f$ or $g$ is basic level, then the other is in $\rup\C$.\label{item:crc5}
  \end{enumerate}
\end{lem}
\begin{proof}
  \ref{item:crc1} is just \cref{thm:creedy-char}\ref{item:creedychar1}, while~\ref{item:crc2} is that combined with \cref{thm:creedy-char}\ref{item:creedychar3}.
  For~\ref{item:crc4}--\ref{item:crc5}, if $g$ is basic level, then composing a fundamental factorization of $f$ with $g$ would yield a fundamental factorization of $g f$, and similarly if $f$ is basic level.
\end{proof}

\begin{lem}\label{thm:creedy-cone}
  In an almost c-Reedy category, if $g_1 f_1 = f_2 = g_3 f_3$, where $f_1,f_2,f_3\in\rdn\C$ and strictly decrease degree,
  and $g_1$ and $g_2$ are basic level, then there exists $f_0\in \rdn\C$ and basic level $h_1$ and $h_3$ such that $g_1 h_1 = g_3 h_3$ and $f_1 = h_1 f_0$ and $f_3 = h_3 f_0$.
\end{lem}
\begin{proof}
  \cref{thm:creedy-char}\ref{item:creedychar4} says that the functor $\rdn\C(x,\blank):\C_{=\de}\to\nSet$ is a coproduct of retracts of representables.
  This is equivalent to saying that its category of elements is a disjoint union of categories whose identity functor admits a cone.
  We have morphisms $f_1 \xto{g_1} f_2 \xot{g_3} f_3$ in this category of elements, so all three lie in the same summand of the coproduct.
  A cone over the identity functor of this summand, with vertex $f_0$ and projections $f_0 \xto{h_i} f_i$, gives the desired data.
\end{proof}

We now have versions of \cref{thm:reedy-fact} and \cref{thm:reedy-zigzag}.

\begin{lem}\label{thm:creedy-fact}
  Every morphism $f:c\to c'$ in an almost c-Reedy category factors as $\rup f \rdn f$, where $\rup f \in \rup \C$ and $\rdn f \in\rdn \C$.
\end{lem}
\begin{proof}
  Just like the proof of \cref{thm:reedy-fact}, except that the stopping condition is when both $\rup{g_n}$ and $\rdn{h_n}$ are level (hence basic level).
  In this case, the desired factorization is $\rdn f = \rdn{h_n}\rup{g_n}\rdn{g_n}$ and $\rup f = \rup{h_n}$, where $\rdn f\in\rdn\C$ by \cref{thm:creedy-compose}\ref{item:crc2}.
\end{proof}

\begin{lem}\label{thm:creedy-zigzag}
  In an almost c-Reedy category, two fundamental factorizations of the same morphism with degrees $\de$ and $\de'$ are connected by a zigzag of fundamental factorizations that passes only through intermediate factorizations of degree $\le\max(\de,\de')$.
\end{lem}
\begin{proof}
  This is similar to the proof of \cref{thm:reedy-zigzag}, but complicated by the presence of nonidentity basic level morphisms (which causes the weakening of the concluding inequality).
  As there, let $\be$ be the maximum degree of intermediate factorizations; we will show that if $\be>\de$ and $\be>\de'$ then we can reduce the number of factorizations of degree $\be$ by one.

  Let $f = h_1 g_1$ be the \emph{first} factorization of degree $\be$ as we traverse the zigzag in one direction.
  Then the factorization preceding it is definitely of degree $<\be$.
  If the factorization following it is also of degree $<\be$, or if its connecting morphism is not basic, then we can proceed as in \cref{thm:reedy-zigzag}.
  Thus, we have two remaining situations to consider.
  \begin{enumerate}
  \item \label{item:crz1} $(h_0,g_0) \xto{k} (h_1,g_1) \xot{\ell} (h_2,g_2)$ where $k$ strictly increases degree and $\ell$ is basic level.
    Then $g_1 = k g_0$ is not basic.
    Since $g_1 = \ell g_2$ as well, by \cref{thm:creedy-compose}\ref{item:crc2}, $g_2$ is not basic either, so we have a fundamental factorization $g_2 = p q$.
    Now we can replace $(h_2,g_2)$ in the zigzag by $(h_2 p,q)$, which is of degree $<\be$, and proceed as in \cref{thm:reedy-zigzag}.
  \item $(h_0,g_0) \xot{k} (h_1,g_1) \xto{\ell} (h_2,g_2)$, where $k$ strictly decreases degree and $\ell$ is basic level.
    Now consider what happens after $(h_2,g_2)$ in the zigzag (it cannot be the end, since it has degree $\be>\max(\de,\de')$): we must have $(h_2,g_2) \xot{p} (h_3,g_3)$.
    If $p$ strictly increases degree, then we can argue as in~\ref{item:crz1}, swapping $(h_1,g_1)$ with $(h_2,g_2)$.
    Thus, $p$ is also level, and we may assume it is basic (otherwise, a fundamental factorization of it could be spliced in).

    Now since $\ell$ and $p$ are basic level, \cref{thm:creedy-compose}\ref{item:crc2} and~\ref{item:crc4} imply that $g_1$, $g_2$, and $g_3$ are either all basic or all not basic.
    If they are not, then we can use a fundamental factorization of $g_1$ to replace $(h_1,g_1)$ by a factorization of smaller degree.
    Thus, we may assume they are all basic.
    But now, by \cref{thm:creedy-cone} applied to $\ell g_1 = g_2 = p g_3$, we can replace the fragment of zigzag $(h_1,g_1) \xto{\ell} (h_2,g_2) \xot{p} (h_3,g_3)$ with one of the form $(h_1,g_1) \ot (h_4,g_4) \to (h_3,g_3)$.
    Composing the connecting morphisms $(h_4,g_4) \to (h_1,g_1) \xto{k} (h_0,g_0)$, we may omit $(h_1,g_1)$, ending up with one fewer factorization of degree $\be$ as desired.\qedhere
  \end{enumerate}
\end{proof}

Let us call a factorization $f = h g$ a \textbf{Reedy factorization} if $g\in\rdn\C$ and $h\in\rup\C$.
We may now expect by analogy with \cref{thm:reedy-fact-uniq} some sort of uniqueness for Reedy factorizations.
Unfortunately, in general this fails.

\begin{eg}
  Consider the commutative square category, with degrees assigned as shown:
  \[ \xymatrix{ & d && \deg = 2\\
    & a \ar[dl] \ar[r] & b \ar[ul] & \deg=1\\
    c \ar[uur] &&& \deg=0 } \]
  This is almost c-Reedy, but the composite $a \to d$ has two Reedy factorizations that are not even related by a zigzag of factorizations.
  (This does not contradict \cref{thm:creedy-char}\ref{item:creedychar2}, since the Reedy factorization $a\to b\to d$ is not fundamental.)
\end{eg}

The best we can do seems to be the following.

\begin{thm}\label{thm:creedy-fact-uniq}
  Let \C be an almost c-Reedy category and suppose it satisfies the additional property that if $f \in\rup\C$ and $g$ is basic level, then $f g \in\rup\C$.
  Then any two Reedy factorizations of the same morphism are connected by a zigzag of Reedy factorizations whose connecting maps are basic level (and thus, in particular, they have the same degree).
\end{thm}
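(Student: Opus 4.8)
The plan is to argue by well-founded induction on $\min(\deg(c),\deg(c'))$ for $f\colon c\to c'$, in the manner of \cref{thm:reedy-fact-uniq} but now propagating zigzags rather than collapsing to a single factorization. Throughout, recall that a Reedy factorization $f=hg$ has degree at most $\min(\deg(c),\deg(c'))$, with equality precisely when it is non-fundamental, and that in the non-fundamental case the factor on the side of smaller degree is level, hence (being a basic morphism) basic level.

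First suppose $f$ is basic. Since every morphism non-strictly raises or non-strictly lowers degree and $f$ is basic, $f$ lies in $\rup\C$ or in $\rdn\C$; say $f\in\rup\C$, the other case being symmetric. Then $(f,\id_c)$ is a Reedy factorization. For any Reedy factorization $f=hg$, basicness forces it to be non-fundamental, so its degree is $\min(\deg(c),\deg(c'))=\deg(c)$ (using $f\in\rup\C$), whence $g$ is basic level; and $g$ is then a connecting morphism from $(f,\id_c)$ to $(h,g)$, since $g\cdot\id_c=g$ and $h\cdot g=f$. Thus any two Reedy factorizations of a basic $f$ are joined, through $(f,\id_c)$, by a length-$2$ zigzag with basic-level connecting morphisms, which settles this case.

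Now suppose $f$ is non-basic. I would first show, using the additional hypothesis together with \cref{thm:creedy-compose}\ref{item:crc2}, that \emph{every} Reedy factorization of $f$ is fundamental: a non-fundamental Reedy factorization $f=hg$ has one of $g$, $h$ basic level (whichever lies on the lower-degree side), so either $h\in\rup\C$ with $g$ basic level gives $f=hg\in\rup\C$ by the additional hypothesis, or $g\in\rdn\C$ with $h$ basic level gives $f=hg\in\rdn\C$ by \cref{thm:creedy-compose}\ref{item:crc2}, in either case contradicting that the non-basic $f$ lies in neither $\rup\C$ nor $\rdn\C$. So let $f=hg$ and $f=h'g'$ be two fundamental Reedy factorizations, say with the degree of $(h,g)$ at least that of $(h',g')$. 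By \cref{thm:creedy-zigzag} they are joined by a zigzag of fundamental factorizations passing only through degrees at most that of $(h,g)$. Exactly as in \cref{thm:reedy-fact-uniq}, a connecting morphism between a \emph{Reedy} fundamental factorization and a factorization of strictly smaller degree would exhibit a fundamental factorization of the relevant $\rup$- or $\rdn$-factor, which is impossible as those are basic; and a level connecting morphism touching a Reedy fundamental factorization is automatically basic level by the same reasoning. It therefore remains to replace the zigzag produced by \cref{thm:creedy-zigzag} by one passing through \emph{Reedy} fundamental factorizations: one Reedy-ifies each intermediate factorization using \cref{thm:creedy-fact}, reassociating $\rup\phi\,\rdn\phi\,\rup\psi\,\rdn\psi$ as in the two-step computation~\eqref{eq:reedyfact-zigzag} from the proof of \cref{thm:reedy}, and verifies that the new connecting morphisms are basic level.

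The main obstacle is this last straightening step. In the s-Reedy setting it is immediate because $\rup\C$ and $\rdn\C$ are there assumed closed under composition, so reassociating through the middle term $k=\rdn\phi\,\rup\psi$ lands back in $\rup\C$ and $\rdn\C$ on the nose. Here $\rup\C$ and $\rdn\C$ need not be closed, and the work is to show that the factors produced when \cref{thm:creedy-fact} is applied to the ``wrong-way'' composite $\rdn\phi\,\rup\psi$ are basic level, so that the additional hypothesis (``$\rup\C$ absorbs basic level on the right'') and \cref{thm:creedy-compose}\ref{item:crc2} (``$\rdn\C$ absorbs basic level on the left'') can be invoked to conclude both that the reassociated outer factors still lie in $\rup\C$ and $\rdn\C$ and --- the point not to lose --- that the connecting morphisms of the straightened zigzag are basic level rather than merely level. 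Keeping the intermediate degrees controlled so that the inductive hypothesis is available for every smaller problem that is spawned is the remaining bit of care.
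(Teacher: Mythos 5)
There is a genuine gap, and it sits exactly where you flag your ``main obstacle.'' Your treatment of the basic case and your initial reductions for non-basic $f$ (every Reedy factorization of a non-basic $f$ is fundamental; apply \cref{thm:creedy-zigzag} to get a connecting zigzag of fundamental factorizations of controlled degree) agree with the paper. But the final step you propose --- replacing that zigzag by a new one obtained by Reedy-ifying each intermediate factorization via \cref{thm:creedy-fact} and splicing in two-step zigzags as in~\eqref{eq:reedyfact-zigzag} --- is both unnecessary and unlikely to work as stated: the connecting maps appearing in~\eqref{eq:reedyfact-zigzag} are of the form $\rdn h$ and $\rup k$, which in general strictly change degree, so the spliced zigzag would not have basic level connecting maps, which is precisely the conclusion you need; and your key observations about connecting maps (that one adjacent to a Reedy factorization with basic factors must be level, and basic) only apply at factorizations already known to be Reedy, so they give you no handle on how the Reedy-ified versions of two adjacent non-Reedy intermediate factorizations are related. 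You acknowledge this step is not carried out, so the proof is incomplete.

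The point you are missing is that the zigzag supplied by \cref{thm:creedy-zigzag} is already the right one: no straightening is needed. Starting from the Reedy end $(h,g)$, your own observations force the adjacent connecting map $k$ to be level (a strict degree change would produce a fundamental factorization of the basic morphism $h$ or $g$) and basic (a fundamental factorization of $k$ would compose to one of $h$ or $g$). Then \cref{thm:creedy-compose}\ref{item:crc2}, \ref{item:crc4}, \ref{item:crc5} together with the extra hypothesis (``$\rup\C$ absorbs basic level morphisms on the right'') show that the next factorization $(h_1,g_1)$ has $g_1\in\rdn\C$ and $h_1\in\rup\C$, i.e.\ is itself Reedy with basic factors --- whichever direction $k$ points. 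So the same argument applies at $(h_1,g_1)$, and by induction along the zigzag every intermediate factorization is Reedy and every connecting map is basic level. This direct propagation is the paper's proof; your additional hypothesis and \cref{thm:creedy-compose} are exactly what make it go through, but they should be used to show the given zigzag already has the required form rather than to build a new one.
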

\begin{proof}
  If $f$ is basic, then either $(f,\id)$ or $(\id,f)$ is a Reedy factorization, and any Reedy factorization is connected to this one by a zigzag of length one.
  Thus, suppose $f$ is not basic, and let $(h,g)$ and $(h',g')$ be two Reedy factorizations of it.
  Without loss of generality, let $\deg(h,g) \ge \deg(h',g')$.
  Note that if $h$ or $g$ were level, then by \cref{thm:creedy-compose}\ref{item:crc2} and the assumption on \C, $f$ would be basic.
  Thus, $(h,g)$ is a fundamental factorization, and likewise so is $(h',g')$.

  Therefore, by \cref{thm:creedy-zigzag}, $(h,g)$ and $(h',g')$ are connected by a zigzag of fundamental factorizations all of degree $\le\deg(h,g)$.
  Starting from $(h,g)$, if the next map in the zigzag were $(h,g) \xto{k} (h_1,g_1)$ where $k$ strictly decreases degree, then $(h_1,k)$ would be a fundamental factorization of $h$, which is impossible since $h$ is basic.
  Similarly, if the next map were $(h,g) \xot{k} (h_1,g_1)$ where $k$ strictly increases degree, then $(k,g_1)$ would be a fundamental factorization of $g$.
  Thus, whichever direction $k$ goes in, it must be level.
  Similarly, if $k$ had a fundamental factorization $(\ell,m)$, then $(h_1\ell,m)$ or $(\ell,mg_1)$ would be a fundamental factorization of $h$ or $g$ respectively; thus $k$ is basic.
  Now using \cref{thm:creedy-compose}\ref{item:crc2},~\ref{item:crc4}, and~\ref{item:crc5} and the assumption on \C, we have $g_1\in \rdn\C$ and $h_1\in\rup\C$, so $(h_1,g_1)$ is also Reedy.
  Proceeding inductively in this way along the zigzag, we conclude that all the maps occurring in it are basic level and all the factorizations occurring in it are Reedy.
\end{proof}

Note that if all basic level morphisms are invertible, then the condition in \cref{thm:creedy-fact-uniq} is automatic.
Moreover, a zigzag consisting of isomorphisms can be reduced to a single isomorphism; thus in the g-Reedy case we have a nicer conclusion.

\begin{cor}
  If \C is almost g-Reedy, then the Reedy factorization of every morphism $f$ is unique up to isomorphism.\qed
\end{cor}

Let us now recall the notion of g-Reedy category from~\cite{bm:extn-reedy} (there called a \emph{generalized Reedy category}), which is strictly more general than that of~\cite{cisinski:presheaves}.

\begin{defn}[\cite{bm:extn-reedy}]\label{defn:bm-greedy}
  A \textbf{g-Reedy category} is a category \C equipped with an ordinal degree function on its objects, and subcategories $\rup \C$ and $\rdn \C$ containing all the objects, such that
  \begin{itemize}
  \item Every non-invertible morphism in $\rup\C$ strictly raises degree and every non-invertible morphism in $\rdn\C$ strictly lowers degree.
  \item A morphism is an isomorphism if and only if it lies in both $\rup\C$ and $\rdn \C$, and in this case it is level.
  \item Every morphism $f$ factors uniquely up to isomorphism as $\rup f \rdn f$, where $\rup f \in \rup \C$ and $\rdn f\in\rdn \C$.
  \item If $\theta f = f$ for $f\in\rdn \C$ and $\theta$ an isomorphism, then $\theta=\id$.
  \end{itemize}
\end{defn}

\begin{thm}\label{thm:greedy}
  The following are equivalent for a category \C with an ordinal degree function on its objects.
  \begin{enumerate}
  \item \C is a g-Reedy category.\label{item:greedy1}
  \item \C is a almost g-Reedy category and $\rup\C$ and $\rdn \C$ are closed under composition.\label{item:greedy2}
  \end{enumerate}
\end{thm}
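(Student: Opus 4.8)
The plan is to mirror the proof of \cref{thm:reedy}, replacing ``identity'' by ``isomorphism'' throughout and invoking the g-Reedy analogues of the ingredients used there: \cref{thm:creedy-fact}, the corollary of \cref{thm:creedy-fact-uniq}, and the concrete characterization \cref{thm:greedy-char}. For the implication \ref{item:greedy2}$\Rightarrow$\ref{item:greedy1}, I would assume \C is almost g-Reedy with $\rup\C$ and $\rdn\C$ closed under composition and verify the four clauses of \cref{defn:bm-greedy} for these subcategories. All identities are basic and level, hence lie in $\rup\C\cap\rdn\C$. Since basic level morphisms are exactly the isomorphisms by \cref{thm:greedy-char}\ref{item:greedychar1}, a non-invertible morphism of $\rup\C$ cannot be level and so strictly raises degree (dually for $\rdn\C$), and $\rup\C\cap\rdn\C$ — the category of basic level morphisms — consists precisely of the (necessarily level) isomorphisms. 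Existence of a factorization $f=\rup f\,\rdn f$ is \cref{thm:creedy-fact}, and its uniqueness up to isomorphism is the corollary of \cref{thm:creedy-fact-uniq}, whose hypothesis holds automatically here since basic level morphisms are invertible. Finally, if $\theta f=f$ with $f\in\rdn\C$ and $\theta$ an isomorphism: if $f$ is level it is itself an isomorphism and $\theta=\id$ by cancellation, while if $f$ strictly lowers degree it is a basic degree-decreasing morphism, so $\theta=\id$ by the freeness clause \cref{thm:greedy-char}\ref{item:greedychar3}.

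For the converse \ref{item:greedy1}$\Rightarrow$\ref{item:greedy2}, I would assume \C is g-Reedy and first observe that $f\in\rdn\C$ iff the $\rup$-part of a Reedy factorization of $f$ is an isomorphism: if $f\in\rdn\C$ then $f=\id\cdot f$ is such a factorization, so uniqueness up to isomorphism forces $\rup f$ invertible; conversely an invertible $\rup f$ lies in $\rup\C\cap\rdn\C$, whence $f=\rup f\,\rdn f\in\rdn\C$ by closure of $\rdn\C$ under composition (dually for $\rup\C$). Next, exactly as in \cref{thm:reedy}, an arbitrary factorization $f=hg$ rewrites as $f=\rup h\,(\rup k\,\rdn k)\,\rdn g$ with $k=\rdn h\,\rup g$; since $\rup h\rup k\in\rup\C$ and $\rdn k\rdn g\in\rdn\C$ by closure, uniqueness up to isomorphism identifies $(\rup h\rup k,\rdn k\rdn g)$ with a Reedy factorization up to an isomorphism of the intermediate object, and comparing degrees shows the Reedy factorization has the least degree among all factorizations of $f$. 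It follows that $f$ admits a fundamental factorization iff $f=\rup f\,\rdn f$ is fundamental, which holds iff neither $\rup f$ nor $\rdn f$ is invertible, i.e.\ (by the previous observation) iff $f\notin\rup\C\cup\rdn\C$. Hence the basic morphisms are exactly $\rup\C\cup\rdn\C$; in particular the basic level morphisms are precisely the isomorphisms (\cref{thm:greedy-char}\ref{item:greedychar1}), the subcategories defined from the degree function coincide with the given $\rup\C$ and $\rdn\C$ (so they are closed under composition), and the last clause of \cref{defn:bm-greedy} translates exactly into \cref{thm:greedy-char}\ref{item:greedychar3}.

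It then remains to check \cref{thm:greedy-char}\ref{item:greedychar2}, after which \cref{thm:greedy-char} yields that \C is almost g-Reedy and the proof is done. Here I would argue as in the last paragraph of the proof of \cref{thm:reedy}: an arbitrary factorization $f=hg$ of a non-basic $f$ is joined to a Reedy factorization $\rup f\,\rdn f$ by the two-step zigzag \eqref{eq:reedyfact-zigzag}, every factorization in which is fundamental whenever $f=hg$ is; and any two Reedy factorizations of $f$ are connected by an isomorphism of the intermediate object, hence by a connecting morphism in the category of factorizations, these Reedy factorizations being themselves fundamental since $f$ is non-basic. Thus any two fundamental factorizations of $f$ are connected. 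The main obstacle — though it should be routine rather than deep — is that Reedy factorizations are now unique only up to isomorphism, so one must confirm that the intermediate factorization in \eqref{eq:reedyfact-zigzag} is genuinely fundamental and that the various equalities in the proof of \cref{thm:reedy} survive being weakened to isomorphisms; no new idea beyond those already present in \cref{thm:reedy} and \cref{thm:creedy-fact-uniq} should be required.
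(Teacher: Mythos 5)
Your proposal is correct and takes essentially the same route as the paper: the paper's own proof just observes that the last clause of \cref{defn:bm-greedy} restates \cref{thm:greedy-char}\ref{item:greedychar3}, cites the preceding lemmas (\cref{thm:creedy-fact}, the corollary of \cref{thm:creedy-fact-uniq}, \cref{thm:greedy-char}) for one direction, and says the converse ``follows essentially as in \cref{thm:reedy}, with isomorphisms replacing identities'' --- which is exactly the adaptation you spell out, including the correct handling of uniqueness of Reedy factorizations only up to isomorphism. No gaps.
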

\begin{proof}
  Note that the last clause in \cref{defn:bm-greedy} is just another way of stating \cref{thm:greedy-char}\ref{item:greedychar3}.
  Thus, we have already shown that an almost g-Reedy category has all the properties of a g-Reedy category except for closure of $\rup\C$ and $\rdn \C$ under composition, so~\ref{item:greedy2} implies~\ref{item:greedy1}.
  For the converse, \cref{thm:greedy-char}\ref{item:greedychar1} and~\ref{item:greedychar2} follow essentially as in \cref{thm:reedy}, with isomorphisms replacing identities.
\end{proof}

\begin{eg}\label{eg:orbit}
  A number of examples of g-Reedy categories can be found in~\cite{bm:extn-reedy}; here we recall only one (which we will return to in \cref{sec:enriched-generalized}).
  Let $G$ be a finite discrete group and $\O_G$ its \emph{orbit category}, whose objects are the transitive $G$-sets $G/H$ and whose morphisms are $G$-equivariant maps.
  Then $\O_G$ is g-Reedy in two different ways:
  \begin{enumerate}
  \item with $\deg(G/H) = |H|$, where $\rup{\O_G} = \O_G$ and $\rdn{\O_G}$ contains only the isomorphisms;
  \item with $\deg(G/H) = |G/H| = [G:H]$, where $\rdn{\O_G} = \O_G$ and $\rup{\O_G}$ contains only the isomorphisms.
    (This requires checking the freeness condition for the action of automorphisms on $\rdn{\O_G}$.)
  \end{enumerate}
  Its opposite category $\O_G\op$ is g-Reedy in only one way, with $\deg(G/H) = |G/H|$ and $\rup{\O_G\op} = \O_G\op$; the other putative structure would fail the freeness condition.

  Now in equivariant homotopy theory, we are often interested in categories of the form $\M^{\O_G\op}$.
  Thus, if $\M$ is a model category admitting both projective and injective model structures, there are \emph{three} ways to put a g-Reedy model structure on $\M^{\O_G\op}$: we can write $\M^{\O_G\op} \cong ((\M\op)^{\O_G})\op$ and apply either of the above g-Reedy structures on $\O_G$, or we can directly apply the g-Reedy structure of $\O_G\op$.
  Two of these model structures end up coinciding with the injective and projective model structures on $\M^{\O_G\op}$ itself, so that the g-Reedy machinery gives a more explicit description of the projective cofibrations and injective fibrations.
  In the other model structure, the fibrations are the $\aut(x)$-injective-fibrations for each object $x$ separately, while the cofibrations look like traditional s-Reedy cofibrations.

  If $G$ is infinite or topological, $\O_G$ will not generally be g-Reedy (although the above g-Reedy structures work if we restrict to subgroups of finite cardinality or finite index, respectively).
  But as noted in \cref{rmk:transfinite-degrees}, if $G$ is \emph{compact Lie}, and we restrict $\O_G$ (as is usual in the topological case) by requiring the subgroups $H$ to be closed (and the maps continuous), then $\O_G$ does have a g-Reedy structure with $\deg(G/H)$ being the ordinal $\omega\cdot \dim(H)+|\pi_0(H)|-1$, where $\rup{\O_G} = \O_G$ and $\rdn{\O_G}$ contains only the isomorphisms.
  The other two structures do not work in this case, since the opposite ordering of (even closed) subgroups may not be well-founded.
  Thus, if $\M$ admits projective model structures, $\M^{\O_G\op}$ admits a g-Reedy model structure, which ends up coinciding with its injective model structure, so we get a more explicit description of the injective fibrations.

  However, this is not as useful as it might be, since when $G$ is topological (including compact Lie), one generally wants to consider $\O_G$ as a \emph{topologically enriched} category.
  We will return to this question in \cref{sec:enriched-generalized}.
\end{eg}

We can define a notion of \emph{c-Reedy category} analogous to \cref{defn:bm-greedy} without the restriction to groupoidal strata, but it is a bit more difficult to state.

\begin{defn}\label{defn:creedy}
  A \textbf{c-Reedy category} is a category \C equipped with an ordinal degree function on its objects, and subcategories $\Cbar$, $\rup \C$, and $\rdn \C$ containing all the objects, such that
  \begin{enumerate}
  \item $\Cbar \subseteq \rup\C\cap \rdn\C$.\label{item:crd1}
  \item Every morphism in $\Cbar$ is level.\label{item:crd2}
  \item Every morphism in $\rup\C\setminus\Cbar$ strictly raises degree, and every morphism in $\rdn\C\setminus\Cbar$ strictly lowers degree.\label{item:crd3}
  \item Every morphism $f$ factors as $\rup f \rdn f$, where $\rup f \in \rup \C$ and $\rdn f\in\rdn \C$, and the category of such factorizations with connecting maps in \Cbar is connected.\label{item:crd4}
  \item For any $x$ and any degree $\de<\deg(x)$, the functor $\rdn\C(x,\blank):\Cbar_{=\de} \to \nSet$ is a coproduct of retracts of representables.\label{item:crd5}
  \end{enumerate}
\end{defn}

Note that~\ref{item:crd3} implies the other containment in~\ref{item:crd1}: if $f\in (\rup\C\cap\rdn\C)\setminus \Cbar$, it would have to both strictly raise and strictly lower degree, which is impossible.
Thus, in fact $\Cbar = \rup\C\cap \rdn\C$.

\begin{thm}\label{thm:creedy}
  The following are equivalent for a category \C with an ordinal degree function on its objects.
  \begin{enumerate}
  \item \C is a c-Reedy category.\label{item:creedy1}
  \item \C is a almost c-Reedy category and $\rup\C$ and $\rdn \C$ are closed under composition.\label{item:creedy2}
  \end{enumerate}
\end{thm}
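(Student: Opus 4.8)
The plan is to follow the template of \cref{thm:reedy} and \cref{thm:greedy}, substituting the structural results \cref{thm:creedy-fact}, \cref{thm:creedy-zigzag}, \cref{thm:creedy-fact-uniq} and the characterization \cref{thm:creedy-char} for their s-Reedy analogues. For the implication \ref{item:creedy2}$\Rightarrow$\ref{item:creedy1}, suppose \C is almost c-Reedy with $\rup\C$ and $\rdn\C$ closed under composition, and take $\Cbar = \rup\C\cap\rdn\C$; by \cref{thm:creedy-char}\ref{item:creedychar1} this is precisely the subcategory of basic level morphisms. Then \cref{defn:creedy}\ref{item:crd1}--\ref{item:crd3} are immediate from the fact that $\rup\C$ (resp.\ $\rdn\C$) consists of the basic morphisms that non-strictly raise (resp.\ lower) degree and that basic level morphisms are level; \cref{defn:creedy}\ref{item:crd5} is exactly \cref{thm:creedy-char}\ref{item:creedychar4}, once one observes that $\Cbar_{=\de}$ coincides with the stratum $\C_{=\de}$ and that the elements of $\rdn\C(x,\blank)$ landing in degree $\de<\deg(x)$ are exactly the basic morphisms out of $x$ of that target degree; and \cref{defn:creedy}\ref{item:crd4} combines the existence statement of \cref{thm:creedy-fact} with \cref{thm:creedy-fact-uniq}, whose extra hypothesis ``$f\in\rup\C$ and $g$ basic level imply $fg\in\rup\C$'' holds here because $\Cbar\subseteq\rup\C$ and $\rup\C$ is closed under composition.

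For the converse \ref{item:creedy1}$\Rightarrow$\ref{item:creedy2}, the crux is a dictionary lemma: a morphism $f$ of a c-Reedy category admits a fundamental factorization if and only if neither $\rup f$ nor $\rdn f$ lies in $\Cbar$. One direction is trivial, since if $\rup f,\rdn f\notin\Cbar$ then by \cref{defn:creedy}\ref{item:crd2}--\ref{item:crd3} the factorization $f=\rup f\,\rdn f$ is itself fundamental. For the other, given a fundamental factorization $f=hg$ I would carry out the algebraic manipulation from the proof of \cref{thm:reedy}: factor $h=\rup h\,\rdn h$ and $g=\rup g\,\rdn g$, set $k=\rdn h\,\rup g$, factor $k=\rup k\,\rdn k$, and use closure of $\rup\C$ and $\rdn\C$ under composition to rewrite $f=(\rup h\,\rup k)(\rdn k\,\rdn g)$ as a \emph{Reedy} factorization whose intermediate object has degree no larger than that of $(h,g)$, hence again strictly below the degrees of the domain and codomain of $f$. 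Since by \cref{defn:creedy}\ref{item:crd4} this Reedy factorization is connected to $f=\rup f\,\rdn f$ through a zigzag of Reedy factorizations with connecting maps in $\Cbar$, and maps in $\Cbar$ are level by \cref{defn:creedy}\ref{item:crd2}, every intermediate object along that zigzag has the same degree; in particular $\rup f$ strictly raises and $\rdn f$ strictly lowers degree, so neither lies in $\Cbar$. Taking contrapositives, the basic morphisms (those with no fundamental factorization) are exactly $\rup\C\cup\rdn\C$ and the basic level ones are exactly $\Cbar$; a short check --- using that a morphism of $\rdn\C$ which does not strictly lower degree must be level, hence in $\Cbar\subseteq\rup\C$, and dually --- then shows the \emph{derived} subcategories $\rup\C,\rdn\C$ of the almost-c-Reedy structure agree with the \emph{given} ones.

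With this dictionary, verifying the conditions of \cref{thm:creedy-char} for \C is quick. Conditions \ref{item:creedychar0} and \ref{item:creedychar1} hold because identities and basic level morphisms all live in the subcategory $\Cbar$; conditions \ref{item:creedychar3} and \ref{item:creedychar4} transport \cref{defn:creedy}\ref{item:crd5} along the identification of the basic arrows out of $x$ with $\rdn\C(x,\blank)$, with closure of $\rdn\C$ under composition supplying the functoriality; and condition \ref{item:creedychar2} (connectedness of the fundamental factorizations of a non-basic morphism) is obtained by reusing the two-step-zigzag argument from the proof of \cref{thm:reedy} to connect an arbitrary fundamental factorization to a Reedy one, then invoking the $\Cbar$-connectedness of \cref{defn:creedy}\ref{item:crd4} and again noting that a $\Cbar$-zigzag never leaves the fundamental factorizations since its connecting maps are level. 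Hence \C is almost c-Reedy, and $\rup\C,\rdn\C$ are closed under composition because they are given to be subcategories, completing the equivalence.

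The step I expect to be the main obstacle is the bookkeeping in \ref{item:creedy1}$\Rightarrow$\ref{item:creedy2} forced by the fact that, unlike in the s-Reedy case, Reedy factorizations are unique only up to a zigzag of basic level morphisms rather than on the nose: one must argue that the ``degree of a Reedy factorization'' is nonetheless well-defined (it is, because $\Cbar$-connecting maps are level) and that the canonical Reedy factorization is reached from any given fundamental factorization through fundamental factorizations only. Keeping straight which of the many composites produced by the manipulation above land in $\rup\C$, in $\rdn\C$, or in $\Cbar$ is where the care lies, but no essentially new idea beyond the proof of \cref{thm:reedy} should be needed.
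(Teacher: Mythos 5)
Your proposal is correct and follows essentially the same route as the paper: the forward direction invokes \cref{thm:creedy-fact} and \cref{thm:creedy-fact-uniq} (noting closure of $\rup\C$ supplies the extra hypothesis), and the converse uses the same $f=\rup h\,\rup k\,\rdn k\,\rdn g$ manipulation to connect any fundamental factorization to a fundamental Reedy one, exploits the well-defined degree of Reedy factorizations coming from level $\Cbar$-connecting maps to identify the given $\rup\C,\rdn\C,\Cbar$ with the derived ones, and then verifies the conditions of \cref{thm:creedy-char} exactly as the paper does. Your ``dictionary lemma'' is just a repackaging of the paper's observation that a morphism has a fundamental factorization iff it has a fundamental Reedy factorization.
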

\begin{proof}
  Note that closure of $\rup\C$ under composition implies the extra hypothesis of \cref{thm:creedy-fact-uniq}.
  Thus, our preceding lemmas show that~\ref{item:creedy2} implies~\ref{item:creedy1}.

  For the converse, we again argue roughly as in \cref{thm:reedy}, but we must take more care to deal with non-uniqueness of factorizations.
  Let us call a factorization $(\rup f,\rdn f)$ in a c-Reedy category with $\rup f \in \rup \C$ and $\rdn f\in\rdn\C$ a \emph{Reedy factorization}.
  Now the argument $f = h g = \rup h \rdn h \rup g \rdn g = \rup h \rup k \rdn k \rdn g$ shows that any factorization $(g,h)$ of $f$ is connected by a zigzag to a Reedy factorization $(\rup h \rup k,\rdn k \rdn g)$ of no greater degree.  
  Therefore, $f$ has a fundamental factorization iff it has a fundamental Reedy factorization.

  Now, if $f\in\rup\C$, then $(f,\id)$ is a Reedy factorization having the same degree as the domain of $f$.
  But by \cref{defn:creedy}\ref{item:crd4}, any two Reedy factorizations have the same degree; thus $f$ has no fundamental Reedy factorizations, hence no fundamental factorizations at all, and so it is basic.
  Similarly, every morphism in $\rup\C$ is basic.

  Conversely, however, if $f$ is basic, then a Reedy factorization $f = \rup f \rdn f$ cannot be fundamental, so either $\rdn f$ or $\rup f$ must be level.
  If $\rup f$ is level, then by \cref{defn:creedy}\ref{item:crd3} it is also in $\rdn f$, so $f\in\rdn C$, and similarly.
  Thus, the basic morphisms are precisely $\rup \C \cup \rdn \C$, and so once again these subcategories given as data agree with those that we have defined.
  It also follows that $\Cbar = \rup \C \cap \rdn \C$ is exactly the class of basic level morphisms.

  With that out of the way, we can essentially repeat the argument of \cref{thm:reedy}.
 \cref{thm:creedy-char}\ref{item:creedychar0} and~\ref{item:creedychar1} follow because \Cbar is a subcategory containing all the objects.
  For \cref{thm:creedy-char}\ref{item:creedychar2}, we connect any two fundamental factorizations to Reedy factorizations, and then use \cref{defn:creedy}\ref{item:crd4} to connect those Reedy factorizations.
  Finally, \cref{thm:creedy-char}\ref{item:creedychar3} is immediate since $\Cbar\subseteq \rdn\C$ and $\rdn\C$ is a subcategory, while \cref{thm:creedy-char}\ref{item:creedychar4} is identical to \cref{defn:creedy}\ref{item:crd5}.
\end{proof}

\begin{rmk}
  As mentioned in the introduction, \cref{thm:creedy} has been formally verified in the computer proof assistant Coq; see \cref{sec:formalization}.
\end{rmk}

\begin{rmk}
  In the terminology of~\cite[Definition 4.10]{cheng:dl-lawvere}, the subcategories $\rdn\C$ and $\rup\C$ of a c-Reedy category form a \emph{factorization system over \Cbar}.
  The special cases when \Cbar consists of isomorphisms or identities, as in a g-Reedy or s-Reedy category, correspond to ordinary orthogonal factorization systems and to ``strict factorization systems'', respectively.
\end{rmk}

I do not know any interesting examples of c-Reedy categories that are not g-Reedy.
However, when we come to the enriched context in \cref{sec:enriched-generalized}, this extra generality will be useful, as it enables us to avoid addressing the question of what an ``enriched groupoid'' is.

\section{Enriched Reedy categories}
\label{sec:enriched}

The yoga of Reedy-ness we have presented works in any sort of ``category theory'' where we have profunctors and collages.
In this section we show this by example in the case of enriched categories, which is probably the most familiar and most useful case, and also permits a comparison with the existing theory of~\cite{angeltveit:enr-reedy}.
We will omit many proofs, which are straightforward enrichments of those in \cref{sec:reedy}.

To start with, let \V be a complete and cocomplete closed symmetric monoidal category, so that we can consider \V-enriched categories, functors, profunctors, and so on.
The following definition looks exactly the same as \cref{defn:abd-cd}, except that the weights and hom-objects take values in \V rather than \nSet.

\begin{defn}\label{defn:v-abd-cd}
  Given small \V-categories \C and \D, \textbf{abstract bigluing data from \C to \D} consists of \V-profunctors $U:\D\hto \C$ and $W:\C\hto\D$ together with a \V-natural transformation
  \[ \al: \wcolim{W}{U}{\D} \to \C(\blank,\blank) \]
  The \textbf{collage} of such data is a \V-category with objects being the disjoint union of the objects of \C and \D, with
  \begin{align*}
    \coll{\al}(c,c') &= \C(c,c')\\
    \coll{\al}(c,d) &= U(c,d)\\
    \coll{\al}(d,c) &= W(d,c)\\
    \coll{\al}(d,d') &= \D(d,d') \sqcup (\wcolimc{U}{W})(d,d').
  \end{align*}
\end{defn}

If \M is a complete and cocomplete \V-category, we have weighted limit and colimit functors $(\wcolimc{U}{\blank}):\M^\C \to\M^\D$ and $\wlimc{W}{\blank}:\M^\C \to\M^\D$, and $\al$ induces as before a transformation $\al:(\wcolimc U \blank) \to \wlimc W\blank$.

\begin{thm}\label{thm:enriched-bicollage}
  In the above situation, we have an equivalence of \V-categories $\M^{\coll{\al}} \simeq \biglue{\al}$.\qed
\end{thm}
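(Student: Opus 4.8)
The plan is to run the proof of \cref{thm:bigluing=gencollage} over again, replacing \nSet-enrichment by \V-enrichment throughout. Every ingredient used there is available in the enriched setting: the bicategory \vprof of \V-profunctors, the identity-on-objects pseudofunctor $\rep{(\blank)}$ together with its local full faithfulness, the adjunctions $\rep P \adj \corep P$, the co-Yoneda lemma, and above all the universal property of collages as representable lax colimits (\cref{thm:gen-collage}) --- whose cited sources are in fact already stated for enriched categories. One should first record, as the evident enrichment of the construction in \cref{sec:bigluing}, that for \V-functors $F,G:\M\to\N$ and a \V-natural $\al:F\to G$ the bigluing category $\biglue{\al}$ is a \V-category: its hom-object from $(M,N,\phi,\gm)$ to $(M',N',\phi',\gm')$ is the limit in \V of $\M(M,M')$ and $\N(N,N')$ over the two commuting-square conditions.

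First I would observe that the \V-category $\coll{\al}$ of \cref{defn:v-abd-cd} is the collage of a normal lax functor $T:\J\to\vprof$, for $\J$ the free-living retraction, exactly as in \cref{sec:iterated-bigluing}: take $Tc=\C$, $Td=\D$, $Tu=U$, $Tw=W$, with the constraint $(Tu)(Tw)\to T(1_c)$ given by $\al$ and all remaining constraints identities. (That such data assemble into a lax functor, and conversely, is the enriched form of the lemma preceding \cref{thm:bigluing=gencollage}.) By the \V-enriched version of \cref{thm:gen-collage}, \V-functors $\coll{\al}\to\M$ are the same, \V-naturally, as representable lax cocones under $T$ with vertex \M.

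Next I would unwind such a cocone. It consists of \V-functors $P_c:\C\to\M$ and $P_d:\D\to\M$ --- precisely the two functors appearing in an object of $\biglue{\albar}$ --- together with structure $2$-cells. Since $T$ is normal the components at identities are forced, and since $(Tw)(Tu)\to Te$ is an identity the structure map $\rep{(P_d)}WU\to\rep{(P_d)}$ is the composite of $\rep{(P_d)}W\to\rep{(P_c)}$ and $\rep{(P_c)}U\to\rep{(P_d)}$, so only these two remain as data, subject to the single axiom that $\rep{(P_c)}UW\to\rep{(P_d)}W\to\rep{(P_c)}$ equals the canonical map $\rep{(P_c)}UW\to\rep{(P_c)}1_\C\cong\rep{(P_c)}$ induced by $\al$. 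Passing across the adjunctions $\rep{(P_c)}\adj\corep{(P_c)}$ and $\rep{(P_d)}\adj\corep{(P_d)}$ in \vprof --- exactly as in the proof of \cref{thm:gluing=collage}, using that these adjunctions turn the weighted colimits that $\rep{(\blank)}$ does not preserve into weighted limits that it does --- converts the two structure maps into $\phi:(\wcolimc{U}{P_c})\to P_d$ and $\gm:P_d\to\wlimc{W}{P_c}$ and converts the coherence axiom into the assertion that $\gm\phi$ is the component of $\albar$ at $P_c$. This is exactly the data and the axiom defining an object of $\biglue{\albar}$.

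Finally I would check naturality: a morphism of representable lax cocones is a pair of \V-natural transformations $P_c\to P_c'$, $P_d\to P_d'$ compatible with the structure maps, which under the translation above is precisely a morphism of $\biglue{\albar}$; and since on either side the hom-object is carved out of $\M^\C(P_c,P_c')\times\M^\D(P_d,P_d')$ by the same limit in \V, the bijection of morphisms upgrades to a \V-isomorphism of hom-objects, giving the claimed equivalence of \V-categories. The only real work --- and hence the main obstacle --- is the bookkeeping: checking that each appeal to the co-Yoneda lemma, to the local full faithfulness of $\rep{(\blank)}$, and to the adjointness $\rep P\adj\corep P$ in the unenriched argument survives enrichment, that the relevant weighted (co)limits exist because \M is a complete and cocomplete \V-category, and that the equivalence produced is genuinely \V-enriched rather than merely an equivalence of underlying ordinary categories. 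None of this is conceptually new --- it is the ``straightforward enrichment'' promised at the start of the section --- but it is where any subtlety would hide.
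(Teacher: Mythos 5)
Your proposal is correct and is exactly the argument the paper intends: the theorem is stated with its proof omitted as a ``straightforward enrichment'' of \cref{thm:bigluing=gencollage}, and your enrichment of that proof---via the lax functor on the free-living retraction, the enriched universal property of collages, and the passage across the adjunctions $\rep P\adj\corep P$---is the same route, with the bookkeeping you flag being precisely the details the paper leaves to the reader.
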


Note additionally that by \cref{thm:bistrat-parapostcomp}, the tensor and cotensor of $\biglue{\al}$ over \V can be constructed inductively using \cref{thm:cell-paramor}.

We define \textbf{bistratified \V-categories} exactly as in \cref{defn:bistratified}.
There is an extra wrinkle in the characterization of these, due to the fact that not every subobject in \V may have a complement.

\begin{defn}\label{defn:v-fundfact}
  Let \C be a \V-category whose objects are assigned ordinal degrees, let $x$ and $y$ be objects, let $\de$ be the lesser of their two degrees, and let $\C_\de$ as usual be the full subcategory on objects of degree $<\de$.
  We write $\partial_\de\C(x,y)$ for the coequalizer
  \[ \ncoeq\left( \coprod_{z,w\in\C_\de} \C(w,y) \otimes \C(z,w) \otimes \C(x,z) \toto
    \coprod_{z\in \C_\de} \C(z,y) \otimes \C(x,z)  \right).
  \]
  It comes with a canonical map
  \begin{equation}
    \partial_\de\C(x,y) \to \C(x,y)\label{eq:v-fundfact}
  \end{equation}
\end{defn}

Analogously to \cref{thm:bistrat-char}, we have

\begin{thm}\label{thm:v-bistrat-char}
  A \V-category \C whose objects are assigned ordinal degrees is bistratified if and only if the following conditions hold.
  \begin{enumerate}
  \item If $x$ and $y$ have the same degree, then the map~\eqref{eq:v-fundfact} is the coprojection of a coproduct, so that $\C(x,y) \cong \partial_\de\C(x,y) \sqcup \Cbar(x,y)$ for some object $\Cbar(x,y)$, which we call the \textbf{object of basic level morphisms}.\label{item:vbistrat1}
  \item The identity map $I \to \C(x,x)$ factors through $\Cbar(x,x)$.\label{item:vbistrat2}
  \item If $x$, $y$, and $z$ all have the same degree, then the composition map $\C(y,z) \otimes \C(x,y) \to \C(x,z)$ maps $\Cbar(y,z) \otimes \Cbar(x,y)$ into $\Cbar(x,z)$.\label{item:vbistrat3}
  \item The induced structure is associative and unital, making \Cbar a \V-category and the inclusion $\Cbar \to \C$ a \V-functor.\qed\label{item:vbistrat4}
  \end{enumerate}
\end{thm}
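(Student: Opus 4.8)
The plan is to mirror the proof of \cref{thm:bistrat-char} almost verbatim; the only genuinely new ingredient is an enriched version of the recognition principle \cref{thm:recoll}, which I would establish first. So let $\C\subseteq\E$ be a full \V-subcategory and let $\D$ be the collection of objects of \E not in \C. The claim is that \E is isomorphic to the collage (in the sense of \cref{defn:v-abd-cd}) of abstract bigluing data from \C to a \V-category with object set $\D$ if and only if: (a) for all $x,y\in\D$ the composition map $\wcolimc{\E(\blank,y)}{\E(x,\blank)} \to \E(x,y)$ exhibits $\E(x,y)$ as a coproduct of $\wcolimc{\E(\blank,y)}{\E(x,\blank)}$ with a further object; and (b) those further objects, together with the identities of \E at objects of $\D$ and the restriction of the composition of \E, assemble into a \V-category $\D$ for which the inclusion $\D\into\E$ is a \V-functor. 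When this holds one puts $U(c,y)=\E(c,y)$, $W(y,c)=\E(y,c)$, lets $\al$ be induced by composition in \E exactly as in \cref{thm:recoll}, and checks that the comparison functor $\coll{\al}\to\E$ is bijective on objects and fully faithful except between objects of $\D$, where on hom-objects it is precisely the coprojection $\wcolimc{\E(\blank,y)}{\E(x,\blank)}\sqcup\D(x,y)\to\E(x,y)$ supplied by~(a). The reason for packaging things this way is that in \V one cannot separate ``disjoint images, jointly surjective, injective second component'' as in \eqref{eq:recoll}: demanding that the map above be a coproduct coprojection does all three jobs at once, and in particular it absorbs the connectedness clause of \eqref{eq:recoll}, which in the set-theoretic case was exactly what forced $\wcolimc{\E(\blank,y)}{\E(x,\blank)}\to\E(x,y)$ to be a monomorphism.

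Granting this lemma, the second step is the transfinite induction of \cref{thm:inverse-char}. I would induct on the supremum $\be$ of the degrees of objects of \C; for $\de\le\be$ let $\C_\de$ and $\E_\de$ be the full \V-subcategories of \C on the objects of degree $<\de$ and $\le\de$ respectively. When $\be$ is a successor or $\de<\be$, the subcategory $\C_\de$ has height strictly less than $\be$, so by the inductive hypothesis conditions~(i)--(iv) restricted to objects of degree $<\de$ are equivalent to $\C_\de$ being a bistratified \V-category. Applying the enriched recognition principle with $\C_\de$ in the role of \C, $\E_\de$ in the role of \E, and the objects of degree $\de$ in the role of $\D$ --- noting that for $x,y$ of degree $\de$ the coequalizer in question is exactly the object $\partial_\de\C(x,y)$ of \cref{defn:v-fundfact} --- conditions~(i)--(iv) for the degree-$\de$ objects become equivalent to $\E_\de$ being the collage of abstract bigluing data on $\C_\de$, with the object $\Cbar(x,y)$ of condition~(i) serving as the hom-object of the complementary \V-category. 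The argument then closes as in \cref{thm:bistrat-char}: if $\be=\gm+1$ then $\C=\E_\gm$ is such a collage over the bistratified $\C_\gm$, hence bistratified of height $\be$; if $\be$ is a limit then $\C=\colim_{\de<\be}\C_\de$ with each successor inclusion $\C_\de\into\C_{\de+1}$ of collage form, hence again bistratified. The reverse implication is the same chain of equivalences read backwards, using that each successor stage of a bistratification displays, via \cref{defn:v-abd-cd}, precisely the coproduct decomposition of condition~(i) together with a \V-category structure on its stratum as demanded by conditions~(ii)--(iv).

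I expect the main obstacle to be bookkeeping rather than anything conceptual: one must track the variances in the enriched profunctor composite $(\wcolimc{U}{W})(x,y)$ that appears in the collage formula of \cref{defn:v-abd-cd}, and verify that, for the bigluing data read off from \E as above, this object really does coincide with the coequalizer $\partial_\de\C(x,y)$ of \cref{defn:v-fundfact}, so that $\coll{\al}\to\E$ genuinely restricts on hom-objects to the coprojection invoked in~(a). Once that identification is pinned down, the rest is a routine transcription of \cref{thm:recoll} and \cref{thm:bistrat-char} with $\nSet$ replaced by \V and ``injection''/``complemented subobject'' replaced by ``coprojection of a coproduct''. It is also worth noting in the write-up that conditions~(ii)--(iv) are not three independent demands --- (iv) is precisely the assertion that the partial composition and units singled out by (ii) and (iii) do assemble into a \V-category --- so only (iv) need actually be invoked when constructing the complementary \V-category in the recognition principle.
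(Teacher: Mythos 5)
Your proposal is correct and matches the approach the paper intends: the theorem is stated without proof precisely because it is the routine enrichment of \cref{thm:bistrat-char}, i.e.\ an enriched version of the recognition principle \cref{thm:recoll} (with ``injection with complement'' replaced by ``coproduct coprojection whose complement carries a \V-category structure'', which indeed absorbs the connectedness clause) followed by the same transfinite induction on degree, identifying the profunctor composite $\wcolim{W}{U}{\C_\de}$ with $\partial_\de\C(x,y)$. Your closing remark should just be phrased carefully: when coprojections in \V are not monic, conditions~(ii) and~(iii) are genuine \emph{data} (the chosen lifts giving \Cbar its unit and composition) and~(iv) is the assertion that these data satisfy the \V-category axioms, exactly as the paper's discussion following the theorem points out.
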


If coprojections of coproducts in \V are monic, as is often the case\footnote{In particular, this is the case if \V is cartesian monoidal or has zero morphisms.  However, there do exist counterexamples; I learned the following simple one from Michal Przybylek.  Let $\V = \nSet\times \nSet\op$ with the closed symmetric monoidal structure $(A,B) \otimes (C,D) = (A\times C, B^C \times D^A)$; this is a special case of the ``Chu construction''~\cite{chu:construction}.  Here the coproduct is $(A,B)\sqcup(C,D) = (A\sqcup C, B\times D)$, and the coprojection $(\emptyset,\ast) \to (\emptyset,\ast) \sqcup (\ast,\emptyset) \cong (\ast, \emptyset)$ is not monic.\label{fn:chu}}, then condition~\ref{item:vbistrat4} is unnecessary.
Otherwise, conditions~\ref{item:vbistrat2} and~\ref{item:vbistrat3} must be viewed as the \emph{choice} of a lift rather than its mere existence.
If complements are unique (up to unique isomorphism) when they exist, as is sometimes the case, then~\ref{item:vbistrat1} can also be viewed as a mere condition, but in general it too must be regarded as a choice of data.

We say that a bistratified \V-category \C has \textbf{discrete strata} if the \V-category \Cbar is discrete, i.e.\ we have
\[ \Cbar(x,y) =
\begin{cases}
  I &\qquad x=y\\
  \emptyset &\qquad x\neq y.
\end{cases}
\]
We will write $\C_{=\de}$ for the full subcategory of $\Cbar$ on the objects of degree $\de$, and call it the $\de^{\mathrm{th}}$ \textbf{stratum}.
As usual, we emphasize that $\C_{=\de}$ is a \emph{non-full} subcategory of \C (where ``subcategory'' is meant in the sense that its hom-objects are summands of those of \C; as before, this doesn't necessarily imply their ``inclusions'' are monic).

We can define enriched matching and latching object functors as 
\begin{align}
  M_x A &= \wlimc{\partial_\de\C(x,\blank)}{A}\\
  L_x A &= \wcolimc{\partial_\de\C(\blank,x)}{A}
\end{align}
where now $\partial_\de\C(\blank,\blank)$ denotes the above \V-enriched weight.

\begin{thm}
  For any bistratified \V-category \C with discrete strata, and any complete and cocomplete \V-category \M with a wfs (or pre-wfs), there is an induced \textbf{Reedy wfs} (or pre-wfs) on $\M^\C$ that is characterized by:
  \begin{itemize}
  \item $A\to B$ is an \R-map iff $A_x \to M_x A \times_{M_x B} B_x$ is an \R-map in \M for all $x\in \C$.
  \item $A\to B$ is an \L-map iff $L_x B \sqcup_{L_x A} A_x \to B_x$ is an \L-map in \M for all $x\in \C$.\qed
  \end{itemize}
\end{thm}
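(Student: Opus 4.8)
The plan is to prove this exactly as its unenriched predecessor \cref{thm:bistrat-disc-wfs}, by transfinite induction on the height of \C, substituting the enriched analogue for each ingredient. Throughout, every diagram \V-category is equipped with its inductively defined Reedy (pre-)wfs, and at each stage we must also confirm that the claimed matching/latching characterization holds.

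For the successor step, write $\C = \coll{\al}$, the collage of abstract bigluing data $(U,W,\al)\colon \C_\be \hto I$ in which $I = \C_{=\be}$ is discrete. Since $I$ is discrete, $\M^I$ is a product of copies of \M, carrying the objectwise (pre-)wfs, and in particular has all pushouts, pullbacks, and finite limits and colimits. By \cref{thm:enriched-bicollage}, $\M^\C \simeq \biglue{\albar}$, where $\albar\colon (\wcolimc U \blank) \to \wlimc W\blank$ is the induced transformation between \V-functors $\M^{\C_\be} \to \M^I$. The enriched weighted colimit $(\wcolimc U \blank)$ is cocontinuous, having the right adjoint $Y\mapsto Y^U$ given by enriched powers; dually $\wlimc W\blank$ is continuous, with left adjoint $Y\mapsto Y\cpw W$ given by enriched copowers. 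Hence the enriched form of \cref{thm:wfs} (in the wfs case) or of \cref{thm:prewfs} (in the pre-wfs case, where precisely these adjoints are the hypothesis) applies, producing a (pre-)wfs on $\biglue{\albar}\simeq \M^\C$; and \cref{thm:wfs} records further that it is canonically cloven and that the forgetful functor $\biglue{\albar}\to \M^{\C_\be}$ is strict. Because \L- and \R-maps in $\M^I$ are objectwise, unwinding the recipe of \cref{thm:wfs} yields exactly the two stated conditions at objects $x$ of degree $\be$; for objects of lower degree they are inherited from $\M^{\C_\be}$ since $\iota_\be^*\colon \M^\C \to \M^{\C_\be}$ preserves matching and latching objects at objects of degree $<\be$ --- exactly as in the proof of \cref{thm:quillen-almostreedy}, using that $M_x$ and $L_x$ are computed by the enriched weights $\partial_\de\C(x,\blank)$ and $\partial_\de\C(\blank,x)$.

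For limit stages, $\M^\C = \lim_{\de<\be}\M^{\C_\de}$ by the universal property of colimits, the transition functors in this tower are strict by the successor analysis, and the enriched analogue of \cref{thm:clovenlim} then yields the Reedy (pre-)wfs on $\M^\C$ with its \L- and \R-maps detected stagewise. The matching/latching characterization again descends, since each $M_x$ and $L_x$ only involves a bounded initial segment of the tower.

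The routine enrichment of the cited results is not the real obstacle; the one point deserving care is the bookkeeping around \emph{discrete strata} in the enriched sense. Here ``\C has discrete strata'' means that the \V-category \Cbar of basic level morphisms is discrete, i.e.\ each $\C_{=\de}$ is a coproduct of copies of the unit object $I$, and one must confirm that $\M^{\C_{=\de}}$ is then genuinely a power of \M over $\mathrm{ob}(\C_{=\de})$ with the objectwise (pre-)wfs --- this is what makes the bigluing hypotheses applicable and the characterization objectwise --- and that the cloven structure and the strictness of the forgetful functors transport through the enriched weighted (co)limit adjunctions. None of this uses monicity of the ``inclusions'' $\C_{=\de}\to\C$, so the complement-related subtleties flagged after \cref{thm:v-bistrat-char} do not intervene.
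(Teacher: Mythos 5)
Your proposal is correct and follows essentially the route the paper intends: induction on height, with the successor step handled by \cref{thm:enriched-bicollage} plus \cref{thm:wfs}/\cref{thm:prewfs} (the adjoints of the weighted (co)limit functors supplying the hypotheses, and objectwise \L/\R-maps in $\M^I$ giving the stated characterization via the matching/latching weights), and limit steps handled by \cref{thm:clovenlim} together with strictness of the forgetful functors. The paper omits this proof precisely because it is the straightforward enrichment of \cref{thm:bistrat-disc-wfs}, which is what you have written out.
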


Now suppose \V is a symmetric monoidal model category; we need a version of \cref{thm:reedy-wfs}.
If \M is a \V-category with a wfs (or pre-wfs), we will say it is a \textbf{\V-wfs} (or \textbf{\V-pre-wfs}) if the hom-functor $\M\op\times\M\to\V$ is a right wfs-bimorphism for the (cofibration, acyclic fibration) wfs on \V.
As usual, this can equivalently be expressed in terms of the tensor or cotensor.
Both wfs of a \V-model category are \V-wfs (but this statement is only $\frac{2}{3}$ of being a \V-model category).

\begin{thm}\label{thm:vreedy-wfs}
  Let \C be a bistratified \V-category with discrete strata and let \M be a complete and cocomplete \V-category with a \V-wfs.
  Let $(\V^{\C})$ and $\V^{\C\op}$ have their Reedy wfs induced by the (cofibration, acyclic fibration) wfs on \V.
  Then:
  \begin{enumerate}
  \item The weighted limit $(\V^{\C})\op \times \M^\C \to \M$ is a right wfs-bimorphism.
  \item The weighted colimit $\V^{\C\op} \times \M^\C \to \M$ is a left wfs-bimorphism.
  \end{enumerate}
\end{thm}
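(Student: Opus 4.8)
The plan is to enrich the proof of \cref{thm:reedy-wfs} essentially verbatim, replacing the copower $\nSet\times\M\to\M$ by the tensor $\V\times\M\to\M$ and replacing \cref{thm:copower} by the hypothesis that the wfs on $\M$ is a $\V$-wfs. I would prove~(1); part~(2) is dual. The weighted limit $(\V^{\C})\op\times\M^\C\to\M$ sits in a two-variable adjunction whose other two functors are the objectwise tensor $\V^\C\times\M\to\M^\C$ (pointwise tensoring of an object of $\M$ by a $\C$-diagram of objects of $\V$) and the objectwise hom $\M\op\times\M^\C\to\V^\C$. Since the left- and right-wfs-bimorphism conditions pass among the three functors of a two-variable adjunction --- the fact recalled just before \cref{thm:copower} and used in its proof --- it suffices to show that the objectwise tensor is a left wfs-bimorphism, when $\V^\C$ carries its Reedy wfs (induced from the (cofibration, acyclic fibration) wfs on $\V$) and $\M$ its given $\V$-wfs.

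I would then run the same induction on the height of $\C$ as in \cref{thm:reedy-wfs}. By the enriched analogue of \cref{thm:bistrat-parapostcomp}, the objectwise tensor $\V^\C\times\M\to\M^\C$ is assembled inductively via \cref{thm:cell-paramor}, with the category $\M$ itself in the role of the parameter category there. At a successor stage $\C_\de\into\coll{\al_\de}=\C_{\de+1}$, where $\al_\de$ is abstract bigluing data from $\C_\de$ to the discrete stratum $I_\de$, we have $\M^{\C_{\de+1}}\simeq\biglue{\albar_\de}$ and $\V^{\C_{\de+1}}\simeq\biglue{\beta_\de}$, and the objectwise tensor over $\C_{\de+1}$ is $\biglue{S,T}$ for $S$ the objectwise tensor over $\C_\de$, $T$ the ($I_\de$-fold product of the) tensor $\V\times\M\to\M$, and $\xi,\ze$ the canonical comparison transformations. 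The limit steps follow from \cref{thm:clovenlimmor}; for a successor step I would apply \cref{thm:wfs-paramor-pres} to the underlying ordinary categories and functors of the enriched data, whose hypotheses hold because $S$ is a left wfs-bimorphism by the inductive hypothesis, $T$ is one because the tensor $\V\times\M\to\M$ is (this is precisely the $\V$-wfs hypothesis, via the tensor/cotensor/hom equivalence) together with $\M^{I_\de}$ carrying the objectwise wfs, both $S$ and $T$ are cocontinuous in each variable since the tensor is, and $\xi$ is an isomorphism because the tensor is cocontinuous in its $\V$-variable and hence commutes with weighted colimits. The base case ($\C$ empty, $\M^\C$ terminal) is trivial. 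For~(2) the dual argument reduces, via the same two-variable adjunction, to showing the objectwise cotensor $(\V^{\C\op})\op\times\M\to\M^\C$ is a right wfs-bimorphism, again bottoming out in the $\V$-wfs hypothesis in its ``cotensor is a right wfs-bimorphism'' form.

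The paper already flags this as a routine enrichment, and indeed every ingredient is either stated for plain categories carrying (pre-)wfs --- so that it applies directly to the underlying categories of the \V-categories and \V-functors in play --- or is one of the promised enriched analogues of results in \cref{sec:reedy}. The only step that rewards attention is confirming that the objectwise tensor really is the functor produced by the parametrized inductive construction and, in particular, that the comparison $2$-cell $\xi$ is invertible at each stage; I do not expect any genuine obstacle.
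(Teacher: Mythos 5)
Your proposal is correct and follows the paper's own route: the paper proves this by saying it is "just like" the unenriched Lemma on weighted limits/colimits being wfs-bimorphisms, with the $\V$-wfs hypothesis (tensor is a left wfs-bimorphism) replacing the role of the universal enrichment of $\nSet$, which is exactly the adjunction reduction and height induction via the parametrized bigluing functoriality that you describe.
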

\begin{proof}
  Just like \cref{thm:reedy-wfs}, starting from the assumption that \M has a \V-wfs.
\end{proof}

There is not much hope of strengthening this to an enriched analogue of \cref{thm:almost-reedy}, but we can at least use it to give a sufficient condition for the existence of \V-Reedy model structures.

\begin{defn}
  An \textbf{almost-Reedy \V-category} is a bistratified \V-category with discrete strata such that for any object $x$ of degree $\de$, the hom-functors $\C(x,\blank) \in \V^{\C_\de}$ and $\C(\blank,x) \in \V^{\C_\de\op}$ are \L-objects in the Reedy wfs.
\end{defn}

\begin{thm}\label{thm:vreedy-model}
  If \C is an almost-Reedy \V-category and \M is a \V-model category, then the \V-functor category $\M^\C$ has a model structure characterized by:
  \begin{itemize}
  \item The weak equivalences are objectwise,
  \item $A\to B$ is a fibration iff $A_x \to M_x A \times_{M_x B} B_x$ is a fibration in \M for all $x\in \C$.
  \item $A\to B$ is a cofibration iff $L_x B \sqcup_{L_x A} A_x \to B_x$ is a cofibration in \M for all $x\in \C$.\qed
  \end{itemize}
\end{thm}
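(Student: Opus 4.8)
The plan is to prove the theorem by transfinite induction on the height of \C, following the pattern of \cref{thm:inverse-model} and \cref{thm:reedy-model} but with \cref{thm:vreedy-wfs} in place of \cref{thm:reedy-wfs}; the enrichment introduces no new conceptual difficulty, only bookkeeping. At a limit stage there is nothing to do beyond what was done before: $\M^\C$ is a limit of the $\M^{\C_\de}$ along a continuous tower, the enriched version of \cref{thm:wfs} makes the forgetful functors strict, and \cref{thm:clovenlim} assembles the two inherited wfs levelwise; together with $2$-out-of-$3$ for the (objectwise) weak equivalences this yields the model structure on $\M^\C$.

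For a successor stage, write $\C = \coll{\al_\be}$ for abstract bigluing data $(U,W,\al_\be)$ from $\C_\be$ to the discrete set $I$ of objects of degree $\be$, with $U(\blank,x) = \C(\blank,x)$ and $W(x,\blank) = \C(x,\blank)$. The full subcategory $\C_\be$ is again an almost-Reedy \V-category of strictly smaller height, so by the inductive hypothesis $\M^{\C_\be}$ has the Reedy model structure; in particular its (cofibration, acyclic fibration) and (acyclic cofibration, fibration) wfs are the Reedy wfs determined by the two wfs of \M. By \cref{thm:enriched-bicollage}, $\M^\C \simeq \biglue{\albar}$, where $\albar\colon (\wcolimc{U}{\blank}) \to \wlimc{W}{\blank}$ is the induced transformation between functors $\M^{\C_\be} \to \M^I$ and $\M^I$ carries the wholly objectwise product model structure. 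It therefore suffices to verify the hypotheses of \cref{thm:model} for $F = (\wcolimc{U}{\blank})$ and $G = \wlimc{W}{\blank}$, i.e.\ that $F$ is left Quillen and $G$ is right Quillen.

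Since $I$ is discrete, this may be checked one object $x \in I$ at a time: we must see that $(\wcolim{\C(\blank,x)}{\blank}{\C_\be})\colon \M^{\C_\be}\to\M$ is left Quillen and $\wlim{\C(x,\blank)}{\blank}{\C_\be}\colon\M^{\C_\be}\to\M$ is right Quillen. Both wfs of the \V-model category \M are \V-wfs, so \cref{thm:vreedy-wfs} applies to each of them: with the Reedy wfs on $\V^{\C_\be}$ and $\V^{\C_\be\op}$, the weighted colimit $\V^{\C_\be\op}\times\M^{\C_\be}\to\M$ is a left wfs-bimorphism and the weighted limit $(\V^{\C_\be})\op\times\M^{\C_\be}\to\M$ is a right wfs-bimorphism. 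Substituting into the weight variable the \L-object $\C(\blank,x)\in\V^{\C_\be\op}$ (respectively $\C(x,\blank)\in\V^{\C_\be}$), which is exactly what the almost-Reedy hypothesis supplies, the bimorphism property degenerates to the statement that $(\wcolim{\C(\blank,x)}{\blank}{\C_\be})$ preserves \L-maps (respectively that $\wlim{\C(x,\blank)}{\blank}{\C_\be}$ preserves \R-maps); applying this for both wfs of \M shows $F$ preserves cofibrations and acyclic cofibrations and $G$ preserves fibrations and acyclic fibrations. As $F$ is cocontinuous, $G$ continuous, acyclic cofibrations are couniversal weak equivalences and acyclic fibrations are universal weak equivalences, \cref{thm:model} produces a model structure on $\biglue{\albar}\simeq\M^\C$ whose three classes are detected objectwise over $\M^{\C_\be}$ and $\M^I$. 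Tracing this through the definitions of the latching and matching object functors $L_x$ and $M_x$ --- exactly as in the proofs of \cref{thm:inverse-model} and \cref{thm:reedy-model} --- rewrites these conditions as the ones in the statement, completing the induction.

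I do not expect a genuine obstacle: all the hard work is already packaged in \cref{thm:vreedy-wfs}, and the remaining content is (i) checking that the constructions of the preceding sections, especially the identification $\M^{\coll\al}\simeq\biglue\al$ and the strictness assertions, survive passage to \V-enriched categories verbatim, and (ii) reading off the latching/matching-object description. The one point worth flagging is that, unlike in \cref{sec:reedy}, there is no enriched counterpart of \cref{thm:almost-reedy}: the almost-Reedy \V-category hypothesis is here only a convenient sufficient condition for the two wfs on $\M^\C$ to assemble into a model structure, and no converse is claimed.
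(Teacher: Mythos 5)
Your proposal is correct and follows essentially the same route the paper intends: the paper omits the proof precisely because it is the ``straightforward enrichment'' of \cref{thm:reedy-model}, i.e.\ induction on height with \cref{thm:enriched-bicollage} and \cref{thm:model} at successor stages (the \L-object hypothesis feeding into \cref{thm:vreedy-wfs} to make the weighted colimit/limit functors left/right Quillen) and \cref{thm:clovenlim} at limit stages. Your closing remark is also on target: the paper explicitly notes there is no enriched analogue of \cref{thm:almost-reedy}, so the almost-Reedy \V-category condition is only claimed to be sufficient.
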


More explicitly, for \C to be almost-Reedy we require the canonical maps
\begin{equation}
  \partial_\de\C(x,y) \too \C(x,y)\label{eq:vcofmap}
\end{equation}
to be cofibrations whenever $x$ and $y$ have different degrees (and, as before, $\de$ is the lesser of their degrees).
Note that unlike in the unenriched case, where the cofibrancy condition on non-level morphisms had the same form as the bistratification condition on level morphisms (both required these maps to be injections), in the enriched situation the two generalize to different notions of ``injection'': the first must be the coprojection of a coproduct, while the second must be a cofibration.
However, if we want a notion of ``Reedy \V-category'' that looks more like the unenriched version, we can notice that there is a common refinement of these two kinds of ``injection'': a coprojection is a cofibration if its complement is cofibrant.

There seems little hope of enriching Lemmas~\ref{thm:reedy-fact}--\ref{thm:reedy-fact-uniq}, but we can jump right to a definition of Reedy category in terms of unique factorizations.

\begin{defn}\label{defn:vreedy}
  A \textbf{Reedy \V-category} is a small \V-category \C with ordinal degrees assigned to its objects, along with:
  \begin{enumerate}
  \item \V-categories $\rup \C$ and $\rdn\C$, with the same objects as \C, and whose hom-objects are cofibrant.\label{item:vreedy1}
  \item \V-functors $\rup \C \to \C$ and $\rdn\C \to\C$ that are the identity on objects.\label{item:vreedy2}
  \item The unit maps $I\to \rup\C(x,x)$ and $I\to \rdn\C(x,x)$ are isomorphisms.\label{item:vreedy4}
  \item $\rup\C(x,y) = \emptyset$ if $\deg(x)\geq\deg(y)$ and $x\neq y$, and similarly $\rdn\C(x,y) = \emptyset$ if $\deg(x)\leq\deg(y)$ and $x\neq y$.\label{item:vreedy3}
  \item For any $x,y$, the map\label{item:vreedy5}
    \[ \coprod_{z} \rup\C(z,y) \otimes \rdn\C(x,z) \to \C(x,y) \]
    is an isomorphism.
  \end{enumerate}
\end{defn}

Note that conditions~\ref{item:vreedy1} and~\ref{item:vreedy4} imply in particular that the unit object $I$ of \V must be cofibrant.
Condition~\ref{item:vreedy5} says in an enriched way that ``every morphism $f:x\to y$ factors uniquely as $f = \rup f \rdn f$ where $\rup f \in\rup\C$ and $\rdn f \in\rdn\C$''.
By~\ref{item:vreedy3}, the coproduct in~\ref{item:vreedy5} may as well run only over $z$ with $\deg(z)\leq \min(\deg(x),\deg(y))$.
There is also at most one nontrivial summand where $\deg(z)= \min(\deg(x),\deg(y))$:
\begin{itemize}
\item If $x=y$, then when $z=x$ we have the summand $I\otimes I \cong I$, and all other summands with $\deg(z) = \deg(x)$ are $\emptyset$.
\item If $x\neq y$ and $\deg(x)=\deg(y)$, then all summands with $\deg(z) = \deg(x)$ are $\emptyset$.
\item If $\deg(x) <\deg(y)$, then when $z=x$ we have the summand $\rup\C(x,y) \otimes I \cong \rup\C(x,y)$, and all other summands with $\deg(z) = \deg(x)$ are $\emptyset$.
\item If $\deg(x) > \deg(y)$, then when $z=y$ we have the summand $I\otimes \rdn\C(x,y) \cong \rdn\C(x,y)$, and all other summands with $\deg(z) = \deg(y)$ are $\emptyset$.
\end{itemize}
We regard this unique summand, whatever it is, as the \emph{basic hom-object} from $x$ to $y$, and its complement (the coproduct of all the summands with $\deg(z) < \min(\deg(x),\deg(y))$) as the \emph{non-basic hom-object} from $x$ to $y$.
(We will show in a moment that they deserve these names.)
By assumption, the basic hom-object is always cofibrant.
It also follows from these considerations that $\rup\C$ and $\rdn\C$ are ``subcategories'' of $\C$ in the sense that  the maps $\rup\C(x,y) \to \C(x,y)$ and $\rdn\C(x,y) \to \C(x,y)$ are coprojections of a coproduct (though depending on \V, this may not imply that they are monomorphisms).

\begin{thm}\label{thm:v-reedy}
  A Reedy \V-category is an almost-Reedy \V-category.
\end{thm}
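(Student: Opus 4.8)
The plan is to check directly that \C fulfils the definition of an almost-Reedy \V-category: that it is bistratified with discrete strata --- via the characterization \cref{thm:v-bistrat-char} --- and that every $\C(x,\blank)$ and $\C(\blank,x)$ is an \L-object in the relevant Reedy wfs. Everything reduces to understanding the object $\partial_\de\C(x,y)$ of \cref{defn:v-fundfact} in terms of the factorization decomposition \ref{item:vreedy5}. So the first step is to unpack \ref{item:vreedy5}. With $\de=\min(\deg(x),\deg(y))$, this condition of \cref{defn:vreedy} presents $\C(x,y)$ as the coproduct of $N(x,y):=\coprod_{z\,:\,\deg(z)<\de}\rup\C(z,y)\otimes\rdn\C(x,z)$ and a complement $\Cbar(x,y)$ collecting the terms with $\deg(z)\geq\de$. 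Using the emptiness clause~\ref{item:vreedy3} and the unit isomorphisms~\ref{item:vreedy4}, one sees that no term with $\deg(z)>\de$ survives and that $\Cbar(x,y)$ is the single possibly-nonzero term at $\deg(z)=\de$: it is $I$ if $x=y$, the object $\rup\C(x,y)$ if $\deg(x)<\deg(y)$, the object $\rdn\C(x,y)$ if $\deg(x)>\deg(y)$, and $\emptyset$ if $x\neq y$ with $\deg(x)=\deg(y)$. So $\Cbar(x,y)$ is discrete for objects of equal degree, and (as~\ref{item:vreedy1} and~\ref{item:vreedy4} force $I$ cofibrant) every $\Cbar(x,y)$ is cofibrant.

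The technical heart is to identify $N(x,y)$, as a summand of $\C(x,y)$, with $\partial_\de\C(x,y)$, compatibly with the canonical maps to $\C(x,y)$. I would build a map $N(x,y)\to\partial_\de\C(x,y)$ by including each summand $\rup\C(z,y)\otimes\rdn\C(x,z)$ (for $\deg(z)<\de$) into $\C(z,y)\otimes\C(x,z)$ and thence into $\partial_\de\C(x,y)$, and a map $\partial_\de\C(x,y)\to N(x,y)$ by post-composing the composition maps $\C(w,y)\otimes\C(x,w)\to\C(x,y)$ with the projection to $N(x,y)$ --- these maps do coequalize the defining parallel pair, and their images land in $N(x,y)$ by a decomposition argument that applies~\ref{item:vreedy5} to $\C(w,y)$, $\C(x,w)$ and to the intermediate hom-objects produced by composing. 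That these two maps are mutually inverse is a diagram chase: one composite is the identity by the unit laws, and for the other the coequalizer relation ``$(h,gk)\sim(hg,k)$'' is used repeatedly to transport an intermediate object $w$ down to one of degree $<\de$ supplied by the Reedy factorizations of $h$ and of $g$. This transport is the structural replacement for the zigzag-of-factorizations arguments of \cref{sec:reedy}, which carry no elementwise content over an arbitrary base, and it is the step I expect to be the main obstacle --- conceptually routine, but a delicate bookkeeping with generalized elements of the hom-objects. Its payoff is that the canonical map $\partial_\de\C(x,y)\to\C(x,y)$ is exactly the coprojection of $N(x,y)$, with complement $\Cbar(x,y)$.

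Granting this, the rest is short. For objects of equal degree, $\C(x,y)\cong\partial_\de\C(x,y)\sqcup\Cbar(x,y)$ with $\Cbar(x,y)$ discrete, which is \cref{thm:v-bistrat-char}\ref{item:vbistrat1}; conditions~\ref{item:vbistrat2}--\ref{item:vbistrat4} then hold automatically for discrete strata (the only basic composites are of identities, and the identity $I\to\C(x,x)$ factors through $\Cbar(x,x)\cong I$ by~\ref{item:vreedy4}). So \cref{thm:v-bistrat-char} gives that \C is bistratified with discrete strata, hence so is every full subcategory $\C_\gamma$ on objects of degree $<\gamma$, and so the Reedy wfs on $\V^{\C_\gamma}$ and $\V^{\C_\gamma\op}$ are available. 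Now fix $x$ of degree $\gamma$ and $y$ of smaller degree; by the co-Yoneda lemma the $y$-latching map of $\C(x,\blank)\in\V^{\C_\gamma}$ is the canonical map $\partial_{\deg(y)}\C(x,y)\to\C(x,y)$, which by the previous step is the coprojection of $N(x,y)$ with cofibrant complement $\rdn\C(x,y)$. A coprojection with cofibrant complement is a cofibration, so every latching map of $\C(x,\blank)$ is a cofibration, i.e.\ $\C(x,\blank)$ is an \L-object in the Reedy wfs; applying this to $\C\op$ --- again a Reedy \V-category, by the evident symmetry of \cref{defn:vreedy} --- gives the same for $\C(\blank,x)$. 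Thus \C is an almost-Reedy \V-category. (One could instead organize the argument as an induction on the ordinal height of \C, but \cref{thm:v-bistrat-char} makes that unnecessary.)
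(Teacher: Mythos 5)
Your route is essentially the paper's: unpack \cref{defn:vreedy}\ref{item:vreedy5} into the basic/non-basic decomposition of $\C(x,y)$, identify $\partial_\de\C(x,y)$ with the non-basic summand $\coprod_{\deg(z)<\de}\rup\C(z,y)\otimes\rdn\C(x,z)$ (your $N(x,y)$) compatibly with the canonical maps to $\C(x,y)$, and then read off bistratification with discrete strata via \cref{thm:v-bistrat-char} and cofibrancy of the maps~\eqref{eq:vcofmap} from cofibrancy of the basic hom-objects. The paper performs the identification by exhibiting~\eqref{eq:vreedy-coeq} as an \emph{absolute} coequalizer: the splitting $s$ is your inclusion of summands, and the two extra maps $t_1,t_2$ are exactly the enriched form of the two-step zigzag~\eqref{eq:reedyfact-zigzag} that you describe as ``transport''.

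The gap lies in the two verifications you defer as ``delicate bookkeeping with generalized elements''. First, your map $\partial_\de\C(x,y)\to N(x,y)$ exists only after one proves that the assembled map $e$ out of $\coprod_{z\in\C_\de}\C(z,y)\otimes\C(x,z)$ coequalizes the parallel pair; you assert this in a single clause, but it is precisely the step the paper flags as having \emph{no} analogue in the unenriched argument: the coprojection $N(x,y)\to\C(x,y)$ need not be monic (footnote~\ref{fn:chu}), so $ep=eq$ cannot be deduced from associativity of composition in $\C$ by cancelling that coprojection, and the paper needs the full diagram of \cref{fig:vreedyisafork} for it. Second, generalized-element reasoning is not actually available for the remaining identity (that $\partial_\de\C(x,y)\to N(x,y)\to\partial_\de\C(x,y)$ is the identity): a generalized element of $\C(z,y)\otimes\C(x,z)$ does not split into a pair of morphisms, and equality of maps into a coequalizer cannot be tested on ``elements'' of the quotient, so the transport has to be implemented by explicit natural maps --- which is exactly what $t_1$, $t_2$ and the relations $es=\id$, $se=qt_1$, $pt_1=pt_2$, $qt_2=\id$ accomplish. (Relatedly, there is no ``projection'' of $\C(x,y)$ onto the summand $N(x,y)$; the map $e$ must be built from the decompositions of $\C(z,y)$ and $\C(x,z)$ as in~\eqref{eq:vreedy-compose}, which is what your ``decomposition argument'' in effect does.) So the strategy is the right one, but the substance of the paper's proof is concentrated in exactly the steps you have labelled routine.
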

\begin{proof}
  Let $\min(\deg(x),\deg(y))=\de$; it will suffice to show that the map $\partial_\de \C(x,y) \to \C(x,y)$ is an isomorphism onto the non-basic hom-object.
  When $\deg(x)=\deg(y)$, by \cref{thm:v-bistrat-char} this will show that \C is bistratified with discrete strata, while when $\deg(x)\neq\deg(y)$ it will show that the maps~\eqref{eq:vcofmap} are cofibrations (since the basic hom-object is cofibrant).

  Reinterpreted, what we want to show is that we have a coequalizer diagram:
  \begin{equation}
    \resizebox{\textwidth-1.5cm}{!}{$\displaystyle\coprod_{z,w\in\C_\de} \C(w,y) \otimes \C(z,w) \otimes \C(x,z) \toto
    \coprod_{z\in \C_\de} \C(z,y) \otimes \C(x,z) \too
    \coprod_{w\in \C_\de} \rup\C(w,y) \otimes \rdn\C(x,w)$}.\label{eq:vreedy-coeq}
  \end{equation}
  We will do this essentially by ``enriching'' the proof of \cref{thm:reedy}, including~\cite[Lemma 2.9]{rv:reedy}.

  We begin by observing that the following diagram commutes for any $z\in\C_{\de}$, where $u,v,w$ in the coproducts range over all objects of degree $<\de$.
  \begingroup
  \def\C{\mathcal{C}}
  \begin{equation}
    \vcenter{\xymatrix{
      \coprod_{u,v} \rup\C(v,y) \otimes \rdn\C(z,v) \otimes \rup\C(u,z) \otimes \rdn\C(x,u) \ar[d] \ar[r]^-\cong &
      \C(z,y) \otimes \C(x,z) \ar[ddd] \\
      \coprod_{u,v} \rup\C(v,y) \otimes \C(u,v) \otimes \rdn\C(x,u) \ar@/^2pc/[ddr]  & \\
      \coprod_{u,v,w} \rup\C(v,y) \otimes \rup\C(w,v) \otimes \rdn\C(u,w) \otimes \rdn\C(x,u) \ar[d]\ar[u]^\cong \\
      \coprod_{w} \rup\C(w,y) \otimes \rdn\C(x,w) \ar[r] &
      \C(x,y)
    }}\label{eq:vreedy-compose}
  \end{equation}
  \endgroup
  The two internal quadrilaterals commute by associativity of composition in $\C$.
  Therefore, inverting the two isomorphisms, we see that the composition map $\C(z,y) \otimes \C(x,z) \to \C(x,y)$ on the right is equal to the composite around the left-hand side.

  (The bottom map in~\eqref{eq:vreedy-compose} is not an isomorphism because we omit the summands in its domain with $\deg(w)=\de$.
  But the two displayed isomorphisms are so, because since $\deg(z)<\de$ we only need to factor morphisms from or to it through objects also of degree $<\de$, and similarly for $u$ and $v$.)

  Now the composite around the top and left of~\eqref{eq:vreedy-compose} yields the $z$-component of the final map in~\eqref{eq:vreedy-coeq}, the one we intend to be the coequalizer of the other two.
  Note that it is a factorization of the composition map $\coprod_{z\in \C_\de} \C(z,y) \otimes \C(x,z) \to \C(x,z)$ through the ``non-basic hom-object'', which we have defined to be the complement of the hom-object in $\rup\C$ or $\rdn\C$ (as appropriate by degree).
  Thus, the existence and epimorphy of this map shows that the non-basic hom-object deserves its name, and is an enrichment of the statement ``$\rup\C\cup\rdn\C$ is precisely the class of basic morphisms'' from \cref{thm:reedy}.

  It remains to show that~\eqref{eq:vreedy-coeq} is a coequalizer diagram.
  In fact, we will show that it is an \emph{absolute} coequalizer diagram.
  Recall that a \emph{split} coequalizer is a diagram like
  \[ \xymatrix{ A \ar@<1mm>[r]^p \ar@<-1mm>[r]_q & B \ar[r]^e \ar@/^2pc/[l]^t & C \ar@/^2pc/[l]^{s} } \]
  such that $ep=eq$, $es = \id$, $se=qt$, and $pt = \id$.
  This automatically makes $e$ a coequalizer of $p$ and $q$, which is moreover preserved by any functor.
  An \emph{absolute} coequalizer generalizes this by replacing the single morphism $t$ with a finite list of morphisms relating $se$ to $\id$ by a finite chain of ``zigzags''.
  See~\cite{pare:abscoeq} for the general theory; we will only need a 2-step version which looks like this:
  \[ \xymatrix{ A \ar@<1mm>[r]^p \ar@<-1mm>[r]_q & B \ar[r]^e \ar@/^1.5pc/[l]^{t_1} \ar@/^2.5pc/[l]^{t_2} & C \ar@/^2pc/[l]^{s} } \]
  such that $ep=eq$, $es=\id$, $se=qt_1$, $pt_1 = pt_2$, and $q t_2 = \id$.
  The reader can easily check that this suffices to make $e$ a coequalizer of $p$ and $q$.

  Let $p$ and $q$ be the parallel morphisms in~\eqref{eq:vreedy-coeq}, where $p$ composes the right two hom-objects and $q$ composes the left two.
  We will define $s$, $t_1$, and $t_2$ essentially by ``enriching'' the zigzag~\eqref{eq:reedyfact-zigzag}.
  
  We define $s$ by simply setting $z=w$ and including $\rdn\C(x,z)$ into $\C(x,z)$ and $\rup\C(z,y)$ into $\C(z,y)$.
  The fact that $es=\id$ follows from the construction of $e$ above; each component of $s$ factors through the upper-left corner of~\eqref{eq:vreedy-compose} where $u=v=z$.

  We define $t_1$ to be the following composite.
  \begin{equation}
    \C\C \otiso \rup\C\rdn\C\rup\C\rdn\C \to \rup\C \C \rdn \C \otiso \rup\C \rup\C \rdn\C \rdn\C \to \rup\C \rup\C \rdn\C \to \C\C\C.\label{eq:t1}
  \end{equation}
  Here and from now on, we use a schematic notation for objects of \V such as those appearing in~\eqref{eq:vreedy-compose}, in which we omit the names of the objects of \C and the symbols $\coprod$ and $\otimes$.
  It is to be understood that the coproducts range over all ``intermediate'' objects having degree $<\de$.
  If $\V=\nSet$, then $t_1$ takes a pair of morphisms $(h,g)$ (regarded as a factorization of $f = hg$) to the triple $(\rup h, \rup k, \rdn k\rdn g)$, where $k=\rdn h \rup g$ as in \cref{thm:reedy}.

  The first three morphisms in~\eqref{eq:t1} are part of $e$, proceeding not quite all the way around the upper-left composite of~\eqref{eq:vreedy-compose}.
  The fourth just composes in $\rdn\C$, and the last includes $\rup \C$ and $\rdn\C$ into $\C$.
  The definitions of $t_1$ and $e$ make it clear that $se = qt_1$.
  When $\V=\nSet$, this equality means that if $f=hg$, then $(\rup f, \rdn f) = (\rup h \rup k, \rdn k \rdn g)$.

  We define the map $t_2$ more simply, as
  \[ \C\C \otiso \rup\C \rdn \C \C \to \C\C\C. \]
  If $\V=\nSet$, then $t_2$ takes $(h,g)$ to $(\rup h, \rdn h, g)$.
  Evidently $q t_2 = \id$, while the following diagram (in which the long top-left-bottom composite $\C\C\to\C\C\C$ is $t_1$) shows that $pt_1 = pt_2$ (which in the case $\V=\nSet$ means $(\rup h, \rup k \rdn k \rdn g) = (\rup h, \rdn h g)$).
  \def\C{\mathcal{C}}
  \begin{equation}
    \vcenter{\xymatrix@R=1.5pc{
        \rup\C \rdn\C \rup\C \rdn\C \ar[d] \ar[r]^-\cong &
        \rup\C \rdn\C \C \ar[rr]^-\cong \ar[d] \ar[dr] &&
        \C\C \ar[d]^{t_2} \\
      \rup\C \C \rdn\C \ar[r] &
      \rup\C \C \ar[r] &
      \C\C &
      \C\C\C \ar[l]_p \\
      \rup\C \rup\C \rdn\C \rdn\C \ar[u]^\cong \ar[r] &
      \rup\C \rup\C \rdn\C \ar[u] \ar[r] &
      \C\C\C\ar[u]_p
      }}
  \end{equation}

  One thing remains: we have to show that the two composites in~\eqref{eq:vreedy-coeq} are actually equal, i.e.\ that $e p = e q$!
  Note that this is automatic if coproduct injections in \V are monic; since this is the case for \nSet, the remainder of the proof has no analogue in the proof of \cref{thm:reedy}.
  We begin by extending~\eqref{eq:vreedy-compose} as shown in \cref{fig:vreedy-fork}.
  \begin{figure}
    \centering
    \begingroup
    \def\C{\mathcal{C}}
    \begin{equation}
      \label{eq:vreedy-fork}
      \vcenter{\xymatrix@R=1.5pc{
          &\rup\C \rdn\C \rup\C \rdn\C \rup\C \rdn\C \ar@<5mm>[d] \ar@<-5mm>[d] \ar[rr]^-\cong &&
          \C\C\C \ar@<2mm>[ddd]^p \ar@<-2mm>[ddd]_q \\
          & \rup\C \C \rdn\C \rup\C \rdn\C \quad \rup\C \rdn\C \rup\C \C \rdn\C & \\
          & \rup\C \rup\C \rdn\C \rdn\C \rup\C \rdn\C \quad
          \rup\C \rdn\C \rup\C \rup\C \rdn\C \rdn\C
          \ar@<5mm>[u]^\cong \ar@<-5mm>[u]_\cong
          \ar@<5mm>[d] \ar@<-5mm>[d] & \\
          &\rup\C \rdn\C \rup\C \rdn\C \ar[d] \ar[rr]^-\cong &&
          \C\C \ar[ddd] \ar[dddll]_e \\
          &\rup\C \C \rdn\C \\
          &\rup\C \rup\C \rdn\C \rdn\C \ar[u]^\cong \ar[d] \\
          &\rup\C \rdn\C \ar[rr] &&
          \C
        }}
    \end{equation}
    \endgroup
    \caption{}\label{fig:vreedy-fork}
  \end{figure}
  The lower rectangle is just~\eqref{eq:vreedy-compose} in schematic notation.
  At the top, we have two rectangles with vertices $\rup\C \rdn\C \rup\C \rdn\C \rup\C \rdn\C$, $\rup\C \rdn\C \rup\C \rdn\C$, $\C\C\C$, and $\C\C$, in which the vertical maps either both take the left path or both the right.
  Each of these rectangles commutes by the same argument as in~\eqref{eq:vreedy-compose}.
  Thus, to show that $e p = e q$, it will suffice to show that the two composites $\rup\C \rdn\C \rup\C \rdn\C \rup\C \rdn\C \to \rup\C\rdn\C$ along the left-hand side are equal.
  But these are the outer composites in \autoref{fig:vreedyisafork} on page~\pageref{fig:vreedyisafork}, in which all the inner quadrilaterals commute.
  \begin{figure}
    \centering
    \def\C{\mathcal{C}}
    \begin{equation}
    \xymatrix@R=1.5pc@C=1.5pc{
      & &\rup\C \rdn\C \rup\C \rdn\C \rup\C \rdn\C \ar[dr] \ar[dl] \\ &
      \rup\C \C \rdn\C \rup\C \rdn\C \ar[dr] && \rup\C \rdn\C \rup\C \C \rdn\C \ar[dl] \\
      \rup\C \rdn\C \rup\C \rdn\C \ar[dd] 
      & \rup\C \rup\C \rdn\C \rdn\C \rup\C \rdn\C \ar[u]^\cong \ar[d] \ar[l] &
      \rup\C\C\C\rdn\C  &
      \rup\C \rdn\C \rup\C \rup\C \rdn\C \rdn\C \ar[u]_\cong \ar[d] \ar[r] &
      \rup\C \rdn\C \rup\C \rdn\C \ar[dd] \\ &
      \rup\C \rup\C \rdn\C  \C \rdn\C  \ar[ur]^\cong \ar[d]
      & & \rup\C \C \rup\C \rdn\C \rdn\C \ar[ul]_\cong \ar[d] \\
      \rup\C \C \rdn \C
      & \rup\C \rup\C \C \rdn\C \ar[l] 
      & \rup\C \rup\C\rdn\C \rup\C \rdn\C \rdn \C \ar[ul]_\cong \ar[ur]^\cong \ar[d]
      & \rup\C \C \rdn\C \rdn\C \ar[r]
      & \rup\C \C \rdn\C \\
      \rup\C \rup\C \rdn\C \rdn\C \ar[u]^\cong \ar[ddrr]
      & \rup\C \rup\C \rup\C \rdn\C \rdn\C \ar[u]^\cong \ar[l]
      & \rup\C \rup\C \C \rdn\C \rdn\C \ar[ul] \ar[ur]
      & \rup\C \rup\C \rdn\C \rdn\C \rdn\C \ar[u]_\cong  \ar[r] &
      \rup\C \rup\C \rdn\C \rdn\C \ar[u]_\cong \ar[ddll] \\ &
      & \rup\C \rup\C \rup\C \rdn\C \rdn\C \rdn\C \ar[u]^\cong \ar[ur] \ar[ul] \ar[d] \\ &
      & \rup\C \rdn\C 
      }
  \end{equation}
    \caption{}
    \label{fig:vreedyisafork}
  \end{figure}
\end{proof}

The definition of Reedy \V-category in~\cite{angeltveit:enr-reedy} (as corrected in the arXiv v2) is a bit more restrictive.
In more categorical language, it consists of:
\begin{enumerate}
\item An unenriched Reedy category \B.
\item A normal lax functor from \B, regarded as a bicategory with only identity 2-cells, to \V, regarded as a bicategory with one object.  This consists of:
  \begin{enumerate}
  \item For each morphism $f\in\B(x,y)$, an object $\C(x,y)_f \in \V$,
  \item Composition maps $\C(y,z)_g \otimes \C(x,y)_f \to \C(x,z)_{g f}$,
  \item An identity isomorphism $I\toiso \C(x,x)_{\id}$, and
  \item Appropriate associativity and unitality axioms.
  \end{enumerate}
\item For any $g\in\B(x,y)$, the composition map $\C(z,y)_{\rup g} \otimes \C(x,z)_{\rdn g} \to \C(x,y)_g$ is an isomorphism.
\item For any $g\in\B(x,y)$, the objects $\coprod_{g\in \rup \B} \C(x,y)_g$ and $\coprod_{g\in \rdn \B} \C(x,y)_g$ are cofibrant in \V.
\end{enumerate}
We can restrict this lax functor along the inclusions $\rup\B\into\B$ and $\rdn\B\into\B$, obtaining two more lax functors.
Moreover, the bicategory-with-one-object \V can be embedded into the bicategory \vprof of \V-categories and \V-functors, by sending the unique object to the unit \V-category.
This gives us three lax functors into \vprof.
It is straightforward to verify that their collages $\C$, $\rup\C$, and $\rdn \C$ (meaning an enriched generalization of \cref{defn:gen-collage}) form a Reedy \V-category in the sense of \cref{defn:vreedy}.

\section{Enriched generalized Reedy categories}
\label{sec:enriched-generalized}

As an example of the sort of new theory enabled by this approach to Reedy-ness, let us now combine the enriched Reedy categories of \cref{sec:enriched} with the generalized ones of \cref{sec:auto}.
With the general theory of c-Reedy categories under our belt, there is no reason to restrict attention to those with groupoidal strata;
this is convenient because it means we can avoid deciding what to mean by a ``\V-groupoid''.

We will require projective model structures on \V-diagram categories.
It is easy to see that if \M has a cofibrantly generated \V-wfs, then $\M^\D$ has a projective wfs for any small \V-category \D, in which the \R-maps are objectwise.
The generating \L-maps are those of the form $\D(d,\blank)\otimes i$, as $i$ ranges over the generating \L-maps in \M and $d$ over the objects of \D.
To obtain a projective \emph{model structure}, we need an additional hypothesis, such as that acyclic cofibrations in \M are preserved under tensoring by objects of \V.


Let \C be a bistratified \V-category.
Recall that we characterized these in \cref{thm:v-bistrat-char}, and that we write $\C_{=\de}$ for the $\de^{\mathrm{th}}$ stratum.
As in \cref{sec:auto}, we define generalized \textbf{matching} and \textbf{latching} functors for any degree $\de$ by
\begin{align}
  M_\de &: \M^\C \to \M^{\C_{=\de}}\\
  (M_\de A)_x &= \wlimc{\partial_\de\C(x,\blank)}{A}
\end{align}
and similarly
\[ (L_\de A)_x = \wcolimc{\partial_\de\C(\blank,x)}{A}. \]
Here $\partial_\de\C(\blank,\blank)$ is the enriched functor defined in \cref{defn:v-fundfact}.
We again may write $M_x A = (M_\de A)_x$ and so on, and $A_\de$ for the restriction of $A$ to $\C_{=\de}$.
We have an enriched version of \cref{thm:bistrat-wfs}, proven in the same way:

\begin{thm}\label{thm:v-bistrat-wfs}
  For any bistratified \V-category \C and any complete and cocomplete \V-category \M with a \V-pre-wfs, there is an induced \textbf{c-Reedy pre-wfs} on $\M^\C$ that is characterized by:
  \begin{itemize}
  \item $A\to B$ is an \R-map iff $A_x \to M_x A \times_{M_x B} B_x$ is an \R-map in \M for all $x\in\C$.
  \item $A\to B$ is an \L-map iff $L_\de B \sqcup_{L_\de A} A_\de \to B_\de$ is a projective \L-map in $\M^{\C_{=\de}}$ for all degrees $\de$.
  \end{itemize}
  If the projective pre-wfs on each $\M^{\C_{=\de}}$ is a wfs, then the c-Reedy pre-wfs on $\M^\C$ is also a wfs.\qed
\end{thm}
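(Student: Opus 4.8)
The statement is proven by transfinite induction on the height of \C, exactly as in the unenriched \cref{thm:bistrat-disc-wfs} and \cref{thm:bistrat-wfs}; I indicate only the structure, since every step is a routine enrichment. First I would set up the induction with hypothesis ``$\M^{\C_\de}$ carries a c-Reedy (pre-)wfs with the stated characterization, cloven, and the transition restriction functors are strict''.

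For the successor step, write $\C = \C_{\de+1} = \coll{\al_\de}$, obtained from the bistratified \V-category $\C_\de$ by bigluing along abstract bigluing data $(U,W,\al_\de)$ from $\C_\de$ to the stratum $\C_{=\de}$ (\cref{defn:v-abd-cd}). By \cref{thm:enriched-bicollage} there is a \V-equivalence $\M^\C \simeq \biglue{\albar}$, where $\albar\colon(\wcolimc{U}{\blank}) \to \wlimc{W}{\blank}$ is the induced transformation between \V-functors $\M^{\C_\de}\to\M^{\C_{=\de}}$. Here $(\wcolimc{U}{\blank})$ is cocontinuous with right adjoint $Y\mapsto Y^U$, while $\wlimc{W}{\blank}$ is continuous with left adjoint $Y\mapsto Y\cpw W$, and $\M^{\C_{=\de}}$ is complete and cocomplete. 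Equip $\M^{\C_\de}$ with its c-Reedy (pre-)wfs, which exists by the inductive hypothesis, and $\M^{\C_{=\de}}$ with its projective (pre-)wfs, which exists by hypothesis; choose cleavages. Then \cref{thm:wfs} (in the wfs case) or \cref{thm:prewfs} (in the pre-wfs case) supplies a (pre-)wfs on $\biglue{\albar}$, cloven, with the forgetful functor $\biglue{\albar}\to\M^{\C_\de}$ strict. Unravelling the description in \cref{thm:wfs}: the underlying-in-$\M^{\C_\de}$ conditions, via the inductive characterization, say precisely that the matching (resp.\ latching) conditions hold at all objects of degree $<\de$; and the induced maps ``$N\to GM\times_{GM'}N'$'' and ``$FM'\sqcup_{FM}N\to N'$'' become, under the identifications $(M_\de A)_x\cong\wlimc{\partial_\de\C(x,\blank)}{A}$ and $(L_\de A)_x\cong\wcolimc{\partial_\de\C(\blank,x)}{A}$ recorded before the theorem, exactly $A_\de\to M_\de A\times_{M_\de B}B_\de$ and $L_\de B\sqcup_{L_\de A}A_\de\to B_\de$ in $\M^{\C_{=\de}}$. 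Since the \R-maps and \L-maps of the projective (pre-)wfs on $\M^{\C_{=\de}}$ are the stated ones, this yields the characterization in the theorem for objects of degree $\de$, completing the successor step.

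For the limit step, $\C = \colim_{\de<\be}\C_\de$, so \eqref{eq:cotower} gives $\M^\C\cong\lim_{\de<\be}\M^{\C_\de}$, the transition functors being the restrictions $\M^{\C_{\de'}}\to\M^{\C_\de}$ for $\de<\de'$. These are composites of the strict forgetful functors produced above, hence strict morphisms of cloven (pre-)wfs, so \cref{thm:clovenlim} equips $\M^\C$ with a cloven (pre-)wfs created by the forgetful functors, in which a map is an \L-map (resp.\ \R-map) iff it is one in each $\M^{\C_\de}$. As every object of \C has some degree $\de<\be$ and so lies in $\C_{\de+1}$, this recovers the asserted characterization.

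Finally, if each projective pre-wfs on $\M^{\C_{=\de}}$ is a wfs, then \cref{thm:wfs} applies in place of \cref{thm:prewfs} at every successor step, and \cref{thm:clovenlim} for $\wfscl$ preserves the wfs property at limit steps; hence the c-Reedy pre-wfs on $\M^\C$ is a wfs. The argument is purely formal: the only points needing attention are the choice of cleavages so that \cref{thm:clovenlim} applies, and the (standard) end/coend computation identifying the abstractly biglued (pre-)wfs with the one phrased via the enriched matching and latching functors; neither is a genuine obstacle.
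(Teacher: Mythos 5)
Your proposal is correct and follows essentially the same route as the paper, which proves this result by the one-line reduction ``just like the unenriched case'': transfinite induction on height, with successor steps handled by the bigluing theorems (\cref{thm:wfs}/\cref{thm:prewfs}) applied to the weighted (co)limit functors $\M^{\C_\de}\to\M^{\C_{=\de}}$ with the projective (pre-)wfs on the stratum, and limit steps handled by \cref{thm:clovenlim} together with strictness of the forgetful functors. Your unravelling of the biglued classes into the matching/latching characterization, and your note that the adjoints required by \cref{thm:prewfs} exist, match the intended argument.
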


And an enriched version of \cref{thm:creedy-wfs}, also proven in the same way.

\begin{thm}\label{thm:v-creedy-wfs}
  Let \C be a bistratified \V-category, \M a complete and cocomplete \V-category with a \V-pre-wfs, and \D a small \V-category.
  \begin{itemize}
  \item Let $\M^\C$ have its c-Reedy pre-wfs and $\V$ have its (cofibration, acyclic fibration) wfs.
  \item Let $\V^\C$ and $\V^{\C\op} \cong ((\V\op)^\C)\op$ have their c-Reedy wfs.
  \item Let $\V^{\C\op\times \D} \cong (((\V\op)^\C)\op)^\D$ and $\V^{\C\times\D\op} \cong (\V^\C)^{\D\op}$ have their induced injective pre-wfs.
  \end{itemize}
  Then:
  \begin{enumerate}
  \item The weighted colimit $\V^{\C\op\times \D} \times \M^\C \to \M^\D$ is a left wfs-bimorphism when $\M^\D$ has its injective pre-wfs.\label{item:vcreedywfs2}
  \item The weighted limit $(\V^{\C\times\D\op})\op \times \M^\C \to \M^\D$ is a right wfs-bimorphism when $\M^\D$ has its projective pre-wfs.\qed\label{item:vcreedywfs1}
  \end{enumerate}
\end{thm}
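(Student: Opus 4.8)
The plan is to follow the proof of \cref{thm:creedy-wfs} essentially verbatim, replacing $\nSet$ by $\V$ throughout and replacing the appeal to \cref{thm:copower} by the hypothesis that $\M$ carries a $\V$-pre-wfs. I would prove the weighted-limit statement~\ref{item:vcreedywfs1} and deduce the weighted-colimit statement~\ref{item:vcreedywfs2} by the usual dualization $(\M,\C)\rightsquigarrow(\M\op,\C\op)$: by \cref{thm:bistrat-opp} this preserves bistratification, and by the definitions it turns the c-Reedy wfs into the opposite of a c-Reedy wfs and interchanges the injective and projective pre-wfs in the $\D$-direction, converting the right wfs-bimorphism property of a weighted limit into the left wfs-bimorphism property of the corresponding weighted colimit.

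The first step is to strip off the parameter category $\D$. Writing $\V^{\C\times\D\op}\cong(\V^\C)^{\D\op}$ with its injective pre-wfs, the \L-maps there are objectwise in the $\D\op$-variable, while the projective pre-wfs on $\M^\D$ has \R-maps objectwise in the $\D$-variable; so, exactly as with the ``simple version of~\cite[Theorem~3.2]{gambino:wgtlim}'' used in \cref{thm:creedy-wfs}, it suffices to show that the weighted limit $(\V^\C)\op\times\M^\C\to\M$ is a right wfs-bimorphism, with $\V^\C$ carrying its c-Reedy wfs. By the standard two-variable adjunction principle for wfs-bimorphisms — which, being purely about lifting, makes no use of factorizations and hence applies to pre-wfs — this is equivalent to showing that the objectwise copower $\V^\C\times\M\to\M^\C$ is a left wfs-bimorphism.

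The second step is an induction on the height of $\C$, carried out exactly as in \cref{thm:reedy-wfs} and \cref{thm:creedy-wfs}. By \cref{thm:bistrat-parapostcomp} the objectwise copower is naturally isomorphic to the functor built inductively from the tensor $\otimes\colon\V\times\M\to\M$ via \cref{thm:cell-paramor}: the limit stages are limits of the previously constructed copowers, handled by \cref{thm:clovenlimmor}, and at a successor stage $\C_{\de+1}=\coll{\al_\de}$ the copower is produced by \cref{thm:cell-paramor} with comparison maps $\xi,\ze$, the map $\xi$ being an isomorphism since the tensor is cocontinuous in each variable. So \cref{thm:wfs-paramor-pres} applies, given (i) the inductive hypothesis that the copower over $\C_\de$ is a left wfs-bimorphism and (ii) the same statement for the copower into the stratum $\M^{\C_{=\de}}$ with its projective pre-wfs — an instance of the present lemma for a trivially bistratified domain, which reduces in the same way to the fact that $\otimes\colon\V\times\M\to\M$ is a left wfs-bimorphism. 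And that fact is precisely the hypothesis that $\M$ is a $\V$-pre-wfs; it is the one place where \cref{thm:copower} is genuinely replaced by an assumption rather than a theorem.

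I do not expect a real obstacle — the statement is flagged as being proven ``in the same way'' — so the only work is bookkeeping: keeping straight which pre-wfs is meant on each of $\V^\C$, $\V^{\C\op}\cong((\V\op)^\C)\op$, $\V^{\C\op\times\D}$, $\V^{\C\times\D\op}$, $\M^\C$, and $\M^\D$, and checking that the non-self-dual choices forced by using the \emph{projective} structures on strata are consistent with the identifications $\V^{\C\op\times\D}\cong(((\V\op)^\C)\op)^\D$ and $\V^{\C\times\D\op}\cong(\V^\C)^{\D\op}$ recorded in the statement. Once those are pinned down as in \cref{thm:creedy-wfs}, everything goes through, since the bigluing and collage machinery developed earlier is already insensitive to the choice of enrichment base.
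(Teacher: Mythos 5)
Your treatment of the weighted-limit half is correct and is exactly the paper's intended argument: strip off $\D$ using that the relevant classes are objectwise in the $\D$-direction, pass across the two-variable adjunction to the objectwise tensor $\V^\C\times\M\to\M^\C$, and run the induction of \cref{thm:reedy-wfs}/\cref{thm:creedy-wfs} with \cref{thm:wfs-paramor-pres}, \cref{thm:bistrat-parapostcomp} and \cref{thm:clovenlimmor}, the base case now being the tensor form of the \V-pre-wfs hypothesis instead of \cref{thm:copower}.

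However, your deduction of the weighted-colimit statement by the dualization $(\M,\C)\rightsquigarrow(\M\op,\C\op)$ does not work, and this is a genuine gap rather than bookkeeping. The c-Reedy construction is \emph{not} self-dual, precisely because it uses projective structures on the strata: the opposite of the c-Reedy pre-wfs on $(\M\op)^{\C\op}$ is the pre-wfs on $\M^\C$ whose \L-maps are those whose relative latching maps are merely \emph{objectwise} \L-maps in $\M^{\C_{=\de}}$ (equivalently whose \R-maps have relative matching maps that are \emph{injective} \R-maps), which differs from the c-Reedy pre-wfs on $\M^\C$ whenever the strata are non-discrete. The same mismatch occurs for the weight category: the colimit statement uses the wfs obtained by applying the c-Reedy construction with $\C$ itself inside $\V\op$ and then passing to $(((\V\op)^\C)\op)^\D$, not the c-Reedy wfs of $\V^{\C\op}$ with respect to the bistratified category $\C\op$; the paper points out exactly this failure when it notes that the analogue of \cref{thm:reedy-wfs-opp} is false for c-Reedy wfs (it holds only in the discrete-strata setting of \cref{sec:reedy}, which is presumably where your intuition for the duality comes from). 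Concretely, applying your proven limit statement to $(\M\op,\C\op,\D\op)$ and dualizing yields a left-bimorphism statement in which the \L-class on $\M^\C$ is too large (harmless) but the \L-class on the weight category is too small (fatal): it does not cover the \L-maps of the wfs specified in item~\ref{item:vcreedywfs2}, so the desired statement does not follow. What ``the proof of the other half is dual'' means here is that one must rerun the mirror-image argument separately: strip off $\D$ as before, pass across the two-variable adjunction to the objectwise cotensor (or the enriched hom), and induct using the dual form of \cref{thm:wfs-paramor-pres} with base case the cotensor form of the \V-pre-wfs hypothesis, checking at each stratum the (now different) mixture of projective and objectwise structures that the statement's identifications prescribe.
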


\begin{defn}
  A bistratified \V-category is \textbf{almost c-Reedy} if for all objects $x$ of degree $\be$,
  \begin{itemize}
  \item The hom-functor $\C(\blank,x)$ is an \L-object in the c-Reedy wfs on $\V^{\C_\be\op} \cong ((\V\op)^{\C_\be})\op$, and
  \item The hom-functor $\C(x,\blank)$ is an \L-object in the c-Reedy wfs on $\V^{\C_\be}$.
  \end{itemize}
\end{defn}

\begin{thm}\label{thm:v-creedy-model}
  Let \C be an almost c-Reedy \V-category and \M a \V-model category such that each $\M^{\C_{=\de}}$ has a projective model structure.
  Then $\M^\C$ has a model structure defined by:
  \begin{itemize}
  \item The weak equivalences are objectwise,
  \item $A\to B$ is a fibration iff the induced map $A_x \to M_x A \times_{M_x B} B_x$ is a fibration in \M for all $x\in \C$.
  \item $A\to B$ is a cofibration iff the induced map $L_\de B \sqcup_{L_\de A} A_\de \to B_\de$ is a projective-cofibration in $\M^{\C_{=\de}}$ for all degrees $\de$.\qed
  \end{itemize}
\end{thm}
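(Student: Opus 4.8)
The plan is to mimic the proof of \cref{thm:creedy-model} — induct on the height $\be$ of $\C$ — but, since the enriched development deliberately omits a counterpart of \cref{thm:almost-creedy}, I would supply its ``if'' half directly: the point is that the almost-c-Reedy hypothesis is precisely the assertion that at each stage of the bigluing recursion the two profunctors involved are $\L$-objects in the sense of \cref{thm:v-creedy-wfs}. (Note that the statement only asks for an ordinary model structure on $\M^\C$, so \cref{thm:model} may be applied to underlying ordinary categories and there is no $\V$-compatibility to verify.) The base case, height $0$, is trivial.

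For the successor step $\be \mapsto \be+1$, write $\C = \coll{\al_\be}$ where $(U_\be, W_\be, \al_\be)$ is the abstract bigluing data from $\C_\be$ (the full subcategory on objects of degree $<\be$, which is again an almost-c-Reedy $\V$-category of height $\be$, so the inductive hypothesis applies) to the $\be$-th stratum $\C_{=\be}$. First I would invoke \cref{thm:enriched-bicollage} to identify $\M^\C \simeq \biglue{\albar}$, where $\albar : (\wcolimc{U_\be}{\blank}) \to \wlimc{W_\be}{\blank}$ is the induced transformation between functors $\M^{\C_\be} \to \M^{\C_{=\be}}$. I would then equip $\M^{\C_{=\be}}$ with its projective model structure (available by hypothesis) and apply \cref{thm:model} with $F := (\wcolimc{U_\be}{\blank})$ and $G := \wlimc{W_\be}{\blank}$; these are cocontinuous and continuous respectively (each has an adjoint), so the only real work is their behaviour on acyclic (co)fibrations.

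This is where the hypothesis enters. Regarding $U_\be$ as an object of $\V^{\C_\be\op\times\C_{=\be}}$ and $W_\be$ as an object of $\V^{\C_\be\times\C_{=\be}\op}$, equipped with the pre-wfs of \cref{thm:v-creedy-wfs} — which are objectwise in the $\C_{=\be}$-variable — being an $\L$-object amounts, for each object of degree $\be$, to exactly one of the two clauses of the definition of almost-c-Reedy. Specializing the left-wfs-bimorphism property of \cref{thm:v-creedy-wfs}\ref{item:vcreedywfs2} to the $\L$-object $U_\be$ (take pushout-products against $\emptyset \to U_\be$) shows that $F$ sends $\L$-maps of the c-Reedy wfs on $\M^{\C_\be}$ — by the inductive hypothesis, the cofibrations and acyclic cofibrations of its model structure — to objectwise cofibrations and objectwise acyclic cofibrations in $\M^{\C_{=\be}}$; by \cref{thm:objwise-cwe} applied to the projective model structure, objectwise acyclic cofibrations are couniversal weak equivalences. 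Dually, \cref{thm:v-creedy-wfs}\ref{item:vcreedywfs1} with the $\L$-object $W_\be$ gives that $G$ sends fibrations and acyclic fibrations to objectwise ones, and objectwise acyclic fibrations are universal weak equivalences. Hence \cref{thm:model} produces a model structure on $\biglue{\albar} \simeq \M^\C$; tracing its characterization of cofibrations and fibrations (from \cref{thm:wfs}) through the identification of $(\wcolimc{U_\be}{\blank})$ and $\wlimc{W_\be}{\blank}$, extended to $\M^\C$, with the functors $L_\be$ and $M_\be$, together with the inductive hypothesis on the strata of degree $<\be$, recovers the description in the statement; weak equivalences are objectwise by construction.

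For a limit ordinal $\be$, I would use the $\V$-enriched form of \cref{eq:cotower} to write $\M^\C \cong \lim_{\de<\be}\M^{\C_\de}$; each restriction functor $\M^{\C_{\de+1}}\to\M^{\C_\de}$ is the strict forgetful functor of a bigluing by \cref{thm:wfs}, so \cref{thm:clovenlim} creates the limiting pair of cloven wfs, which form a model structure since $2$-out-of-$3$ is immediate for objectwise weak equivalences, and the latching/matching characterizations pass to the limit because $L_x$ and $M_x$ at an object of degree $\de$ only involve $\C_{\de+1}$. I expect the main obstacle to be bookkeeping rather than anything conceptual: correctly juggling the several (pre-)wfs in play — the c-Reedy wfs on $\M^{\C_\be}$, the projective model structure on the stratum $\M^{\C_{=\be}}$, and the fact that the acyclic cofibrations produced by $F$ are ``injective'' (objectwise) rather than projective — so that one application of \cref{thm:model} returns exactly the stated cofibration and fibration classes, and making sure that the asymmetry between the conditions on $U_\be$ and $W_\be$ (one involving projective cofibrancy, one not, as in the remark after \cref{thm:creedy-wfs}) is entirely absorbed into the almost-c-Reedy hypothesis.
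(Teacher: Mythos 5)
Your proposal is correct and is essentially the paper's intended argument: the theorem is stated with the proof left implicit precisely because it follows, as you describe, by induction on height, identifying $\M^\C\simeq\biglue{\albar}$ at successor stages via \cref{thm:enriched-bicollage}, applying \cref{thm:model} with the weighted colimit/limit functors whose required behaviour on acyclic (co)fibrations comes from the almost-c-Reedy hypothesis through \cref{thm:v-creedy-wfs} and \cref{thm:objwise-cwe}, and handling limit stages with \cref{thm:clovenlim}. Your unpacking of the $\L$-object condition (objectwise in the $\C_{=\be}$-variable, reducing to the two clauses of the definition) and of the resulting (co)fibration classes matches the paper's machinery, so there is nothing to add.
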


As in \cref{sec:auto}, to say that each $\C(\blank,x)$ is an \L-object in $\V^{\C_{\be}\op}$ is to say that each map
\begin{equation}
  \partial_\de\C(y,x) \to \C(y,x)\label{eq:Vccofmap}
\end{equation}
is a cofibration in \V whenever $\de=\deg(y)<\deg(x)$.
And to say that each $\C(x,\blank)$ is an \L-object in $\V^{\C_\be}$ means that for each $\de<\deg(x)$, the map
\begin{equation}
  \partial_\de\C(x,\blank) \to \C(x,\blank)\label{eq:Vcprcofmap}
\end{equation}
is a projective-cofibration in $\V^{\C_{=\de}}$.
As in \cref{sec:enriched}, we can obtain these conditions together with bistratification from a definition in terms of ``enriched factorizations''.

\begin{defn}\label{defn:v-creedy}
  A \textbf{c-Reedy \V-category} is a small \V-category \C with ordinal degrees assigned to its objects, along with:
  \begin{enumerate}
  \item A commutative square of \V-functors that are the identity on objects:\label{item:vcr1}
    \begingroup
    \def\C{\mathcal{C}}
    \begin{equation}
      \vcenter{\xymatrix@-.5pc{
          \Cbar\ar[r]\ar[d] &
          \rup\C\ar[d]\\
          \rdn\C\ar[r] &
          \C.
        }}
    \end{equation}
    \endgroup
  \item If $\deg(x)=\deg(y)$, then the maps $\Cbar(x,y) \to \rup\C(x,y)$ and $\Cbar(x,y) \to \rdn\C(x,y)$ are isomorphisms.\label{item:vcr2}
  \item If $\deg(x)\neq\deg(y)$, then $\Cbar(x,y)=\emptyset$.\label{item:vcr3}
  \item If $\deg(x)>\deg(y)$, then $\rup\C(x,y) = \emptyset$ and $\rdn\C(y,x) = \emptyset$.\label{item:vcr4}
  \item The hom-objects of $\rup\C$ (hence also those of $\Cbar$) are cofibrant.\label{item:vcr7}
  \item For any $x$ and $\de<\deg(x)$, the functor $\rdn\C(x,\blank):\Cbar_{=\de} \to \V$ is projectively cofibrant, where $\Cbar_{=\de}$ is the full subcategory of $\Cbar$ on objects of degree $\de$.\label{item:vcr6}
  \item For any $x,y$, the following diagram is a coequalizer:\label{item:vcr5}
    \begin{equation}
      \coprod_{z,w} \rup\C(w,y)\otimes \Cbar(z,w) \otimes \rdn\C(x,z) \toto
      \coprod_z\rup\C(z,y)\otimes \rdn\C(x,z) \too
      \C(x,y).\label{eq:v-creedy-coeq}
    \end{equation}
  \end{enumerate}
\end{defn}

Note that~\ref{item:vcr5} says in an enriched way that ``every morphism $f:x\to y$ factors as $f = \rup f \rdn f$ where $\rup f \in\rup\C$ and $\rdn f \in\rdn\C$, uniquely up to zigzags of such factorizations whose connecting maps are in $\Cbar$.''

Similarly to \cref{defn:vreedy}, the coproducts in~\ref{item:vcr5} may as well run only over $z,w$ with $\deg(z)=\deg(w)\leq \min(\deg(x),\deg(y))$.
Moreover, the two parallel maps preserve degrees of $z$ and $w$, so their coequalizer decomposes as a coproduct over $\de\leq\min(\deg(x),\deg(y))$ of the coequalizers of the parallel pairs
\begin{equation}
  \coprod_{\deg(z)=\deg(w)=\de} \rup\C(w,y)\otimes \Cbar(z,w) \otimes \rdn\C(x,z) \toto
  \coprod_{\deg(z)=\de}\rup\C(z,y)\otimes \rdn\C(x,z).\label{eq:v-creedy-coeqpart}
\end{equation}
For the summand when $\de=\min(\deg(x),\deg(y))$, we have three cases:
\begin{itemize}
\item If $\de=\deg(x)=\deg(y)$, then all hom-objects are equivalently those of $\Cbar$, so by the co-Yoneda lemma the coequalizer of~\eqref{eq:v-creedy-coeqpart} reduces to $\Cbar(x,y)$.
\item If $\de=\deg(x)<\deg(y)$, then all hom-objects of $\rdn\C$ are those of $\Cbar$, hence also those of $\rup\C$, so the coequalizer of~\eqref{eq:v-creedy-coeqpart} reduces to $\rup\C(x,y)$.
\item Similarly, if $\de=\deg(y)<\deg(x)$, the coequalizer reduces to $\rdn\C(x,y)$.
\end{itemize}
This unique summand of $\C(x,y)$ is the \emph{basic hom-object} from $x$ to $y$, and its complement is the \emph{non-basic hom-object}.
The latter is the coproduct of all the coequalizers of~\eqref{eq:v-creedy-coeqpart} for $\de<\min(\deg(x),\deg(y))$, which can be written as the coequalizer of
\begin{equation}
  \coprod_{z,w\in\C_\de} \rup\C(w,y)\otimes \Cbar(z,w) \otimes \rdn\C(x,z) \toto
  \coprod_{z\in\C_\de}\rup\C(z,y)\otimes \rdn\C(x,z).
\end{equation}
where as usual $\C_\de$ denotes the full subcategory on objects of degree $<\de$.
It will be convenient to regard this as a tensor product of functors
\begin{equation}
  \wcolim{\rup\C(\blank,y)}{\rdn\C(x,\blank)}{\Cbarde}
\end{equation}
where $\Cbarde$ denotes the full subcategory of $\Cbar$ on objects of degree $<\de$.

As before, these considerations imply that the inclusions $\rup\C(x,y) \to \C(x,y)$ and $\rdn\C(x,y)\to \C(x,y)$ are always coproduct coprojections, which for many \V{}s implies that they are monic.
If they \emph{are} monic, then we can conclude that the square in~\ref{item:vcr1} is actually a pullback, so we have an enriched version of the equality $\Cbar = \rup\C\cap\rdn\C$ from \cref{defn:creedy}.

\begin{thm}\label{thm:v-creedy}
  A c-Reedy \V-category is an almost c-Reedy \V-category.
\end{thm}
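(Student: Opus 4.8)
The plan is to deduce everything from a single structural fact: for all objects $x,y$ with $\de=\min(\deg x,\deg y)$, the canonical map $\partial_\de\C(x,y)\to\C(x,y)$ of \cref{defn:v-fundfact} is the coprojection of a coproduct whose complement is the basic hom-object isolated in \cref{defn:v-creedy}\ref{item:vcr5}. Granting this, the three ingredients of ``almost c-Reedy'' fall out. When $\deg x=\deg y$ the complement is $\Cbar(x,y)$ (by the co-Yoneda computation following \cref{defn:v-creedy}), and since \cref{defn:v-creedy}\ref{item:vcr1}--\ref{item:vcr2} present $\Cbar$ as a \V-category equipped with an identity-on-objects \V-functor to $\C$, this is precisely conditions \ref{item:vbistrat1}--\ref{item:vbistrat4} of \cref{thm:v-bistrat-char}; so $\C$ is bistratified, with stratum $\C_{=\de}=\Cbar_{=\de}$. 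When $\deg y<\deg x$ the complement is $\rup\C(y,x)$, cofibrant by \cref{defn:v-creedy}\ref{item:vcr7}, so the map \eqref{eq:Vccofmap} is a pushout of $\emptyset\to\rup\C(y,x)$ and hence a cofibration. Dually, for fixed $x$ and each $y$ of degree $\de<\deg x$ the complement is $\rdn\C(x,y)$, and as $y$ ranges over $\C_{=\de}=\Cbar_{=\de}$ these assemble into the functor $\rdn\C(x,\blank)\colon\Cbar_{=\de}\to\V$, projectively cofibrant by \cref{defn:v-creedy}\ref{item:vcr6}; so \eqref{eq:Vcprcofmap} is a pushout of $\emptyset\to\rdn\C(x,\blank)$ in $\V^{\C_{=\de}}$ and hence a projective \L-map. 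By the discussion preceding \cref{thm:v-creedy} these are exactly the defining conditions of an almost c-Reedy \V-category.

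So the work is the structural fact, which is the enriched, categorified counterpart of the absolute coequalizer \eqref{eq:vreedy-coeq} used for \cref{thm:v-reedy}. Write $N(x,y)\coloneqq\wcolim{\rup\C(\blank,y)}{\rdn\C(x,\blank)}{\Cbarde}$ for the non-basic hom-object, i.e.\ \cref{defn:v-creedy}\ref{item:vcr5} restricted to intermediate objects of degree $<\de$; by \cref{defn:v-creedy}\ref{item:vcr5} it is a coproduct coprojection into $\C(x,y)$, so it suffices to produce an isomorphism $\partial_\de\C(x,y)\iso N(x,y)$ over $\C(x,y)$. I would build it as a pair of mutually inverse maps. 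For any $z$ of degree $<\de$, \cref{defn:v-creedy}\ref{item:vcr5} gives $\C(z,y)\iso\wcolim{\rup\C(\blank,y)}{\rdn\C(z,\blank)}{\Cbarde}$ and likewise $\C(x,z)$ (by \cref{defn:v-creedy}\ref{item:vcr3}--\ref{item:vcr4} the coproducts involved already run over degree-$<\de$ objects); following the path through \eqref{eq:vreedy-compose} --- compose the two inner hom-objects, refactor via \cref{defn:v-creedy}\ref{item:vcr5}, compose again --- yields a map $\coprod_{z\in\C_\de}\C(z,y)\otimes\C(x,z)\to N(x,y)$ which, by a diagram chase enriching \eqref{eq:vreedy-fork} and \autoref{fig:vreedyisafork}, coequalizes the two structure maps of \cref{defn:v-fundfact} and so descends to $\phi\colon\partial_\de\C(x,y)\to N(x,y)$ with $\phi$ followed by the coprojection equal to the canonical map. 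Conversely, the inclusions $\rup\C,\rdn\C\hookrightarrow\C$ give $\coprod_z\rup\C(z,y)\otimes\rdn\C(x,z)\to\coprod_z\C(z,y)\otimes\C(x,z)$; composing with the quotient onto $\partial_\de\C(x,y)$ coequalizes the two maps defining $N(x,y)$, since (exactly as in \cref{thm:v-reedy}) inserting ``$\rup\C$-then-$\Cbar$'' and ``$\Cbar$-then-$\rdn\C$'' into $\C$ both factor through the triple-composite summand $\C(w,y)\otimes\C(z,w)\otimes\C(x,z)$, so it descends to $\psi\colon N(x,y)\to\partial_\de\C(x,y)$. That $\phi$ and $\psi$ are inverse I would check by length-two-zigzag chases generalizing \eqref{eq:reedyfact-zigzag}: $\psi\phi$ sends a factorization $x\to z\to y$ to its refactored form, which \eqref{eq:reedyfact-zigzag} connects to the original through a two-step zigzag of factorizations whose connecting maps lie in $\Cbar$ --- i.e.\ through precisely the relations imposed in both $\partial_\de\C(x,y)$ and $N(x,y)$ --- while $\phi\psi$ reduces to the triviality that refactoring an already-Reedy factorization returns it up to a $\Cbar$-action.

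The main obstacle, and the one respect in which this genuinely diverges from \cref{thm:v-reedy}, is the failure of uniqueness of Reedy factorizations. There the target $\coprod_w\rup\C(w,y)\otimes\rdn\C(x,w)$ is a bare coproduct and \eqref{eq:vreedy-coeq} is an honest two-step split \emph{absolute} coequalizer with an actual section; here $N(x,y)$ is only a coequalizer, no section exists, and both $\phi$ and $\psi$ --- together with every diagram chase used to verify their properties --- must be set up on, and run modulo, the defining relations of the tensor products over $\Cbarde$. Keeping faithful track of these relations (equivalently, of the $\Cbar$-valued connecting maps produced by \eqref{eq:reedyfact-zigzag}) is where all the bookkeeping lives; once the appropriate enlarged commuting diagrams are drawn the checks are mechanical, just as in \cref{thm:v-reedy}. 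If coproduct coprojections in \V happen to be monic one gets the minor bonus that the square in \cref{defn:v-creedy}\ref{item:vcr1} is a pullback and $\Cbar,\rup\C,\rdn\C$ are honest subcategories, but the argument does not need this.
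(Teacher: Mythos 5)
Your proposal is correct and is essentially the paper's own proof: both arguments reduce the theorem to showing that $\partial_\de\C(x,y)\to\C(x,y)$ is an isomorphism onto the non-basic hom-object $\wcolim{\rup\C(\blank,y)}{\rdn\C(x,\blank)}{\Cbarde}$ (from which bistratification, cofibrancy of \eqref{eq:Vccofmap}, and projective-cofibrancy of \eqref{eq:Vcprcofmap} follow exactly as you describe), and both establish this identification by rerunning the refactor-and-compose chases of \cref{thm:v-reedy} modulo the relations of the tensor products over \Cbarde. The only difference is packaging: the paper observes that, since $\Cbarde\subseteq\C_\de$, the coequalizer defining $\partial_\de\C(x,y)$ may be computed by first tensoring over \Cbarde, after which the absolute-coequalizer data $(s,t_1,t_2)$ of \cref{thm:v-reedy} carries over verbatim --- so a section \emph{does} survive at the $\Cbarde$-tensor level, contrary to your remark, and the connecting maps of \eqref{eq:reedyfact-zigzag} lie in $\rdn\C$ and $\rup\C$ rather than \Cbar (harmless, since $\partial_\de\C$ imposes relations for arbitrary low-degree connecting maps) --- whereas you build the two-sided inverse by hand, which amounts to the same diagrams.
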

\begin{proof}
  Let $\de=\min(\deg(x),\deg(y))$.
  As in \cref{thm:v-reedy}, it will suffice to show that the map $\partial_\de \C(x,y) \to \C(x,y)$ is an isomorphism onto the non-basic hom-object.
  When $\deg(x)=\deg(y)$, this will show that \C is bistratified with strata $\Cbar_{=\de}$.
  When $\deg(x)<\deg(y)$, the assumed cofibrancy of the basic hom-object $\rup\C(x,y)$ will show that~\eqref{eq:Vccofmap} is a cofibration.
  And when $\deg(x)>\deg(y)$, the assumed projective-cofibrancy of the basic hom-functor $\rdn\C(x,\blank)$ will show that~\eqref{eq:Vcprcofmap} is a projective-cofibration.

  Thus, analogously to~\eqref{eq:vreedy-coeq}, what we want is a coequalizer diagram
  \begin{equation}
    \small \coprod_{z,w\in\C_\de} \C(w,y) \otimes \C(z,w) \otimes \C(x,z) \toto
    \coprod_{z\in \C_\de} \C(z,y) \otimes \C(x,z) \too
    \left(\wcolim{\rup\C(\blank,y)}{\rdn\C(x,\blank)}{\Cbarde}\right).
  \end{equation}
  Now note that because $\Cbarde$ is a subcategory of $\C_\de$, in computing the coequalizer of the two parallel maps above, it is would be equivalent to first tensor over $\Cbarde$.
  In other words, we may as well look for a coequalizer diagram
  \begin{equation}
    \small \left(\C(\blank,y) \otimes_{\Cbarde} \C(\blank,\blank) \otimes_{\Cbarde} \C(x,\blank)\right) \toto
    \left(\C(\blank,y) \otimes_{\Cbarde} \C(x,\blank)\right)  \too
    \left(\wcolim{\rup\C(\blank,y)}{\rdn\C(x,\blank)}{\Cbarde}\right).
  \end{equation}
  Now, however, we can essentially repeat the proof of \cref{thm:v-reedy}, simply changing the notational conventions so that $\C\C$, for instance, denotes $\C(\blank,y) \otimes_{\Cbarde} \C(x,\blank)$ rather than $\coprod_{z\in \C_\de} \C(z,y) \otimes \C(x,z)$.
\end{proof}

We end with a few of examples of c-Reedy \V-categories.
The first is obtained by modifying the main example of~\cite[\S6]{angeltveit:enr-reedy} to include automorphisms.

\begin{eg}\label{eg:prop}
  Let \O be an operad in \V, \emph{with} symmetric group action.
  We regard \O as a functor from the groupoid of finite sets and bijections to \V, with extra structure.
  Assume that $\O(0)$ and $\O(1)$ are cofibrant in \V, and that for $n\ge 1$, $\O(n)$ is a (projectively) cofibrant $\O(1)$-module (via the usual action).

  If $f:X\to Y$ is a map of finite sets, we write
  \[\O[f] = \bigotimes_{y\in Y} \O(f\inv(y)) \]
  Note that $\O[f]$ is canonically a module over $\O(1)^{\otimes|Y|}$; the assumption ensures that it is a cofibrant module whenever $f$ is surjective.

  Define a \V-category $\FO$ whose objects are (some canonical set of) finite sets and whose hom-objects are
  \begin{equation}
    \FO(X,Y) = \coprod_{f:X\to Y} \O[f].\label{eq:FOhom}
  \end{equation}
  The operad structure on \O makes this into a \V-category.
  (In fact, it is none other than the \textbf{PROP} associated to the operad \O.)
  We will show that it is a c-Reedy \V-category.

  We define the degree of a finite set to be its cardinality, and we define the hom-objects of $\rup\FO$, $\rdn\FO$, and $\FObar$ by restricting the coproducts in~\eqref{eq:FOhom} to run over injections, surjections, and bijections $X\to Y$ respectively.
  This yields \cref{defn:v-creedy}\ref{item:vcr1}.
  Condition~\ref{item:vcr2} follows since any injection or surjection between finite sets of the same cardinality is a bijection, while~\ref{item:vcr3} and~\ref{item:vcr4} are obvious.
  Condition~\ref{item:vcr7} holds because $\O(0)$ and $\O(1)$ are cofibrant, since $\rup\FO(X,Y)$ is a coproduct of copies of $\O(1)^{\otimes|X|} \otimes \O(0)^{\otimes |Y|-|X|}$.

  For condition~\ref{item:vcr6}, note that $\FObar(Y,Y) \cong \aut(Y) \cpw \O(1)^{\otimes |Y|}$.
  Thus, we are asking for projective-cofibrancy in $\V^{\aut(Y)\cpw\O(1)^{\otimes |Y|}}$ of $\coprod_{f:X\epi Y} \O[f]$, the coproduct being over surjections.
  But $\aut(Y)$ acts freely on the set of surjections $X\epi Y$, so this object is freely generated by a coproduct of $\O(1)^{\otimes |Y|}$-modules of the form $\O[f]$ for a surjection $f$.
  Since each of these is a cofibrant module, the resulting $\FObar(Y,Y)$-module is also cofibrant.

  Finally, the diagram in~\ref{item:vcr5} splits up as a coproduct over functions $f:X\to Y$:
  \begin{equation}
    \coprod_{f = h k g} \O[h] \otimes \O[k] \otimes \O[g] \toto
    \coprod_{f = h g}\O[h]\otimes \O[g] \too
    \O[f]
  \end{equation}
  where $g:X\to Z$ is surjective, $h:Z'\to Y$ is injective, and $k:Z\to Z'$ is bijective.
  We therefore have
  \begin{align}
    \O[g] &\cong \bigotimes_{y\in Y, f\inv(y)\neq\emptyset} \O(f\inv(y))\\
    \O[h] &\cong \O(1)^{\otimes|\im(f)|} \otimes \O(0)^{\otimes|Y\setminus\im(f)|}\\
    \O[k] &\cong \O(1)^{\otimes|\im(f)|}.
  \end{align}
  Thus, for given $h,k,g$ we have
  \begin{align}
    \O[h] \otimes_{\O[k]} \O[g] &\cong \O(0)^{\otimes|Y\setminus\im(f)|}\otimes \bigotimes_{y\in Y, f\inv(y)\neq\emptyset} \O(f\inv(y))\\
    &\cong \O[f].
  \end{align}
  The coproducts over $h,k,g$ disappear in the quotient because $f$ factors uniquely up to isomorphism as a surjection followed by an injection; thus we have a coequalizer as desired.
\end{eg}

\begin{eg}\label{eg:operators}
  In addition to the assumptions in \cref{eg:prop}, suppose that \V is \emph{cartesian} monoidal.
  (Technically, it suffices for it to be \emph{semicartesian}, i.e.\ for the unit of the monoidal structure to be the terminal object.)
  Then in the definition of \FO we can use \emph{partial} functions $X\rightharpoonup Y$ instead of total ones.
  In defining composition, we may have to ``project away'' objects such as $\O(n)$; this is where (semi)cartesianness comes in.
  Other than this, the rest of the argument in \cref{eg:prop} goes through without any trouble.
  The resulting \V-category is the \textbf{category of operators} associated to \O as in~\cite[Construction 4.1]{mt:uniq-inf-loop}, which is therefore also a c-Reedy \V-category.

  If we allow arbitrary \emph{relations} from $X$ to $Y$, then the resulting category is the (\V-enriched) \textbf{Lawvere theory} associated to \O.
  Unfortunately, in this case \cref{defn:v-creedy}\ref{item:vcr6} fails, so we no longer get a c-Reedy \V-category.
\end{eg}

\begin{eg}\label{eg:enriched-orbit}
  As in \cref{eg:orbit}, let $G$ be a compact Lie group and $\O_G$ its orbit category, consisting of the transitive $G$-spaces $G/H$, with $H$ a closed subgroup, and the continuous $G$-maps between them.
  Then $\O_G$ is a topologically enriched c-Reedy category, where as before we take $\omega\cdot \dim(H)+|\pi_0(H)|-1$ and $\rup{\O_G} = \O_G$, while $\OGbar = \rdn{\O_G}$ contains only the isomorphisms.

  Thus, if $\M$ is a topological model category admitting projective model structures, then $\M^{\O_G}$ has a c-Reedy model structure, and if $\M$ admits injective model structures, then $\M^{\O_G\op}$ has a c-Reedy model structure.
  However, because topological spaces are not a combinatorial model category, we do not generally know how to construct injective model structures on topological model categories, which is unfortunate since $\M^{\O_G\op}$ is more often the category of interest in equivariant homotopy theory.
  But if we pass to the homotopy-equivalent world of simplicial sets, making $\O_G$ into a simplicially enriched category by taking singular complexes of its hom-spaces, then it remains enriched c-Reedy, and injective model structures do exist.
  Thus, we have a Reedy-style model structure for equivariant homotopy theory over a compact Lie group, which as in \cref{eg:orbit} turns out to be an explicit version of the injective model structure.
\end{eg}

\begin{rmk}
  As mentioned in the introduction, we can also generalize to ``category theories'' other than ordinary and enriched ones.
  Other well-known category theories are internal categories and indexed categories.
  more generally, we could use the enriched indexed categories of~\cite{shulman:eicats}.
  And more generally still, we could use a framework for formal category theory such as the \emph{cosmoi} of~\cite{street:cauchy-enr} or the \emph{proarrow equipments} of~\cite{wood:proarrows-ii}.
  The fundamental idea of ``Reedy-like categories'' as iterated collages of abstract bigluing data makes sense quite generally.
  Other potential generalizations include allowing different model category structures at different stages of the bigluing, such as in~\cite{johnson:modreedy}, or bigluing along more general functors, such as in the Reedy structure on oplax limits from~\cite{shulman:invdia}.
\end{rmk}

\appendix

\section{Functorially-Reedy categories}
\label{sec:freedy}

In some places such as~\cite{dhks:holim,barwick:reedy}, one finds a slightly different definition of Reedy category in which the factorizations $f = \rup f \rdn f$ are not required to be unique, only \emph{functorial}.
We will call these fs-Reedy categories (``s'' for ``strict'' or ``set'', meaning that they are analogous to those of \cref{sec:reedy} rather than \cref{sec:auto}).

\begin{defn}
  An \textbf{fs-Reedy category} is a category \C equipped with a ordinal degree function on its objects, and subcategories $\rup\C$ and $\rdn \C$ containing all the objects, such that
  \begin{itemize}
  \item Every nonidentity morphism in $\rup\C$ strictly raises degree and every nonidentity morphism in $\rdn\C$ strictly lowers degree.
  \item Every morphism $f$ factors \emph{functorially} as $\rup f \rdn f$, where $\rup f \in \rup \C$ and $\rdn f \in\rdn \C$.
  \end{itemize}
\end{defn}

Perhaps surprisingly, this definition is \emph{not} equivalent to the more common one with unique factorizations.
There are roughly two ways in which it can fail.

\begin{eg}\label{eg:fsreedy1}
  The following example is due to Charles Rezk.
  Let \C be the poset $(1\le 0 \le 2)$, with degrees assigned as follows:
  \[ \xymatrix@-.5pc{ && 2 & \deg = 2\\
    1 \ar[urr] \ar[dr] &&& \deg=1\\
    & 0 \ar[uur] && \deg=0 } \]
  This is almost-Reedy, with $1\to 0$ and $0\to 2$ being the only nonidentity morphisms in $\rdn \C$ and $\rup \C$ respectively; hence by \cref{thm:reedy} it is Reedy.
  
  However, if we enlarge $\rup\C$ by adding the morphism $1\to 2$ to it (which also strictly raises degree), then the previous unique factorizations are still \emph{functorial}, but no longer unique, since $1\to 2$ can now also be factored as $1 \to 1 \to 2$.
  Thus, with this larger $\rup\C$, \C is fs-Reedy but not Reedy.
\end{eg}

\begin{eg}\label{eg:fsreedy2}
  Let \C be the category with two objects $0$ and $1$, having degrees $0$ and $1$ respectively, and a single nonidentity isomorphism $0\cong 1$.
  Then \C is not even bistratified, and since it contains a nonidentity isomorphism it does not even admit some other degree function for which it is almost-Reedy.
  However, if we let $\rup\C$ and $\rdn\C$ be the subcategories containing only the nonidentity morphisms $0\to 1$ and $1\to 0$, respectively, then \C is fs-Reedy: any factorization at all is functorial, since \C is a contractible groupoid.
\end{eg}

Note that in \cref{eg:fsreedy1}, we can make \C s-Reedy by shrinking $\rup\C$ and $\rdn\C$; while in \cref{eg:fsreedy2}, \C is \emph{equivalent} to an s-Reedy category, though it is not s-Reedy itself.
These are the only two things that can happen.

\begin{thm}\label{thm:fsreedy}
  Any fs-Reedy category \C has a full subcategory \D such that
  \begin{enumerate}
  \item The inclusion functor $\D\into\C$ is an equivalence, and
  \item The induced degree function on \D makes it s-Reedy with $\rup\D\subseteq \rup\C$ and $\rdn\D\subseteq \rdn\C$.
  \end{enumerate}
\end{thm}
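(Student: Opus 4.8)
The plan is to reduce to \cref{thm:reedy} by passing to a suitable full subcategory, and the whole argument is most naturally organized as an induction on the height (the supremum of the degrees of the objects of \C). Fix the functorial factorization on \C, writing $f = \rup f\cdot\rdn f$ for a morphism $f$, and for an identity write $\rho_x := \rdn{(\id_x)}\colon x\to Rx$ and $\lambda_x := \rup{(\id_x)}\colon Rx\to x$, so that $\lambda_x\rho_x = \id_x$; functoriality makes $R$ a functor with $\rho$ and $\lambda$ natural. Note also the trivial but crucial dichotomy: since $\rho_x\in\rdn{\C}$, either $\rho_x = \id_x$ (so $Rx = x$) or $\deg Rx < \deg x$ strictly.

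The technical heart is the following lemma: \emph{in an fs-Reedy category, every split monomorphism lying in $\rdn{\C}$ is an isomorphism, and dually every split epimorphism lying in $\rup{\C}$ is an isomorphism.} I would prove this by induction on the degree of the smaller of the two objects involved (the codomain of the split mono, which by the defining inequalities has degree $\le$ that of the domain). The base case uses the observation that a nonidentity morphism of $\rup{\C}$ or $\rdn{\C}$ strictly changes degree, so an \emph{endomorphism} lying in either class is forced to be an identity; this is what grounds the induction. For the inductive step, given a split mono $g\colon x\to z$ in $\rdn{\C}$ with retraction $r\colon z\to x$, one factors $r = \rup r\cdot\rdn r$ through an object of still-smaller degree, observes that $\rup r$ thereby becomes a split epimorphism in $\rup{\C}$ over a smaller object, invokes the inductive hypothesis to make it an isomorphism, and then chases the resulting identities to conclude that $gr = \id_z$. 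Applied to $\rho_x$ (which is split by $\lambda_x$), the lemma shows $\rho_x$ is an isomorphism with inverse $\lambda_x$; hence every object is isomorphic (via $\rho_x$) to the possibly lower-degree object $Rx$, and iterating — the degree strictly drops at each nontrivial step, so this terminates by well-foundedness — shows that every object of \C is isomorphic to an object of minimal degree in its isomorphism class.

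Now choose $\D$ to be the full subcategory of \C on a set of objects containing exactly one member of each isomorphism class, each of minimal degree in its class. The previous paragraph shows $\D\hookrightarrow\C$ is essentially surjective, hence (being full and faithful) an equivalence, and by transport of structure $\D$ is again fs-Reedy for the restricted degree function. Moreover $\D$ is \emph{gaunt}: any nonidentity isomorphism of $\D$ would have to be an automorphism (distinct objects of $\D$ lie in distinct isomorphism classes), and factoring it as $\rup\phi\cdot\rdn\phi$ and applying the lemma to $\rdn\phi$ would exhibit its domain as isomorphic to an object of strictly smaller degree, contradicting minimality. Finally, define $\rup{\D}$ and $\rdn{\D}$ to be the subcategories of $\D$ on the \emph{basic} morphisms that respectively non-strictly raise and lower degree. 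I claim \D, with these, is an almost-Reedy category. Gauntness together with the lemma gives condition~(i) of \cref{thm:reedychar} (the only basic level morphisms are identities: a level morphism in $\rdn{\C}$ between equal-degree objects is forced to be an identity, so a basic level morphism has trivial $\rdn$- and $\rup$-parts). For condition~(ii), connectedness of the category of fundamental factorizations of a non-basic morphism, one uses functoriality of the factorization on \C to connect an arbitrary fundamental factorization to the canonical one coming from the chosen factorization, along the lines of the two-step zigzag in the proof of \cref{thm:reedy}, transporting through \C-isomorphisms to land back in \D. Then \cref{thm:reedy} shows \D is s-Reedy and identifies $\rup{\D}$ and $\rdn{\D}$ as exactly the classes just defined; since a basic $\rdn$-morphism of \D is its own $\rdn$-factor in \C it lies in $\rdn{\C}$, and dually, giving $\rdn{\D}\subseteq\rdn{\C}$ and $\rup{\D}\subseteq\rup{\C}$. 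Limit stages of the overall induction on height are handled by taking colimits, exactly as for inverse and stratified categories.

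The main obstacle is the lemma that split monomorphisms in $\rdn{\C}$ (and split epimorphisms in $\rup{\C}$) are isomorphisms: this is precisely the fact that makes the two pathologies of \cref{eg:fsreedy1} and \cref{eg:fsreedy2} — an over-large $\rup{\C}$, and an object redundantly isomorphic to a lower-degree one — the \emph{only} ways an fs-Reedy category can fail to be s-Reedy, and it is the one place where the fs-Reedy axioms must be used in an essential and somewhat delicate way. A secondary subtlety is verifying condition~(ii) of \cref{thm:reedychar} for \D when only \emph{functorial}, rather than unique, factorizations are available upstairs in \C.
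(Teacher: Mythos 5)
Your argument hinges on the lemma that in an fs-Reedy category every split monomorphism lying in $\rdn\C$ (dually, every split epimorphism in $\rup\C$) is invertible, and this lemma is false. Here is a counterexample: start with the Reedy category on two objects $w$ (degree $0$) and $z$ (degree $1$) generated by $a\colon w\to z$ in $\rup{}$ and $b\colon z\to w$ in $\rdn{}$ with $ba=\id_w$ (so $z$ carries the split idempotent $ab$; compare $[0]\rightleftarrows[1]$ in $\lDelta$), and adjoin a redundant isomorphic copy $x$ of $w$ placed at degree $2$. Concretely, $\C$ has objects $x,z,w$ of degrees $2,1,0$, generated by $g\colon x\to z$, $r\colon z\to x$, $a$, $b$ with $rg=\id_x$, $ba=\id_w$, $gr=ab$; take $\rdn\C=\{\text{identities},g,b,bg\}$ and $\rup\C=\{\text{identities},a,r,ra\}$. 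Every morphism except $\id_z$ factors through $w$ via the unique maps into and out of $w$, and choosing these factorizations (and $\id_z=\id\cdot\id$) is functorial: all hom-sets except $\C(z,z)=\{\id_z,gr\}$ are singletons, so the connecting maps are forced and one checks directly that they compose correctly. Hence $\C$ is fs-Reedy, yet $g\in\rdn\C$ is a split mono with $gr\neq\id_z$, and $r\in\rup\C$ is a non-invertible split epi. This also shows why your inductive step cannot close: in this example $\rup r=ra$ \emph{is} an isomorphism, but no chase can yield $gr=\id_z$ (and the induction measure need not drop when passing from $g$ to $\rup r$, e.g.\ when $\rdn r$ is an identity). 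The correct statement is weaker and is what the paper proves: iterating factorizations of the induced idempotents produces a chain of retracts whose degrees strictly decrease, and only at termination (an object with \emph{basic} identity) does one obtain an isomorphism. Everything you derive from the lemma --- invertibility of $\rho_x$, the reduction to minimal-degree representatives, and gauntness of your skeletal $\D$ --- is therefore unsupported as argued (some of these facts are true, but only as consequences of the full theorem, not by your route).

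There is a second gap independent of the lemma: after sketching that $\D$ is almost-Reedy you invoke \cref{thm:reedy} to conclude it is s-Reedy, but \cref{thm:reedy} requires in addition that $\rup\D$ and $\rdn\D$ (the classes of basic morphisms that non-strictly raise, resp.\ lower, degree) be closed under composition, and almost-Reedy does not imply this (\cref{eg:almost-reedy}). You never verify closure, and this is exactly the delicate part of the paper's proof: two further applications of the functorial factorization show that a composite of basic $\rdn{}$-morphisms is again basic, using that all identities in $\D$ are basic. (Your verification of \cref{thm:reedychar}(ii) is also only gestured at, but the intended move --- apply the functorial factorization to the square formed by two fundamental factorizations --- is essentially the paper's, so that part is repairable; the two gaps above are not, without new arguments.)
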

\begin{proof}
  Let \C be fs-Reedy.
  First note that if $f$ is basic and level with specified factorization $f = \rup f \rdn f$, then since $\rup f$ non-strictly increases degree and $\rdn f$ non-strictly decreases it, in fact they must both be level, and hence identities.
  Thus all basic level morphisms are identities --- but, as \cref{eg:fsreedy2} shows, not every identity may be basic.
  
  Let \D be the full subcategory of \C determined by the objects $x$ such that $\id_x$ \textit{is} basic.
  We will show that $\D\into\C$ is an equivalence, or equivalently that every object of \C is isomorphic to an object of \D.
  In fact, we will show that every object $x\in \C$ is isomorphic to an object $y\in \D$ with $\deg(y)\le \deg(x)$, and that moreover the inverse isomorphisms $x\to y$ and $y\to x$ lie in $\rdn\C$ and $\rup\C$ respectively.
  
  Suppose $\id_x$ is not basic; we may assume by induction that the claim holds for all objects of strictly lesser degree.
  Now the argument is much like that of \cref{thm:reedy-fact}: since $\id_x$ is not basic, it has some fundamental factorization $\id_x = h_0 g_0$.
  If $g_0 h_0 = \id_{y_0}$, then $x\cong y_0$, and $\deg(y_0)<\deg(x)$ so the inductive hypothesis kicks in.
  Otherwise, $g_0 h_0$ is a nonidentity level morphism, so that $g_0 h_0 = \rup{g_0 h_0} \rdn{g_0 h_0}$ is a fundamental factorization through some object $y_1$.

  Let $g_1 = \rdn{g_0 h_0} g_0$ and $h_1 = h_0 \rup{g_0 h_0}$; then $h_1 g_1 = h_0 g_0 h_0 g_0 = \id_x$.
  If $g_1 h_1 = \id_{y_1}$, then $x\cong y_1$ and the inductive hypothesis kicks in; otherwise we proceed defining $g_2 = \rdn{g_1 h_1} g_1$ and $h_2 = h_1 \rup{g_1 h_1}$ and so on.
  We have $\deg(y_0) > \deg(y_1) >\deg(y_2)>\cdots$, so since ordinals are well-founded, the process must terminate; hence we have $x\cong y_n$ and we are done.
  Note that since $\rup\C$ and $\rdn\C$ are closed under composition, the isomorphisms $g_n:x\to y_n$ and $h_n:y_n \to x$ lie in $\rdn\C$ and $\rup\C$ respectively.

  Now we claim that \D is also fs-Reedy with the induced degree function and $\rup\D = \rup\C \cap\D$ and $\rdn\D= \rdn\C\cap\D$.
  The specified functorial factorization of a morphism $f:x\to y$ in $\D$ might, in principle, go through an object $z\in \C\setminus\D$.
  However, we have $z\cong w$ with $w\in\D$, via isomorphisms $g:z\to w$ and $h:w\to z$ in $\rdn\C$ and $\rup\C$ respectively, so that $f = (\rup f h)(g\rdn f)$ is a factorization of $f$ lying entirely in \D.
  Since this factorization is isomorphic to the given one $(\rup f,\rdn f)$, if we splice it in instead the factorization operation remains functorial.
  We can do this for all morphisms in \D, so that \D admits the desired functorial factorization.

  It remains to show that \D is s-Reedy; we will do this with \cref{thm:reedychar} and \cref{thm:reedy}.
  Condition~\ref{item:reedychar1} of \cref{thm:reedychar} holds by the above observation that all basic level morphisms in an fs-Reedy category are identities, together with the construction of \D so that all identities are basic.
  For \cref{thm:reedychar}\ref{item:reedychar2}, let $(h,g)$ and $(h',g')$ be fundamental factorizations of some morphism $f$, and apply the functorial factorization to the square
  \begin{equation}
    \vcenter{\xymatrix{
        \ar[r]^g\ar[d]_{g'} &
        \ar[d]^{h}\\
        \ar[r]_{h'} &
      }}
    \qquad\text{to get}\qquad
    \vcenter{\xymatrix{
        \ar[r]^{\rdn g}\ar[d]_{g'} &
        \ar[r]^{\rup g} \ar[d]^k &
        \ar[d]^{h}\\
        \ar[r]_{\rdn {h'}} &
        \ar[r]_{\rup{h'}}&
      }}
  \end{equation}
  The new intermediate objects must also be of strictly lesser degree than the domain and codomain of $f$.
  Thus, we have a connecting zigzag of fundamental factorizations:
  \[ \xymatrix@R=3pc@C=1.5pc{
    &&& \ar[dlll]_{g} \ar[dl]|{\rdn g} \ar[dr]|{\rdn{h'}g'} \ar[drrr]^{g'}\\
    \ar[drrr]_{h} && \ar[dr]|{h\rup g}\ar[ll]_{\rup g} \ar[rr]^k && \ar[dl]|{\rup{h'}} && \ar[dlll]^{h'}\ar[ll]_{\rdn{h'}}  \\
    &&&
  } \]
  By \cref{thm:reedychar}, therefore, \D is almost s-Reedy.
  Importantly, however, the classes of morphisms ``$\rup\D$'' and ``$\rdn\D$'' defined in \cref{sec:reedy} (consisting of basic morphisms that non-strictly increase or decrease degree, respectively) may not be the same as the subcategories $\rup\D$ and $\rdn\D$ specified in the fs-Reedy structure of \D.
  We will continue to use the notations $\rup\D$ and $\rdn\D$ for the latter only.

  It \emph{is} true that every basic morphism lies in $\rup\D$ or $\rdn\D$, since the specified functorial factorization is fundamental unless one of its factors is an identity.
  However, the converse may not hold.
  Thus, what we have to prove to apply \cref{thm:reedy} is that the composite of two basic morphisms in $\rdn\D$ is again basic (the dual statement has a dual proof).
  
  Suppose, therefore, that $f:x\to y$ and $g:y\to z$ are basic and lie in $\rdn\D$.
  Applying the functorial factorization to the commutative square on the left, we obtain the rectangle on the right:
  \begin{equation}
    \vcenter{\xymatrix{
        \ar[r]^f\ar@{=}[d] &
        \ar[d]^g\\
        \ar[r]_{g f} &
      }}
    \qquad\leadsto\qquad
    \vcenter{\xymatrix{
        \ar[r]^f\ar@{=}[d] &
        \ar@{=}[r]\ar[d]^k &
        \ar[d]^g\\
        \ar[r]_{\rdn{gf}} &
        \ar[r]_{\rup{gf}} &
      }}
  \end{equation}
  Note that since $f$ is basic, $f = \rdn f$ and $\rup f = \id$, since otherwise $f = \rup f \rdn f$ would be a fundamental factorization.
  But now $g = \rup{gf} \circ k$ is a fundamental factorization, contradicting the fact that $g$ is basic, unless $\rup{gf}= \id$.
  Therefore, $gf = \rdn{gf}$.

  Now suppose $g f = h k$ were a fundamental factorization, and once again apply the functorial factorization:
  \begin{equation}
    \vcenter{\xymatrix{
        \ar[r]^{gf}\ar[d]_{k} &
        \ar@{=}[d]\\
        \ar[r]_{h} &
      }}
    \qquad\leadsto\qquad
    \vcenter{\xymatrix{
        \ar[r]^{g f}\ar[d]_k &
        \ar@{=}[r]\ar[d]^\ell &
        \ar@{=}[d]\\
        \ar[r]_{\rdn{h}} &
        \ar[r]_{\rup{h}} &
      }}
  \end{equation}
  Since the factorization $h k$ is fundamental, $h$ must strictly increase degree, whence $\rup h$ is not an identity.
  But then $\rup h \ell = \id$ is a fundamental factorization of an identity, contradicting the fact that all identities in \D are basic.
  Thus, $g f$ is basic, as desired.
\end{proof}

\begin{cor}
  If \C is fs-Reedy and \M is a model category, then $\M^\C$ admits a model structure defined as follows:
  \begin{itemize}
  \item The weak equivalences are objectwise.
  \item $A\to B$ is a fibration (resp.\ acyclic fibration) iff for any $x\in \C$ that is not isomorphic to any object of strictly lesser degree, the induced map $A_x \to M_x A \times_{M_x B} B_x$ is a fibration (resp.\ acyclic fibration) in \M.
  \item $A\to B$ is a cofibration (resp.\ acyclic cofibration) iff for any $x\in \C$ that is not isomorphic to any object of strictly lesser degree, the induced map $L_x B \sqcup_{L_x A} A_x \to B_x$ is a cofibration (resp.\ acyclic cofibration) in \M.
  \end{itemize}
  Here $M_x$ and $L_x$ are as defined for all bistratified categories in \cref{sec:reedy}, and cannot necessarily be reformulated purely in terms of the specified subcategories $\rup\C$ and $\rdn\C$ as in the discussion preceeding \cref{thm:reedy-initial}.
\end{cor}
\begin{proof}
  With $\D$ as in \cref{thm:fsreedy}, simply transfer the Reedy model structure on $\M^\D$ across the equivalence $\M^\D \simeq\M^\C$ induced by the equivalence $\D\simeq \C$.
\end{proof}

We leave it to the reader to consider fg-Reedy and fc-Reedy categories.

\section{Remarks on the formalization}
\label{sec:formalization}

As mentioned in the introduction, I have formalized the proof of \cref{thm:creedy} in the computer proof assistant Coq, assuming as given the elementary characterization of almost c-Reedy categories from \cref{thm:creedy-char}.
The code is available from the arXiv and on my web site; it requires nothing more than a standard install of Coq v8.4.
For the most part, the definitions and proofs in the formalization follow those in the paper fairly closely.
I have included quotations from the paper as comments in the code, to help the interested reader match them up.

There are a few minor ways in which the formalized proofs diverge slightly from those in the paper.
For instance, for expositional purposes the paper built up gradually through inverse categories, stratified categories, s-Reedy categories, and finally g- and c-Reedy categories; but for the formalization it was simpler to go directly to the most general c-Reedy case.
Another small change is that, to avoid the need for a category theory library, the (equivalent) conditions \cref{thm:creedy-char}\ref{item:creedychar4} and \cref{defn:creedy}\ref{item:crd5} are phrased in a more elementary way in terms of cones (which is actually how they are used in the proof of \cref{thm:creedy-cone}).
The organization of results also differs somewhat, e.g.\ due to the requirements of formalization, it was convenient to separate out more statements as intermediate lemmas.

Some slightly more substantial differences involve the nature of the mathematical framework of Coq.
For instance, Coq's logic is by default constructive, meaning that if we want the law of excluded middle ($P$ or not $P$) we need to assert it as an additional axiom.
We did use excluded middle throughout this paper (e.g.\ ``$f$ is either basic or not basic''), so to formalize the paper as written we need some assumptions of this sort.
However, it is almost as easy, more informative, and more in the constructive spirit to assert only those particular instances of excluded middle that are needed.
This turned out to be the following:
\begin{enumerate}
\item As part of the definition of almost c-Reedy category (for which, recall, the formalization uses \cref{thm:creedy-char}), we assert that basic-ness is decidable (i.e.\ every morphism is either basic or not basic).
  In fact, since ``$f$ is basic'' is already a negative statement, it is better to assert decidability of the positive one: every morphism either admits a fundamental factorization or does not.\label{item:lem1}
\item Similarly, in the definition of c-Reedy category (\cref{defn:creedy}) we assert that the subcategories $\rup\C$ and $\rdn\C$ are decidable: every morphism is either in $\rup\C$ or not, and likewise for $\rdn\C$.\label{item:lem2}
\item In all cases, degrees of objects are assumed to take values in some ordinal that satisfies the trichotomy principle (i.e.\ for degrees $\de$ and $\de'$ we have either $\de<\de'$ or $\de=\de'$ or $\de>\de'$).
  In fact, the degrees are just assumed to come with a well-founded relation (as defined in the Coq standard library) and to satisfy trichotomy.\label{item:lem3}
\end{enumerate}
I find~\ref{item:lem1} and~\ref{item:lem2} intriguingly suggestive of a possible constructive version of the results in this paper.
Recall that the inclusion of the non-basic subset of $\C(x,y)$ has two faces: when $\deg(x)=\deg(y)$ it is the injection of a coproduct, as required for bistratification, while when $\deg(x)\neq\deg(y)$ it is an \L-map for the universally enriching wfs on \nSet, as required for cofibrancy of the appropriate bigluing data.
In both cases it \emph{is} natural, constructively, to assume it to be a decidable subset: injections of coproducts are always decidable, whereas arguably the most natural constructive replacement for the (injections, surjections) wfs on \nSet (whose existence is equivalent to the axiom of choice) is (coproduct injections, split surjections).
However,~\ref{item:lem3} is somewhat disheartening in this regard, since constructively the trichotomous ordinals may be quite rare~\cite{rosolini:ist}.
Further investigation of this question is left to the interested reader.

Another group of differences arise from the \emph{intensional} nature of Coq's type theory.
From the recent perspective of homotopy type theory~\cite{hottbook}, this means that the basic objects of Coq, called ``types'', do not necessarily behave like sets, but rather like higher groupoids.
We can make them behave like sets by assuming an axiom; but as with excluded middle, it is almost as easy, more informative, and more in the spirit of homotopy type theory to assert this only about the particular types for which we need it.
This turns out to be the following:
\begin{enumerate}
\item As part of our definition of a category, we assert that the hom-types $\C(x,y)$ are sets.
  We do \emph{not} need to assert that the type of \emph{objects} is a set; thus our categories are the ``precategories'' of~\cite{aks:rezk} and~\cite[Ch.~9]{hottbook}.
  (These references use ``category'' to refer to precategories satisfying a ``saturation'' condition ensuring that the type of objects behaves like the maximal subgroupoid of the category, but we have no need for this either.)
\item We always assume that the type of degrees of objects (which, recall, is a trichotomous ordinal) is a set.
\item We assume that for degrees $\de$ and $\de'$, the type $(\de<\de')$ is a set with at most one element, which in homotopy type theory is often called an ``h-proposition''.
  (In type theory, propositions like $(\de<\de')$ are identified with particular types; the h-propositions are those that carry no more information than a truth value.)
\end{enumerate}
These are all natural assumptions from the perspective of homotopy type theory.

One final issue has to do with the \emph{divergence} of Coq from homotopy type theory.
Coq includes a sort \texttt{Prop} of ``propositions'', but not only are these not necessarily h-propositions (hence the need for the assumption above about $(\de<\de')$), but not every h-proposition necessarily lies in \texttt{Prop}.
In particular, this means that the function comprehension principle (if for every $a\in A$ there is a unique $b\in B$ such that $P(a,b)$, then there is a function $f:A\to B$ such that $P(a,f(a))$) fails unless asserted as an axiom.
The only place this became a problem was in defining functions by cases based on the trichotomy law, so I simply stated trichotomy as a disjoint union rather than a disjunction; this is unproblematic semantically because well-founded relations are automatically irreflexive.

Finally, the reader may wonder whether the process of formalization uncovered any errors in the original proofs.
The answer is no, not substantial ones.
The original statement of \cref{thm:creedy-cone} omitted the hypothesis that $f_1,f_2,f_3$ strictly decrease degree, but this is satisfied in all cases where it is used.
And the original proof of \cref{thm:creedy-fact-uniq} tried to prove first that all the connecting maps were basic level and then use a separate induction to show that all the factorizations were Reedy, when in fact this has to be done simultaneously.
There were, of course, many places where details I had left to the reader needed to be spelled out explicitly for the computer, but this is to be expected of any formalization and doesn't necessarily indicate any problem with the informal proofs.
There was only one place where the process of formalization made me decide that I had left too much to the reader: in the proof of \cref{thm:creedy}, the proof that the specified $\rup\C$ and $\rdn\C$ agree with their similarly-named defined versions requires a more substantial argument than in the s-Reedy case.

\bibliographystyle{alpha}
\bibliography{all}

\end{document}